\numberwithin{equation}{section} 
\theoremstyle{plain}
\def\CC{\mathbb{C}}
\def\FF{\mathbb{F}}
\def\QQ{\mathbb{Q}}
\def\ZZ{\mathbb{Z}} 
\def\A{{\rm A}}
\def\B{{\rm B}}
\def\D{{\rm D}}
\def\E{{\rm E}}
\def\F{{\rm F}}
\def\G{{\rm G}}
\def\H{{\rm H}}
\def\I{{\rm I}}
\def\J{{\rm J}}
\def\K{{\rm K}}
\def\L{{\rm L}}
\def\M{{\rm M}}
\def\N{{\rm N}}
\def\P{{\rm P}}
\def\Q{{\rm Q}}
\def\R{{\rm R}}
\def\SS{{\rm S}}
\def\T{{\rm T}}
\def\U{{\rm U}}
\def\V{{\rm V}}
\def\W{{\rm W}}
\def\X{{\rm X}}
\def\Y{{\rm Y}}
\def\Z{{\rm Z}}
\def\Cc{\mathscr{C}}
\def\Dd{\mathscr{D}}
\def\Ee{\mathscr{E}}
\def\Gg{\mathscr{G}}
\def\Hh{\mathscr{H}}
\def\Mm{\mathscr{M}}
\def\Oo{\mathscr{O}}
\def\Rr{\mathscr{R}}
\def\Ww{\mathscr{W}}
\def\Xx{\mathscr{X}}
\def\AA{\mathfrak{A}}
\def\KK{\mathfrak{K}}
\def\PP{\mathfrak{P}}
\def\a{\alpha} 
\def\b{\beta}
\def\d{\delta}
\def\e{\varepsilon}
\def\g{\gamma}
\def\h{\varphi}
\def\k{\kappa}
\def\l{\lambda}
\def\n{\eta}
\def\p{\mathfrak{p}}
\def\q{\star}
\def\s{\sigma}
\def\t{\theta}
\def\v{\upsilon}
\def\w{\varpi}
\def\Ga{\Gamma}
\def\La{\Lambda}
\def\Om{\Omega}
\def\ie{c'est-à-dire }
\def\rp{\rangle}
\def\>{\geqslant}
\def\<{\leqslant}
\def\Hom{{\rm Hom}}
\def\End{{\rm End}}
\def\Mat{\mathscr{M}}
\def\GL{{\rm GL}}
\def\Gal{{\rm Gal}}
\def\Ker{{\rm Ker}}
\def\tr{{\rm tr}}
\def\ind{{\rm ind}}
\def\cind{{\rm ind}}
\def\mult#1{{#1}^{\times}}
\def\fr#1{\smash{\mathop{\ \to\ }\limits^{#1}}}
\def\iso#1{\smash{\mathop{\longrightarrow}\limits^{#1}}}
\def\Cc{\EuScript{C}}
\def\Hh{\EuScript{H}}
\def\Mm{\textbf{\textsf{M}}}
\def\Oo{\EuScript{O}}
\def\Ww{\EuScript{W}}
\def\Xx{\X}
\def\GB{{\overline{\G}}}
\def\MB{{\overline{\M}}}
\def\PB{{\overline{\P}}}
\def\UB{{\overline{\U}}}
\def\NB{{\overline{\N}}}
\def\IA{\ip}
\def\aa{\mathfrak{a}}
\def\bb{\mathfrak{b}}
\def\jj{\mathfrak{j}}
\def\ff{\kk}
\def\hh{\mathfrak{h}}
\def\kk{\mathfrak{k}}
\def\ll{\mathfrak{l}}
\def\vv{\mathfrak{v}}
\def\mm{\mathfrak{m}}
\def\nn{\mathfrak{n}}
\def\ss{\mathfrak{s}}
\def\xx{\mathfrak{x}}
\def\ip{\boldsymbol{i}}
\def\jp{\boldsymbol{j}}
\def\te{j}
\def\rp{\boldsymbol{r}}
\def\qlb{\overline{\QQ}_{\ell}}
\def\zlb{\overline{\ZZ}_{\ell}}
\def\flb{\overline{\FF}_{\ell}}
\def\NJ{\K}
\def\nl{\Sigma}
\def\cusp{{\rm cusp}}
\def\Irr{{\rm Irr}}
\def\Iw{\I}
\def\r{{\bf r}}
\def\m{\mm}
\def\({\left(}
\def\){\right)}
\def\tmax{\t_{{\rm max}}}
\def\kmax{\k_{{\rm max}}}
\def\Tmax{\boldsymbol{\Theta}_{{\rm max}}}
\def\KM{\textbf{\textsf{K}}}
\def\qr{q(\rho)}
\def\om{\omega}
\def\Fr{\phi}
\def\sy#1{[#1]}
\def\BJ{{\bf J}}
\def\bl{{\boldsymbol{\l}}}
\def\bk{\boldsymbol{\k}}
\def\bs{\boldsymbol{\s}}
\def\bn{\boldsymbol{\n}}
\def\bt{\boldsymbol{\tau}}
\def\vr{\varrho}
\def\vk{\bk}
\def\ve{\bn}
\def\el{o_\ell}
\def\ee{o}
\newcounter{nonum}
\newcounter{introlet}
\newtheorem{theon}[introlet]{Théorème}
\newcounter{intronum}
\def\theintronum{\arabic{intronum}}
\newenvironment{introsec}{\refstepcounter{intronum}
\noindent{\bf\theintronum.}}{\medskip}
\newcounter{notanum}
\def\thenotanum{\arabic{notanum}}
\newenvironment{notasec}{\refstepcounter{notanum}
\noindent{\bf\thenotanum.}}{\medskip}
\newcounter{prelinum}
\def\theprelinum{\arabic{prelinum}}
\newenvironment{prelisec}{\refstepcounter{prelinum}
\noindent{\bf\thesection.\bf\theprelinum.}}{\medskip}
\author{Alberto M\'\i nguez}
\address{Institut de Mathématiques de Jussieu, Université Paris 6, 
4 place Jussieu, 75005, Paris, France. 
URL: {\rm http://www.math.jussieu.fr/$\sim$minguez/}} 
\email{minguez@math.jussieu.fr}
\author{Vincent Sécherre}
\address{Université de Versailles Saint-Quentin-en-Yvelines\\
Laboratoire de Mathémati\-ques de Versailles\\
45 avenue des Etats-Unis\\
78035 Versailles cedex, France}
\email{vincent.secherre@math.uvsq.fr}
\title[Types modulo $\ell$ pour $\GL_{m}(\D)$]
{Types modulo $\ell$ pour les formes intérieures de $\GL_n$ sur un corps local
non archimédien}
\thanks{
Ce travail a bénéficié de financements de l'EPSRC 
(GR/T21714/01, EP/G001480/1) 
et de l'Agence Natio\-nale de la Recherche 
(ANR-08-BLAN-0259-01, ANR-10-BLANC-0114).
Le premier auteur est aussi financé en partie 
par MTM2010-19298 et FEDER}
\begin{document}

\maketitle

\setcounter{tocdepth}{2}

\section*{Introduction}

\begin{introsec}
Soit $\F$ un corps commutatif localement compact non archimédien de 
ca\-rac\-téristique ré\-si\-duelle $p$ et soit $\D$ une algèbre à division 
centrale de dimension finie sur $\F$ dont le degré ré\-duit est noté $d$.  
Étant donné $m\>1$, on pose $\G=\GL_{m}(\D)$, qui est une forme 
intérieure de $\GL_{md}(\F)$. 
La théorie des types complexes pour $\G$ a été développée dans une série 
d'articles \cite{VS1,VS2,VS3,SeSt1,BSS1,SeSt2} 
à la suite des travaux de Bushnell et Kutzko \cite{BK,BK2} pour $\GL_n(\F)$, 
$n\>1$. 
C'est un outil puis\-sant, qui per\-met une description explicite 
de la catégorie des re\-pré\-sentations lisses complexes de $\G$.  
Dans cet article, nous développons une théorie des types modulaires pour $\G$
dans l'objectif d'é\-tu\-dier les re\-pré\-sentations lisses modulaires de $\G$, 
\ie à coefficients dans un corps $\R$ algébriquement clos de 
carac\-té\-ristique $\ell$ différente de $0$ et de $p$
(voir \cite{MS2,MSu,MS12}). 
\end{introsec}

\begin{introsec}
La théo\-rie des représentations mo\-du\-lai\-res des groupes 
réductifs $p$-adiques a été dé\-ve\-lop\-pée par Vignéras 
\cite{Vig1,Vig2}.
Comparée à la théorie complexe,
elle présente de gran\-des simi\-larités 
mais aussi des différences importantes, à la fois dans les résultats 
et dans les méthodes. 
Les représen\-tations modulaires d'un sous-groupe ouvert compact 
ne sont pas semi-simples en général. 
Le fait que $\ell$ soit différent de $p$ équivaut à 
l'exis\-tence d'une mesure de Haar à valeurs 
dans $\R$ sur le groupe, 
mais la mesure d'un sous-groupe ouvert compact peut être nulle.  
Il faut distin\-guer 
entre les deux notions de représentation irréductible cuspidale
(\ie dont tous les modules de Jacquet relativement à un sous-groupe 
parabolique propre sont nuls) et super\-cus\-pi\-da\-le (\ie qui 
n'est sous-quotient d'aucune induite pa\-ra\-bo\-li\-que d'une 
re\-pré\-sen\-ta\-tion irréductible d'un sous-groupe de Levi propre). 
On a une notion de support super\-cus\-pi\-dal pour une 
repré\-sen\-tation irréductible, 
mais on ignore en général s'il est unique.
Il n'existe pas de version modulaire de la for\-mule des traces ni du théorème de
Paley-Wie\-ner. 
Il y a des repré\-sen\-tations irréductibles non 
iso\-mor\-phes d'un même groupe dont tous les modules de Jacquet propres 
sont isomorphes. 
\end{introsec}

\begin{introsec}
L'un des principaux outils dont on dispose pour étudier les représentations 
mo\-du\-laires d'un groupe réductif $p$-adique est la théorie 
des types, pour les groupes pour lesquels une telle théorie existe. 
Une théorie des types modu\-laires a été développée pour le groupe 
$\GL_n(\F)$ (voir \cite{Vig1,Vig2}) 
et l'objet du présent article est de produire une théorie analogue pour ses 
for\-mes inté\-rieu\-res. 
On l'utilise dans \cite{MS12} pour construire une 
clas\-si\-fi\-ca\-tion à la Zele\-vinski des représentations modu\-lai\-res 
irréductibles de $\G$ en termes de multi\-segments,
et dans \cite{MS2} pour clas\-ser les 
représenta\-tions ba\-nales de $\G$. 
V.~Sécherre et S.~Stevens l'utilisent dans \cite{SeSt3}
pour obtenir une dé\-com\-position en blocs
de la catégorie des représentations lisses mo\-du\-lai\-res de $\G$. 
\end{introsec}

\begin{introsec}
\label{PI5}
Notre première tâche est d'étendre au cas modulaire la construction des 
types semi-sim\-ples de $\GL_n(\F)$ et de ses formes intérieures.
Pour cela, nous reprenons les arguments de la théorie com\-plexe 
\cite{VS2,VS3,SeSt1,SeSt2} 
en expliquant comment les adapter au cas modu\-laire.

La principale nouveauté tient à ce que, dans le cas modulaire, 
l'algèbre des endomorphismes d'une 
représentation lisse d'un groupe compact peut être triviale sans que la 
représentation soit irréductible.
Ceci in\-ter\-vient dans le problème du transfert des $\b$-extensions 
(voir le lemme \ref{indirredkappa} et la proposition \ref{TSF}) et à propos de 
l'irréductibilité de certai\-nes représentations 
(voir le lemme \ref{MlleDeLaMole} et le corollaire \ref{Mathilde}). 
Nous renvoyons aussi au calcul de l'algèbre de Hecke d'un type simple~: 
comme dans le cas complexe, c'est une algèbre de Hecke 
affine de type \textsf{A}, mais la preuve de \cite{VS3} né\-ces\-site quelques 
ajustements pour être valable dans le cas modulaire (voir le lemme \ref{pinv}). 

Une nouveauté d'un autre ordre concerne la construction des types semi-simples 
non homogè\-nes (paragraphe \ref{NonEndoEq}).  
L'argument utilisé dans le cas complexe par Bushnell et Kutzko (\cite{BK2}), 
que nous reprenons, 
repose sur le fait que tout élément inversible à gauche de l'algèbre de
Hecke d'un type semi-simple est inversible. 
La preuve dans le cas complexe (\cite[7.15]{BK1}) doit être légère\-ment 
modifiée. 
D'autre part, nous déterminons la structure de cette algèbre non pas 
\textit{a prio\-ri} à l'aide de \cite[Theorem 12.1]{BK1} comme dans le cas complexe, mais 
\textit{a posteriori} grâce à une majoration de l'en\-sem\-ble d'entrelacement 
d'un type semi-simple effec\-tuée dans l'ap\-pen\-dice par V.~Sécherre et
S.~Stevens 
(voir le corollaire \ref{cor:xiKint} et le lemme \ref{NicolasRostov}). 

Le lecteur trouvera des détails sup\-plé\-mentaires 
dans l'intro\-duction à la section \ref{TSmodl}. 

L'apparition de ces nouveautés n'est pas liée au fait que le groupe $\G$ n'est 
pas déployé.
Celles-ci sont déjà présentes dans le cas du groupe déployé $\GL_n(\F)$, $n\>1$~;
toutefois elles n'apparaissent pas clairement dans \cite{Vig1,Vig2}.
\end{introsec}

\begin{introsec}
\label{intro5}
Nous obtenons plusieurs résultats sur les types semi-simples, 
qui sont nouveaux même dans le cas com\-plexe.  
Étant donné un type semi-simple $(\BJ,\bl)$ de $\G$, nous définissons une 
décomposition~:
\begin{equation*}
\bl=\bk\otimes\bs
\end{equation*}
analogue à la décomposition bien connue pour les types simples, 
$\bk$ et $\bs$ étant des repré\-senta\-tions irré\-ductibles de $\BJ$ et $\bs$ 
étant triviale sur le radical pro-unipotent $\BJ^1$ de $\BJ$. 
La restriction de $\bk$ à $\BJ^1$ est une 
représentation ir\-ré\-ductible $\bn$ dont l'entrelacement est calculé 
(proposition \ref{propbn2}) 
grâce à la majoration de l'appendice. 
Ce calcul est essentiel dans la preuve du théorème \ref{THB} 
ci-dessous. 
Voir aussi la pro\-po\-sition \ref{jnpc}, qui fournit une 
pro\-priété de paire couvrante de $(\BJ^1,\bn)$ 
bien utile dans la section \ref{NaisKM}
(dans le cas homogène). 
\end{introsec}

\begin{introsec}
La section \ref{RCIC} est consacrée à la construction des représentations 
irréductibles cuspidales de $\G$ en termes de certains types semi-simples, 
appelés types simples maximaux, de $\G$.
Comme dans le cas complexe, les types simples maximaux de $\G$ forment 
une famille de paires composées d'un sous-groupe ouvert compact 
$\J\subseteq\G$ et d'une représentation lisse irréductible $\l$ de $\J$, 
possédant les propriétés suivantes (voir la pro\-po\-sition \ref{MartinDuGard} et 
le théorème \ref{TheZerPos})~: 
\begin{enumerate}
\item
pour toute représentation irréductible cuspidale $\rho$ de $\G$, il existe 
un type simple maxi\-mal $(\J,\l)$, unique à $\G$-conjugaison près, 
telle que la restriction de 
$\rho$ à $\J$ admette une sous-re\-pré\-sen\-tation isomorphe à $\l$~;
\item
si $\rho$ est une représentation irréductible cuspidale de $\G$,
et si $(\J,\l)$ est un type sim\-ple ma\-xi\-mal contenu dans $\rho$, 
il y a une unique représentation du $\G$-normalisateur de $(\J,\l)$ 
qui pro\-longe $\l$ et dont l'induite compacte à $\G$ soit isomorphe à 
$\rho$~;
\item
deux représentations irréductibles cuspidales de $\G$ contiennent un
même type simple maxi\-mal $(\J,\l)$ si et seulement si elles sont 
inertiellement équivalentes. 
\end{enumerate}
Comme dans le cas complexe, la preuve de ce résultat suit le schéma maintenant 
classique élabo\-ré par Bushnell et Kutzko et fondé sur la méthode des 
paires couvrantes \cite{BK1,Vig2}~: il s'agit de prou\-ver que toute représentation 
irréductible cuspidale de $\G$ contient une strate fondamentale non scindée, 
puis un caractère simple, puis un type semi-simple, en\-fin de prouver 
que ce\-lui-ci est un type simple maximal. 
Seule la preuve de la proposition \ref{MartinDuGard} nécessite quelques 
adaptations à cause du problème de l'irréductibilité déjà rencontré au 
paragraphe \ref{PI5}. 
Signalons qu'à ce stade, il n'existe aucune différence de traitement entre les 
représentations cuspidales et les représentations supercuspidales. 
\end{introsec}

\begin{introsec}
Cette description des représentations irréductibles cuspidales de $\G$ en 
termes de types simples maximaux permet, étant donné un nombre premier 
$\ell$ différent de $p$, d'étudier la réduction 
modu\-lo $\ell$ des $\qlb$-représentations irréductibles cuspidales entières. 
Ceci donne le premier résul\-tat dif\-férant du cas déployé~: 
voir le théo\-rè\-mes \ref{RedCusp}, 
que l'on com\-parera au théorème III.1.1 de \cite{Vig1}. 

\begin{theon}
\label{THA}
Étant donnée une $\qlb$-repré\-sen\-ta\-tion irréductible cuspidale 
${\rho}$ de $\G$, 
il y a un entier $a\>1$ et une $\flb$-repré\-sen\-ta\-tion irréductible 
cuspidale ${\bar\rho}$ de $\G$ telle que la réduction modulo $\ell$ de 
${\rho}$ est~:
\begin{equation*}
{\bar\rho}\oplus{\bar\rho}\bar\nu\oplus\cdots\oplus{\bar\rho}\bar\nu^{a-1}
\end{equation*}
où $\bar\nu$ est le $\flb$-caractère non ramifié défini comme la 
valeur absolue de la norme réduite de $\G$.
\end{theon}

Il y a ainsi des $\qlb$-représentations irréductibles cuspidales entières 
dont la réduction modulo $\ell$ n'est pas irréductible, 
et des $\flb$-représentations irréductibles cuspidales n'admettant 
pas de re\-lè\-ve\-ment à $\qlb$ (voir les exemples \ref{DuPlessis} et 
\ref{ExBadRel}).  
On renvoie aussi au théorème \ref{RelSuperCusp} 
(et à la remarque \ref{RemaRelSuperCusp}), 
premier résultat faisant une différence entre 
représentations cuspidales et représentations supercuspidales, 
que l'on com\-parera au théorème III.5.10 de \cite{Vig1}.
\end{introsec}

\begin{introsec}
La théorie abstraite des types de Bush\-nell et Kutzko (\cite{BK1}) -- 
par opposition à la théorie des types semi-simples qui en est une réalisation 
concrète pour certains groupes -- 
est un programme dont l'objectif est de 
décrire la décomposition de Bernstein de la catégorie des représentations 
lis\-ses complexes d'un groupe réductif $p$-adique en fonction de 
catégories de modules sur les algè\-bres d'entrelacement -- ou de Hecke -- 
des types impliqués. 
La notion de type est caractérisée par 
plusieurs propriétés importantes (voir le paragraphe d'intro\-duc\-tion à la 
section \ref{CostaCafe}).
Dans le cas modulaire, il n'existe pas à l'heure actuelle de théorème général de 
décomposition en blocs en termes de classes d'inertie de paires 
(super)cuspidales pour la catégorie des représen\-ta\-tions 
lisses modulaires d'un groupe réductif $p$-adique,
non plus qu'une notion claire de ce que devrait être un type pour un tel
groupe.
Notre objectif ici n'est pas de développer une théo\-rie abstraite des types 
modulaires, mais plutôt d'étudier comment la théorie des types semi-sim\-ples 
-- tels qu'ils sont définis dans la section \ref{TSmodl} -- 
permet de comparer la théorie des 
représentations lis\-ses modulaires de $\G$ à celle de certaines algèbres de 
Hecke affines. 
\end{introsec}

\begin{introsec}
Contrairement au cas complexe (\cite{BK2,SeSt2}), 
cette comparaison n'est pas parfaite 
(on n'a pas en gé\-né\-ral d'équivalence de ca\-té\-go\-ries 
décrivant les blocs de la catégorie des représentations lis\-ses de $\G$) 
mais permet toutefois d'étudier les représentations irréductibles de 
$\G$ grâce à la propriété de presque-projectivité introduite par Dipper 
\cite{DipHom1} et développée par Vignéras et Arabia \cite{Vig2,Vig3}
(voir le paragraphe \ref{RQP}).
Le point de départ est le résultat suivant, qui est l'un des principaux 
résultats de cet article (voir la proposition \ref{pptf1}). 

\begin{theon}
\label{THB}
Soit $(\BJ,\bl)$ un type semi-simple de $\G$.
Alors l'induite compacte $\cind_\BJ^\G(\bl)$ est quasi-projective.
\end{theon}

L'un des principaux ingrédients de la preuve de ce résultat est 
le calcul de l'entrelacement de la représentation $\bn$ introduite 
au paragraphe \ref{intro5}. 

Grâce à cette propriété, 
nous avons 
une bijection naturelle entre les modules à droite simples sur l'algèbre 
de Hecke de $(\BJ,\bl)$ et les clas\-ses d'isomorphisme de représentations 
irréductibles de $\G$ dont la restriction à $\BJ$ contient $\bl$ 
(voir le théorème \ref{qptf}).
En outre, la proposition \ref{BoyerDArgens} montre que ces représentations-ci 
sont caractérisées par la clas\-se inertielle de leur support cuspidal. 
\end{introsec}

\begin{introsec}
La suite de la section \ref{CostaCafe} est consacrée au problème important de 
la comparaison entre in\-dui\-tes paraboliques et modules induits. 
On donne notamment des conditions suffisantes pour qu'une in\-duite 
parabolique soit irréduc\-ti\-ble, 
dont un corol\-lai\-re est le théorème \ref{ss} permettant de ra\-me\-ner le 
problème de la classification de toutes les représentations irréductibles de 
$\G$
à celui des représentations ir\-ré\-ductibles dont le support cuspidal est 
simple, \ie inertiellement équi\-valent à un support cuspidal de la forme 
$\rho+\dots+\rho$
où $\rho$ est une représentation irréduc\-ti\-ble cuspidale fixée.
Plusieurs résultats de cette partie sont inspirés de \cite{Vig2} (notamment du 
para\-gra\-phe V.2) même si nos preuves sont en général différentes.
Mentionnons enfin le théorème de compa\-raison \ref{RussSamProp} 
(dont on ne trouve pas d'analogue dans \cite{Vig2}), 
qui permet d'établir des cri\-tè\-res d'ir\-ré\-ductibilité d'induites par 
changement de groupe (paragraphe \ref{ChgtGp})
et d'associer à toute représentation irréductible 
cuspi\-dale $\rho$ de $\G$ 
un caractère non ramifié $\nu_{\rho}$ de ce groupe possédant la 
propriété suivante (voir la propo\-sition \ref{cuspidal}). 

\begin{theon}
Si $\rho'$ est une représentation irréductible cuspidale de $\GL_{m'}(\D)$, 
$m'\>1$, alors l'induite normalisée de $\rho\otimes\rho'$ est réductible 
si et seulement si $m'=m$ et $\rho'$ est isomorphe à $\rho\nu_{\rho}^{}$ 
ou à $\rho\nu_{\rho}^{-1}$. 
\end{theon}

Comme déjà dans le cas complexe, et contrairement au cas où $\G$ est déployé, 
ce caractère $\nu_\rho$ dépend de $\rho$.
\end{introsec}

\begin{introsec}
\label{introKM}
Si l'on essaie d'étendre à $\G$ les techniques employées dans \cite{Vig1,Vig2} 
pour le groupe déployé $\GL_n(\F)$, on est confronté au fait que les 
représentations irréductibles cuspidales de $\G$ n'ont pas de 
mo\-dè\-le de Whitta\-ker et qu'il n'y a pas de théorie des dérivées pour les 
représentations de $\G$, dont l'usage est crucial dans \cite{Vig2} pour la 
classification des représentations irréductibles. 
C'est la raison pour laquelle on introduit dans la section \ref{NaisKM}
un outil technique important, 
permettant de faire un lien entre représenta\-tions de
$\G$ et re\-pré\-sentations de grou\-pes linéaires généraux sur 
une extension finie du corps résiduel de $\F$. 
Un tel outil a déjà été utilisé dans \cite{Vig1} pour étudier les représentations 
irréductibles cuspidales modulaires de $\GL_n(\F)$, et de façon plus systématique 
dans \cite{SZ} pour définir une stratification de la catégorie des 
représentations complexes de $\GL_n(\F)$ affinant la décomposition de 
Bernstein.
\end{introsec}

\begin{introsec}
Le point de départ est un processus 
associant à toute représentation irréductible cuspidale de $\G$ 
un objet appelé son endo-classe. 
Il est décrit dans \cite{BH1,BSS1} pour les représentations complexes 
et fonc\-tionne de façon simi\-laire pour les représentations modulaires.
On en trouve dans \cite{BHLTL4} une inter\-prétation arith\-mé\-ti\-que 
dans le cas complexe pour $\GL_n(\F)$.
Ensuite, étant donnée une repré\-sentation irré\-duc\-tible cuspidale $\rho$ de $\G$, 
on peut lui attacher 
un entier $r\>1$, une extension finie $\kk$ du corps résiduel de $\F$ 
et un foncteur $\KM$ de la catégorie des représentations lisses 
de $\G$ dans la catégorie des représentations 
du groupe fini $\GL_{r}(\kk)$ possédant les propriétés suivantes~:
\begin{itemize}
\item 
le foncteur $\KM$ est exact~;
\item
il envoie représentations admissibles sur représentations de 
dimension finie et repré\-sen\-ta\-tions cuspidales sur représentations 
cuspidales (ou nulles)~; 
\item
il annule les représentations irréductibles de $\G$ -- 
\textit{et uniquement celles-là} -- 
dont le support cuspidal contient une représentation cuspidale 
dont l'endo-classe diffère de celle de $\rho$. 
\end{itemize}
Par exemple, si $\rho$ est de niveau zéro, alors 
$\kk$ est le corps résiduel de $\D$ et 
$\KM$ est le foncteur associant à toute représentation lisse de 
$\G$ la représentation de $\GL_{m}(\kk)$ sur l'espace de ses invariants 
sous le radical pro-unipotent du sous-groupe 
compact maximal $\GL_{m}(\Oo)$, où $\Oo$ est l'anneau des entiers 
de $\D$.
\end{introsec}

\begin{introsec}
La propriété d'annulation du foncteur $\KM$ (troisième point ci-dessus), 
qui jouera un rôle essen\-tiel dans des travaux ultérieurs (voir par exemple 
\cite{MS12}), constitue un des principaux résultats 
de cet article (voir tout le paragraphe \ref{Brehier}, 
no\-tam\-ment la pro\-po\-sition \ref{Cavalcanti}).  
Dans le cas complexe, le résultat découle du résultat principal de 
\cite{SeSt2}, à savoir que tout type semi-simple de $\G$ est un type pour un 
bloc de Bernstein bien déterminé. 
Dans le cas modulaire, nous avons procédé par relèvement de $\flb$ à $\qlb$
pour pouvoir utiliser le résultat en caractéristique nulle.

La suite et fin de la section \ref{NaisKM} est consacrée à l'établissement de 
deux autres propriétés importantes du foncteur $\KM$, \ie sa compatibilité 
à l'induction et à la restriction paraboliques
(voir les propositions \ref{SZ} et \ref{ZS}).
Toutes ces propriétés fondamentales du foncteur $\KM$ 
sont dans le cas modulaire des résultats nouveaux, même dans le cas de 
$\GL_n(\F)$.  
Dans le cas complexe, la compatibilité à l'induction parabolique est 
prouvée dans \cite{SZ}, mais la preuve qui y est donnée ne s'applique pas au 
cas modulaire. 

Ces foncteurs sont utilisés dans \cite{MS12} pour prouver l'unicité 
du support supercus\-pi\-dal d'une représentation irréductible et définir la
notion de repré\-sen\-tation résiduellement non dégénérée de $\G$, 
qui généralise celle de repré\-sen\-tation non dégénérée et 
est à la base de notre clas\-si\-fication des re\-pré\-sen\-ta\-tions irréductibles 
de $\G$ en ter\-mes de multisegments. 
\end{introsec}

\section*{Remerciements}

Nous remercions Jean-François Dat, Guy Henniart, Vanessa Miemietz, 
Shaun Stevens et Ma\-rie-France Vignéras
pour de nombreuses discussions à propos de ce travail, 
et plus particu\-liè\-rement Shaun Stevens pour ses idées qui ont abouti 
à la rédaction de l'appendice.

Une partie de ce travail a été réalisée lors du séjour des auteurs à 
l'Erwin Schrödinger Institute en janvier-février 2009 et du second 
auteur à l'Institut Henri Poincaré de janvier à mars 2010~;
que ces deux institutions soient remerciées pour leur accueil 
et leur soutien financier. 
Une autre partie en a été réalisée lors de plusieurs 
séjours à l'Univer\-sity of East Anglia~: nous remercions 
celle-ci pour son accueil et Shaun Stevens 
pour ses nombreuses invitations. 

Alberto M\'\i nguez remercie le CNRS
pour les six mois de délégation dont il a bénéficié en 2011. 

Vincent Sécherre remercie l'Uni\-ver\-sité de la Méditerranée 
et l'Institut de Mathé\-matiques de Luminy, où il 
était en poste durant la majeure partie de ce travail. 

Enfin, nous remercions chaleureusement le rapporteur pour sa lecture 
rigoureuse et sa critique d'une première version de ce texte, qu'il a 
contribué à améliorer. 

\section*{Notations et conventions}

\begin{notasec}
Dans cet article, $\F$ est un corps commutatif localement compact 
et non archi\-médien de carac\-téristique résiduelle notée $p$, 
et $\R$ est un corps algébriquement clos de ca\-ractéristique dif\-fé\-rente de $p$. 
\end{notasec}

\begin{notasec}
Toutes les $\F$-algèbres sont supposées unitaires et de dimension finie.
Par $\F$-al\-gèbre à di\-vi\-sion on entend $\F$-algèbre 
centrale dont l'anneau sous-jacent est un corps, pas forcément
com\-mu\-tatif. 
Si $\K$ est une algèbre à division sur une extension finie de $\F$, 
notons $\Oo_\K$ son anneau d'entiers, $\p_\K$ son idéal maximal, 
$\kk_\K$ son corps résiduel et $q_\K$ le cardinal de $\kk_\K$.
Posons enfin $q=q_\F$.  
\end{notasec}

\begin{notasec}
Une $\R$-{\it re\-pré\-sen\-ta\-tion lisse} d'un groupe topologique $\G$ 
est la donnée d'un $\R$-espace vectoriel $\V$ et d'un homomorphisme 
de $\G$ dans $\GL(\V)$ tel que le stabilisateur dans $\G$ de tout 
vecteur de $\V$ soit ouvert. 
Une $\R$-représentation lisse est dite {\it admissible} si, pour tout sous-groupe 
ouvert et compact $\K$ de $\G$, 
l'espace $\V^\K$ des vecteurs de $\V$ invariants par $\K$ est de dimension 
finie. 
{\it Dans cet article, toutes les représentations sont des 
  $\R$-re\-pré\-sen\-ta\-tions lisses.} 

Un $\R$-{\it caractère} de $\G$ est un homo\-mor\-phis\-me de $\G$ dans 
$\mult\R$ de noyau ouvert.  
Si $\pi$ est une $\R$-représentation de $\G$ et $\chi$ un $\R$-caractère de 
$\G$, on note $\chi\pi$ ou $\pi\chi$ la représentation 
$g\mapsto\chi(g)\pi(g)$. 

Si aucune ambiguïté n'est à craindre, on écrira \textit{caractère} et 
\textit{re\-présentation} plutôt que $\R$-ca\-rac\-tère et 
$\R$-repré\-sen\-ta\-tion. 
\end{notasec}

\section{Préliminaires}
\label{SectionPreliminaire}

\begin{prelisec}
\label{Gred}
On fixe une $\F$-algèbre à division $\D$ 
de degré réduit $d$.
Pour $m\>1$, on note $\A_m=\Mat_{m}(\D)$ la 
$\F$-algèbre des matrices de taille $m\times m$ à coefficients dans 
$\D$, et on pose $\G_{m}=\GL_{m}(\D)$.
\end{prelisec}

\begin{prelisec}
Soit $\G=\G_m$ pour $m\>1$.
On désigne par $\Rr_{\R}(\G)$ la catégorie abélienne des 
re\-pré\-sen\-ta\-tions de $\G$ (qui sont lisses et à coefficients dans $\R$), 
par $\Irr_{\R}(\G)$ l'ensemble des classes d'isomorphisme des
re\-pré\-sen\-ta\-tions ir\-ré\-duc\-tibles de $\G$ et par $\Gg(\G,\R)$ le groupe 
de Gro\-then\-dieck des re\-pré\-sen\-ta\-tions de longueur finie de $\G$. 
Ce der\-nier est un $\ZZ$-module libre de base $\Irr_{\R}(\G)$, canonique\-ment 
muni d'une relation d'or\-dre partiel, notée $\<$. 

Pour toute représentation de longueur finie $\s$ de $\G$, notons
$\sy{\s}$ son image dans $\Gg(\G,\R)$.
Si $\s$ est irréductible, $\sy{\s}$ désigne donc sa classe d'isomorphisme. 

D'après \cite[II.2.8]{Vig1}, toute représentation irréductible de $\G$ est admissible. 
\end{prelisec}

\begin{prelisec}
\label{MenelasEtHermione}
\textit{On fixe une fois pour toutes une racine carrée de $q$ dans $\R$.}
Si $\P=\M\N$ est un sous-groupe parabolique de $\G$ muni d'une décomposition 
de Levi, on note $\rp_{\P}^{\G}$ le foncteur de restriction para\-bolique nor\-ma\-li\-sé
de $\Rr_{\R}(\G)$ dans $\Rr_{\R}(\M)$ et 
$\ip_{\P}^{\G}$ son adjoint à droite, le foncteur d'induction 
para\-bo\-li\-que nor\-ma\-li\-sé lui correspondant.
Ces foncteurs sont exacts et
ils pré\-ser\-vent l'admissibilité et le fait d'être de longueur finie
(voir \cite{Vig1}, chapitre II, paragraphes 2.1, 3.8 et 5.13).

Soit $\P^-$ le sous-groupe parabolique de $\G$ opposé à $\P$ relativement à 
$\M$. 

\begin{prop}
Si $\pi$ et $\s$ sont des représentations admissibles de $\G$ et de $\M$ 
respectivement, 
il y a un iso\-mor\-phisme de $\R$-espaces vec\-to\-riels~:
\begin{equation}
\label{SecondeAdjonctionAdmissible}
\Hom_{\G}(\ip^{\G}_{\P^{-}}(\s),\pi)\simeq
\Hom_{\M}(\s,\rp^{\G}_{{\P}}(\pi))
\end{equation}
dit de seconde adjonc\-tion (voir \cite[II.3.8]{Vig1}).  
\end{prop}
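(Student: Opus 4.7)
The plan is to derive this second adjointness from the first adjointness (Frobenius reciprocity) by means of duality, following the strategy of \cite[II.3.8]{Vig1}. The key ingredients to assemble are: (a) preservation of admissibility under $\ip_{\P^-}^\G$ and $\rp_\P^\G$, already recorded above; (b) reflexivity $\pi^{\vee\vee}\simeq\pi$ of admissible representations; (c) compatibility of the contragredient with normalized parabolic induction, $(\ip_{\P^-}^\G(\s))^\vee \simeq \ip_{\P^-}^\G(\s^\vee)$ for admissible $\s$; (d) Casselman's duality $\rp_{\P^-}^\G(\pi^\vee) \simeq (\rp_\P^\G(\pi))^\vee$, valid for admissible $\pi$; and (e) the standard first adjointness
\begin{equation*}
\Hom_{\G}(\tau,\ip_{\P^-}^\G(\tau')) \simeq \Hom_{\M}(\rp_{\P^-}^\G(\tau),\tau').
\end{equation*}

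With these in hand, the sought isomorphism is obtained as the composition
\begin{align*}
\Hom_{\G}(\ip_{\P^-}^\G(\s),\pi)
&\simeq \Hom_{\G}(\pi^\vee, \ip_{\P^-}^\G(\s^\vee)) \\
&\simeq \Hom_{\M}(\rp_{\P^-}^\G(\pi^\vee),\s^\vee) \\
&\simeq \Hom_{\M}((\rp_\P^\G(\pi))^\vee,\s^\vee) \\
&\simeq \Hom_{\M}(\s,\rp_\P^\G(\pi)),
\end{align*}
where the successive steps use (b) together with (c), then the first adjointness (e), then Casselman's duality (d), and finally (b) again applied to the admissible objects $\s$ and $\rp_\P^\G(\pi)$. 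Functoriality in $\pi$ and $\s$ is automatic from the construction.

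The hard part is Casselman's duality (d): unlike (b), (c), and (e), which are essentially formal, it requires a careful analysis of matrix coefficients — equivalently, a comparison of the Jacquet module (defined by coinvariants) with the dual of the Jacquet module of the contragredient — and must be adapted to the $\ell$-modular setting, where the failure of semisimplicity of representations of compact open subgroups forbids any naive argument. For the purposes of the present paper, however, this adjunction is simply quoted from \cite[II.3.8]{Vig1}, so no new argument is needed here.
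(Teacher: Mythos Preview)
Your proposal is correct and, as you yourself note in the final paragraph, the paper does not prove this statement at all: it is stated with the parenthetical reference to \cite[II.3.8]{Vig1} and used as a black box. Your sketch via contragredients and Casselman duality is indeed the standard route taken in that reference, so there is nothing to compare.
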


\label{GeEm}
Si $\a=(m_{1},\ldots,m_{r})$ est une famille 
d'entiers $\>1$ de somme $m$, il lui correspond le 
sous-groupe de Levi standard $\M_{\a}$ de $\G_{m}$ constitué des matrices 
diagonales par blocs de tailles $m_{1},\ldots,m_{r}$ respectivement, que 
l'on identifie naturellement à 
$\G_{m_{1}}\times\cdots\times\G_{m_{r}}$. 
On note $\P_{\a}$ 
le sous-groupe para\-bo\-li\-que de $\G_{m}$ de facteur de Levi 
$\M_{\a}$ formé des matrices tri\-an\-gu\-lai\-res su\-pé\-rieu\-res 
par blocs de tailles $m_{1},\ldots,m_{r}$ respectivement, et on note 
$\N_{\a}$ son radical unipotent.
Les foncteurs $\ip_{\P_{\a}}^{\G_{m}}$ et $\rp_{\P_{\a}}^{\G_{m}}$ sont simplement 
notés respectivement $\ip_{\a}$ et $\rp_{\a}$.
Si, pour chaque en\-tier $i\in\{1,\ldots,r\}$, on a une 
représentation $\pi_{i}$ de $\G_{m_i}$, on pose~: 
\begin{equation*}
\label{VentreDieu}
\pi_1\times\cdots\times\pi_r=\ip_{\a}(\pi_1\otimes\cdots\otimes\pi_r).
\end{equation*}
\end{prelisec}

\vspace{-0.7cm}

\begin{prelisec}
\label{DefRepCusp}
Une représentation irréductible de $\G$ est {\it cus\-pi\-da\-le} 
si son image par $\rp^{\G}_{\P}$ est nulle pour tout sous-groupe parabolique 
propre $\P$ de $\G$, \ie si elle n'est isomorphe à aucune sous-re\-pré\-sentation 
(ou, de fa\c{c}on équivalente, à aucun quotient) d'une induite 
parabolique propre.

Une représentation irréductible de $\G$ 
est {\it super\-cus\-pi\-da\-le} si elle n'est isomorphe à aucun 
sous-quo\-tient d'une représentation de la forme $\ip^\G_\P(\s)$, où 
$\P$ est un sous-groupe parabolique propre de $\G$ et 
$\s$ une représentation \textit{irréductible}\footnote{
On peut prouver qu'on peut omettre la condition d'irréductibilité sur $\s$
dans cette définition, 
\ie qu'une représentation supercuspidale de $\G$ 
n'est isomorphe à aucun sous-quo\-tient d'une induite parabolique propre 
de la forme $\ip^\G_\P(\s)$ avec $\s$ \textit{lisse}~;
voir \cite{SeSt3}.
} 
d'un facteur de Levi de $\P$.

Étant donnée une re\-pré\-sen\-ta\-tion ir\-ré\-ductible $\pi$ de $\G$, 
il existe des entiers $m_{1},\ldots,m_{r}\>1$ de somme $m$, 
et, pour chaque $i\in\{1,\dots,r\}$, il existe une
re\-pré\-sen\-ta\-tion irréductible cuspi\-dale $\rho_i$ de $\G_{m_i}$, 
de telle sorte que $\pi$ soit une sous-représentation de 
$\rho_1\times\dots\times\rho_r$.  
On note~:
\begin{equation*}
\cusp(\pi)
\end{equation*}
la somme formelle $\sy{\rho_1}+\dots+\sy{\rho_r}$
dans le monoïde commutatif libre de base la réunion disjointe des 
$\Irr_{\R}(\G_{m})$, $m\>1$. 
Elle est uniquement déterminée et s'appelle le sup\-port cuspidal de $\pi$. 
En outre, il y a une permutation $w$ de $\{1,\dots,r\}$ telle que $\pi$ 
soit un quotient de $\rho_{w(1)}\times\dots\times\rho_{w(r)}$.  
Pour tous ces résultats, on renvoie à \cite[II.2.20]{Vig1} et \cite[\S2]{MS12}.
\end{prelisec}

\begin{prelisec}
\label{DefHecke}
Soient $\H$ un sous-groupe ouvert de $\G$ et 
$\s$ une représentation de $\H$ sur un $\R$-espace vectoriel $\V$.  
On note $\cind_\H^\G(\s)$ l'induite compacte de $\s$ à $\G$, constituée 
des fonctions $f:\G\to\V$ localement constantes à support compact modulo 
$\H$ telles que $f(hg)=\s(h) f(g)$ pour $h\in\H$, $g\in\G$, et~:
\begin{equation*}
\label{DefHecke1}
\Hh(\G,\s)
\end{equation*}
l'{\it al\-gè\-bre de Hecke} de $\G$ relativement à $\s$, \ie l'algèbre
des $\G$-endomorphismes de $\cind_\H^\G(\s)$. 
Par réciprocité de Fro\-be\-nius et décomposition de Mackey, elle 
s'identifie à l'algèbre de convolution des fonctions $f:\G\to\End_\R(\V)$ 
telles que $f(hgh')=\s(h)\circ f(g)\circ\s(h')$ pour tous $h,h'\in\H$ et $g\in\G$
et dont le support est une union finie de $\H$-doubles classes. 

Si $\s$ est le caractère trivial du groupe $\H$, on note simplement 
$\Hh(\G,\H)$ l'algèbre de Hecke qui lui correspond.

On appelle {\it ensemble d'en\-tre\-la\-ce\-ment} de $\s$ dans ${\G}$ 
l'ensemble des $g\in\G$ pour lesquels il existe une fonction 
$f\in\Hh(\G,\s)$ telle que $f(g)\neq0$.
\end{prelisec}

\begin{prelisec}
\label{ROk}
\label{Antisthene}
Soit $\ell$ un nom\-bre premier différent de $p$. 
On note 
$\QQ_{\ell}$ le corps des nombres $\ell$-adiques, 
$\ZZ_{\ell}$ son anneau d'entiers et 
$\FF_{\ell}$ le corps résiduel de $\ZZ_{\ell}$. 
On fixe une clôture algébrique 
$\overline{\QQ}_{\ell}$ de $\QQ_{\ell}$, on note 
$\overline{\ZZ}_{\ell}$ son anneau d'entiers et 
$\overline{\FF}_{\ell}$ le corps résiduel de $\overline{\ZZ}_{\ell}$,
qui est une clôture algébrique de $\FF_{\ell}$. 

\begin{defi}
Une représentation de $\G$ sur un $\qlb$-es\-pa\-ce vectoriel $\V$ est 
{\it entière} si elle est ad\-mis\-si\-ble et si elle admet une 
{\it structure entière}, \ie un sous-$\zlb$-module de $\V$ stable par $\G$ et
engendré par une base de $\V$ sur $\qlb$ 
(voir \cite{Vig1,Vig6}). 
\end{defi}

Soit $\pi$ une $\qlb$-représentation irréductible entière de $\G$.  
On a les propriétés suivantes, 
d'après \cite[Theorem 1]{Vig7} et \cite[II.5.11]{Vig1}~:
\begin{enumerate}
\item 
toutes les struc\-tures en\-tières de $\pi$ sont de type fini comme 
$\zlb\G$-modules~; 
\item
si $\vv$ est une struc\-ture en\-tière de $\pi$, 
la re\-pré\-sen\-ta\-tion de $\G$ sur $\vv\otimes\flb$ est de lon\-gueur finie~; 
\item
la semi-sim\-pli\-fiée de $\vv\otimes\flb$, qu'on note $\r_{\ell}(\pi)$
et qu'on appelle la {\it réduction modulo $\ell$} de $\pi$, ne dé\-pend pas du choix 
de $\vv$ mais seulement de la classe d'isomorphisme de $\pi$. 
\end{enumerate}
Par li\-néa\-rité, ceci définit un mor\-phis\-me de groupes~: 
\begin{equation}
\label{HomRes}
\r_{\ell}:\Gg(\G,\qlb)^{{\rm ent}}\to\Gg(\G,\flb),
\end{equation}
le membre de gauche étant le sous-groupe de $\Gg(\G,\qlb)$
engendré par les clas\-ses 
de $\qlb$-représen\-ta\-tions irré\-duc\-tibles entières de $\G$. 

\begin{rema}
\label{lamaline}
Si $\H$ est un groupe profini, alors toute $\qlb$-re\-pré\-sen\-ta\-tion de 
dimension finie de $\H$ est en\-tiè\-re (\cite[théorème 32]{Serre})
et on a un morphisme de réduction $\r_{\ell}$ analogue à (\ref{HomRes}).
\end{rema}
\end{prelisec}

\vspace{-0.3cm}

\begin{prelisec}
\label{focales}
Soit $\s$ une $\qlb$-représentation entière d'un sous-groupe ouvert 
$\H\subseteq\G$.  
Si $\vv$ est une structure entière de $\s$, alors 
d'après \cite[Proposition II.3]{Vig7} le sous-module $\mathfrak{i}(\vv)$
des fonctions à valeurs dans $\vv$ est une structure entière de 
$\ind^\G_\H(\s)$ et l'homomorphisme naturel de $\ind^\G_\H(\vv\otimes\flb)$ 
dans $\mathfrak{i}(\vv)\otimes\flb$ est un 
isomorphisme de $\flb$-représentations. 
Si en outre $\ind^\G_\H(\s)$ est de longueur finie, alors sa ré\-duc\-tion
modulo $\ell$ est égale à $\ind^\G_\H(\r_{\ell}(\s))$ dans 
$\Gg(\G,\flb)$. 
\end{prelisec}

\begin{prelisec}
On choisit des racines carrées de $q$ dans $\qlb$ et $\flb$ de sorte que 
la seconde soit la réduction modulo $\ell$ de la première. 
Soit $\P=\M\N$ un sous-groupe parabolique de $\G$. 
Si $\vv$ est une structure entière d'une $\qlb$-représentation entière
$\s$ de $\M$, 
alors d'après \cite[II.4.14]{Vig1}
le sous-espace $\ip(\vv)$ 
des fonc\-tions à valeurs dans $\vv$ est 
une structure entière de $\ip^{\G}_{\P}(\s)$ et 
le morphisme naturel de 
$\ip(\vv)\otimes\flb$ dans $\ip^{\G}_{\P}(\vv\otimes\flb)$
est un isomorphisme de $\flb$-représentations. 
Si en outre $\s$ est de longueur finie, la réduction
modulo $\ell$ de $\ip^{\G}_{\P}(\s)$ est égale à $\ip^\G_\P(\r_{\ell}(\s))$ 
dans $\Gg(\G,\flb)$.  
\end{prelisec}

\section{Types semi-simples}
\label{TSmodl}

Dans \cite{BK,BK2}, 
Bushnell et Kutzko ont construit des repré\-sen\-tations 
irréductibles complexes de certains sous-groupes ou\-verts com\-pacts 
de $\GL_n(\F)$, pour $n\>1$, appelées types semi-simples, per\-mettant 
une étude très fine et exhaustive des représentations lisses complexes de ce groupe. 
Cette construction a ensuite été adaptée aux re\-pré\-sentations 
modu\-lai\-res de $\GL_{n}(\F)$ 
par Vignéras \cite{Vig1,Vig2} et généralisée aux représentations complexes de
$\GL_m(\D)$, pour $m\>1$, par Broussous, Sé\-cherre, Stevens \cite{Br2,VS1,VS2,VS3,SeSt2}.  
Dans cette section, nous définissons plus généralement les types semi-simples 
de $\GL_m(\D)$ à coefficients dans $\R$. 

Dans les paragraphes \ref{StrSim} à \ref{Paralipomenes}, nous construisons les 
$\R$-types simples de $\GL_m(\D)$ à partir des strates et des caractères 
simples, et nous calculons leurs algèbres de Hecke.
La construction est très proche de celle développée dans le cas complexe 
et ne s'en écarte qu'en trois occasions~:
\begin{enumerate}
\item
Pour prouver l'existence des $\b$-extensions 
(voir le paragraphe \ref{bext}), elle-même s'appuyant sur l'existence des relations de 
cohérence et du transfert (voir la proposition \ref{TSF}), nous utilisons le 
lemme \ref{indirredkappa} dont la preuve nécessite un argument adapté au cas 
où la caractéristique de $\R$ est non nulle.
Ceci est dû au fait que, dans ce cas, une représentation lisse d'un
groupe compact n'est pas toujours semi-simple, et avoir une algèbre
d'endomorphismes de dimension $1$ n'équivaut pas à être irréductible. 
\item
Pour la même raison, on ne peut pas appliquer l'argument de \cite[5.3.2]{BK} 
dans le cas modu\-laire pour prouver l'irréductibilité de certai\-nes
représentations. 
On introduit le lemme \ref{MlleDeLaMole}, d'où dérivent les 
corollaires \ref{Mathilde} et \ref{LAmourLApresMidi}. 
\item
Pour déterminer la structure de l'algèbre de Hecke d'un type simple
(paragraphe \ref{Paralipomenes}), 
un argument adapté au cas 
où la caractéristique de $\R$ est non nulle est nécessaire pour prouver le 
lemme \ref{pinv}, car dans le cas modulaire une homothétie non nulle peut être 
de trace nulle.
\end{enumerate}

Aux paragraphes \ref{EndoEq} et \ref{NonEndoEq}, 
nous définissons les $\R$-types semi-simples de $\GL_m(\D)$ com\-me des 
paires cou\-vran\-tes de types simples maximaux de sous-groupes de Levi.
Dans le cas homo\-gène, \ie lorsqu'une seule endo-classe de ps-caractère apparaît,
la méthode suivie dans le cas 
complexe (\cite{BK2,SeSt2}) s'adapte sans difficulté au cas modu\-laire.  
Des difficultés appa\-rais\-sent dans le cas non homogène~: 
\begin{enumerate}
\item
Une fois construit un type semi-simple $(\BJ,\bl)$ par récurrence comme dans 
le cas complexe (pro\-position \ref{PCTSM}), nous suivons l'argument de 
\cite[Corollary 6.6]{BK2} pour prouver que $(\BJ,\bl)$ est une paire couvrante. 
Celui-ci repose sur le fait que tout élément inversible 
à gau\-che de l'algèbre de Hecke 
$\Hh(\G,\bl)$ est inversible, ce fait lui-même étant prouvé à 
partir de \cite[7.15]{BK1}, \ie que tout module 
à droite simple sur $\Hh(\G,\bl)$ est de dimension finie.
Dans le cas modulaire, on ne peut pas appliquer l'argument prouvant 
\cite[7.15]{BK1} dans le cas complexe, 
car on ne sait pas que tout tel module est, 
à isomorphisme près, de la forme $\Hom_{\BJ}(\bl,\pi)$ pour une 
représentation irréductible $\pi$ de $\G$ con\-ve\-nable | du moins 
on ne le sait pas encore puisque ce sera une conséquence de la 
pro\-po\-si\-tion \ref{pptf1} et du théorème \ref{qptf}.
A la place, on utilise \cite[Proposition III.2]{Blondel}. 
\item 
Dans le cas complexe, la structure de $\Hh(\BJ,\bl)$ 
est déterminée à l'aide de \cite[12.1]{BK1}, dont la preuve n'est pas valable 
(ou pour le moins nécessite des explications) dans le cas modulaire. 
A la place, nous utilisons une majoration de l'ensemble d'entrelacement 
de $\bl$ dans $\G$ effec\-tuée en appendice (voir ci-dessous).
\end{enumerate}

Au paragraphe \ref{defeta}, les résultats que nous obtenons sont 
nouveaux, même dans le cas complexe. 
Étant donné un type semi-simple $(\BJ,\bl)$ de $\G$, nous définissons une 
décomposition~:
\begin{equation*}
\bl=\bk\otimes\bs
\end{equation*}
analogue à la décomposition bien connue pour les types simples.
La restriction de $\bk$ au radical pro-unipotent $\BJ^1$ de $\BJ$ est une 
représentation ir\-ré\-ductible $\bn$ dont l'entrelacement est déterminé grâce 
au travail effectué dans l'appendice.
Il est de la forme 
$\BJ(\L\cap\mult\B)\BJ$ où $\mult\B$ est le centra\-lisateur d'un certain 
élément $\b$ dans $\G$ et où $\L$ est un sous-groupe de Levi de $\G$ 
correspondant à la présence de plusieurs endo-classes lorsque $(\BJ,\bl)$ 
n'est pas homogène.
Cette formule d'entre\-lacement est utilisée dans la preuve de la proposition 
importante \ref{pptf1}.

Dans le paragraphe \ref{RedmodlK} enfin, étant donné un nombre premier $\ell\neq p$,
nous étudions la ré\-duc\-tion modulo $\ell$ des $\qlb$-types simples,
ce qui fournit une preuve de l'existence des $\flb$-types simples différente 
de celle des paragraphes \ref{bext} et \ref{Pontesprit}.

\subsection{Strates simples}
\label{StrSim}

Dans tout ce qui suit, on suppose le langage des strates simples 
connu du lecteur. 
On trouvera plus de détails dans \cite{VS1,SeSt1} 
(voir aussi \cite{BK,BK2} dans le cas déployé).
On fixe un entier $m\>1$~; on pose $\A=\A_m$ et $\G=\G_m$.

Soit $[\La,n,0,\b]$ une stra\-te simple de $\A$ (au sens par exemple 
de \cite[\S1.6]{SeSt1}).
Elle est constituée d'une $\Oo_\D$-suite de réseaux $\La$ de $\D^m$ 
et d'un élément $\b\in\A$ satisfaisant à certaines conditions. 
Il correspond à $\La$ une famille décroissante $(\U_k(\La))_{k\>0}$ 
de sous-groupes ouverts compacts de $\G$ et un $\Oo_\F$-ordre 
héréditaire de $\A$, noté $\AA(\La)$.
La sous-$\F$-algèbre de $\A$ engendrée par $\b$ est un corps, noté $\F(\b)$. 

À cette strate simple on associe
(voir \cite[\S2.4]{SeSt1}) 
deux sous-groupes ou\-verts compacts $\H(\b,\La)$, $\J(\b,\La)$ 
de $\U(\La)=\U_0(\La)$. 
Chacun d'eux est filtré par une suite décrois\-sante de pro-$p$-sous-grou\-pes 
ouverts compacts~:
\begin{equation*}
\H^k(\b,\La)=\H(\b,\La)\cap\U_k(\La),
\quad
\J^k(\b,\La)=\J(\b,\La)\cap\U_k(\La), 
\quad k\>1. 
\end{equation*}
On renvoie à \cite[\S3.3]{VS1} et à \cite[\S2]{SeSt1} pour une étude détaillée des 
propriétés de ces groupes. 

\subsection{Caractères simples}
\label{CarSim}
 
On choisit un homomorphisme injectif $\iota_{p,\R}$
du groupe $\mu_{p^{\infty}}(\CC)$ 
des racines complexes de l'unité d'ordre une puissance 
de $p$ vers le groupe multiplicatif $\mult\R$ ainsi qu'un 
caractère com\-ple\-xe additif $\psi_{\F,\CC}:\F\to\mult\CC$
tri\-vial sur $\p_\F$ mais pas sur $\Oo_\F$.

Soit $[\La,n,0,\b]$ une strate simple de $\A$, et soit 
$q_0=-k_0(\b,\La)$ l'opposé de son exposant criti\-que.  
Dans \cite[\S2.4]{SeSt1}, on asso\-cie à cette strate simple et à tout entier 
$m$ tel que $0\<m\<q_0-1$ un ensemble fini $\Cc_{\CC}(\La,m,\b)$ 
(qui dépend de $\psi_{\F,\CC}$) de carac\-tè\-res complexes de
$\H^{m+1}(\b,\La)$ appelés ca\-rac\-tè\-res simples de niveau $m$.  

Les $\H^{m+1}(\b,\La)$ étant des pro-$p$-groupes pour 
$m\in\{0,\dots,q_0-1\}$,
l'ap\-pli\-ca\-tion $\t\mapsto\iota_{p,\R}\circ\t$ 
est bien définie pour tout $\t\in\Cc_{\CC}(\La,m,\b)$.
L'image de $\Cc_{\CC}(\La,m,\b)$ par cette application, 
notée $\Cc_{\R}(\La,m,\b)$ 
-- ou simplement $\Cc(\La,m,\b)$ si aucune ambiguïté n'en résulte --
est un en\-sem\-ble fini de $\R$-carac\-tè\-res de
$\H^{m+1}(\b,\La)$ appelés $\R$-{\it caractères simples de niveau $m$}.

Toutes les propriétés des caractères simples complexes se transportent 
aux $\R$-ca\-ractères simples. 
On renvoie le lecteur à \cite{BK,BK2,VS1,SeSt1,BSS1,SeSt2} pour une étude 
dé\-tail\-lée de ces pro\-priétés.  
On rappelle simplement ici le principe du transfert.
Soit un entier $m'\>1$, et soit $[\La',n',0,\b']$ une strate simple de $\A_{m'}$. 
On suppose qu'il y a un isomorphisme de $\F$-algèbres $\h$ 
de $\F(\b)$ vers $\F(\b')$ tel que $\h(\b)=\b'$.  
Il existe alors une bijection~:
\begin{equation*}
\Cc(\La,0,\b)\to\Cc(\La',0,\b')
\end{equation*}
canoniquement associée à $\h$, appelée application de transfert 
(voir \cite[\S2.6]{SeSt1}).

On utilise\-ra également la notion de ps-caractère, ainsi que la relation 
d'endo-équivalence entre ps-caractères, pour lesquelles on renvoie à 
\cite[\S8]{BH1} et à \cite{BSS1}. 

\subsection{Représentations de Heisenberg}
\label{BetaExt}

Soit $[\La,n,0,\b]$ une strate sim\-ple de $\A$.
Posons $\E=\F(\b)$.
Le centralisateur $\B$ de $\E$ dans $\A$ 
est une $\E$-al\-gèbre centrale simple.
L'égalité $\J(\b,\La)=(\U(\La)\cap\mult\B)\J^1(\b,\La)$ induit un isomorphisme 
canonique de groupes~: 
\begin{equation}
\label{QuoRedFin}
\J(\b,\La)/\J^{1}(\b,\La)\simeq
(\U(\La)\cap\mult\B)/(\U_1(\La)\cap\mult\B)
\end{equation}
permettant d'associer canoniquement et bijectivement une représentation de
$\J=\J(\b,\La)$ tri\-via\-le sur $\J^1=\J^{1}(\b,\La)$ à une représentation de 
$\U(\La)\cap\mult\B$ triviale sur $\U_1(\La)\cap\mult\B$. 

Soit $\t\in\Cc(\La,0,\b)$ un caractère simple. 
On note $\KK(\La)$ le normalisateur de $\La$ dans $\G$~;
c'est un sous-groupe ouvert et compact modulo le centre de $\G$. 
D'après \cite[\S2.10]{SeSt1}, le groupe $(\KK(\La)\cap\mult\B)\J$ 
normalise $\t$ et l'en\-sem\-ble 
d'entrelacement (voir le paragraphe 1.5) 
de $\t$ dans ${\G}$ est égal à $\J^1\mult\B\J^1$.  

\begin{prop}
\label{blaeta}
Il existe une représentation irréductible $\n$ de $\J^1$, 
unique à isomorphisme près, dont la restriction à $\H^1=\H^1(\b,\La)$ 
contient $\t$. 
Elle possède les propriétés suivantes~:
\begin{enumerate}
\item
sa dimension sur $\R$ est égale à $(\J^1:\H^1)^{1/2}$ et 
sa restriction à $\H^1$ est un multiple de $\t$~;
\item 
son ensemble d'entrelacement dans ${\G}$ est égal à 
$\J^1\mult\B\J^1$ et, pour tout 
$y\in\mult\B$, le $\R$-espace d'en\-tre\-lacement
$\Hom_{\J^1\cap(\J^1)^{y}}(\n,\n^y)$
est de di\-men\-sion $1$.
\end{enumerate}
\end{prop}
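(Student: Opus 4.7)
The plan is to adapt the classical Heisenberg construction (as in \cite[\S5.1]{BK} and \cite[\S2.5]{SeSt1} for the complex case) to the modular setting. The key enabling observation is that $\H^1$ and $\J^1$ are pro-$p$-groups while the characteristic of $\R$ is different from $p$; hence the finite quotient $\J^1/\H^1$ has order invertible in $\R$, and Maschke's theorem together with Schur's lemma applies without restriction to the representation theory of the relevant finite $p$-groups over $\R$.

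First I would set up the group-theoretic framework. Recall from \cite{BK,SeSt1} that $\H^1$ is normal in $\J^1$, that $\J^1/\H^1$ is a finite abelian $p$-group, and that the commutator pairing $(x,y)\mapsto\t([x,y])$ descends to a non-degenerate alternating $\R$-valued form on $\J^1/\H^1$. Letting $\K=\ker(\t)$, the fact that $\J$ normalises $\t$ (recalled just before the proposition) gives $\K\triangleleft\J^1$, and non-degeneracy of the commutator form ensures that the finite $p$-group $\J^1/\K$ has centre $\H^1/\K$; it is therefore a Heisenberg $p$-group in the usual sense.

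For existence, uniqueness and part (1), since $|\J^1/\K|$ is invertible in $\R$ the category of $\R$-representations of $\J^1/\K$ is semisimple, and classical Heisenberg theory produces a unique irreducible $\R$-representation with central character induced by $\t$, of dimension $(\J^1:\H^1)^{1/2}$. Inflating to $\J^1$ yields the desired $\n$, whose restriction to $\H^1$ is necessarily a multiple of $\t$. Any other irreducible $\R$-representation of $\J^1$ containing $\t$ in its restriction to $\H^1$ must factor through $\J^1/\K$ and so coincides with $\n$ by uniqueness.

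For part (2), any $g\in\G$ intertwining $\n$ also intertwines $\t$, since $\t$ occurs in $\n|_{\H^1}$, so $g\in\J^1\mult\B\J^1$ by the intertwining formula for $\t$ recalled above. For the converse inclusion and the one-dimensionality, fix $y\in\mult\B$: then $y$ intertwines $\t$, so $\t$ and $\t^y$ coincide on $\H^1\cap(\H^1)^y$; applying the same Heisenberg construction to the finite $p$-group $(\J^1\cap(\J^1)^y)/(\K\cap\K^y)$ identifies both $\n$ and $\n^y$, after restriction to $\J^1\cap(\J^1)^y$, with multiples of the unique irreducible Heisenberg representation attached to this common character, so $\Hom_{\J^1\cap(\J^1)^{y}}(\n,\n^y)$ is non-zero and of dimension one by Schur's lemma. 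The main obstacle to transferring the complex-case argument verbatim is the use of irreducibility and Schur-type statements that may fail over $\R$; here this is made harmless because everything is controlled by a finite $p$-group with $p\in\mult\R$, whereas the more substantial modular subtleties flagged in the introduction (Lemmas \ref{indirredkappa}, \ref{MlleDeLaMole}, \ref{pinv}) concern later constructions of $\b$-extensions and Hecke algebras.
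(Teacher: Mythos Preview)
Your proposal is correct and follows essentially the same approach as the paper: both rely on the fact that $\J^1$ is a pro-$p$-group with $p$ invertible in $\R$, so representations are semisimple and the complex-case Heisenberg argument transfers unchanged. The paper's proof is in fact just a one-line appeal to this observation together with references (\cite[\S8.3]{BF} and \cite[Lemma 5.1]{SeSt2}), whereas you have spelled out the underlying mechanism; your sketch of the one-dimensionality in part (2) is slightly informal (the restrictions to $\J^1\cap(\J^1)^y$ are not literally isotypic for a single Heisenberg irreducible, and the cited \cite[Lemma 5.1]{SeSt2} handles this via a more careful decomposition and dimension count), but the essential point---that everything reduces to finite $p$-group representation theory over $\R$---is exactly what the paper invokes.
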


\begin{proof}
Puisque $\J^1$ est un pro-$p$-groupe, et $\R$ étant 
algébriquement clos et de carac\-té\-ristique différente de $p$,
toutes ses
représentations sont semi-simples, et la preuve existant dans le cas 
complexe (voir \cite[\S8.3]{BF} et \cite[Lemma 5.1]{SeSt2}) s'applique.
\end{proof}

Une telle représentation $\n$ s'appelle {\it re\-pré\-sen\-ta\-tion de
Heisenberg} associée à $\t$. 
Sa classe d'iso\-morphisme est normalisée par $(\KK(\La)\cap\mult\B)\J$, et 
l'induite de $\n$ à $\U_1(\La)$ est irréductible. 

\subsection{$\b$-extensions}
\label{bext}

Une $\b$-{\it extension} de $\n$ (ou de $\t$) est une 
représentation irré\-duc\-tible de $\J$ prolon\-geant $\n$ 
et dont l'ensemble d'entre\-lacement contient $\mult\B$. 
Lorsque $\R$ est le corps des nombres complexes, 
on sait d'après \cite[Théorème 2.28]{VS2} et 
\cite[Proposition 6.4]{SeSt2} 
que tout ca\-rac\-tère simple admet une $\b$-ex\-tension.
La méthode utilisée dans \cite{VS2,SeSt2} est encore valable 
lorsque le corps $\R$ est quel\-con\-que.
Toutes les étapes peuvent être re\-pri\-ses une à une, à l'exception du 
résultat suivant qui nécessite un argument adapté au cas où la 
caractéristique de $\R$ est non nulle.

\begin{lemm}
\label{indirredkappa}
Soit $\mu$ une représentation irréductible de $\J$ contenant $\n$. 
Alors l'induite de $\mu$ au groupe $(\U(\La)\cap\mult\B)\U_1(\La)$ est irréductible. 
\end{lemm}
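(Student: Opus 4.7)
Set $\Q := (\U(\La)\cap\mult\B)\U_1(\La)$. The plan is to apply Clifford theory to the two nested pairs $(\J,\J^1)$ with respect to $\n$ and $(\Q,\U_1(\La))$ with respect to the induced representation $\eta' := \Ind_{\J^1}^{\U_1(\La)}\n$ (irreducible by the remark after Proposition \ref{blaeta}), and to match the two resulting Clifford-theoretic descriptions via the induction functor $\Ind_\J^\Q$.

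The key structural identities are $\Q = \J\,\U_1(\La)$ and $\J\cap\U_1(\La) = \J^1$, both of which follow from $\J = (\U(\La)\cap\mult\B)\J^1$ combined with $\mult\B\cap\U_1(\La) = \mult\B\cap\J^1$. Mackey's formula then gives $\Res^{\Q}_{\U_1(\La)}\Ind_\J^\Q\mu \simeq \Ind_{\J^1}^{\U_1(\La)}\mu|_{\J^1}$. Clifford's theorem (valid in arbitrary characteristic since $[\J:\J^1]<\infty$) ensures that $\mu|_{\J^1}$ is semisimple, and because $\J$ normalises the isomorphism class of $\n$ (through its normalisation of $\t$) while $\mu$ contains $\n$, one deduces $\mu|_{\J^1}\simeq e\n$ for some $e\>1$. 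Consequently $\pi := \Ind_\J^\Q\mu$ restricts to $\U_1(\La)$ as $e\eta'$, isotypic of the irreducible $\eta'$, and Frobenius reciprocity yields a canonical $\R$-linear isomorphism $\Hom_{\U_1(\La)}(\eta',\pi) \simeq \Hom_{\J^1}(\n,\mu)$.

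Clifford theory equips these two multiplicity spaces with projective actions of $\Q/\U_1(\La)$ and $\J/\J^1$ respectively, and under the canonical isomorphism $\J/\J^1\simeq\Q/\U_1(\La)$ coming from $\Q = \J\,\U_1(\La)$ these actions correspond to cohomologous $2$-cocycles: a choice of intertwiners $\n\to\n^j$ for $j\in\J$ realising the $\J$-stability of $\n$ induces, by functoriality of $\Ind_{\J^1}^{\U_1(\La)}$, intertwiners $\eta'\to{\eta'}^j$ with the same multipliers. The Clifford correspondence being an equivalence of categories that preserves irreducibility, the $\J$-irreducibility of $\mu$ forces that of $\pi$: any nonzero $\Q$-subrepresentation $\pi'\subseteq\pi$ would restrict to a semisimple isotypic subrepresentation of $e\eta'$ whose multiplicity space is a proper $\Q/\U_1(\La)$-invariant subspace, contradicting the irreducibility of the associated projective representation. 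The principal delicate point---replacing the complex-case shortcut through Maschke-style semisimplicity of finite-group representations---is this compatibility of the two projective actions under Frobenius reciprocity; the remainder is either standard Clifford formalism or elementary group-theoretic decomposition.
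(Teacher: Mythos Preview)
Your approach is correct and takes a genuinely different route from the paper's. The paper argues directly on the Mackey decomposition of $\rho|_\J$: using the intertwining of $\eta$ in $(\U(\La)\cap\mult\B)\U_1(\La)$, it shows that $\mu$ is a direct summand of $\rho|_\J$ and is the \emph{unique} irreducible subquotient of $\rho|_\J$ containing $\eta$. This already gives $\End_\G(\rho)=\R$, but the paper stresses that this is not enough in positive characteristic; it then invokes the irreducibility criterion \cite[Lemma~4.2]{Vig3}, checking that for every irreducible quotient $\pi$ of $\rho$ the representation $\mu$ occurs as a quotient of $\pi|_\J$ (which follows because the $\eta$-isotypic component of $\pi|_\J$ is simultaneously a $\J$-direct summand of $\pi|_\J$ and a $\J$-quotient of that of $\rho|_\J$, namely of $\mu$). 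Your Clifford-theoretic argument instead transports irreducibility through the two correspondences for $(\J,\J^1,\n)$ and $(\Q,\U_1(\La),\eta')$, which has the conceptual advantage of explaining \emph{why} $\Ind_\J^\Q$ preserves irreducibility here: under the identification $\J/\J^1\simeq\Q/\U_1(\La)$ it acts as the identity on multiplicity spaces, with matching cocycles. The paper's approach avoids the machinery of projective representations and the cocycle compatibility check, at the cost of invoking the external criterion from \cite{Vig3}.

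One small imprecision worth tightening: the isomorphism $\Hom_{\U_1(\La)}(\eta',\pi)\simeq\Hom_{\J^1}(\n,\mu)$ is not quite a single application of Frobenius reciprocity. Frobenius gives $\Hom_{\U_1(\La)}(\eta',\pi)\simeq\Hom_{\J^1}(\n,\pi|_{\J^1})$, and one then needs that the canonical inclusion $\mu\hookrightarrow\pi$ induces an isomorphism on $\Hom_{\J^1}(\n,-)$. The cleanest justification is that the intertwining of $\n$ in $\U_1(\La)$ equals $\J^1$ (since $\U_1(\La)\cap\mult\B\subseteq\J^1$), so $\Ind_{\J^1}^{\U_1(\La)}$ is fully faithful on $\n$-isotypic representations; this same fact is what underlies the irreducibility of $\eta'$ that you quote, and it also makes the compatibility of the two projective actions transparent.
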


\begin{proof}
On note $\rho$ l'induite de $\mu$ à $(\U(\La)\cap\mult\B)\U_1(\La)$
et $\W$ l'espace de $\mu$. 
Comme $\J^1$ est un pro-$p$-grou\-pe, la restriction de $\mu$ à $\J^1$ 
se décompose sous la forme~:
\begin{equation*}
\W=\W(\eta)\oplus\W'
\end{equation*}
où $\W(\eta)$ est une somme directe non nulle 
de copies de $\eta$ et où aucun sous-quotient irréductible de $\W'$ 
n'est isomorphe à $\eta$.
Comme $\J$ normalise la classe d'isomorphisme 
de $\eta$, il laisse stable le sous-espace $\W(\eta)$.
Comme $\mu$ est irréductible, on en déduit que $\W=\W(\eta)$.
La restric\-tion de $\mu$ à $\J^1$ est donc un multiple de $\eta$.

Par application de la formule de Mackey, la restriction de $\rho$ à $\J$ est 
la somme directe des~:
\begin{equation*}
\label{IurhoJ}
\I(u)=\ind^\J_{\J\cap\J^u}(\mu^u),
\end{equation*}
pour $u$ décrivant un système de représentants de doubles classes de 
$(\U(\La)\cap\mult\B)\U_1(\La)$ mod $\J$.
Comme la restriction de $\mu$ à $\J^1$ est un multiple de $\eta$, et comme 
l'ensemble d'entre\-lacement de $\eta$ dans 
$(\U(\La)\cap\mult\B)\U_1(\La)$ est égal à $\J$,
l'espace $\Hom_{\J^1}(\eta,\I(u))$ est non nul si et seulement si $u\in\J$. 
Par conséquent,
$\mu$ est un facteur direct de la restriction de $\rho$ à $\J$, 
et c'en est le seul sous-quotient irréductible contenant $\eta$. 
En particulier, l'algè\-bre des endo\-morphismes de $\rho$ est de dimension 
$1$ sur $\R$. 
Mais ceci ne suffit pas, quand $\R$ est de carac\-téristique non nulle, 
pour en déduire que $\rho$ est irréductible. 
Nous allons appliquer le critère d'irré\-duc\-tibilité \cite[Lemma 4.2]{Vig3}.

Il faut montrer que, pour tout quo\-tient irréductible $\pi$ 
de $\rho$, la représentation $\mu$ est un quotient 
de la restriction de $\pi$ à $\J$. 
Soit $\pi$ un tel quo\-tient irréductible. 
Par réciprocité de Frobenius, sa restriction à $\J$ contient $\mu$. 
On déduit du paragraphe précédent de la preuve que $\mu$ est un facteur direct, 
donc un quotient, de la 
restriction de $\pi$ à $\J$.
\end{proof}

Grâce à ce lemme, on peut prouver comme dans le cas où $\R$ est le corps 
des nombres comple\-xes 
(voir \cite[\S2.4]{VS2} et \cite[Proposition 6.4]{SeSt2}) 
le résultat suivant. 

\begin{prop}
\label{TSF}
Soit $[\La',n',0,\b]$ une strate simple de $\A$ telle qu'on ait 
$\U(\La)\subseteq\U(\La')$,
soit $\t'\in\Cc(\La',0,\b)$ le transfert du caractère simple $\t$ 
et soit $\n'$ la représentation de Heisenberg de $\t'$.
\begin{enumerate}
\item
Si $\k$ est une représentation de $\J$ prolongeant $\n$,
il existe une unique représentation $\k'$ de $\J(\b,\La')$ 
prolongeant $\n'$ telle qu'on ait~:
\begin{equation*}
\label{RelCoh}
\ind^{(\U(\La)\cap\mult\B)\U_1(\La)}_{\J(\b,\La)}(\k)\simeq
\ind^{(\U(\La)\cap\mult\B)\U_1(\La)}_{(\U(\La)\cap\mult\B)\J^1(\b,\La')}(\k'),
\end{equation*}
ces induites étant irréductibles.
\item
L'application $\k\mapsto\k'$ ainsi définie induit une bijection 
(appelée application de transfert)
entre les $\b$-extensions de $\n$ et les $\b$-ex\-tensions de $\n'$.
\end{enumerate}
\end{prop}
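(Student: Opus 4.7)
The plan is to follow the strategy of the complex-case proofs in \cite{VS2,SeSt2}, using compact induction to the larger group $(\U(\La)\cap\mult\B)\U_1(\La)$ as a common target linking the two levels $\La$ and $\La'$. The essential new input over $\R$ of positive characteristic is the irreducibility supplied by Lemma \ref{indirredkappa}, which substitutes for the semi-simplicity arguments available over $\CC$.

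First, I would set $\Pi=\ind_{\J(\b,\La)}^{(\U(\La)\cap\mult\B)\U_1(\La)}(\k)$; this is irreducible by Lemma \ref{indirredkappa} applied to $\mu=\k$. The same lemma admits a natural variant at the $\La'$ level, proved by the same Mackey-formula computation: any representation of $(\U(\La)\cap\mult\B)\J^1(\b,\La')$ whose restriction to $\J^1(\b,\La')$ is isotypic of type $\n'$ induces irreducibly to $(\U(\La)\cap\mult\B)\U_1(\La)$, the key point being that by Proposition \ref{blaeta}(2) the intertwining of $\n'$ in $\U_1(\La)$ is no larger than $(\U(\La)\cap\mult\B)\J^1(\b,\La')$. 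This variant will govern the $\La'$-side of the picture.

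To construct $\k'$, I would exploit the transfer relation between $\t$ and $\t'$, which propagates (through the uniqueness part of Proposition \ref{blaeta}) to a canonical compatibility between $\n$ and $\n'$. Restricting $\Pi$ to $(\U(\La)\cap\mult\B)\J^1(\b,\La')$ and examining its isotypic decomposition under $\J^1(\b,\La')$, one singles out a distinguished irreducible constituent $\k'_0$ extending $\n'$; the $\La'$-variant above yields that its induction to $(\U(\La)\cap\mult\B)\U_1(\La)$ is irreducible, and Frobenius reciprocity provides a non-zero map onto $\Pi$, which must be an isomorphism. One then extends $\k'_0$ uniquely to a representation $\k'$ of $\J(\b,\La')=(\U(\La')\cap\mult\B)\J^1(\b,\La')$ using the compatibility of the $\b$-extension structure with $\U(\La')\cap\mult\B$. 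Uniqueness at each step follows from the irreducibility of $\Pi$ together with the uniqueness clause of Proposition \ref{blaeta}(1).

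For part (2), I would verify that the correspondence $\k\mapsto\k'$ preserves the defining property of a $\b$-extension, namely that the intertwining in $\G$ contains $\mult\B$. By Proposition \ref{blaeta}(2) the intertwining of both $\n$ and $\n'$ in $\G$ is $\J^1\mult\B\J^1$, and the characterization of $\k'$ via the common induced representation $\Pi$ transports the intertwining condition from one side to the other. Bijectivity follows by running the construction symmetrically, starting from $\k'$ and recovering $\k$ uniquely from $\Pi$. The principal obstacle is precisely the loss of semi-simplicity of compact-group representations over $\R$ of characteristic $\ell>0$, which is exactly what Lemma \ref{indirredkappa} and its $\La'$-variant are designed to circumvent; with these irreducibility statements in place, the complex-case constructions of \cite{VS2,SeSt2} adapt directly.
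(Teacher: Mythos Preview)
Your proposal is correct and aligns with the paper's own treatment: the paper does not give a detailed proof but simply states that, thanks to Lemma~\ref{indirredkappa}, one can argue exactly as in the complex case \cite[\S2.4]{VS2} and \cite[Proposition~6.4]{SeSt2}. Your sketch of how Lemma~\ref{indirredkappa} (and its $\La'$-variant) supplies the irreducibility needed to run the complex-case transfer construction via the common induced representation on $(\U(\La)\cap\mult\B)\U_1(\La)$ is precisely the intended adaptation.
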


A partir de là, on prouve comme dans le cas complexe l'existence de 
$\b$-extensions pour tout $\R$-ca\-rac\-tère simple. 

\begin{prop}
Tout $\R$-ca\-rac\-tère simple $\t\in\Cc(\La,0,\b)$ admet une 
$\b$-ex\-ten\-sion.  
\end{prop}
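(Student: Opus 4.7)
The plan is to mimic the complex-case construction of \cite{VS2,SeSt2}, reducing to an auxiliary ``maximal'' lattice sequence and then transferring back via Proposition \ref{TSF}. The new ingredient for the modular setting is Lemma \ref{indirredkappa}, which will replace the semi-simplicity argument unavailable when $\R$ has positive characteristic.

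Given the simple stratum $[\La,n,0,\b]$, first select a simple stratum $[\La_M,n_M,0,\b]$ (with the same pure element $\b$) such that the image $\BB(\La_M)=\AA(\La_M)\cap\B$ is a maximal hereditary $\Oo_\E$-order in the centralizer $\B$ of $\E=\F(\b)$, and such that $\U(\La)\subseteq\U(\La_M)$; such a $\La_M$ exists by the standard theory of hereditary orders in the $\E$-central simple algebra $\B$. Let $\t_M\in\Cc(\La_M,0,\b)$ denote the transfer of $\t$ and $\n_M$ its Heisenberg representation. By Proposition \ref{TSF}, the set of $\b$-extensions of $\n$ is in bijection with the set of $\b$-extensions of $\n_M$, so it suffices to exhibit one for $\n_M$.

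For the maximal case, the isomorphism (\ref{QuoRedFin}) identifies the finite quotient $\J(\b,\La_M)/\J^{1}(\b,\La_M)$ with a finite general linear group of the form $\GL_{m'}(\kk)$ for an appropriate finite extension $\kk$ of the residue field of $\E$. As in \cite[Th\'eor\`eme 2.28]{VS2}, since $\J(\b,\La_M)$ normalizes the isomorphism class of $\n_M$, the standard obstruction vanishes exactly as in the complex case and one produces an irreducible representation $\k_M$ of $\J(\b,\La_M)$ extending $\n_M$. Lemma \ref{indirredkappa} then guarantees that the induced representation
\begin{equation*}
\rho_M\ :=\ \ind^{(\U(\La_M)\cap\mult\B)\U_1(\La_M)}_{\J(\b,\La_M)}(\k_M)
\end{equation*}
is irreducible. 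A direct Mackey-type computation, combined with the description of the intertwining of $\n_M$ in $\G$ as $\J^{1}(\b,\La_M)\mult\B\J^{1}(\b,\La_M)$ furnished by Proposition \ref{blaeta}, shows that every $y\in\mult\B$ intertwines $\rho_M$; by Frobenius reciprocity and the irreducibility of $\rho_M$, this forces $\k_M$ itself to be intertwined by $\mult\B$. Hence $\k_M$ is a $\b$-extension of $\n_M$, and applying the transfer bijection of Proposition \ref{TSF} yields the desired $\b$-extension of $\n$ on $\J(\b,\La)$.

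The main obstacle is precisely the irreducibility of $\rho_M$: in the complex case it is automatic from the semi-simplicity of representations of the compact group $(\U(\La_M)\cap\mult\B)\U_1(\La_M)$, but this semi-simplicity is lost when $\R$ has characteristic $\ell>0$, so a direct decomposition argument fails. Lemma \ref{indirredkappa} sidesteps this by appealing to the irreducibility criterion of \cite[Lemma 4.2]{Vig3}, producing the irreducibility of $\rho_M$ even in the absence of complete reducibility. Once this key input is available, every remaining step of the complex construction --- the cohomological existence of an extension $\k_M$, the Mackey computation of intertwinings, and the transfer to $\La$ --- transports to the modular setting unchanged.
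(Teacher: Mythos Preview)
Your overall plan—reduce via Proposition~\ref{TSF} to a single base case and construct a $\b$-extension there—is the right shape and matches what the paper indicates (it simply says the complex-case argument of~\cite{VS2,SeSt2} transports once Proposition~\ref{TSF} is available). Two points, however, need correction.

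First, a structural remark: in the paper's logic, Lemma~\ref{indirredkappa} is the modular input needed to \emph{establish} Proposition~\ref{TSF} (the coherence relations between extensions attached to comparable lattice sequences). It is not a separate tool to be invoked again in the base case, as your sketch suggests; once Proposition~\ref{TSF} holds, the existence argument runs as in~\cite{VS2} without further appeal to that lemma.

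Second, and more seriously, your maximal-case argument has a genuine gap. The chain ``$\mult\B$ intertwines $\rho_M$, $\rho_M$ is irreducible, hence by Frobenius reciprocity $\mult\B$ intertwines $\k_M$'' uses no property of $\k_M$ beyond its being \emph{some} extension of $\n_M$; if it were valid it would show that every extension of $\n_M$ to $\J(\b,\La_M)$ is a $\b$-extension. But extensions of $\n_M$ form a torsor under the characters of $\J_M/\J^1_M\simeq\GL_{m'}(\kk_{\D'})$, i.e.\ under characters of $\kk_{\D'}^\times$, whereas by~\eqref{StHonore} the $\b$-extensions form a torsor only under characters of $\kk_\E^\times$. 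Whenever $d'>1$ (already over $\CC$) these two sets have different cardinalities, so your implication cannot hold as stated; at least one of its two steps fails, and neither is actually justified in the sketch. The construction in~\cite{VS2} is more delicate—inductive on the stratum, and at the step where intertwining by $\mult\B$ is verified one works with a lattice sequence whose order in $\B$ is \emph{minimal} rather than maximal, precisely so that $\J/\J^1$ is a torus and the intertwining can be checked against a single well-chosen normalizing element. Your maximal-case shortcut does not reproduce this.
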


Dans le cas où $\R=\flb$,
on verra au paragraphe \ref{RedmodlK} 
comment prouver ce résultat par réduction modulo $\ell$,
à partir de l'existence de $\b$-extensions dans le cas complexe. 

Comme dans \cite[Proposition 6.4]{SeSt2}, 
si $\k$ est une $\b$-extension de $\t$, alors l'ensemble des $\b$-ex\-tensions 
de $\t$ est égal à~:
\begin{equation}
\label{StHonore}
\{\k\otimes(\chi\circ\N_{\B/\E})\ |\ 
\text{$\chi$ caractère de $\Oo_\E^\times$ trivial sur $1+\p_\E$}\},
\end{equation}
où $\N_{\B/\E}$ désigne la norme réduite de $\B$ sur $\E$, 
et où $\chi\circ\N_{\B/\E}$ est vu comme un caractère de $\J$ trivial sur 
$\J^{1}$ grâce à \eqref{QuoRedFin}. 

L'importance des $\b$-ex\-ten\-sions est justifiée par le résultat suivant, 
qui se démontre exactement comme dans le cas complexe
(voir \cite[5.3.2]{BK} et \cite[Lemme 4.2(2)]{VS3}).

\begin{lemm}
\label{FactorisationEntrelacement}
Soient $\k$ une $\b$-extension de $\t$ et $\xi$ une représentation 
irréductible de $\J$ tri\-viale sur $\J^1$.
Pour tout $g\in\mult\B$, on a~:
\begin{equation*}
\Hom_{\J\cap\J^g}(\k\otimes\xi,\k^g\otimes\xi^g)
=\Hom_{\J\cap\J^g}(\k,\k^g)\otimes\Hom_{\J\cap\J^g}(\xi,\xi^g).
\end{equation*}
\end{lemm}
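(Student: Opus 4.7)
Le plan consiste � ramener l'�galit� � d�montrer au calcul de l'entrelacement de la restriction de $\k$ � $\J^1$, o� la proposition \ref{blaeta} s'applique directement. Comme $\xi$ est triviale sur $\J^1$ (et $\xi^g$ sur $(\J^1)^g$), les deux repr�sentations $\k\otimes\xi$ et $\k^g\otimes\xi^g$, vues comme $\J^1\cap(\J^1)^g$-modules, se d�composent canoniquement comme produits tensoriels dont le second facteur est un espace vectoriel muni d'une action triviale. Il en r�sulte imm�diatement la factorisation
\begin{equation*}
\Hom_{\J^1\cap(\J^1)^g}(\k\otimes\xi,\k^g\otimes\xi^g)=\Hom_{\J^1\cap(\J^1)^g}(\k,\k^g)\otimes_{\R}\Hom_{\R}(\xi,\xi^g),
\end{equation*}
et d'apr�s la proposition \ref{blaeta}(2), le premier facteur est une droite, dont je fixe un g�n�rateur $\phi_0$.

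Ensuite, j'exploiterai le fait que $\J\cap\J^g$ normalise $\J^1\cap(\J^1)^g$ pour identifier le membre de gauche de l'�galit� cherch�e aux $(\J\cap\J^g)$-invariants, pour l'action par conjugaison, de l'espace pr�c�dent. Cette action pr�serve la d�composition tensorielle ci-dessus~: elle agit sur la droite $\R\phi_0$ par un certain caract�re $c:\J\cap\J^g\to\mult\R$, et sur $\Hom_{\R}(\xi,\xi^g)$ par la conjugaison usuelle, dont les invariants sont par d�finition $\Hom_{\J\cap\J^g}(\xi,\xi^g)$.

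Le point technique essentiel, et sans doute le plus d�licat, sera de v�rifier la trivialit� du caract�re $c$. Pour cela j'utiliserai la propri�t� d�finissant une $\b$-extension~: l'ensemble d'entrelacement de $\k$ contient $\mult\B$ (paragraphe \ref{bext}), de sorte que $\Hom_{\J\cap\J^g}(\k,\k^g)$ est non nul pour tout $g\in\mult\B$~; comme cet espace est inclus dans $\R\phi_0$, il lui est �gal, ce qui entra�ne la trivialit� de $c$ et fournit au passage l'identit� $\Hom_{\J\cap\J^g}(\k,\k^g)=\R\phi_0$. La factorisation annonc�e s'en d�duit aussit�t en prenant les invariants composante par composante. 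Notons que l'argument ne pr�sente aucune difficult� propre au cas modulaire~: la semi-simplicit� des repr�sentations du pro-$p$-groupe $\J^1$ sur $\R$, qui assure la 1-dimensionnalit� de l'entrelacement de $\n$ utilis�e plus haut, reste valable puisque la caract�ristique de $\R$ est diff�rente de $p$.
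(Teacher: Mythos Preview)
Votre argument est correct et suit essentiellement la m�me d�marche que la preuve cit�e par l'article (\cite[5.3.2]{BK} et \cite[Lemme 4.2(2)]{VS3})~: restriction � $\J^1\cap(\J^1)^g$ pour obtenir la factorisation tensorielle, unidimensionnalit� de l'entrelacement de $\n$ (proposition~\ref{blaeta}), puis passage aux $(\J\cap\J^g)$-invariants en utilisant que $g\in\mult\B$ entrelace $\k$ pour forcer la trivialit� du caract�re $c$. L'article se contente de renvoyer � ces r�f�rences sans d�tailler~; votre r�daction explicite fid�lement l'argument classique.
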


Pour prouver qu'une représentation de $\J$ 
de la forme $\k\otimes\xi$ comme ci-dessus est irréductible, 
l'argument de \cite[5.3.2]{BK} ne s'applique pas dans le cas modulaire, 
pour la même raison que celle invoquée dans la preuve du lemme 
\ref{indirredkappa}. 
Voici d'abord un lemme général.

\begin{lemm}
\label{MlleDeLaMole}
Soit $\k$ une représentation irréductible d'un groupe profini 
$\emph{\textsf{J}}$.
Supposons qu'il y a un pro-$p$-sous-groupe ouvert 
distingué $\emph{\textsf{J}}^1$ de $\emph{\textsf{J}}$ tel que la restriction $\n$
de $\k$ à $\emph{\textsf{J}}^1$ soit irréductible. 
Alors le foncteur $\W\mapsto\k\otimes\W$ induit une équivalence 
entre~: 
\begin{enumerate}
\item 
la catégorie 
des représentations de $\emph{\textsf{J}}/\emph{\textsf{J}}^1$~;
\item 
la catégorie $\Rr(\emph{\textsf{J}},\n)$ des représentations de 
$\emph{\textsf{J}}$ dont la restriction à $\emph{\textsf{J}}^1$ est $\n$-isotypi\-que. 
\end{enumerate}
\end{lemm}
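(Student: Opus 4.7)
The plan is to exhibit an explicit quasi-inverse
$\mathsf{G}\colon\Rr(\textsf{J},\n)\to\Rr(\textsf{J}/\textsf{J}^1)$ to
$\mathsf{F}\colon W\mapsto\k\otimes W$, and to check, via Schur's lemma
for $\n$, that the two composites are canonically isomorphic to the
identity. I would set
$\mathsf{G}(\V)=\Hom_{\textsf{J}^1}(\k,\V)$, equipped with the
$\textsf{J}$-action $(g\cdot\varphi)(v)=g\,\varphi(\k(g)^{-1}v)$, where
the outer action is the given one on $\V$ and the inner one is the
action via $\k$. Normality of $\textsf{J}^1$ in $\textsf{J}$ makes this
well defined, and the $\textsf{J}^1$-equivariance of $\varphi$ forces
$j\cdot\varphi=\varphi$ for every $j\in\textsf{J}^1$, so $\mathsf{G}(\V)$
genuinely is a representation of $\textsf{J}/\textsf{J}^1$.

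The unit and counit of the candidate equivalence are the two natural
maps
$W\to\mathsf{G}(\k\otimes W)$, $w\mapsto(v\mapsto v\otimes w)$, and
$\k\otimes\mathsf{G}(\V)\to\V$, $v\otimes\varphi\mapsto\varphi(v)$,
both equivariant for the appropriate groups by direct inspection.
The crux is to prove these are isomorphisms. Restricted to $\textsf{J}^1$
each reduces to the canonical comparison map between $\n$-isotypic
$\textsf{J}^1$-modules; since $\n$ is an irreducible smooth
representation of the pro-$p$ group $\textsf{J}^1$ over the algebraically
closed field $\R$ of characteristic different from~$p$, Schur's lemma
gives $\End_{\textsf{J}^1}(\n)=\R$, and by definition every object of
$\Rr(\textsf{J},\n)$ decomposes as a $\textsf{J}^1$-representation into
a sum of copies of $\n$. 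Both maps are therefore $\textsf{J}^1$-linear
bijections, hence $\textsf{J}$-linear isomorphisms.

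The main point requiring genuine care is not the underlying vector-space
bijection but the equivariance under all of $\textsf{J}$: it is
precisely here that one uses that $\k$ extends $\n$ to an honest
representation of $\textsf{J}$, and not merely to a projective one, so
that the conjugation action on $\mathsf{G}(\V)$ really descends to
$\textsf{J}/\textsf{J}^1$ and turns $\mathsf{G}$ into a functor with
target $\Rr(\textsf{J}/\textsf{J}^1)$. Once this is in place, naturality
in $W$ and $\V$ is immediate and gives the stated equivalence of
categories.
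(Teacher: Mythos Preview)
Your proof is correct. You and the paper use the same quasi-inverse functor $\mathsf{G}(\V)=\Hom_{\textsf{J}^1}(\n,\V)$ (your $\Hom_{\textsf{J}^1}(\k,\V)$ is the same thing, since $\k|_{\textsf{J}^1}=\n$), but you establish the equivalence differently. The paper argues abstractly: it observes that $\P=\ind_{\textsf{J}^1}^{\textsf{J}}(\n)$ is projective (because $\textsf{J}^1$ is pro-$p$) and generates $\Rr(\textsf{J},\n)$ (because $\textsf{J}$ normalises $\n$), hence is a progenerator; Morita theory then gives an equivalence with right $\End_{\textsf{J}}(\P)$-modules, and the identification $\P\simeq\k\otimes\R[\textsf{J}/\textsf{J}^1]$ turns this into $\R[\textsf{J}/\textsf{J}^1]$-modules. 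Your route is more elementary: you write down explicit unit and counit and reduce their bijectivity to Schur for $\n$ plus semisimplicity of smooth representations of the pro-$p$ group $\textsf{J}^1$. The paper's argument is slicker if one is happy to quote the progenerator formalism, while yours is self-contained and makes the role of the extension $\k$ of $\n$ (rather than a mere projective extension) completely transparent.
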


\begin{proof}
Comme $\textsf{J}^1$ est un pro-$p$-groupe, et comme l'induction préserve la 
projectivité, l'induite $\P$ de $\n$ à $\textsf{J}$ est projective dans la catégorie 
des représentation de $\textsf{J}$.
Comme $\textsf{J}$ norma\-lise la classe d'isomorphisme de $\n$, la 
restriction de $\P$ (donc de chacun de ses sous-quotients) à 
$\textsf{J}^1$ est $\n$-isotypique. 
Ainsi $\P$ est un progénérateur de $\Rr({\textsf{J}},\n)$.
Il s'ensuit (voir par exemple 
\cite[Pro\-po\-sition 1.1]{Roche}) que 
le foncteur $\pi\mapsto\Hom_{\textsf{J}^1}(\n,\pi)$ 
induit une équi\-va\-lence entre $\Rr({\textsf{J}},\n)$ 
et la catégorie des modules à droite sur $\End_\textsf{J}(\P)$~; un quasi-inverse 
est donné par le foncteur $\m\mapsto\m\otimes\P$.

Identifions maintenant $\P$ à la représentation 
$\k\otimes\P_1$ où $\P_1$ 
désigne la représentation régulière de $\textsf{J}/\textsf{J}^1$, \ie 
l'induite à $\textsf{J}$ du caractère trivial de $\textsf{J}^1$.
L'application $f\mapsto \k\otimes f$ définit un isomorphisme de 
$\R$-algèbres de $\R[\textsf{J}/\textsf{J}^1]$, l'algèbre de groupe de 
$\textsf{J}/\textsf{J}^1$, vers $\End_\textsf{J}(\P)$. 
Le résultat s'ensuit. 
\end{proof}

Appliquons ce lemme au cas où 
$\k$ est une représentation du groupe $\textsf{J}=\J(\b,\La)$ prolongeant $\n$ 
et $\textsf{J}^1=\J^1(\b,\La)$.

\begin{coro}
\label{Mathilde}
Soit $\k$ une représentation de $\J$ prolongeant $\n$,
et soit $\xi$ une représentation irréductible de $\J/\J^1$.
Alors la représentation $\k\otimes\xi$ est irréductible. 
\end{coro}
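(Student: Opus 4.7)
The plan is to deduce the corollary as a direct application of Lemma \ref{MlleDeLaMole}, using the pair $(\textsf{J},\textsf{J}^1)=(\J,\J^1)$ with $\J^1$ a pro-$p$-subgroup of $\J$.

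First I would observe that the hypothesis of the lemma is satisfied: the representation $\n$ of $\J^1$ is irreducible by Proposition \ref{blaeta}, and since $\k$ restricts to $\n$ on $\J^1$, any nonzero $\J$-stable subspace of $\k$ is in particular $\J^1$-stable, hence equal to the whole space by irreducibility of $\n$. Thus $\k$ is irreducible on $\J$, so Lemma \ref{MlleDeLaMole} applies and gives an equivalence of categories
\begin{equation*}
\W\longmapsto\k\otimes\W
\end{equation*}
between the category of representations of $\J/\J^1$ and the full subcategory $\Rr(\J,\n)$ of representations of $\J$ whose restriction to $\J^1$ is $\n$-isotypic.

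Next I would check that $\k\otimes\xi$ lies in $\Rr(\J,\n)$. Since $\xi$ is trivial on $\J^1$, the restriction of $\k\otimes\xi$ to $\J^1$ is isomorphic to a direct sum of $\dim\xi$ copies of $\n$, which is indeed $\n$-isotypic.

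Finally, an equivalence of categories preserves irreducibility, so the image $\k\otimes\xi$ of the irreducible object $\xi$ is irreducible in $\Rr(\J,\n)$, hence irreducible as a representation of $\J$. There is no real obstacle here beyond correctly verifying the hypotheses of Lemma \ref{MlleDeLaMole}; the whole point of that lemma was precisely to circumvent the fact, highlighted in the paragraph introducing the corollary, that the complex-case argument of \cite[5.3.2]{BK} does not apply when $\R$ has positive characteristic.
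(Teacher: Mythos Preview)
Your proof is correct and follows exactly the paper's approach: the corollary is stated immediately after Lemma~\ref{MlleDeLaMole} as its direct application to $(\textsf{J},\textsf{J}^1)=(\J,\J^1)$, and you have simply spelled out the verification of the hypotheses and the conclusion via the equivalence of categories.
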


\begin{coro}
\label{LAmourLApresMidi}
Soit $\pi$ une représentation irréductible d'un sous-groupe ouvert de 
$\G$ contenant $\J$.
On suppose que $\pi$ contient $\t$.
Alors il y a une représentation $\k$ de $\J$ prolongeant $\n$ 
et une 
re\-pré\-sentation irréductible $\xi$ de $\J$ triviale sur 
$\J^{1}$ telle que $\k\otimes\xi$ soit une sous-représentation 
(irréductible) de la restriction de $\pi$ à $\J$.
\end{coro}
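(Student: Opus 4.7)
The plan is to extract the $\n$-isotypic component of $\pi|_{\J^1}$ and then apply Lemma \ref{MlleDeLaMole}.

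First, since $\J^1$ is a pro-$p$-group and $\R$ is algebraically closed of characteristic different from $p$, the restriction of $\pi$ to $\J^1$ is semisimple. By hypothesis, $\pi$ contains $\t$ upon restriction to $\H^1=\H^1(\b,\La)\subseteq\J^1$, so some irreducible sub-representation of $\pi|_{\J^1}$ has its restriction to $\H^1$ containing $\t$. By Proposition \ref{blaeta}, any such irreducible representation of $\J^1$ is isomorphic to the Heisenberg representation $\n$. Hence the $\n$-isotypic component of $\pi|_{\J^1}$ is non-zero.

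Next, since $\J$ normalizes the isomorphism class of $\n$ (as recalled in paragraph \ref{BetaExt}), this component is stable under the action of $\J$ on $\pi$, and therefore defines a $\J$-sub-representation of $\pi|_{\J}$. I would then choose any extension $\k$ of $\n$ to $\J$ (for instance a $\b$-extension, whose existence is provided by the preceding paragraph). Applying Lemma \ref{MlleDeLaMole} with $\J$ and $\J^1$ in the roles of the abstract groups, the functor $\W\mapsto\k\otimes\W$ induces an equivalence between the category of representations of the finite group $\J/\J^1$ and the category of representations of $\J$ whose restriction to $\J^1$ is $\n$-isotypic. Applied to the $\n$-isotypic component identified above, this equivalence yields a non-zero representation $\W$ of $\J/\J^1$ such that the component is isomorphic to $\k\otimes\W$ as a representation of $\J$.

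Finally, since $\J/\J^1$ is finite, the non-zero representation $\W$ admits an irreducible sub-representation $\xi$. Then $\k\otimes\xi$ sits as a sub-representation of the $\n$-isotypic component inside $\pi|_{\J}$, and it is irreducible by Corollary \ref{Mathilde}. I do not anticipate any serious obstacle: the corollary is essentially a formal consequence of Lemma \ref{MlleDeLaMole} and Corollary \ref{Mathilde}; the only point to check carefully is that the $\n$-isotypic subspace is $\J$-stable, which follows from the fact that $\J$ normalizes $\n$ up to isomorphism.
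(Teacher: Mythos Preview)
Your proposal is correct and follows essentially the same route as the paper: isolate the $\n$-isotypic component of $\pi|_{\J^1}$, observe it is $\J$-stable, apply Lemma~\ref{MlleDeLaMole} to write it as $\k\otimes\W$ for some representation $\W$ of $\J/\J^1$, and pick an irreducible sub-representation $\xi$ of $\W$. The only cosmetic difference is that the paper invokes Lemma~\ref{MlleDeLaMole} a second time (the equivalence preserves irreducibles) where you cite Corollary~\ref{Mathilde}, which amounts to the same thing.
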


\begin{proof}
Puisque $\J^{1}$ est un pro-$p$-groupe, 
la restriction de $\pi$ à $\J^{1}$ est semi-sim\-ple.
Elle contient donc la représentation $\n$ et se décompose sous la forme~: 
\begin{equation*}
\W=\W(\eta)\oplus\W'
\end{equation*}
où $\W(\eta)$ est une somme directe non nulle de copies de $\eta$ et où 
aucun sous-quotient irréductible de $\W'$ n'est isomorphe à $\eta$. 
Notons $\V$ la représentation de $\J$ sur $\W(\eta)$, et notons 
$\Y$ la représenta\-tion de $\J/\J^1$ lui correspondant par le 
lemme \ref{MlleDeLaMole}. 
Comme $\J/\J^1$ est fini, $\Y$ admet une sous-re\-pré\-sentation 
irréductible $\xi$. 
Par le lemme \ref{MlleDeLaMole} encore, $\k\otimes\xi$ est une 
sous-représentation irré\-duc\-ti\-ble de $\V$, donc de la restriction de $\pi$ 
à $\J$. 
\end{proof}

\subsection{Types simples}
\label{Pontesprit}

On définit les $\R$-types simples de $\G$ comme dans le cas complexe 
(voir \cite[\S4.1]{VS3} 
-- en particulier la remarque 4.1 permettant de faire 
des types simples de niveau zéro un cas particulier de types simples -- 
ainsi que \cite[\S5.5]{SeSt2}).  
Rappelons brièvement cette définition. 

Soit $[\La,n,0,\b]$ une strate simple de $\A$ telle que 
$\AA(\La)\cap\B$ soit un $\Oo_\E$-ordre principal de $\B$, de période 
notée $r$. 
On fixe un isomorphisme de $\E$-algèbres~: 
\begin{equation}
\label{MonEpousee}
\Phi:\B\to\Mat_{m'}(\D')
\end{equation} 
où $m'$ est un entier et $\D'$ une $\E$-algèbre à division 
convenables, identifiant $\AA(\La)\cap\B$ à un ordre principal standard. 
Plus précisément, si $d'$ désigne le degré réduit de $\D'$ sur $\E$, on a~: 
\begin{equation}
\label{MPRIME}
m'd'=\frac{md}{[\E:\F]}.
\end{equation}
On en déduit des iso\-mor\-phis\-mes de groupes~:
\begin{equation}
\label{IDENT}
\J/\J^1\simeq(\U(\La)\cap\mult\B)/(\U_1(\La)\cap\mult\B)\simeq\GL_{s}(\kk_{\D'})^r
\end{equation}
où $s$ est l'entier défini par $m'=rs$.
Soit $\t\in\Cc(\La,0,\b)$ 
et soit $\k$ une $\b$-ex\-ten\-sion de $\t$. 
Fixons une représentation irréductible cus\-pidale $\s_0$ de 
$\GL_{s}(\kk_{\D'})$. 
La représentation $\s_0^{\otimes r}$ définit par \eqref{IDENT} et par inflation une 
re\-pré\-sen\-ta\-tion irréductible $\s$ de $\J$ triviale sur $\J^1$. 
Posons $\l=\k\otimes\s$.

\begin{defi}
Un couple de la forme $(\J,\l)$ est appelé un {\it type simple} de $\G$.
\end{defi}

Compte tenu du corollaire \ref{Mathilde}, tout type simple de $\G$ 
est irréductible.  

Si $\U(\La)\cap\mult\B$ est un sous-groupe ouvert compact maximal 
de $\mult\B$, le type simple $(\J,\l)$ est dit \textit{maximal}. 
C'est équivalent à la condition $r=1$.

Si la strate simple $[\La,n,0,\b]$ est nulle, le type simple $(\J,\l)$ est dit
\textit{de niveau $0$}. 
Dans ce cas, on a $\E=\F$ et $\J=\U(\La)$, 
et on choisira systématiquement pour $\k$ le caractère trivial de $\U(\La)$.

\begin{exem}
\label{ExeIwa}
Soit $\Iw$ le sous-groupe d'Iwahori standard de $\G$, 
\ie le sous-groupe de $\GL_{m}(\Oo_\D)$ formé des matrices dont la 
réduction modulo $\p_\D$ est triangulaire supérieure,
et soit $1_\Iw$ son caractère trivial.
Alors le couple $(\Iw,1_\Iw)$ est un type simple de niveau $0$ de $\G$.
\end{exem}

Comme dans le cas complexe (voir \cite[Proposition 1]{BHisc}), 
on a la propriété suivante.

\begin{prop}
\label{isc}
Fixons un caractère simple $\t\in\Cc(\La,0,\b)$.
\begin{enumerate}
\item 
Soit $[\La',n',0,\b]$ une strate simple de $\A$ telle 
que $\U(\La)\subseteq\U(\La')$
et soit $\t'\in\Cc(\La',0,\b)$ le transfert de $\t$.  
Toute représentation irréductible de $\G$ contenant $\t$ con\-tient 
aussi $\t'$.
\item
Soit $\pi$ une représentation irréductible de $\G$ contenant une 
représentation irréductible de $\J$ de la forme $\k\otimes\xi$ où 
$\xi$ est une représentation irréductible de $\J$ triviale sur $\J^1$.
Supposons que $\xi$ considérée comme représentation de 
$\GL_{s}(\kk_{\D'})^r$ n'est pas cuspidale.  
Alors il y a une strate simple $[\La',n',0,\b]$ de $\A$ telle 
que $\U(\La')\subsetneq\U(\La)$
et telle que $\pi$ contient le transfert $\t'\in\Cc(\La',0,\b)$ de $\t$.  
\end{enumerate}
\end{prop}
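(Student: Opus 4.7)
The plan is to follow the strategy of Bushnell--Henniart \cite{BHisc} in the complex case and verify that each step carries over to coefficients in $\R$. The two adjustments to the classical argument are already in place: Proposition \ref{TSF} provides the transfer of $\b$-extensions in arbitrary characteristic, while Corollary \ref{LAmourLApresMidi} supplies the modular substitute for the semisimplicity arguments used in \cite{BK,BHisc}.

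For part (1), we start from an irreducible representation $\pi$ of $\G$ containing $\t$. By Proposition \ref{blaeta}, $\n$ is the unique irreducible representation of $\J^1=\J^1(\b,\La)$ whose restriction to $\H^1=\H^1(\b,\La)$ contains $\t$, hence $\pi|_{\J^1}$ contains $\n$. Applying Corollary \ref{LAmourLApresMidi} produces a $\b$-extension $\k$ of $\n$ and an irreducible representation $\xi$ of $\J=\J(\b,\La)$ trivial on $\J^1$ such that $\k\otimes\xi$ embeds into $\pi|_\J$. Applying Proposition \ref{TSF} to $\k$ yields a $\b$-extension $\k'$ of the Heisenberg representation $\n'$ of $\t'$ such that the compact inductions of $\k$ and $\k'$ to $(\U(\La)\cap\mult\B)\U_1(\La)$ coincide; combining this with the transport of $\xi$ through the natural map on reductive quotients derived from \eqref{IDENT} produces an irreducible representation $\k'\otimes\xi'$ of $\J(\b,\La')$ which embeds into $\pi$ and whose restriction to $\H^1(\b,\La')$ is a multiple of $\t'$.

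For part (2), since $\xi$ is non-cuspidal as a representation of $\GL_{s}(\kk_{\D'})^r$ via \eqref{IDENT}, it embeds, by the theory of cuspidal support of $\R$-representations of finite reductive groups, into a proper parabolic induction in this finite group. Translating this through \eqref{MonEpousee} and \eqref{IDENT} produces a principal $\Oo_{\E}$-order in $\B$ strictly contained in $\AA(\La)\cap\B$, hence a finer $\Oo_\D$-lattice chain $\La'$ in $\D^m$ with $\U(\La')\subsetneq\U(\La)$ for which $\AA(\La')\cap\B$ remains principal. Using Proposition \ref{TSF} to obtain the corresponding $\b$-extension $\k'$ at level $\La'$, and transferring $\xi$ via its description as a parabolic induction in $\GL_s(\kk_{\D'})^r$, one checks that $\pi|_{\J(\b,\La')}$ contains a subrepresentation of the form $\k'\otimes\xi'$; restricting to $\H^1(\b,\La')$ then shows that $\pi$ contains $\t'\in\Cc(\La',0,\b)$.

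The main technical point, and the only place where the modular setting requires genuine care, is the extraction of $\k\otimes\xi$ from $\pi|_\J$ without appealing to semisimplicity of the restriction; this is exactly what Corollary \ref{LAmourLApresMidi} accomplishes. The remainder of both arguments is a direct transcription of the proof of \cite[Proposition 1]{BHisc} in the complex case, and the main piece of bookkeeping will be to verify that the three parallel transfer operations -- for simple characters, for Heisenberg representations, and for $\b$-extensions -- are mutually compatible and extend to the factor $\xi$ via the isomorphism \eqref{IDENT} at both levels $\La$ and $\La'$.
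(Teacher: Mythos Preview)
Your approach is broadly in the right spirit and Part~(2) is essentially what the paper does (it refers to the argument of \cite[Proposition~5.15]{SeSt1}). However, Part~(1) is over-engineered, and your identification of Corollary~\ref{LAmourLApresMidi} as the crucial modular input is misplaced.

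For Part~(1) the paper simply invokes the transfer of \emph{Heisenberg} representations (\cite[Proposition~2.12]{VS2}), not of $\b$-extensions. Since $\J^1$ is a pro-$p$-group, $\pi|_{\J^1}$ is semisimple, so from $\t\hookrightarrow\pi|_{\H^1}$ one gets $\n\hookrightarrow\pi|_{\J^1}$ directly; the coherence relation between $\n$ and $\n'$ (the Heisenberg analogue of Proposition~\ref{TSF}) then yields $\n'\hookrightarrow\pi|_{\J^1(\b,\La')}$ via Frobenius, hence $\t'\hookrightarrow\pi|_{\H^1(\b,\La')}$. No passage through $\k\otimes\xi$ is needed, and Corollary~\ref{LAmourLApresMidi} plays no role. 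Your detour through $\b$-extensions forces you to ``transport $\xi$ through the natural map on reductive quotients'', but the quotients $\J/\J^1$ and $\J(\b,\La')/\J^1(\b,\La')$ are different finite reductive groups (the latter is larger), and there is no canonical transport of $\xi$; this step is both vague and unnecessary.

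For Part~(2), Corollary~\ref{LAmourLApresMidi} is again irrelevant: the containment $\k\otimes\xi\hookrightarrow\pi|_\J$ is part of the hypothesis, so nothing needs to be extracted. The genuine content is exactly what you describe --- translating the non-cuspidality of $\xi$ into a finer lattice chain and using the coherence of $\b$-extensions --- which is the strategy of \cite[Proposition~5.15]{SeSt1} that the paper cites. (A small imprecision: Corollary~\ref{LAmourLApresMidi} only produces an extension of $\n$ to $\J$, not a priori a $\b$-extension; but Proposition~\ref{TSF}(1) applies to any such extension, so this is harmless.)
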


\begin{proof}
L'assertion 1 suit des propriétés de transfert des représentations de 
Heisenberg (voir \cite[Proposition 2.12]{VS2}).
Pour l'assertion 2, on raisonne comme dans la preuve de 
\cite[Proposition 5.15]{SeSt1}.
\end{proof}

\subsection{Algèbre de Hecke d'un type simple}
\label{Paralipomenes}

Soit $(\J,\l)$ un type simple de $\G$.
On note $b(\l)$ le cardinal de l'orbite de la classe d'isomorphis\-me de $\s$ 
sous l'action du groupe de Galois~: 
\begin{equation}
\label{Bliblij}
\Ga=\Gal(\kk_{\D'}/\kk_\E)
\end{equation}
où $\s$ est considérée comme une re\-pré\-sen\-ta\-tion de 
$\GL_{s}(\kk_{\D'})^{r}$ grâce au choix de $\Phi$.
Cet entier ne dépend ni de ce choix ni de celui de $\k$.

\begin{rema}
\label{Etoc1}
Dans le cas déployé, \ie lorsque $\D$ est égal à $\F$, le groupe de Galois 
$\Ga$ est trivial et on a $b(\l)=1$ pour tout type simple $(\J,\l)$. 
\end{rema}

On fixe une uniformisante $\w$ de $\D'$, et on note $\Ww_\l$ le
sous-groupe de $\GL_r(\D')$ constitué 
des matrices monomiales dont les coeffi\-cients non 
nuls sont des puissances de~:
\begin{equation}
\label{pilam}
\w_\l=\w^{b(\l)}.
\end{equation}
On voit $\Ww_\l$ comme un sous-groupe de $\mult\B$ par l'intermédiaire 
de $\Phi$ et du plongement diagonal de $\D'$ dans $\Mat_s(\D')$. 

\begin{lemm}
\label{EntTypSim}
L'ensemble d'entrelacement de $\l$ dans $\G$ est la réunion dis\-join\-te des 
$\J{w}\J$ pour $w\in\Ww_\l$ 
et, pour chaque $w\in\Ww_\l$, on a $\dim_{\R}\Hom_{\J\cap\J^w}(\l,\l^w)=1$. 
\end{lemm}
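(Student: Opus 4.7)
The plan is to combine three ingredients: the intertwining of the Heisenberg representation $\bn$ from Proposition \ref{blaeta}, the factorisation of intertwining from Lemma \ref{FactorisationEntrelacement}, and a computation of the intertwining of the level-zero part $\bs$ carried out in the residual finite group.

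First, I would observe that any element of $\G$ that intertwines $\bl$ must intertwine the underlying Heisenberg representation $\bn$. By Proposition \ref{blaeta}(2), the intertwining set of $\bn$ in $\G$ equals $\J^{1}\mult\B\J^{1}$, so the intertwining set of $\bl$ is contained in $\J\mult\B\J$, and it suffices to test elements $g \in \mult\B$ for the non-vanishing of $\Hom_{\J \cap \J^{g}}(\bl,\bl^{g})$.

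Second, for any such $g \in \mult\B$, Lemma \ref{FactorisationEntrelacement} together with the fact that $\k$ is a $\b$-extension gives a factorisation
\begin{equation*}
\Hom_{\J\cap\J^{g}}(\bl,\bl^{g}) = \Hom_{\J\cap\J^{g}}(\k,\k^{g})\otimes\Hom_{\J\cap\J^{g}}(\bs,\bs^{g}),
\end{equation*}
where the first factor is one-dimensional for every $g \in \mult\B$, by definition of a $\b$-extension and by Proposition \ref{blaeta}(2) applied to $\bn$. The problem therefore reduces to determining the set of $g \in \mult\B$ at which $\Hom_{\J\cap\J^{g}}(\bs,\bs^{g})$ is nonzero, and to checking that, whenever it is, it is one-dimensional.

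Third, I transport the question to the residual picture by means of the isomorphism $\Phi$ of \eqref{MonEpousee} and the identification \eqref{IDENT}, under which $\mult\B$ becomes $\GL_{m'}(\D')$, the quotient $\J/\J^{1}$ becomes $\GL_{s}(\kk_{\D'})^{r}$, and $\bs$ corresponds to $\s_{0}^{\otimes r}$. Up to multiplication on either side by elements of the standard principal parahoric $\U(\La)\cap\mult\B$ (which already lies in $\J$), every $g \in \mult\B$ has a representative that is a monomial matrix in $\GL_{r}(\D')$ with diagonal entries powers of a fixed uniformiser $\w$ of $\D'$, embedded diagonally in $\Mat_{s}(\D')$. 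Conjugation by $\w$ on $\GL_{s}(\kk_{\D'})$ is the Frobenius of $\kk_{\D'}/\kk_{\E}$, so such a monomial element $w$ intertwines $\s_{0}^{\otimes r}$ with itself precisely when it permutes the $r$ tensor factors while twisting each one by a power of Frobenius that fixes the isomorphism class of $\s_{0}$. Since $\s_{0}$ is cuspidal, cuspidality of representations of $\GL_{s}(\kk_{\D'})$ and the definition \eqref{Bliblij}--\eqref{pilam} of $b(\bl)$ give that this happens if and only if every diagonal entry is a power of $\w_{\bl}=\w^{b(\bl)}$, i.e.\ if and only if $w\in\Ww_\bl$; and in that case Schur's lemma applied to each tensor factor yields a one-dimensional intertwining.

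The main technical obstacle I expect is the disjointness of the double cosets $\J w \J$ for distinct $w\in\Ww_\bl$, which I would deduce from an Iwahori-type decomposition of $\J$ with respect to the Levi $\mult\B$: the group $\Ww_\bl$ injects into $N_{\mult\B}(\J\cap\mult\B)/(\J\cap\mult\B)$, an affine Weyl-type quotient whose corresponding double cosets $(\J\cap\mult\B) w (\J\cap\mult\B)$ in $\mult\B$ are pairwise disjoint, and the Iwahori decomposition $\J = (\J\cap\mult\B)(\J\cap\mult\N)(\J\cap\mult\N^{-})$ (with $\N$ the unipotent radical of a parabolic of $\G$ with Levi the centraliser of an $\E^{\times}$-split torus of $\mult\B$) then propagates the disjointness to $\J w \J$ in $\G$.
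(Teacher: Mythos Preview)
Your proposal is correct and follows essentially the same route as the paper: reduce to $\mult\B$ via the intertwining of the Heisenberg representation, apply the factorisation lemma (Lemma \ref{FactorisationEntrelacement}) to separate the $\k$-part (one-dimensional for all $g\in\mult\B$) from the $\s$-part, and then compute the intertwining of $\s$ in the residual finite group. The paper delegates this last step to \cite[Proposition 1.2, Lemma 1.5]{GSZ} (noting their proofs remain valid modularly), whereas you spell out the monomial-matrix and Frobenius argument that underlies those results; your treatment of disjointness is likewise what the GSZ reference provides. One cosmetic point: you use the boldface notation $\bn,\bk,\bs,\bl$, which in this paper is reserved for the semi-simple types of \S\ref{EndoEq}--\ref{defeta}; for this lemma on simple types the correct symbols are $\n,\k,\s,\l$.
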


\begin{proof}
Dans le cas où $(\J,\l)$ est de niveau $0$, on utilise la proposition 1.2 
et le lemme 1.5 de \cite{GSZ} dont les preuves sont encore valables dans le 
cas modulaire.  
Dans le cas de niveau non nul, on utilise le lemme 
\ref{FactorisationEntrelacement} pour se ramener au cas de niveau $0$. 
\end{proof}

\begin{rema}
\label{Etoc2}
Supposons que $(\J,\l)$ est un type simple maximal.
L'ensemble d'entrelace\-ment de $\l$ dans $\G$ 
est égal au nor\-ma\-li\-sa\-teur de $\l$ dans $\G$, noté $\N_\G(\l)$.  
Celui-ci est engendré par $\J$ et $\w_\l$. 
Comme le normalisateur de $\U(\La)\cap\mult\B$ dans $\mult\B$ est engendré 
par $\U(\La)\cap\mult\B$ et $\w$, l'entier $b(\l)$ est égal à l'indice
de $\N_\G(\l)$ dans $\N_{\mult\B}(\U(\La)\cap\mult\B)\J$. 
\end{rema}

Notons $\Hh(r,q(\l))$ l'algèbre de Hecke affine de type $\A_{r-1}$
et de paramètre $q(\l) =q^{fd's}$, 
où $f$ désigne le degré résiduel de $\E$ sur $\F$. 
C'est la $\R$-al\-gèbre engendrée par des éléments 
$\SS_0,\dots,\SS_{r-1}$ et $\Pi,\Pi^{-1}$ avec pour tout $ i\in\ZZ/r\ZZ$ les 
relations~: 
\begin{eqnarray}
\label{R1aff}
(\SS_i+1)(\SS_i-q(\l))&=&0,\\ 
\label{R2aff}
\SS_i\SS_j&=&\SS_j\SS_i, \quad j\notin\{i-1,i+1\},\\
\label{R3aff}
\SS_i\SS_{i+1}\SS_i&=&\SS_{i+1}\SS_i\SS_{i+1},\\ 
\Pi\SS_i&=&\SS_{i-1}\Pi, \\
\Pi\Pi^{-1}=\Pi^{-1}\Pi&=&1.
\end{eqnarray}

Le groupe $\Ww_\l$ est engendré par les matrices de permutation
$s_1,\dots,s_{r-1}$ correspondant aux trans\-positions 
$i\leftrightarrow i+1$ et par l'élément~:
\begin{equation*}
\label{Defhl}
h=h_\l=
\begin{pmatrix}
0 & {\rm id}_{r-1}\\
\w_\l & 0 
\end{pmatrix}\in\GL_{r}(\D')
\end{equation*}
où ${\rm id}_{r-1}$ est la matrice identité de $\Mat_{r-1}(\D')$.
Si l'on pose $s_0=hs_1h^{-1}$, tout élément $w\in\Ww_\l$ peut s'écrire 
sous la forme $w=h^{a}s_{i_1}\dots s_{i_l}$ avec $a\in\ZZ$ et $l\>0$ et 
$i_1,\dots,i_l\in\ZZ/r\ZZ$, et~:
\begin{equation}
\label{DEFSW}
\SS_w=\Pi^a\SS_{i_1}\dots\SS_{i_l}
\end{equation}
ne dépend pas du choix de cette écriture. 
Les $\SS_w$, $w\in\Ww_\l$ forment une base d'espace vectoriel 
de $\Hh(r,q(\l))$ sur $\R$. 

\begin{prop}
\label{ANenPlusFinirPosDuplicata}
On a un isomorphisme de $\R$-al\-gèbres~: 
\begin{equation}
\label{BaudrillardPosDuplicata}
\Psi:\Hh(r,q(\l))\to\Hh(\G,\l)
\end{equation}
tel que, pour tout $w\in\Ww_\l$, la fonction $\Psi(\SS_w)$ soit de support 
$\J{w}\J$. 
\end{prop}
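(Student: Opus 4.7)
The plan is to produce a basis $(T_w)_{w \in \Ww_\l}$ of $\Hh(\G,\l)$ via Lemma \ref{EntTypSim} and verify that, after a suitable normalization, it realizes the defining relations of $\Hh(r,q(\l))$.

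First, Lemma \ref{EntTypSim} combined with the Mackey-theoretic description of $\Hh(\G,\l)$ recalled in \S\ref{DefHecke} shows that for each $w \in \Ww_\l$ the subspace of $\Hh(\G,\l)$ consisting of functions supported on $\J w \J$ is one-dimensional. Choosing a nonzero $T_w$ in each yields an $\R$-basis of $\Hh(\G,\l)$ of the same cardinality as the standard basis $(\SS_w)_{w \in \Ww_\l}$ of $\Hh(r,q(\l))$, so it will suffice to show that after a well-chosen normalization the $T_w$ satisfy the relations \eqref{R1aff}--\eqref{R3aff} together with $\Pi \SS_i = \SS_{i-1} \Pi$.

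Next, I would reduce the verification to a level zero computation on the centralizer $\mult\B \simeq \GL_{m'}(\D')$. Writing $\l = \k \otimes \s$ and invoking Lemma \ref{FactorisationEntrelacement}, the convolution structure of the $T_w$, for $w \in \Ww_\l \subseteq \mult\B$, is governed entirely by the intertwining of $\s$ seen, via \eqref{QuoRedFin} and \eqref{IDENT}, as an inflation from $\GL_s(\kk_{\D'})^r$ to the parahoric $\U(\La) \cap \mult\B$. This identifies $\Hh(\G,\l)$ with the Hecke algebra of the level zero type $(\U(\La) \cap \mult\B, \s_0^{\otimes r})$ in $\mult\B$, where $m' = rs$. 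The braid relations \eqref{R2aff}, \eqref{R3aff} and the commutation $\Pi \SS_i = \SS_{i-1} \Pi$ then drop out of support considerations: both sides of each proposed relation are supported on the same double coset $\J w \J$, and the one-dimensionality of the intertwining forces proportionality; the scalar is pinned down by evaluating at a single well-chosen point, and the invertibility of $\Pi$ reflects the fact that $h$ and $h^{-1}$ both lie in $\Ww_\l$.

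The main obstacle is the quadratic relation \eqref{R1aff}, in which the parameter $q(\l) = q^{fd's}$ must appear with the correct value. As flagged in the introduction to \S\ref{TSmodl}, the classical proof in the complex case proceeds by a trace computation that fails when $\R$ has positive characteristic, since a nonzero homothety may have zero trace. The plan, to be executed in Lemma \ref{pinv}, is to compute $T_{s_i}^2$ directly on a distinguished vector of $\cind_\J^\G(\l)$: the image is automatically supported on $\J \sqcup \J s_i \J$, hence equals $a T_e + b T_{s_i}$ for scalars $a, b \in \R$ to be determined by explicit evaluation using the $\b$-extension and the known Hecke algebra of the finite-group type $(\GL_{2s}(\kk_{\D'}), \s_0 \otimes \s_0)$, without appeal to characters. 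Once this is achieved, the assignment $\SS_w \mapsto T_w$ extends to the required algebra isomorphism $\Psi$ for dimension reasons, with the support condition built in by construction.
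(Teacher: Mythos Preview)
Your overall architecture is sound and close to the paper's, but you have located the modular obstruction in the wrong place, and this misidentification leaves a real gap.

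The quadratic relation \eqref{R1aff} is \emph{not} the problem. The paper obtains it by quoting James's theorem on the endomorphism algebra of the parabolically induced representation of $\s_0^{\otimes r}$ to $\GB=\GL_{m'}(\kk_{\D'})$: this gives directly an isomorphism of the finite Hecke algebra $\Hh^0(r,q(\l))$ with $\Hh(\U(\La_{\rm max})\cap\mult\B,\s)$, including the correct parameter $q(\l)$, and James's result is valid over any field of characteristic $\neq p$. So your proposed detailed computation of $T_{s_i}^2$ ``without appeal to characters'' is unnecessary, and in any case this is not what Lemma~\ref{pinv} does.

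The genuine difficulty is the step you treat as automatic: the invertibility of the affine generator $\Pi$. You write that ``the invertibility of $\Pi$ reflects the fact that $h$ and $h^{-1}$ both lie in $\Ww_\l$'', but this only tells you that there exist nonzero $\h,\h'\in\Hh(\G,\l)$ supported on $\J h\J$ and $\J h^{-1}\J$, and that $\h*\h'$ is a scalar multiple of the identity. The whole point is to show this scalar is \emph{nonzero}, and it is precisely here that the classical trace argument fails modulo $\ell$ (a nonzero scalar operator can have zero trace). Lemma~\ref{pinv} is about $\h$ supported on $\J h\J$, not about $T_{s_i}$; its proof decomposes the relevant intertwining operator via Lemma~\ref{FactorisationEntrelacement} into a $\k$-part and a $\s$-part, observes that the $\s$-part is invertible, and then shows the trace of the resulting $\Omega$ is nonzero because it involves a projection onto an irreducible representation of a pro-$p$-group, whose dimension is a power of $p$ and hence invertible in $\R$. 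Once $\h$ is known to be invertible, the paper extends $\Psi$ to the full affine Hecke algebra by following \cite[Th\'eor\`eme 4.6]{VS3}. You should rewrite your plan so that the finite relations come from James and the new work is concentrated on the invertibility of $\Pi$.
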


\begin{proof}
On reprend l'argument de \cite{VS3} en précisant les modifications devant 
lui être apportées. 
On fixe une strate simple $[\La_{{\rm max}},n_{{\rm max}},0,\b]$ de $\A$ telle 
que l'intersection $\U(\La_{{\rm max}})\cap\mult\B$ soit un sous-groupe 
compact maximal de $\mult\B$ contenant $\U(\La)\cap\mult\B$ et dont 
l'image par \eqref{MonEpousee} soit le sous-groupe compact maximal standard 
$\GL_{m'}(\Oo_{\D'})$. 
On pose~:
\begin{eqnarray*}
\GB&=&(\U(\La_{{\rm max}})\cap\mult\B)/(\U_1(\La_{{\rm max}})\cap\mult\B), \\
\PB&=&(\U(\La)\cap\mult\B)(\U_1(\La_{{\rm max}}) \cap\mult\B)/(\U_1(\La_{{\rm max}})\cap\mult\B), \\
\MB&=&(\U(\La)\cap\mult\B)/(\U_1(\La)\cap\mult\B).
\end{eqnarray*}
On identifie $\GB$ au groupe $\GL_{m'}(\kk_{\D'})$ et $\MB$ au sous-groupe de 
Levi standard $\GL_{s}(\kk_{\D'})^{r}$.
La re\-pré\-sentation $\s$ identifiée à une représentation de $\MB$ est de la 
forme $\s_0^{\otimes r}$. 
D'après \cite[Theorem 4.12]{James}, 
l'algèbre des endomorphismes de l'induite parabolique de ${\s}$ à $\GB$ 
le long de $\PB$ est isomorphe à la sous-$\R$-algèbre $\Hh^0(r,q(\l))$ 
engendrée par $\SS_1,\dots,\SS_{r-1}$. 
Plus précisément, il y a un isomorphisme de $\R$-al\-gèbres~: 
\begin{equation}
\label{Psi0}
\Psi^0:\Hh^0(r,q(\l))\to\Hh(\U(\La_{{\rm max}})\cap\mult\B,\s)
\end{equation}
tel que, pour tout $i\in\{1,\dots,r-1\}$, la fonction $\Psi^0(\SS_{i})$ soit 
de support $\J{s_i}\J$. 

Notons $\t_{{\rm max}}\in\Cc(\La_{{\rm max}},0,\b)$ le 
transfert de $\t$ et $\k_{{\rm max}}$ la 
$\b$-extension de $\t_{{\rm max}}$ trans\-fert de $\k$. 
Reprenant la preuve du lemme 4.4 et de la proposition 4.5 de \cite{VS3}, 
on obtient des mor\-phismes de $\R$-algèbres~: 
\begin{equation}
\label{Psi1}
\Hh(\U(\La_{{\rm max}})\cap\mult\B,\s)
\to\Hh(\J(\b,\La_{{\rm max}}),\k_{{\rm max}}\otimes\s)
\hookrightarrow\Hh(\G,\l),
\end{equation}
le premier étant un isomorphisme et le second injectif. 
En composant \eqref{Psi0} et \eqref{Psi1}, on obtient 
un homomorphisme injectif $\Psi$ de $\R$-algèbres de 
$\Hh^0(r,q(\l))$ dans $\Hh(\G,\l)$.
Fixons maintenant une fonction $\h\in\Hh(\G,\l)$ de support $\J h\J$. 
On va montrer qu'il existe un unique isomorphisme 
de $\R$-al\-gèbres \eqref{BaudrillardPosDuplicata} 
prolongeant $\Psi$ et satisfaisant à la condition $\Psi(\Pi)=\h$. 
Pour cela, on reprend la preu\-ve de \cite[Théorème 4.6]{VS3}, 
à ceci près que la preu\-ve de \cite[Lemme 4.14]{VS3}
doit être mo\-di\-fiée comme indiqué ci-dessous. 

\begin{lemm}
\label{pinv}
L'élément $\h$ est inversible dans $\Hh(\G,\l)$. 
\end{lemm}

\begin{proof}
Soit $\h'\in\Hh(\G,\l)$ une fonction de support $\J h^{-1}\J$.
Le produit de convolution 
$\h*\h'$ est un multiple scalaire de l'élément neutre dans $\Hh$. 
Il est donc soit nul, soit inversible, et on va montrer que sa valeur en $1$ 
n'est pas nulle. 
Fixons un ensemble $\X$ de représentants de $\J$ modulo $\J\cap h^{-1}\J h$, 
qu'on peut supposer être contenu dans $\J^1$.
On a~:
\begin{equation*}
\h*\h'(1) = \sum\limits_{x\in\X}\l(x)\circ\h(h) \circ\h'(h^{-1}) \circ\l(x)^{-1}.
\end{equation*}
D'après le lemme \ref{FactorisationEntrelacement}, 
l'opérateur d'entrelacement $\h(h)\in\Hom_{\J\cap\J^h}(\l,\l^h)$ 
se décom\-po\-se sous la forme $\Phi_\k\otimes\Phi_\s$
avec $\Phi_\k\in\Hom_{\J\cap\J^h}(\k,\k^h)$ et 
$\Phi_\s\in\Hom_{\J\cap\J^h}(\s,\s^h)$.
En reprenant l'argument de \cite[Lemma 1.5]{GSZ}, 
on montre qu'on a un isomorphisme de $\R$-espaces vectoriels~:
\begin{equation*}
\Hom_{\J\cap\J^h}(\s,\s^h)\simeq\Hom_{\J/\J^1}(\s,\s^h),
\end{equation*}
ce qui prouve que $\Phi_\s$ est inver\-sible.
De façon analogue, l'opérateur d'entrelacement $\h'(h^{-1})$ se décompose 
sous la forme 
$\Phi'_\k\otimes\Phi'_\s$, où $\Phi'_\s$ est l'inverse de $\Phi_\s^{}$.  
On a donc $\h*\h'(1)=\Omega\otimes{\rm id}_{\s}$ où 
${\rm id}_\s$ est l'identité sur l'espace de $\s$ et où~: 
\begin{equation*}
\Omega = \sum\limits_{x\in\X}\n(x)\circ\Phi_\k^{}\circ\Phi'_\k \circ\n(x)^{-1}.
\end{equation*}
Comme $\J^1\cap h^{-1}\J^1h$ est un pro-$p$-groupe, la restriction de $\n$ à
celui-ci est semi-simple. 
Il y a donc un unique facteur irréductible en commun $\V$ entre les restrictions à 
$\J^1\cap h^{-1}\J^1h$ de $\n$ et de $\n^{h}$, et 
l'opérateur $\Phi_\k^{}\circ\Phi'_\k$ est un multiple non nul de la projection 
sur $\V$.
En tant que représentation lisse 
irréductible d'un pro-$p$-groupe, la dimension de $\V$ est 
une puissance de $p$, qui est non nulle dans $\R$.
On en déduit que la tra\-ce de $\Omega$ est non nulle, ce qui termine la 
démonstration du lemme \ref{pinv}. 
\end{proof}

À partir de là, on termine en reprenant l'argument de \cite[Théorème 4.6]{VS3}.
Ceci met fin à la preuve de la proposition \ref{ANenPlusFinirPosDuplicata}.
\end{proof}

Compte tenu de ce qui précède, on obtient le résultat suivant.

\begin{coro}
\label{InvSupUni}
Tout élément non nul de $\Hh(\G,\l)$ supporté par une double 
classe mod $\J$ est inver\-sible.
\end{coro}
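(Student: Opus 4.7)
Mon plan est de ramener l'assertion � l'inver\-sibilit� des �l�ments $\SS_w$ dans l'alg�bre de Hecke affine $\Hh(r,q(\l))$ via l'isomor\-phisme $\Psi$ de la proposition \ref{ANenPlusFinirPosDuplicata}, puis d'�tablir cette inver\-sibilit� directement � partir des relations d�finissantes.

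Je commencerais par combiner la proposition \ref{ANenPlusFinirPosDuplicata} et le lemme \ref{EntTypSim} pour observer que toute fonction non nulle $f \in \Hh(\G,\l)$ support�e par une $\J$-double classe est n�cessai\-rement de la forme $c \cdot \Psi(\SS_w)$ pour un unique $w \in \Ww_\l$ et un scalaire $c \in \mult\R$. En effet, si la double classe n'est pas contenue dans l'ensemble d'entre\-lace\-ment $\bigsqcup_{w \in \Ww_\l} \J w \J$ de $\l$, la fonction serait nulle ; sinon, le sous-espace des fonctions support�es par la double classe $\J w \J$ est de dimension $1$ puisque l'es\-pace d'entre\-lace\-ment $\Hom_{\J \cap \J^w}(\l,\l^w)$ l'est, et il est engendr� par $\Psi(\SS_w)$.

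Il reste alors � prouver que chaque $\SS_w$ est inversible dans $\Hh(r,q(\l))$. Utilisant la d�com\-po\-si\-tion $\SS_w = \Pi^a \SS_{i_1} \cdots \SS_{i_l}$ de \eqref{DEFSW}, on se ram�ne � v�rifier l'inver\-sibilit� de $\Pi$ et de chaque $\SS_i$. L'inver\-sibilit� de $\Pi$ figure parmi les relations d�finissantes de l'alg�bre. Pour $\SS_i$, la relation quadratique \eqref{R1aff} se r��crit $\SS_i \cdot (\SS_i + 1 - q(\l)) = q(\l)$ ; comme $q(\l) = q^{fd's}$ est une puissance de $p$ et que $\R$ est de caract�ristique diff�rente de $p$, le scalaire $q(\l)$ est inversible dans $\R$, et on obtient explicitement $\SS_i^{-1} = q(\l)^{-1}(\SS_i + 1 - q(\l))$. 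Le scalaire non nul $c$ �tant par ailleurs trivialement inversible dans $\R$, le corollaire s'ensuit.

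Aucun obstacle majeur n'est � pr�voir, toute la partie d�licate ayant �t� surmont�e en amont dans la preuve de la proposition \ref{ANenPlusFinirPosDuplicata} (en particulier le lem\-me \ref{pinv}, qui garantit la non-trivialit� de $\Psi(\Pi)$). Le seul point sensible restant est l'inver\-sibilit� du para\-m�tre $q(\l)$ dans $\R$, qui repose de fa�on essentielle sur l'hypoth�se que la caract�ristique de $\R$ est diff�rente de $p$.
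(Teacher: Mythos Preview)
Your argument is correct and is precisely the natural elaboration of what the paper leaves implicit (the paper simply writes ``Compte tenu de ce qui pr\'ec\`ede'' before stating the corollary). You reduce to the invertibility of $\SS_w$ in $\Hh(r,q(\l))$ via $\Psi$, then factor $\SS_w=\Pi^a\SS_{i_1}\cdots\SS_{i_l}$ and invert each factor using the quadratic relation and the fact that $q(\l)\in\mult\R$; this is exactly the intended route.
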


\begin{exem}
\label{EmilPost}
Si l'on prend pour $(\J,\l)$ le type simple $(\I,1_\I)$ de l'exemple 
\ref{ExeIwa}, la proposition \ref{ANenPlusFinirPosDuplicata} montre que 
l'algèbre de Hecke-Iwahori $\Hh(\G,\I)$ est isomorphe à la $\R$-algèbre de 
Hecke affine $\Hh(m,q^{d})$.
\end{exem}

\subsection{Paires couvrantes}
\label{PCMJ}

Dans ce paragraphe, nous rappelons quelques faits concernant la théorie 
des paires couvran\-tes.
Pour plus de détails, nous renvoyons le lecteur à \cite{BK1,Vig2}.

Soit $\tau$ une représentation irréductible d'un sous-groupe ouvert compact
$\K$ de $\G$. 
Il lui corres\-pond la $\R$-al\-gè\-bre 
$\Hh=\Hh(\G,\tau)$ définie au paragraphe 
\ref{SectionPreliminaire}.\ref{DefHecke} et le foncteur~: 
\begin{equation*}
\label{Lalka}
\Mm_{\tau}:\s\mapsto\Hom_{\K}(\tau,\s)
\end{equation*}
de $\Rr=\Rr_{\R}(\G)$ vers la catégorie des $\Hh$-modules à droite. 
Par réciprocité de Frobenius, celui-ci s'identifie au foncteur 
$\s\mapsto\Hom_{\G}(\cind^{\G}_{\K}(\tau),\s)$.

Soit $\M$ un sous-groupe de Levi de $\G$ et soit $\tau_\M$ une 
représentation irréductible d'un sous-grou\-pe ouvert compact $\K_\M$ 
de $\M$.
Supposons que $(\K,\tau)$ est une paire couvrante de $(\K_{\M},\tau_{\M})$ 
au sens de \cite[8.1]{BK1}.  

\begin{rema}
\label{PosNeg}
Dans \cite{BK1}, l'algèbre associée à la paire $(\K,\tau)$ est l'algèbre 
$\Hh(\G,\tau^\vee)$ asso\-ciée à la contragré\-dien\-te $\tau^\vee$ de $\tau$.
Elle est opposée à $\Hh$ (voir \cite[2.3]{BK1}). 
Dans la condition (iii) de \cite[8.1]{BK1} il faut donc remplacer 
``$(\P,\J)$-positive'' par ``$(\P,\J)$-negative''.
\end{rema}

On a fixé au para\-graphe 
\ref{SectionPreliminaire}.\ref{MenelasEtHermione} une racine carrée de $q$ dans $\R$. 
Soit $\P$ un sous-groupe parabolique de $\G$ muni d'une décomposition 
de Levi $\P=\M\N$.
Il lui correspond un homo\-morphis\-me injectif de $\R$-algèbres~:
\begin{equation}
\label{tePe}
\te_{\P}:\Hh(\M,\tau_{\M})\to\Hh(\G,\tau)
\end{equation}
faisant de $\Hh$ un module à gauche sur 
$\Hh_{\M}=\Hh(\M,\tau_{\M})$
(voir \cite[II.10]{Vig2}~: pour la raison don\-née à la remarque \ref{PosNeg}, 
l'homomorphisme $\te_\P$ préserve les supports des fonctions de $\Hh_\M$ 
supportées par les éléments $\P$-négatifs de $\M$, et non pas $\P$-positifs 
comme dans \cite{BK1}).
Cet homo\-morphisme définit un fonc\-teur de restriction, 
noté $\jp_{\P}^{*}$, 
de la catégorie des $\Hh$-modules à droite vers la catégorie 
des $\Hh_{\M}$-mo\-du\-les à droite.
Pour toute re\-pré\-sen\-ta\-tion $\s$ de $\G$, la pro\-jec\-tion 
na\-tu\-rel\-le de $\s$ vers son module de Jacquet $\rp_{\P}^{\G}(\s)$ 
induit un iso\-mor\-phis\-me~:
\begin{equation}
\label{Gobineau}
\jp_{\P}^{*}(\Mm_{\tau}(\s))\simeq\Mm_{\tau_\M}(\rp_{\P}^{\G}(\s))
\end{equation}
de $\Hh_{\M}$-modules à droite (voir \cite[II.10.1]{Vig2}).

\begin{rema}
\label{SolecismeEvident}
On déduit de \eqref{Gobineau} que, 
si une représentation $\s$ de $\G$
contient une paire couvrante, 
alors $\rp_{\P}^{\G}(\s)$ est non nul.
En particulier, une représentation ir\-ré\-duc\-tible
de $\G$ conte\-nant une paire couvrante pour $\M\subsetneq\G$ 
n'est pas cuspidale.
\end{rema}

La propriété \eqref{Gobineau} permet d'obtenir le résultat suivant. 

\begin{prop}
\label{GobineauAdj} 
Pour tout $\Hh_\M$-module à droite $\mm$ de dimension finie, 
on a un iso\-mor\-phis\-me de représentations de $\G$~:
\begin{equation*}
\mm\otimes_{\Hh_\M}\cind_{\K}^{\G}(\tau)\simeq
\ip_{\P^-}^{\G}(\mm\otimes_{\Hh_\M}\cind_{\K_{\M}}^{\M}(\tau_{\M})),
\end{equation*}
où $\P^-$ désigne le sous-groupe parabolique de $\G$ opposé
à $\P$ relativement à $\M$.
\end{prop}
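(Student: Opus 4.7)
My plan is to establish the isomorphism by a Yoneda-type argument, chaining together the Hom-tensor adjunction, the Hecke-algebraic identity \eqref{Gobineau}, and the second adjunction \eqref{SecondeAdjonctionAdmissible}. For any smooth representation $\pi$ of $\G$, the Hom-tensor adjunction (with $\mm$ right and $\cind_{\K}^{\G}(\tau)$ left over $\Hh_\M$ via $\te_\P$) together with Frobenius reciprocity gives
\begin{equation*}
\Hom_\G(\mm\otimes_{\Hh_\M}\cind_\K^\G(\tau),\pi)\simeq\Hom_{\Hh_\M}(\mm,\jp_\P^*(\Mm_\tau(\pi))).
\end{equation*}
Next, I invoke \eqref{Gobineau}, which identifies $\jp_\P^*(\Mm_\tau(\pi))$ with $\Mm_{\tau_\M}(\rp_\P^\G(\pi))$, and then apply the analogous Hom-tensor adjunction on the $\M$-side to obtain
\begin{equation*}
\Hom_{\Hh_\M}(\mm,\Mm_{\tau_\M}(\rp_\P^\G(\pi)))\simeq
\Hom_\M(\mm\otimes_{\Hh_\M}\cind_{\K_\M}^\M(\tau_\M),\rp_\P^\G(\pi)).
\end{equation*}

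Finally, the second adjunction \eqref{SecondeAdjonctionAdmissible} converts this last group into $\Hom_\G(\ip_{\P^-}^\G(\mm\otimes_{\Hh_\M}\cind_{\K_\M}^\M(\tau_\M)),\pi)$. Since all identifications are natural in $\pi$, Yoneda then produces the desired $\G$-equivariant isomorphism. Alternatively, the direct construction goes as follows: the identity map on $\mm\otimes_{\Hh_\M}\cind_\K^\G(\tau)$ under the chain above gives a canonical $\M$-map
\begin{equation*}
\mm\otimes_{\Hh_\M}\cind_{\K_\M}^\M(\tau_\M)\to\rp_\P^\G(\mm\otimes_{\Hh_\M}\cind_\K^\G(\tau)),
\end{equation*}
and second adjunction turns this into the $\G$-map whose bijectivity is the assertion.

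The principal technical obstacle is the applicability of \eqref{SecondeAdjonctionAdmissible}, which is formulated only for admissible representations on both sides. The representation $\sigma=\mm\otimes_{\Hh_\M}\cind_{\K_\M}^\M(\tau_\M)$ is manifestly finitely generated (being generated as an $\M$-module by the images of $\mm\otimes f$ for $f$ a generator of $\cind_{\K_\M}^\M(\tau_\M)$), but its admissibility is not obvious; in the complex setting it follows from projectivity of $\cind_{\K_\M}^\M(\tau_\M)$ as an $\Hh_\M$-module, while in the $\R$-modular setting one needs a substitute. To bypass this one may either restrict the Yoneda argument to admissible $\pi$ and argue that the natural map so obtained is an isomorphism by testing after applying $\Mm_\tau$ (the functor $\Mm_\tau$ being faithful on representations generated by their $\tau$-isotypic part), or alternatively construct the map directly from the covering structure and verify injectivity and surjectivity by computing Hecke modules on both sides via \eqref{Gobineau} and the analog at the $\M$-level.
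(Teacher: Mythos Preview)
Your approach coincides with the paper's: both pass to left adjoints, combining the Hom--tensor adjunction, \eqref{Gobineau}, and second adjunction \eqref{SecondeAdjonctionAdmissible}. The paper's proof is just as terse---it lists the left adjoints of $\Mm_\tau$ and of $\jp_\P^*$ and then simply says ``from \eqref{Gobineau} and \eqref{SecondeAdjonctionAdmissible}, the result follows.''

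You are right to flag the admissibility hypothesis in \eqref{SecondeAdjonctionAdmissible} as the technical crux; the paper does not elaborate either, handling it only via the accompanying remark that the finite-dimensionality of $\mm$ is imposed precisely so that second adjunction applies, without explaining in detail why $\mm\otimes_{\Hh_\M}\cind_{\K_\M}^\M(\tau_\M)$ is then admissible. Your discussion of workarounds goes beyond what the paper provides.
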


\begin{rema}
L'hypothèse sur la dimension de $\mm$ provient du fait que notre 
preuve utilise la propriété de seconde adjonction 
\eqref{SecondeAdjonctionAdmissible}. 
\end{rema}

\begin{proof}
Le foncteur $\Mm_{\tau} $ admet un adjoint à gauche~:
\begin{equation*}
\mm\mapsto\mm\otimes_{\Hh}\cind^{\G}_{\K}(\tau)
\end{equation*}
de la catégorie des $\Hh$-modules à droite vers la catégorie $\Rr$, et 
le foncteur $\jp_\P^*$ admet un adjoint à gauche
$\mm\mapsto\mm\otimes_{\Hh_\M}\Hh$
de la catégorie des $\Hh_{\M}$-modules à droite vers celle des 
$\Hh$-modules à droi\-te.
À partir de (\ref{Gobineau}) et en utilisant la 
seconde adjonction \eqref{SecondeAdjonctionAdmissible}, 
on obtient le résultat. 
\end{proof}

D'après Blondel \cite[II]{Blondel} 
(voir aussi Dat \cite[§2]{Dat1}), 
on a un isomorphisme~: 
\begin{equation}
\label{BlondelDat}
\Phi_{\P}:\cind_{\K}^{\G}(\tau)\to\ip_{\P}^{\G}
(\cind_{\K_{\M}}^{\M}(\tau_{\M})),
\quad\Phi_{\P}(f)(g)(x)=\int\limits_{\N}f(uxg)\ du,
\end{equation}
de représentations de $\G$ et de $\Hh_{\M}$-modules à gauche, 
où $du$ est la mesure de Haar sur $\N$ normalisée de telle sorte que 
$\K\cap\N$ soit de volume $1$, avec $f\in \cind_{\K}^{\G}(\tau)$, 
$g\in\G$ et $x\in\M$.

\begin{prop}
\label{ScotErigene}
Soit $\s$ une représentation de $\M$ 
engendrée par sa composan\-te $\tau_{\M}$-iso\-ty\-pique.
Alors l'induite $\ip_\P^\G(\s)$ est engendrée par 
sa composante $\tau$-isotypique.
\end{prop}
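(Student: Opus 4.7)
The plan is to exploit the Blondel--Dat isomorphism \eqref{BlondelDat}, which identifies the ``big'' compact induction $\cind_{\K}^{\G}(\tau)$ with the parabolic induction of the ``small'' compact induction $\cind_{\K_\M}^\M(\tau_\M)$. The hypothesis on $\s$ has a natural reformulation in terms of compact inductions, and the conclusion has a similar reformulation; parabolic induction being exact, the Blondel--Dat isomorphism will allow us to pass from one to the other.

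More precisely, I would proceed as follows. First, observe that the hypothesis that $\s$ is engendr\'ee by its $\tau_\M$-isotypic component is equivalent, via Frobenius reciprocity, to the existence of a surjective $\M$-homomorphism
\begin{equation*}
\bigoplus_{i\in I}\cind_{\K_\M}^\M(\tau_\M)\twoheadrightarrow\s
\end{equation*}
for some index set $I$ (each summand corresponding to a choice of embedding $\tau_\M\hookrightarrow\s$ extended by Frobenius to an $\M$-equivariant map). Next, apply the parabolic induction functor $\ip_\P^\G$, which is exact (see paragraph \ref{SectionPreliminaire}.\ref{MenelasEtHermione}); the resulting map
\begin{equation*}
\bigoplus_{i\in I}\ip_\P^\G\bigl(\cind_{\K_\M}^\M(\tau_\M)\bigr)\twoheadrightarrow\ip_\P^\G(\s)
\end{equation*}
is still surjective (exactness of $\ip_\P^\G$ preserves surjections, commutation with direct sums is automatic).

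Now invoke the Blondel--Dat isomorphism \eqref{BlondelDat}, which gives a canonical $\G$-equivariant isomorphism
\begin{equation*}
\cind_{\K}^\G(\tau)\simeq\ip_\P^\G\bigl(\cind_{\K_\M}^\M(\tau_\M)\bigr).
\end{equation*}
Substituting, we obtain a surjection $\bigoplus_{i\in I}\cind_\K^\G(\tau)\twoheadrightarrow\ip_\P^\G(\s)$. Since $\cind_\K^\G(\tau)$ is itself generated by its $\tau$-isotypic component (namely by the canonical copy of $\tau$ supported at $1$, by Frobenius reciprocity), the image of this surjection is generated by $\tau$-isotypic vectors, hence so is $\ip_\P^\G(\s)$.

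There is no genuine obstacle: the argument is a formal transport via \eqref{BlondelDat}. The only point requiring minor care is the translation of the qualitative condition ``engendr\'ee par sa composante $\tau_\M$-isotypique'' into the existence of a surjection from a (possibly infinite) direct sum of copies of $\cind_{\K_\M}^\M(\tau_\M)$, but this is a standard consequence of Frobenius reciprocity and does not rely on any finiteness hypothesis. In particular no admissibility assumption on $\s$ is needed, and the proof works uniformly over $\R$ of arbitrary characteristic different from $p$.
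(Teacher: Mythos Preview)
Your proof is correct and follows essentially the same route as the paper: reinterpret the hypothesis as a surjection from a direct sum of copies of $\cind_{\K_\M}^\M(\tau_\M)$, apply the exact functor $\ip_\P^\G$, and use the Blondel--Dat isomorphism \eqref{BlondelDat} to recognize the result as a quotient of a direct sum of copies of $\cind_\K^\G(\tau)$. The paper's proof is slightly more terse but identical in substance.
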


\begin{proof}
Par hypothèse, $\s$ est quotient d'une somme directe 
de copies de $\cind_{\K_{\M}}^{\M}(\tau_{\M})$, \ie qu'il existe un
ensemble $\SS$ et un homo\-mor\-phis\-me 
surjectif de représentations de $\M$~:
\begin{equation*}
\bigoplus\limits_{\SS}\cind_{\K_{\M}}^{\M}(\tau_{\M})\to\s.
\end{equation*}
Si l'on applique le foncteur exact $\ip_\P^\G$ (qui commute aux sommes
directes arbitraires) et la formule 
(\ref{BlondelDat}), on voit que l'induite parabolique $\ip_\P^\G(\s)$ 
est un quotient d'une somme de copies de $\cind_{\K}^{\G}(\tau)$,
\ie qu'elle est engendrée par sa composante $\tau$-isotypique.
\end{proof}

\subsection{Types semi-simples I : le cas homogène}
\label{EndoEq}

Soit $\a=(m_1,\dots,m_r)$ une fa\-mil\-le d'entiers $\>1$ de somme $m$. 
Pour chaque entier $i\in\{1,\dots,r\}$, fixons un type simple maximal 
$(\J_i,\l_i)$ de $\G_{m_i}$.
On pose $\M=\M_\a$, $\P=\P_\a$, $\N=\N_\a$
et on note $\l_\M$ la représen\-ta\-tion 
$\l_1\otimes\dots\otimes\l_r$ du groupe $\J_\M=\J_1\times\cdots\times\J_r$. 

Pour chaque $i\in\{1,\dots,r\}$, 
fixons une strate simple $[\La_i,n_i,0,\b_i]$ de $\A_{m_i}$ telle que $\J_i$ 
soit égal à $\J(\b_i,\La_i)$ et
fixons un caractère simple $\t_i\in\Cc(\La_{i},0,\b_i)$ contenu dans le type 
simple $\l_i$.  
On note $({\bf\Theta}_i,0,\b_i)$ le ps-caractère 
(voir \cite[Definition 1.5]{BSS1}) défini par le couple 
$([\La_i,n_i,0,\b_i],\t_i)$.

\begin{defi}
\label{DefTSMlevi}
Un couple de la forme $(\J_\M,\l_\M)$ est appelé un 
\textit{type simple maximal de $\M$}.
\end{defi}

\textit{Dans ce paragraphe, on suppose que les ps-caractères 
$({\bf\Theta}_i,0,\b_i)$, $i\in\{1,\dots,r\}$ sont tous endo-équivalents 
(au sens de \cite[Definition 1.10]{BSS1})
à un ps-caractère fixé $({\bf\Theta},0,\b)$.} 
On peut supposer, comme dans \cite[\S8.1]{SeSt2}, que les caractères 
simples $\t_1,\dots,\t_r$ sont des réalisations de $({\bf\Theta},0,\b)$. 

Fixons une strate simple $[\La,n,0,\b]$ de $\A$ telle que~:
\begin{enumerate}
\item
on a $\La=(\La\cap\D^{m_1})\oplus\dots\oplus(\La\cap\D^{m_r})$ et la 
$\Oo_\D$-suite de réseaux $\La\cap\D^{m_i}$ est dans la classe affine de $\La_i$ pour 
chaque $i\in\{1,\dots,r\}$~;
\item
il existe un isomorphisme de $\E$-algèbres $\Phi:\B\to\Mat_{m'}(\D')$
identifiant $\AA(\La)\cap\B$ à un ordre héréditaire standard dont la réduction
modulo $\p_{\D'}$ est formée des matrices triangulaire par blocs de taille 
$(m'_1,\dots,m'_r)$, où $m'_i$ est l'entier associé à $m^{}_i$ par la relation 
\eqref{MPRIME}. 
\end{enumerate}
Quitte à dilater chaque suite $\La_i$ par un entier convenable,
on supposera dans la suite que toutes les $\La_i$ ont la même période, 
égale à celle de $\La$.
Notons $\t\in\Cc(\La,0,\b)$ la réalisation de $({\bf\Theta},0,\b)$ associée à 
cette strate simple et choisissons une $\b$-extension $\k$ de $\t$. 
On définit des sous-groupes~:
\begin{equation*}
\BJ=\H^1(\b,\La)(\J(\b,\La)\cap\P), 
\quad
\BJ^1=\H^1(\b,\La)(\J^1(\b,\La)\cap\P).
\end{equation*}
Comme aux paragraphes 6.3 et 6.4 de \cite{SeSt2} 
(où les groupes définis ci-dessus sont notés $\J^{}_\P$ et 
$\J^1_\P$) on dé\-fi\-nit des représentations irréductibles 
$\bn$ et $\bk$ de $\BJ^1$ et $\BJ$ respectivement,
dont les propriétés sont décrites par les pro\-po\-sitions 6.5 et 
6.6 de \cite{SeSt2}
(où ces représentations sont notées $\n_\P$ et $\k_\P$).
En particulier, l'induite de $\bn$ à $\J^1(\b,\La)$ est isomorphe à la 
restriction de $\k$ à ce groupe et, 
pour toute représentation irréductible $\xi$ de 
$\U(\La)\cap\mult\B$ triviale sur $\U^1(\La)\cap\mult\B$, 
on a un isomorphisme canonique de représentations~:
\begin{equation}
\label{poumi}
\ind_{\BJ}^{\J(\b,\La)}(\bk\otimes\xi)\simeq \k\otimes\xi.
\end{equation}
La restriction de $\bk$ à $\BJ\cap\M$ se décompose sous la forme 
$\k_1\otimes\dots\otimes\k_r$ où $\k_i$ est une $\b$-extension de $\t_i$ pour 
chaque $i\in\{1,\dots,r\}$.
Décomposons $\l_i$ sous la forme $\k_i\otimes\s_i$ où $\s_i$ est une 
représentation irréductible de $\J_i$ triviale sur $\J^1(\b,\La_i)$.
L'isomorphisme $\Phi$ induit un isomorphisme de $\A_{m_i}\cap\B$ vers 
$\Mat_{m'_i}(\D')$ permettant d'identifier 
$\s_i$ à une représentation cuspidale de $\GL_{m'_i}(\kk_{\D '})$.

On réunit $\s_1,\dots,\s_r$ suivant leur classe de conjugaison sous le groupe 
$\Ga=\Gal(\kk_{\D'}/\kk_\E)$, ce qui définit une partition~:
\begin{equation*}
\{1,\dots,r\}=\I_1\cup\dots\cup\I_u,
\quad u\>1.
\end{equation*}
Quitte à renuméroter les types simples 
$(\J_i,\l_i)$, on peut supposer pour simplifier qu'il existe des entiers 
$0=a_0<a_1<\dots<a_u=r$ tels qu'on ait 
$\I_j=\{i\in\{1,\dots,r\}\ |\ a_{j-1}<i\<a_j\}$ pour chaque
$j\in\{1,\dots,u\}$.  
Ceci définit un sous-groupe de Levi \textit{standard} $\M'$
de $\G$ contenant $\M$. 

Notons $\bs$ la représentation $\s_1\otimes\dots\otimes\s_r$ considérée 
comme représentation de $\BJ$ triviale sur $\BJ^1$ et posons 
$\bl=\bk\otimes\bs$. 
On pose $\BJ_{\M'}=\BJ\cap\M'$ et on note $\bl_{\M'}$ la restriction de $\bl$ à 
$\BJ_{\M'}$.

\begin{rema}
\label{Gurgeh}
Dans le cas où $u=1$, \ie où $\M'=\G$, 
la propriété \eqref{poumi} implique 
que l'induite de $\bl$ à $\J(\b,\La)$ est un type simple $\l$.
Il y a donc un isomorphisme canonique de $\R$-algèbres de 
$\Hh(\G,\bl)$ sur $\Hh(\G,\l)$ préservant les supports, 
la structure de cette dernière étant donnée par la 
propo\-sition \ref{ANenPlusFinirPosDuplicata}. 
En reprenant l'argument de \cite[Proposition 6.7]{SeSt2}, 
on en déduit que 
$(\BJ,\bl)$ est une paire couvrante de $(\J_\M,\l_\M)$.
\end{rema}

Pour $j\in\{1,\dots,u\}$, 
notons $r_j$ le cardinal de $\I_j$ et posons $f_j=fd'm'_i$, où 
$f$ est le degré résiduel de $\E$ sur $\F$ et où $m'_i$ ne dépend pas de 
$i\in\I_j$. 

\begin{prop}
\label{JplPM}
\begin{enumerate}
\item 
La paire $(\BJ,\bl)$ est une paire couvrante de $(\BJ_{\M'},\bl_{\M'})$, 
qui est elle-même une paire couvrante de $(\J_\M,\l_\M)$. 
\item
On a un isomorphisme de $\R$-algèbres de $\Hh({\M'},\bl_{\M'})$ dans 
$\Hh(\G,\bl)$ préservant les supports.
\item
On a un isomorphisme~: 
\begin{equation*}
\label{BaudrillardPosDuplicata81}
\Hh(r_1,q^{f_1})\otimes\dots\otimes\Hh(r_u,q^{f_u})
\to\Hh(\G,\bl)
\end{equation*}
de $\R$-al\-gèbres.
\end{enumerate}
\end{prop}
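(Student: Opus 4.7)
The plan is to prove the three assertions together by reducing the non-homogeneous statement to the homogeneous one via the intermediate Levi $\M'$, and to control the passage from $\M'$ to $\G$ using the intertwining bound of the appendix.

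First I would establish that $(\BJ_{\M'},\bl_{\M'})$ is a covering pair of $(\J_{\M},\l_{\M})$ with Hecke algebra isomorphic to the tensor product $\Hh(r_1,q^{f_1})\otimes\dots\otimes\Hh(r_u,q^{f_u})$. Decomposing $\M'$ as a product of $\GL$-blocks indexed by $\{1,\dots,u\}$, the pair $(\BJ_{\M'},\bl_{\M'})$ factors accordingly into a product of pairs, one for each Galois orbit $\I_j$. Each factor falls into the homogeneous setting $u=1$ of Remark~\ref{Gurgeh}: within a block the cuspidal representations $\s_i$, $i\in\I_j$, are all $\Ga$-conjugate, so the induced pair in that $\GL$-block is equivalent (via the isomorphism \eqref{poumi}) to a genuine simple type $(\J_j,\l_j)$, whose Hecke algebra is described by Proposition~\ref{ANenPlusFinirPosDuplicata} as $\Hh(r_j,q^{f_j})$. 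The covering and algebra statements at the level of $\M'$ follow by taking products.

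Next I would prove that $(\BJ,\bl)$ is a covering pair of $(\BJ_{\M'},\bl_{\M'})$ and that the transfer morphism $\te_{\P\cap\M'}\colon\Hh(\M',\bl_{\M'})\to\Hh(\G,\bl)$ provided by \eqref{tePe} is actually an isomorphism preserving supports. The Iwahori decomposition $\BJ=(\BJ\cap\N^-)(\BJ\cap\M')(\BJ\cap\N)$ and the fact that $\bl$ restricts correctly on $\BJ\cap\M'=\BJ_{\M'}$ are built into the construction (compare \cite[Prop.~6.5, 6.6]{SeSt2}), so axioms (i) and (ii) of a covering pair come for free. For axiom (iii) one must verify that any element of $\Hh(\M',\bl_{\M'})$ supported on strongly $(\P\cap\M',\BJ_{\M'})$-negative elements has invertible image in $\Hh(\G,\bl)$. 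Once the support-preservation of $\te_{\P\cap\M'}$ is known, the corollary \ref{InvSupUni} (or rather its analogue for semi-simple types) together with the argument through Blondel \cite[Prop.~III.2]{Blondel} mentioned in the introduction shows that any left-invertible element of $\Hh(\G,\bl)$ is invertible, which yields invertibility of the relevant transfers.

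The decisive step, and the main obstacle, is showing surjectivity of $\te_{\P\cap\M'}$, equivalently showing that the intertwining set of $\bl$ in $\G$ is contained in $\BJ(\M'\cap\mult\B)\BJ$. This is exactly where the intertwining bound of the appendix enters: by Corollary~\ref{cor:xiKint} the intertwining of $\bl$ is contained in $\BJ(\L\cap\mult\B)\BJ$ for a suitable Levi $\L$, and because the cuspidal representations $\s_i$ belonging to distinct $\I_j$'s are not $\Ga$-conjugate, Lemma~\ref{NicolasRostov} forces the non-trivial intertwining to lie inside $\M'\cap\mult\B$. Combined with Lemma~\ref{FactorisationEntrelacement} to factor intertwining operators as $\bk$-part times $\bs$-part, one gets the sharp equality of supports, hence $\te_{\P\cap\M'}$ is a support-preserving isomorphism.

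Finally, assembling the pieces gives the tensor factorisation $\Hh(\G,\bl)\simeq\Hh(\M',\bl_{\M'})\simeq\bigotimes_{j=1}^{u}\Hh(r_j,q^{f_j})$, and transitivity of the covering-pair relation (which follows from the compatibility of the two $\te$-maps and of the Jacquet-module formula \eqref{Gobineau}) then yields that $(\BJ,\bl)$ is a covering pair of $(\J_{\M},\l_{\M})$. I expect the control of the intertwining in the non-homogeneous case, relying essentially on the appendix, to be the only genuinely delicate point; the rest is a careful bookkeeping of the homogeneous blocks and a standard application of the covering-pair transitivity together with the invertibility criterion \`a la Blondel.
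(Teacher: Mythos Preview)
Your overall architecture---reduce to the blocks of $\M'$, use Remark~\ref{Gurgeh} and Proposition~\ref{ANenPlusFinirPosDuplicata} on each block, then transport everything to $\G$ via a support-preserving Hecke isomorphism---is exactly the paper's, and your treatment of $(\BJ_{\M'},\bl_{\M'})$ as a product of homogeneous simple types is correct.

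The genuine gap is in your control of the intertwining of $\bl$ in $\G$. You have misread the setting: Proposition~\ref{JplPM} lives in \S\ref{EndoEq}, the \emph{homogeneous} case, where all the ps-characters $({\bf\Theta}_i,0,\b_i)$ are endo-equivalent to a single $({\bf\Theta},0,\b)$. The partition $\{1,\dots,r\}=\I_1\cup\dots\cup\I_u$ defining $\M'$ is by $\Ga$-conjugacy of the cuspidals $\s_i$, not by endo-class. Consequently the Levi $\L$ of \S\ref{NonEndoEq} coincides with $\G$ here, and the appendix results you invoke (Corollary~\ref{cor:xiKint}, Lemma~\ref{NicolasRostov}) yield only $\I_\G(\bn)\subseteq\BJ^1\mult\B\BJ^1$, which is no restriction at all toward $\M'$. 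So your ``decisive step'' actually proves nothing in this situation.

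What the paper does instead is to reproduce the argument of \cite[Proposition~5.17]{SeSt1}: one first checks that $(\BJ,\bl)$ is decomposed over $(\BJ_{\M'},\bl_{\M'})$, and then bounds the intertwining of $\bl$ inside $\BJ\M'\BJ$ by combining the factorisation Lemma~\ref{FactorisationEntrelacement} (your $\bk$-part/$\bs$-part split) with the level-zero computation of \cite[Proposition~1.2]{GSZ} and \cite[Proposition~6.6]{SeSt2}. The point is that an element of $\mult\B$ intertwining $\bs$ must permute the blocks while carrying each $\s_i$ to a $\Ga$-conjugate; since $\s_i$'s in distinct $\I_j$ are \emph{not} $\Ga$-conjugate, such an element lies in $\M'$. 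Once the intertwining is contained in $\BJ\M'\BJ$, the argument of \cite[Theorem~7.2]{BK1} (valid over $\R$, cf.\ \cite[II.8]{Vig2}) gives simultaneously that $(\BJ,\bl)$ covers $(\BJ_{\M'},\bl_{\M'})$ and that $\te$ is a support-preserving isomorphism---no separate appeal to Blondel's invertibility criterion is needed here.
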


\begin{proof}
Dans le cas où $\R$ est $\CC$,
le résultat est donné par la proposition 8.1 de \cite{SeSt2}. 
Dans le cas général, il 
suffit de reprendre la preuve de \cite[Proposition 5.17]{SeSt1} 
pour prouver que $(\BJ,\bl)$ est décomposée au-dessus de 
$(\BJ_{\M'},\bl_{\M'})$ et que l'ensemble d'entrelacement de $\bl$ dans $\G$ est 
inclus dans $\BJ{\M'}\BJ$,
les propositions \cite[1.2]{GSZ} et \cite[6.6]{SeSt2} étant encore 
valables dans le cas mo\-du\-laire.
En raisonnant comme dans la preuve de \cite[Theorem 7.2]{BK1}
(voir aussi \cite[II.8]{Vig2}), on en déduit que $(\BJ,\bl)$ est une paire 
couvrante de $(\BJ_{\M'},\bl_{\M'})$ et on a un isomorphisme de $\R$-algèbres 
préservant les supports de $\Hh({\M'},\bl_{\M'})$ dans $\Hh(\G,\bl)$.

Le fait que $(\BJ_{\M'},\bl_{\M'})$ est une paire couvrante de $(\J_\M,\l_\M)$ 
est une conséquence de la remarque \ref{Gurgeh}, et l'isomorphisme du point 3 
est une conséquence de la proposition \ref{ANenPlusFinirPosDuplicata}. 
\end{proof}

Pour cha\-que $i\in\{1,\dots,r\}$, notons $\n_i$ la res\-tric\-tion de $\k_i$ 
à $\J^1(\b,\La_i)$ et posons~:
\begin{equation}
\label{cosmos}
\n_\M=\n_1\otimes\dots\otimes\n_r,
\quad
\k_\M=\k_1\otimes\dots\otimes\k_r,
\quad
\J^1_\M=\J_\M^{}\cap\U_1(\La).
\end{equation}
Les paires $(\BJ^1,\bn)$ et $(\BJ,\bk)$ sont 
décomposées au-dessus de 
$(\J^1_\M,\n^{}_\M)$ et $(\J_\M^{},\k_\M^{})$
respectivement.
On a le résultat suivant\footnote{V.~Sécherre remercie Shaun 
Stevens pour une discussion sur une version préliminaire de la 
preuve de ce résultat.}, qui à notre connaissance est nouveau même dans le cas complexe. 

\begin{prop}
\label{jnpc}
La paire $(\BJ^1,\bn)$ est une paire couvrante de $(\J^1_\M,\n_\M^{})$.
\end{prop}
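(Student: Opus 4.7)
The plan is to verify the two axioms of a cover pair in the sense of \cite[8.1]{BK1} (with the positivity/negativity convention of Remark \ref{PosNeg}), namely that $(\BJ^1,\bn)$ is $\P$-d�compos�e au-dessus de $(\J^1_\M,\n_\M)$, et qu'il existe un �l�ment strongly $(\P,\BJ^1)$-n�gatif $\zeta\in Z(\M)$ dont la fonction de Hecke associ�e dans $\Hh(\G,\bn)$ est inversible.

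Pour l'axiome de d�composition, le groupe $\BJ^1=\H^1(\b,\La)(\J^1(\b,\La)\cap\P)$ admet par construction la factorisation de type Iwahori
\begin{equation*}
\BJ^1 = (\BJ^1\cap\overline{\N})\,(\BJ^1\cap\M)\,(\BJ^1\cap\N)
\end{equation*}
avec $\BJ^1\cap\M=\J^1_\M$, ce qui r�sulte de \cite[Proposition 6.5]{SeSt2} transpos� au radical pro-unipotent (le m�me input qui sert � la d�composition analogue de $\BJ$). La repr�sentation $\bn$, encore une fois par construction, est triviale sur $\BJ^1\cap\N$ et sur $\BJ^1\cap\overline{\N}$, et sa restriction � $\BJ^1\cap\M$ est $\n_\M$. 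Ensemble ces faits donnent la $\P$-d�composition requise.

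L'obstacle principal est l'axiome d'inversibilit�. Je fixerais un �l�ment strongly $(\P,\BJ^1)$-n�gatif $\zeta\in Z(\M)$ --- concr�tement, un �l�ment suffisamment profond du r�seau des cocaract�res du centre d�ploy� de $\M$, analogue � celui utilis� dans la preuve de la proposition \ref{JplPM} --- et consid�rerais la fonction de Hecke $\phi_\zeta\in\Hh(\G,\bn)$ support�e par la double classe $\BJ^1\zeta\BJ^1$. Pour �tablir son inversibilit� j'invoquerais le calcul d'entrelacement de la proposition \ref{propbn2}~: celui-ci affirme que $I_\G(\bn)=\BJ(\L\cap\mult\B)\BJ$ et que l'espace d'entrelacement en chaque �l�ment de $\L\cap\mult\B$ est de dimension un. Combin� � un argument de convolution dans l'esprit du lemme \ref{pinv} --- dont l'ingr�dient essentiel est que la dimension d'une repr�sentation lisse irr�ductible d'un pro-$p$-groupe est une puissance de $p$, donc non nulle dans $\R$ --- ceci donne que $\phi_\zeta*\phi_{\zeta^{-1}}$ est un multiple scalaire non nul de l'identit� de $\Hh(\G,\bn)$, d'o� l'inversibilit� de $\phi_\zeta$.

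Une approche compl�mentaire que je garderais en r�serve s'appuie sur la paire couvrante d�j� �tablie $(\BJ,\bl)$ de la proposition \ref{JplPM}~: puisque $\bk|_{\BJ^1}=\bn$ est irr�ductible d'apr�s la proposition \ref{blaeta}, le lemme \ref{MlleDeLaMole} donne une �quivalence entre la cat�gorie des repr�sentations de $\BJ/\BJ^1$ et celle des repr�sentations $\bn$-isotypiques de $\BJ$, dont on peut esp�rer transf�rer un �l�ment inversible strongly n�gatif de $\Hh(\G,\bl)$ vers $\Hh(\G,\bn)$. Le point d�licat, et o� j'attends la vraie difficult�, est de v�rifier que l'�l�ment transf�r� a son support concentr� sur une unique double classe de $\BJ^1$ plut�t que sur une r�union de telles contenue dans une m�me double classe de $\BJ$, ce qui permettrait de conclure directement par un analogue du corollaire \ref{InvSupUni} pour $\bn$.
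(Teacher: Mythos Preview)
Your verification of the decomposition axiom is fine, but the invertibility argument has a genuine gap. The convolution argument ``dans l'esprit du lemme \ref{pinv}'' does not transfer: in that lemma the element $h$ \emph{normalises} $\J$ (it lies in $\Ww_\l\subseteq\N_{\mult\B}(\U(\La)\cap\mult\B)\J$), so $\J h\J h^{-1}\J=\J$ and the convolution $\h*\h'$ is automatically supported on $\J$, hence a scalar. Your strongly $(\P,\BJ^1)$-negative element $\zeta\in Z(\M)$ does \emph{not} normalise $\BJ^1$: conjugation by $\zeta$ contracts $\BJ^1\cap\N$ but dilates $\BJ^1\cap\N^-$, so $\BJ^1\zeta\BJ^1\zeta^{-1}\BJ^1$ is a union of several $\BJ^1$-double cosets and there is no reason for $\phi_\zeta*\phi_{\zeta^{-1}}$ to be a scalar. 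Knowing that the intertwining set of $\bn$ is $\BJ\mult\B\BJ$ (or $\BJ(\L\cap\mult\B)\BJ$) and that each intertwining space is one-dimensional does not rescue this: it tells you where the support lies, not that the various contributions cancel to leave only the identity coset. Your backup plan of transferring an invertible element from $\Hh(\G,\bl)$ runs into exactly the difficulty you already identify, and you give no mechanism to resolve it.

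The paper proceeds by a completely different route which sidesteps the invertibility axiom altogether. It invokes the criterion of Blondel \cite[Proposition III.2]{Blondel}: it suffices to show that for every irreducible $\pi$ of $\G$ the natural Jacquet map
\[
\Hom_{\BJ^1}(\bn,\pi)\longrightarrow\Hom_{\J^1_\M}(\n_\M,\rp_\a(\pi))
\]
is injective. The key idea is to equip both sides with an action of $\MB=\BJ/\BJ^1\simeq\GL_{m'_1}(\kk_{\D'})\times\cdots\times\GL_{m'_r}(\kk_{\D'})$ via twisting by $\bk$. If the kernel $\EuScript{V}$ were nonzero, one picks an irreducible sub with cuspidal support $(\MB^\star,\bt)$. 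When $\MB^\star=\MB$, taking $\bt$-isotypics reduces to the injectivity for the \emph{already known} cover pair $(\BJ,\bk\otimes\bt)$ over $(\J_\M,\k_\M\otimes\bt)$ (Proposition \ref{JplPM}), a contradiction. When $\MB^\star\subsetneq\MB$, one passes via the coherence relations between $\b$-extensions to a finer lattice chain $\La'$ (subordinate to a refinement $\g$ of $\a$), applies the induction hypothesis on $\dim\M$, and again reduces to a known cover pair. Thus the proof is an induction that bootstraps from the cover pairs of Proposition \ref{JplPM} rather than a direct Hecke-algebra computation.
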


\begin{proof}
D'après le corollaire qui suit \cite[Proposition III.2]{Blondel}, 
il suffit de prouver que, pour toute représenta\-tion lisse irréductible $\pi$ 
de $\G$, l'application naturelle~: 
\begin{equation}
\label{Ho2}
\Hom_{\BJ^1}(\bn,\pi)\to\Hom_{\J^1_\M}(\n_\M,\rp_\a(\pi))
\end{equation}
induite par la surjection canonique de $\pi$ sur son module de Jacquet 
est injective, ce que l'on va faire par récurrence sur la dimension de $\M$. 
Fixons une repré\-sen\-tation irréductible $\pi$ de $\G$.
Au moyen de l'isomorphisme de $\E$-algèbres $\Phi$ fixé au début du 
paragraphe \ref{EndoEq}, identifions~:
\begin{equation*}
\BJ/\BJ^1\simeq\J_\M^{}/\J^1_\M\simeq(\U(\La)\cap\mult\B)/(\U_1(\La)\cap\mult\B)
\end{equation*}
à un sous-groupe de Levi standard~:
\begin{equation*}
\MB=\GL_{m'_1}(\kk_{\D'})\times\dots\times\GL_{m'_r}(\kk_{\D'})\subseteq \GL_{m'}(\kk_{\D'}).
\end{equation*}
Munissons $\Hom_{\BJ^1}(\bn,\pi)$ de l'action de $\BJ$ définie, 
pour $x\in\BJ$ et $f\in\Hom_{\BJ^1}(\bn,\pi)$, par la formule~:
\begin{equation*}
x\cdot f=\pi(x)\circ f\circ\bk(x)^{-1}.
\end{equation*}
C'est une représentation de $\BJ$ triviale sur $\BJ^1$ que l'on voit comme une 
représentation de $\MB$.
De façon analogue, le membre de droite de \eqref{Ho2} est muni d'une 
structure de représentation de $\MB$.

Considérons maintenant \eqref{Ho2} comme un homomorphisme de 
représentations de $\MB$ et notons $\EuScript{V}$ son noyau, que l'on suppose 
non nul.  
Il existe donc un sous-groupe de Levi standard $\MB{}^\star\subseteq\MB$ 
et une repré\-sen\-tation irréductible cuspidale $\bt$ de $\MB{}^\star$ 
telle que $\EuScript{V}$ possède une sous-représentation irréductible 
de support cuspidal $(\MB{}^\star,\bt)$. 
Si $\MB{}^\star=\MB$, le calcul du composant $\bt$-isotypique 
don\-ne une suite exacte de $\R$-espaces vectoriels~:
\begin{equation*}
\label{E4}
0\to\EuScript{V}^{\bt}\to\Hom_{\BJ}(\vk\otimes\bt,\pi)\to
\Hom_{\J_\M}(\k_\M\otimes\bt,\rp_\a(\pi)).
\end{equation*}
Comme $(\BJ,\vk\otimes\bt)$ est une paire cou\-vrante de 
$(\J_\M,\k_\M\otimes\bt)$ d'après la proposition \ref{JplPM}, 
on trouve $\EuScript{V}^{\bt}=0$, ce qui contredit l'hypothèse 
faite sur $\bt$.
En particulier, si $m'_1=\dots=m'_r=1$, alors on a toujours 
$\MB{}^\star=\MB$ et ainsi \eqref{Ho2} est un isomorphisme dans ce cas. 

On suppose maintenant que $\MB{}^\star$ est un sous-groupe de Levi standard propre de 
$\MB$.
Il y a une famille d'en\-tiers $\g=(t_1,\dots,t_s)$ de somme $m$ telle que, 
si l'on note $(t'_1,\dots,t'_s)$ la famille d'entiers de somme $m'$
associée à $\g$ par la relation \eqref{MPRIME}, alors~:
\begin{equation*}
\MB{}^\star=\GL_{t'_1}(\kk_{\D'})\times\dots\times\GL_{t'_s}(\kk_{\D'}) 
\subseteq \GL_{m'}(\kk_{\D'}).
\end{equation*}
En particulier, $\g$ est plus fine que $\a$.
Fixons une strate simple $[\La',n_{\La'},0,\b]$ de $\A$ telle que~:
\begin{enumerate}
\item
on a $\La'=(\La'\cap\D^{t_1})\oplus\dots\oplus(\La'\cap\D^{t_s})$~;
\item
$\Phi(\AA(\La')\cap\B)$ est un ordre héréditaire standard dont la réduction 
modulo $\p_{\D'}$ est formée des matrices triangulaire par blocs de taille 
$(t'_1,\dots,t'_s)$.
\end{enumerate}
On définit les deux sous-groupes~:
\begin{equation*}
\BJ'=\H^1(\b,\La')(\J(\b,\La')\cap\P_\g),
\quad
\BJ'^1=\H^1(\b,\La')(\J^1(\b,\La')\cap\P_\g).
\end{equation*}
Soit $\k'$ le transfert de $\k$ en une $\b$-extension du transfert 
$\t'\in\Cc(\La',0,\b)$ de $\t$.
Notons 
$\bk'$ la repré\-sen\-ta\-tion de $\BJ'$ sur l'espace des vecteurs de $\k'$ 
invariants par $\J(\b,\La')\cap\N_\g$ et $\bn'$ la restriction de $\bk'$ à 
$\BJ'^1$. 
La restriction de $\bn'$ à~: 
\begin{equation*}
\J'^1_{\g} = \J^1(\b,\La'\cap\D^{t_1})\times\dots\times\J^1(\b,\La'\cap\D^{t_s})
\end{equation*}
est égale à $\n'_\g=\n'_1\otimes\dots\otimes\n'_s$ où 
$\n'_j$ est la représentation de Heisenberg du transfert de $\t$ dans 
$\Cc(\La'\cap\D^{t_j},0,\b)$, pour chaque $j\in\{1,\dots,s\}$.

Notons $\UB{}^\star$ le sous-groupe unipotent standard de $\MB$ correspondant 
à $\MB{}^\star$. 
Calculant les es\-pa\-ces de vecteurs $\UB{}^\star$-invariants 
grâce aux relations de cohérence entre $\b$-extensions, 
on obtient des isomorphismes de représentations de $\MB{}^\star$~:
\begin{eqnarray*}
\Hom_{\BJ^1}(\bn,\pi)^{\UB{}^\star}
&\simeq&\Hom_{\BJ'^1}(\bn',\pi), \\
\Hom_{\J^1_\M}(\n_\M,\rp_\a(\pi))^{\UB{}^\star}
&\simeq&\Hom_{\BJ'^1\cap\M}(\bn'_{\M},\rp_\a(\pi)),
\end{eqnarray*}
où $\bn'_\M$ est la restriction de $\bn'$ à $\BJ'^1\cap\M$. 
Par hypothèse de récurrence, 
la paire $(\BJ'^1\cap\M,\bn'_\M)$ est une paire couvrante de $(\J'^1_{\g},\n'_{\g})$. 
On en déduit que l'application naturelle~:
\begin{equation*}
\Hom_{\BJ'^1\cap\M}(\bn'_\M,\rp_\a(\pi))\to
\Hom_{\J'^1_\g}(\n'_\g,\rp_\g(\pi))
\end{equation*}
est un isomorphisme. 
Appliquant 
le foncteur des $\UB{}^\star$-invariants à \eqref{Ho2} puis prenant 
la com\-po\-san\-te $\bt$-isotypique, on obtient une suite exac\-te 
de $\R$-espaces vectoriels~:
\begin{equation*}
\label{E4bis2}
0\to\EuScript{V}^{\UB{}^\star,\bt}\to\Hom_{\BJ'}(\vk'\otimes\bt,\pi)
\to\Hom_{\BJ'_\g}(\k'_\g\otimes\bt,\rp_{\g}(\pi)) 
\end{equation*}
où $\k'_\g=\k'_1\otimes\dots\otimes\k'_s$ est la restriction de $\bk'$ à~: 
\begin{equation*}
\J'_{\g} = \J(\b,\La'\cap\D^{t_1})\times\dots\times\J(\b,\La'\cap\D^{t_s}),
\end{equation*}
$\k'_j$ étant une $\b$-extension de $\n'_j$.
Comme $(\BJ',\vk'\otimes\bt)$ est une paire couvrante de 
$(\BJ'_\g,\k'_\g\otimes\bt)$ par la proposition \ref{JplPM}, on en déduit que 
$\EuScript{V}^{\UB{}^\star,\bt}=0$, ce qui donne une contradiction. 
\end{proof}

\subsection{Types semi-simples II : le cas général}
\label{NonEndoEq}

Nous traitons maintenant le cas général. 
Réunissons les $({\bf\Theta}_i,0,\b_i)$ suivant leur classe
d'endo-équivalence, ce qui définit une par\-ti\-tion~:
\begin{equation*}
\{1,\dots,r\}=\I_1\cup\dots\cup\I_l,
\quad
l\>1.
\end{equation*}
Ceci définit un sous-groupe de Levi $\L$ de $\G$ contenant $\M$.  
Quitte à renuméroter les types simples $(\J_i,\l_i)$, 
nous pouvons supposer pour simplifier que $\L$ est standard et 
définit une décomposition~: 
\begin{equation*}
\label{DedDNi}
\D^m = \V^{1}\oplus\dots\oplus\V^{l}
\end{equation*}
où, pour chaque $k\in\{1,\dots,l\}$, 
$\V^{k}$ désigne la somme directe des $\D^{m_i}$, $i\in\I_k$.
On suppose comme dans le paragraphe \ref{EndoEq} que les 
$\t_i$, $i\in\I_k$ sont des réalisations d'un mê\-me ps-caractère 
$(\Theta^k,0,\b^k)$ et que les $\La_i$, $i\in\I_k$ ont toutes la même 
période~;
on fixe une suite de réseaux $\La^k$ de $\V^k$ satisfai\-sant à la 
condition du paragraphe \ref{EndoEq}
relativement aux $\La_i$, $i\in\I_k$~;
on note $\M_k$ le sous-groupe de Levi standard 
formé des $\G_{m_i}$, $i\in\I_k$~; on a un type simple maximal 
$(\J_{\M_k},\l_{\M_k})$ de $\M_k$ formé des $(\J_i,\l_i)$, $i\in\I_k$.  
On note enfin $(\BJ_k,\bl_k)$ la paire couvrante de $(\J_{\M_k},\l_{\M_k})$ dans 
$\G_{\dim\V^k}$ associée à la paire $(\b^k,\La^k)$ par la proposition \ref{JplPM}.  

Posons~:
\begin{equation}
\label{Jacque}
\BJ_\L=\BJ_1\times\dots\times\BJ_l,
\quad
\bl_\L=\bl_1\times\dots\times\bl_l.
\end{equation}
Par construction, $(\BJ_\L,\bl_\L)$ est une paire couvrante de $(\J_\M,\l_\M)$ dans 
$\L$. 
Ici encore, quitte à dilater les suites de réseaux, on peut supposer que 
$\La^1,\dots,\La^l$ ont toutes  la même période~;
notons $\La$ la somme directe des $\La^k$ et notons $\b$ la somme des 
$\b^k$. 
Ceci définit une strate $[\La,n,0,\b]$ de $\A$, pas forcément simple. 
On note $\B$ le centralisateur de $\b$ dans $\A$.

Pour ce qui suit, nous aurons besoin de la notion 
d'approximation commune d'une famille de caractères simples 
(\cite[Definition 5.5]{SeSt2}).

\begin{prop}
\label{PCTSM}
Il existe une paire couvrante $(\BJ,\bl)$ de $(\BJ_\L,\bl_\L)$, donc de 
$(\J_\M,\l_\M)$, dans $\G$ possédant les propriétés suivantes~:
\begin{enumerate}
\item
On a $\U_{n+1}(\La)\subseteq\BJ\subseteq\U(\La)$. 
\item
Si $([\La,n,0,\g],\vartheta,t)$ est une approximation commune de 
$(\t_1,\dots,\t_r)$,
le groupe $\BJ$ contient et normalise  
$(\H^t(\g,\La)\cap\L)\H^{t+1}(\g,\La)$, 
la restriction de $\bl$ à $\H^{t+1}(\g,\La)$ est un multiple de $\vartheta$
et la restriction de $\bl$ à $\H^{t}(\g,\La)\cap\M$ est un multiple de 
$\t_1\otimes\dots\otimes\t_r$.
\end{enumerate}
\end{prop}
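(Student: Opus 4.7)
\noindent\textit{Plan de preuve.} Le plan est de construire explicitement $(\BJ,\bl)$ à partir d'une approximation commune des caractères simples $\t_1,\dots,\t_r$, puis d'établir la propriété de paire couvrante en combinant la majoration de l'ensemble d'entrelacement obtenue dans l'appendice avec un argument inspiré de \cite[Proposition III.2]{Blondel}.

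La première étape consiste à choisir une approximation commune $([\La,n,0,\g],\vartheta,t)$ de la famille $(\t_1,\dots,\t_r)$ au sens de \cite[Definition 5.5]{SeSt2}. Elle fournit une strate (non nécessairement simple) sur $\La$ et un caractère semi-simple $\vartheta$ de $\H^{t+1}(\g,\La)$ dont la restriction à chaque $\H^{t+1}(\g,\La)\cap\A_{m_i}$ coïncide avec celle de $\t_i$. En adaptant la construction itérative de \cite[\S6]{SeSt2} au contexte modulaire, on pose~:
\begin{equation*}
\BJ=\H^{t+1}(\g,\La)\cdot(\BJ_\L\cap\U(\La))\cdot(\J^{1}(\g,\La)\cap\N),
\end{equation*}
où $\P=\L\N$ est un sous-groupe parabolique de facteur de Levi $\L$. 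La représentation $\bl$ se construit comme un prolongement de $\vartheta$ dont la restriction à $\BJ\cap\L$ est $\bl_\L$, grâce aux relations de cohérence entre caractères semi-simples et leurs représentations de Heisenberg. Les propriétés 1 et 2 découlent directement de la construction~: l'encadrement $\U_{n+1}(\La)\subseteq\BJ\subseteq\U(\La)$ est immédiat, et la propriété 2 traduit les relations de cohérence et le principe de transfert appliqués à $\vartheta$ et aux $\t_i$.

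La seconde étape, et c'est là que résidera l'obstacle principal, consiste à prouver que $(\BJ,\bl)$ est une paire couvrante de $(\BJ_\L,\bl_\L)$ dans $\G$. La décomposition d'Iwahori de $\BJ$ relativement à $\P$, faisant de $(\BJ,\bl)$ une paire décomposée au-dessus de $(\BJ_\L,\bl_\L)$, s'obtient comme dans \cite[Proposition 5.17]{SeSt1} sans difficulté supplémentaire. En revanche, la vérification de la condition d'entrelacement caractérisant les paires couvrantes est délicate~: dans le cas complexe, Bushnell et Kutzko la déduisent du fait que tout élément inversible à gauche de $\Hh(\G,\bl)$ est inversible, lui-même conséquence de \cite[7.15]{BK1} selon lequel tout module à droite simple sur $\Hh(\G,\bl)$ est de dimension finie. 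Cette finitude n'étant pas disponible dans le cas modulaire à ce stade (elle ne le sera qu'après la proposition \ref{pptf1} et le théorème \ref{qptf}), on la remplace par \cite[Proposition III.2]{Blondel} combinée à la majoration de l'ensemble d'entrelacement de $\bl$ dans $\G$ effectuée dans l'appendice. On conclut alors, comme dans \cite[Corollary 6.6]{BK2}, que $(\BJ,\bl)$ est une paire couvrante de $(\BJ_\L,\bl_\L)$, donc aussi de $(\J_\M,\l_\M)$ par transitivité.
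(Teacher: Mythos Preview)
Votre plan de construction (récurrence sur $l$, approximation commune, décomposition d'Iwahori) suit bien la démarche du papier, qui renvoie à \cite[Theorem 8.2]{SeSt2}. La difficulté réelle est effectivement la partie \og paire couvrante\fg{}, mais votre analyse de celle-ci contient une confusion entre deux obstacles distincts.

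Ce qui échoue dans le cas modulaire n'est pas l'\emph{énoncé} de \cite[7.15]{BK1} (tout module simple sur $\Hh(\G,\bl)$ est de dimension finie), mais sa \emph{preuve} complexe, laquelle passe par l'identification de tout module simple à un $\Hom_{\BJ}(\bl,\pi)$. Le papier contourne cela en utilisant \cite[Proposition III.2]{Blondel} pour obtenir \emph{directement} la finitude des modules simples ; une fois celle-ci acquise, l'argument de \cite[7.15]{BK1} (\og inversible à gauche $\Rightarrow$ inversible\fg{}) reste valable tel quel en caractéristique positive, et on conclut comme dans \cite[Corollary 6.6]{BK2}. La majoration d'entrelacement de l'appendice n'intervient \emph{pas} dans cette étape : elle sert plus loin, au paragraphe \ref{defeta}, pour calculer l'entrelacement de $\bn$ et déterminer la structure de $\Hh(\G,\bl)$ (corollaire \ref{propbn3}), ce qui est un problème différent de celui de prouver que $(\BJ,\bl)$ est une paire couvrante. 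Votre formule explicite pour $\BJ$ n'est d'ailleurs pas tout à fait celle issue de la construction inductive (comparer avec la décomposition utilisée dans la preuve du lemme \ref{NicolasRostov}), mais c'est un point secondaire.
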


\begin{proof}
On procède par récurrence sur $l$ 
comme dans le cas complexe (\cite[Theorem 8.2]{SeSt2}).
Une fois construite une paire $(\BJ,\bl)$ vérifiant les propriétés 1 et 2, 
le seul passage requérant un peu d'attention 
pour prouver que c'est une paire couvrante de $(\BJ_\L,\bl_\L)$
est la fin de la preuve de \cite[Corollary 6.6]{BK2}.  

D'après \cite[Proposition III.2]{Blondel}, tout module à droite simple sur 
$\Hh(\G,\bl)$ est de dimension finie.
D'après l'argument de \cite[7.15]{BK1}, qui est encore valable dans le cas modulaire, 
on en déduit que tout élément inversible à gau\-che de $\Hh(\G,\bl)$ est 
inversible. 
A partir de là, on conclut comme dans le cas complexe que $(\BJ,\bl)$ est une 
paire couvrante.
Voir l'introduction à la section \ref{TSmodl} pour plus de détails. 
\end{proof}

\begin{defi}
\label{DefTypSeSim}
Un {\it type semi-simple} de $\G$ est une paire couvrante 
d'un type simple maxi\-mal d'un sous-groupe de Levi de $\G$ 
de la forme $(\BJ,\bl)$ définie par la proposition \ref{PCTSM}. 
\end{defi}

\subsection{La représentation $\bn$ et son ensemble d'entrelacement} 
\label{defeta}

Soit $(\BJ,\bl)$ un type semi-simple. 
Le groupe $\BJ$ a une décomposition d'Iwahori~: 
\begin{equation*}
\label{DECIWAJ}
\BJ = (\BJ\cap\N^-)\cdot(\BJ\cap\M) \cdot (\BJ\cap\N)
\end{equation*}
où $\N^-$ est le radical unipotent opposé à $\N$ par rapport à $\M$. 
Pour $k\in\{1,\dots,l\}$, notons $\bn_k$ la représentation 
irréductible de $\BJ^1_k=\BJ_k^{}\cap\U_1(\La^k)$ contenue dans $\bl_k$, 
et posons~:
\begin{equation*}
\bn_\L=\bn_1\otimes\dots\otimes\bn_l.
\end{equation*}
C'est une représentation irréductible de 
$\BJ_\L^{1}=\BJ^1_1\times\dots\times\BJ^1_l$.  
Posons $\BJ^{1}=\BJ\cap\U_1(\La)$ et reprenons 
les notations de \eqref{cosmos}.
Nous voulons définir des représentations irréductibles $\bn$ et $\bk$ 
de $\BJ^1$ et $\BJ$ généralisant celles du paragraphe \ref{EndoEq}
dans le cas homogène, 
avec une décomposition $\bl=\bk\otimes\bs$.

\begin{prop}
\label{propbn1}
Il y a une unique paire décomposée $(\BJ^1,\bn)$ au-dessus de 
$(\BJ^1_\L,\bn_\L)$, donc aussi au-dessus de $(\J^1_\M,\n^{}_\M)$.
La restriction de $\bl$ à $\BJ^1$ est $\bn$-isotypique.
\end{prop}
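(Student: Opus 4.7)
Mon plan consiste � se ramener au cas homog�ne d�j� trait� au paragraphe \ref{EndoEq} via la d�composition $\L = \G_{\dim\V^{1}} \times \cdots \times \G_{\dim\V^{l}}$ et la structure produit $\bn_\L = \bn_1 \otimes \cdots \otimes \bn_l$. Je fixerais un sous-groupe parabolique $\Q$ de $\G$ de facteur de Levi $\L$ contenant $\P$ (ce qui est loisible puisque $\M \subseteq \L$), de radical unipotent $\N_\Q$, le parabolique oppos� ayant pour radical $\N_\Q^-$.

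Premi�re �tape~: �tablir la d�composition d'Iwahori
\begin{equation*}
\BJ^1 = (\BJ^1 \cap \N_\Q^-) \cdot \BJ^1_\L \cdot (\BJ^1 \cap \N_\Q),
\quad \BJ^1_\L = \BJ^1 \cap \L,
\end{equation*}
qui r�sulte du point 1 de la proposition \ref{PCTSM} (encadrant $\BJ^1$ entre deux �tages de la filtration $\U_k(\La)$) et des d�compositions d'Iwahori classiques des groupes $\U_k(\La)$ relativement � $\Q$.

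Deuxi�me �tape~: construction de $\bn$. En s'appuyant sur une approximation commune $([\La,n,0,\g],\vartheta,t)$ des $(\t_1,\dots,\t_r)$ fournie par le point 2 de la proposition \ref{PCTSM}, je consid�rerais la repr�sentation de Heisenberg $\n_\vartheta$ associ�e au caract�re simple $\vartheta$ (analogue de la proposition \ref{blaeta} au niveau $t$) sur un groupe ambiant de type $\J^{t+1}(\g,\La)$ contenant $\BJ^1$. Je d�finirais alors $\bn$ comme la repr�sentation de $\BJ^1$ sur le sous-espace des vecteurs de $\n_\vartheta$ invariants sous l'intersection du groupe ambiant avec $\N_\Q$, suivant la m�thode du cas homog�ne (cf.~propositions 6.5 et 6.6 de \cite{SeSt2}). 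Par construction, $\bn$ est triviale sur $\BJ^1 \cap \N_\Q^{\pm}$, et sa restriction � $\BJ^1_\L$ s'identifie au produit tensoriel $\bn_1 \otimes \cdots \otimes \bn_l = \bn_\L$ via les relations de coh�rence entre $\b$-extensions (proposition \ref{TSF}) appliqu�es bloc par bloc. L'irr�ductibilit� se v�rifie par un calcul de dimension, le degr� se factorisant comme produit des dimensions des $\bn_k$. Pour l'unicit�, toute paire d�compos�e $(\BJ^1,\bn')$ au-dessus de $(\BJ^1_\L,\bn_\L)$ est caract�ris�e, via la d�composition d'Iwahori, par la donn�e de sa restriction � la partie de Levi et par la trivialit� sur les parties unipotentes, donc co\"incide avec $(\BJ^1,\bn)$ � isomorphisme pr�s.

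Pour la derni�re assertion, la restriction de $\bl$ � $\BJ^1_\L = \prod_k \BJ^1_k$ est $\bn_\L$-isotypique en combinant les propri�t�s analogues dans chaque bloc homog�ne (paragraphe \ref{EndoEq}, proposition \ref{JplPM}). La trivialit� de l'action de $\bl$ sur les parties unipotentes de la d�composition d'Iwahori de $\BJ^1$ (h�rit�e de la construction de $(\BJ,\bl)$ dans la proposition \ref{PCTSM}, qui contient $\vartheta$ sur la partie $\H^{t+1}(\g,\La)$) implique alors que $\bl|_{\BJ^1}$ est $\bn$-isotypique. Le principal obstacle sera l'identification rigoureuse entre la construction globale de $\bn$ via la repr�sentation de Heisenberg de $\vartheta$ et sa description locale comme produit tensoriel des $\bn_k$ sur $\BJ^1_\L$~: cette compatibilit�, essentielle pour les r�sultats ult�rieurs, repose sur la coh�rence du transfert des $\b$-extensions dans le cadre semi-simple et sur le contr�le fin fourni par la structure d'approximation commune.
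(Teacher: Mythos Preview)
Votre construction de $\bn$ via une repr�sentation de Heisenberg $\n_\vartheta$ de l'approximation commune pr�sente plusieurs difficult�s non r�solues. D'abord, l'inclusion $\BJ^1\subseteq\J^{t+1}(\g,\La)$ que vous supposez implicitement n'est pas �tablie par la proposition \ref{PCTSM} (celle-ci donne seulement que $\BJ$ contient $(\H^t(\g,\La)\cap\L)\H^{t+1}(\g,\La)$). Ensuite, l'identification des $\N_\Q$-invariants de $\n_\vartheta$ avec $\bn_\L=\bn_1\otimes\cdots\otimes\bn_l$ est probl�matique~: la dimension de $\n_\vartheta$ est $(\J^{t+1}(\g,\La):\H^{t+1}(\g,\La))^{1/2}$, gouvern�e par l'�l�ment simple $\g$ au niveau $t$, tandis que les $\bn_k$ ont des dimensions gouvern�es par les $\b^k$ au niveau $0$~; le lien entre ces quantit�s est loin d'�tre imm�diat. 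Enfin, la proposition \ref{TSF} que vous invoquez concerne le transfert des $\b$-extensions, non des repr�sentations de Heisenberg, et ne s'applique pas directement ici.

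L'article contourne enti�rement ces difficult�s par une id�e beaucoup plus simple~: il n'y a pas besoin de construire $\bn$ \emph{ab initio}. Puisque $\bl$ existe d�j� (proposition \ref{PCTSM}) et que $(\BJ,\bl)$ est d�compos�e au-dessus de $(\J_\M,\l_\M)$, la restriction de $\bl$ � $\BJ^1$ est triviale sur $\BJ^1\cap\N^\pm$ et $\n_\M$-isotypique sur $\J^1_\M$. On en d�duit que l'ensemble
\[
(\BJ^1\cap\N^-)\cdot\Ker(\n_\M)\cdot(\BJ^1\cap\N)=\Ker(\bl)\cap\BJ^1
\]
est un sous-groupe de $\G$. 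Le lemme de Blondel \cite{BlondelCRAS} garantit alors que la formule $\bn(uxu')=\n_\M(x)$ (o� $u\in\BJ^1\cap\N^-$, $x\in\J^1_\M$, $u'\in\BJ^1\cap\N$) d�finit une repr�sentation irr�ductible de $\BJ^1$, et que $(\BJ^1,\bn)$ est d�compos�e au-dessus de $(\J^1_\M,\n_\M)$. L'approximation commune n'intervient pas du tout dans cet argument~; elle ne servira que plus tard, pour le calcul d'entrelacement (proposition \ref{propbn2}).
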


\begin{proof}
On a une décomposition d'Iwahori~:
\begin{equation*}
\BJ^1=(\BJ^1\cap\N^-)\cdot\J^1_\M\cdot(\BJ^1\cap\N).
\end{equation*}
Pour tout $y\in\BJ^1$ se décomposant sous la forme 
$y=uxu'$ avec $u\in \BJ^1\cap\N^-$, $x\in\J^1_\M$, 
$u'\in \BJ^1\cap\N$, on pose $\bn(y)=\n_\M(x)$.
Ceci définit une application $\bn$ de $\BJ^1$ dans 
$\GL(\mathcal{V})$, où $\mathcal{V}$ désigne l'espace 
de $\n_\M$, et il s'agit de 
montrer que c'est un homomorphisme de groupes. 
La restriction de $\bl$ à $\BJ^1$ est triviale sur $\BJ^1\cap\N^-$ 
et $\BJ^1\cap\N$ et la restriction de $\bl$ à ${\BJ^1\cap\M}$, 
qui est égale à la restriction de $\l_\M$ à ${\J^1_\M}$, 
est un multiple de $\n_\M$.
Ainsi l'ensemble~:
\begin{equation*}
(\BJ^1\cap\N^-)\cdot\Ker(\n_\M)\cdot(\BJ^1\cap\N) = \Ker(\bl)\cap{\BJ^1}
\end{equation*}
est un sous-groupe de $\G$.
Par conséquent (voir \cite[Lemme 1]{BlondelCRAS}) l'application $\bn$ 
est une repré\-sen\-ta\-tion irré\-duc\-tible de $\BJ^1$ et la paire 
$(\BJ^1,\bn)$ est décomposée au-dessus de $(\J^1_\M,\n^{}_\M)$. 
\end{proof}

\begin{prop}
\label{propbn2}
L'ensemble d'entrelacement de $\bn$ dans $\G$ est égal à 
$\BJ(\L\cap\mult\B)\BJ$ et, pour tout $g\in\L\cap\mult\B$, 
l'espace $\Hom_{\BJ^1\cap(\BJ^1)^g}(\bn,\bn^g)$ est de dimension $1$ sur $\R$.
\end{prop}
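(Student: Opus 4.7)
La preuve se fait en trois �tapes: on �tablit s�par�ment que $\BJ(\L\cap\mult\B)\BJ$ est inclus dans l'ensemble d'entrelacement de $\bn$ dans $\G$, puis l'inclusion r�ciproque, et enfin on calcule la dimension de l'espace d'entrelacement pour un �l�ment $g\in\L\cap\mult\B$.

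Pour la premi�re inclusion, on proc�de par r�duction au cas homog�ne. Puisque $\b = \b^1+\dots+\b^l$ pr�serve la d�composition $\D^m = \V^1\oplus\dots\oplus\V^l$ et que chaque $\b^k$ agit uniquement sur $\V^k$, le centralisateur $\B$ de $\b$ dans $\A$ a pour restriction � $\L$ le produit des centralisateurs $\B^k$ des $\b^k$ dans $\End_\F(\V^k)$, de sorte que $\L\cap\mult\B = \prod_{k=1}^l\mult{\B^k}$. Pour $g=(g_1,\dots,g_l)$ dans ce produit, on utilise la d�composition de $(\BJ^1,\bn)$ au-dessus de $(\BJ^1_\L,\bn_\L)$ fournie par la proposition \ref{propbn1}, combin�e au fait, d�j� acquis dans le cas homog�ne (paragraphe \ref{EndoEq}, via l'extension � $\bn_k$ de la proposition \ref{blaeta}(2)), que chaque $g_k\in\mult{\B^k}$ entrelace $\bn_k$ avec un espace d'entrelacement de dimension~$1$.

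Pour l'inclusion r�ciproque, on invoque la majoration de l'ensemble d'entrelacement du type semi-simple $\bl$ effectu�e dans l'appendice par V.~S�cherre et S.~Stevens (cf.~le corollaire \ref{cor:xiKint} et le lemme \ref{NicolasRostov}). Sachant que la restriction de $\bl$ � $\BJ^1$ est $\bn$-isotypique (proposition \ref{propbn1}) et que $\bl=\bk\otimes\bs$ avec $\bs$ triviale sur $\BJ^1$, on transf�re cette majoration de $\bl$ � $\bn$ au moyen d'un argument de factorisation analogue au lemme \ref{FactorisationEntrelacement}, qui identifie un entrelaceur de $\bn$ � un entrelaceur de $\bl$ modulo torsion par une repr�sentation de $\BJ/\BJ^1$.

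Pour la dimension, soit $g=(g_1,\dots,g_l)\in\L\cap\mult\B$. La d�composition de $(\BJ^1,\bn)$ au-dessus de $(\BJ^1_\L,\bn_\L)$ fournit un isomorphisme canonique
\begin{equation*}
\Hom_{\BJ^1\cap(\BJ^1)^g}(\bn,\bn^g)
\simeq \Hom_{\BJ^1_\L\cap(\BJ^1_\L)^g}(\bn_\L,\bn_\L^g)
\simeq \bigotimes_{k=1}^{l}\Hom_{\BJ^1_k\cap(\BJ^1_k)^{g_k}}(\bn_k,\bn_k^{g_k}),
\end{equation*}
dont chaque facteur est de dimension~$1$ sur $\R$ par le cas homog�ne. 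L'obstacle principal r�side dans le transfert correct de la majoration technique de l'appendice depuis $\bl$ jusqu'� sa restriction $\bn$ sur $\BJ^1$; les deux autres �tapes se d�duisent directement de la structure d�compos�e de $(\BJ^1,\bn)$ et des r�sultats d�j� �tablis dans le cas homog�ne.
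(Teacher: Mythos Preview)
Votre plan pour la premi�re inclusion et pour le calcul de dimension est correct et co�ncide avec ce que fait l'article~: on se ram�ne bloc par bloc au cas homog�ne via la structure d�compos�e de $(\BJ^1,\bn)$ au-dessus de $(\BJ^1_\L,\bn_\L)$, puis on invoque la proposition~\ref{blaeta}(2) dans chaque bloc.

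En revanche, il y a une vraie erreur dans votre deuxi�me �tape. Le lemme~\ref{NicolasRostov} de l'appendice �nonce directement $\I_\G(\bn)\subseteq\BJ^1\L\BJ^1$~: c'est une majoration de l'entrelacement de $\bn$, pas de $\bl$. Vous proposez au contraire de partir d'une majoration de $\I_\G(\bl)$ et de la \og transf�rer\fg{} � $\bn$ via un argument de factorisation � la~\ref{FactorisationEntrelacement}. Or ce sens de transfert ne fonctionne pas~: puisque la restriction de $\bl$ � $\BJ^1$ est $\bn$-isotypique, on a l'inclusion $\I_\G(\bl)\subseteq\I_\G(\bn)$, et non l'inverse. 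Un �l�ment $g$ entrela�ant $\bn$ n'a aucune raison d'entrelacer $\bl=\bk\otimes\bs$, car rien ne garantit qu'il entrelace la partie $\bs$. Votre argument de factorisation donnerait, pour $g$ entrela�ant $\bn$, un entrelaceur de $\bk$ seul (et encore, seulement une fois qu'on sait $g\in\L\cap\mult\B$), pas de $\bl$.

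La preuve de l'article proc�de donc dans l'autre sens~: le lemme~\ref{NicolasRostov} fournit directement $g\in\BJ^1\L\BJ^1$ pour tout $g$ entrela�ant $\bn$~; on se ram�ne alors � $g\in\L$, o� $\bn$ et $\bn_\L$ ont m�me entrelacement, et l'analyse bloc par bloc (votre premi�re �tape) donne � la fois l'inclusion restante $g\in\prod_k\BJ^1_k(\B^k)^\times\BJ^1_k$ et la dimension~$1$ des espaces d'entrelacement. C'est d'ailleurs l'entrelacement de $\bl$ (corollaire~\ref{propbn3}) qui se d�duit ensuite de celui de $\bn$, et non l'inverse.
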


\begin{proof}
Si $g\in\G$ entrelace $\bn$, on peut supposer grâce au 
lemme \ref{NicolasRostov} qu'il appartient à $\L$. 
Comme $\bn$ et $\bn_\L$ ont le même ensemble d'entrelacement 
dans $\L$, il suffit de calculer celui de $\bn_\L$.
Pour $k\in\{1,\dots,l\}$, notons $\B^k$ le centralisateur de $\b^k$ dans 
$\End_\D(\V^k)$. 
La composante $g_k$ de $g$ dans le $k$-ième bloc de $\L$ 
entrelace $\bn_k$, donc elle appartient à $\BJ^1_k(\B^{k})^\times\BJ^1_k$ et~:
\begin{equation*}
\dim_\R \Hom_{\BJ^1\cap(\BJ^1)^{g_k}}(\bn_k^{\phantom{g}},\bn_k^{g_k}) = 1
\end{equation*}
d'après la proposition \ref{blaeta}.
On conclut en remarquant que 
$\L\cap\mult\B=(\B^{1})^\times\times\dots\times(\B^{l})^\times$.
\end{proof}

La proposition \ref{propbn2} fournissant une majoration de l'ensemble 
d'entrelacement de $\bl$ dans $\G$, nous en déduisons le résultat suivant. 
Notons $\Q$ le sous-groupe parabolique standard de $\G$ de sous-groupe 
de Levi standard $\L$. 

\begin{coro}
\label{propbn3}
Le morphisme de $\R$-algèbres $\jp_\Q:\Hh(\L,\bl_\L)\to\Hh(\G,\bl)$ est un 
isomorphis\-me préservant les supports.  
\end{coro}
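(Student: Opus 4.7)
Le plan est de d\'eduire cet \'enonc\'e de la proposition \ref{propbn2}, combin\'ee avec la th\'eorie g\'en\'erale des paires couvrantes. On rappelle d'abord que le morphisme $\jp_\Q$ est toujours injectif et pr\'eserve le support des fonctions de $\Hh(\L,\bl_\L)$ concentr\'ees sur des $\BJ_\L$-doubles classes $\Q$-n\'egatives de $\L$, d'apr\`es \cite[II.10]{Vig2} et la remarque \ref{PosNeg}. Il reste donc \`a en \'etablir la surjectivit\'e et la pr\'eservation globale des supports.

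Je commencerais par exploiter la proposition \ref{propbn1}, qui dit que la restriction de $\bl$ \`a $\BJ^1$ est $\bn$-isotypique : tout $g\in\G$ entrela\c{c}ant $\bl$ entrelace alors $\bn$, et la proposition \ref{propbn2} donne l'inclusion de l'ensemble d'entrelacement de $\bl$ dans $\BJ(\L\cap\mult\B)\BJ\subseteq\BJ\L\BJ$. Comme $(\BJ,\bl)$ est d\'ecompos\'ee au-dessus de $(\BJ_\L,\bl_\L)$, l'application $l\mapsto\BJ l\BJ$ induit une bijection entre les $\BJ_\L$-doubles classes de $\L$ qui entrelacent $\bl_\L$ et les $\BJ$-doubles classes de $\G$ qui entrelacent $\bl$ ; la dimension $1$ des espaces d'entrelacement fournie par la proposition \ref{propbn2} montre en outre que l'espace des fonctions de $\Hh(\G,\bl)$ support\'ees par une telle double classe $\BJ l\BJ$ est de dimension $1$, exactement comme pour son analogue dans $\Hh(\L,\bl_\L)$.

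Pour passer de la pr\'eservation des supports $\Q$-n\'egatifs \`a un isomorphisme global, je suivrais l'argument standard des paires couvrantes (\cite[Theorem 7.2]{BK1} ou \cite[II.8]{Vig2}) : tout $l\in\L$ entrela\c{c}ant $\bl_\L$ s'\'ecrit $l=l^{-}\cdot z$ avec $l^{-}$ $\Q$-n\'egatif et $z$ strictement $\Q$-positif dans $\L\cap\mult\B$. La fonction $\phi_z\in\Hh(\L,\bl_\L)$ support\'ee par $\BJ_\L z\BJ_\L$ est inversible, et la majoration de l'entrelacement de la proposition \ref{propbn2} assure que $\jp_\Q(\phi_z)$ est non nul et support\'e par $\BJ z\BJ$, donc inversible dans $\Hh(\G,\bl)$ gr\^ace \`a un analogue du corollaire \ref{InvSupUni}. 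La multiplication par de tels \'el\'ements inversibles permet d'atteindre chaque double classe \`a partir des doubles classes $\Q$-n\'egatives, d'o\`u la surjectivit\'e ainsi que la pr\'eservation des supports sur toutes les doubles classes.

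L'\'etape la plus d\'elicate sera la derni\`ere, o\`u il faudra v\'erifier proprement le transfert d'inversibilit\'e du c\^ot\'e $\L$ vers le c\^ot\'e $\G$ via $\jp_\Q$ ; le point technique crucial qui rend la construction possible est pr\'ecis\'ement la majoration explicite de l'ensemble d'entrelacement de $\bl$ par $\BJ(\L\cap\mult\B)\BJ$ fournie par la proposition \ref{propbn2}, laquelle garantit la correspondance bijective entre supports de part et d'autre et assure qu'aucun \'el\'ement de la base canonique de $\Hh(\G,\bl)$ n'\'echappe \`a l'image de $\jp_\Q$.
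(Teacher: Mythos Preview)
Your approach is correct and is precisely the one the paper intends: the corollary is stated immediately after proposition~\ref{propbn2} with the sole justification that this proposition bounds the intertwining of $\bl$ in $\G$, and the implicit argument is the standard one from \cite[Theorem~7.2]{BK1} (cf.\ \cite[II.8]{Vig2}), exactly as you spell out. One small inaccuracy: proposition~\ref{propbn2} gives dimension~$1$ for the intertwining spaces of $\bn$, not of $\bl$; for $\bl$ the relevant fact is rather that, by the decomposed-pair property, $\dim\Hom_{\BJ\cap\BJ^l}(\bl,\bl^l)=\dim\Hom_{\BJ_\L\cap\BJ_\L^l}(\bl_\L,\bl_\L^l)$ for $l\in\L$, which is what the argument actually needs (these dimensions can be~$0$ for some $l\in\L\cap\mult\B$).
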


Pour chaque $k\in\{1,\dots,l\}$, fixons une décomposition 
$\bl_k=\bk_k\otimes\bs_k$ comme au paragraphe \ref{EndoEq} et posons~:
\begin{equation*}
\bk_\L=\bk_1\otimes\dots\otimes\bk_l,
\quad
\bs_\L=\bs_1\otimes\dots\otimes\bs_l.
\end{equation*}
Ainsi $\bk_\L$ est une représentation 
irréductible de $\BJ_\L$ prolongeant $\bn_\L$.
Aussi, $\bs_\L$ est une représen\-ta\-tion 
irréductible de $\BJ_\L$ triviale sur $\BJ_\L^{1}$ et on a la 
décomposition $\bl_\L=\bk_\L\otimes\bs_\L$.
Notons $\bs$ la représentation $\bs_\L$ considérée comme 
une représentation irréductible de $\BJ^{}$ triviale sur $\BJ^{1}$ 
grâce à l'isomorphisme $\BJ/\BJ^1\simeq\BJ_\L^{}/\BJ_\L^1$.
On reprend les notations de \eqref{cosmos}.

\begin{prop}
\label{bnpairedec}
\begin{enumerate}
\item
Il y a une unique paire décomposée $(\BJ,\bk)$ au-dessus de 
$(\BJ_\L,\bk_\L)$, donc aussi au-dessus de $(\J_\M,\k_\M)$.
La restriction de $\bk$ à $\BJ^1$ est égale à $\bn$ et on a 
$\bl=\bk\otimes\bs$.
\item
L'ensemble d'entrelacement de $\bk$ dans $\G$ est $\BJ(\L\cap\mult\B)\BJ$.
\end{enumerate}
\end{prop}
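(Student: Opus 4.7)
The plan for part (1) is to build $\bk$ as an explicit subrepresentation of $\bl\otimes\bs^\vee$, in the spirit of Proposition \ref{propbn1}. Since $\BJ^1$ is normal in $\BJ$ with quotient isomorphic to $\BJ_\L^{}/\BJ_\L^1$ (the Iwahori decomposition with respect to $\Q$ gives $\BJ=(\BJ\cap\N_\Q^-)\BJ_\L(\BJ\cap\N_\Q)$ and $\BJ^1=(\BJ\cap\N_\Q^-)\BJ_\L^1(\BJ\cap\N_\Q)$, where $\N_\Q$ denotes the unipotent radical of $\Q$), one inflates $\bs_\L$ to a representation $\bs$ of $\BJ$ trivial on $\BJ^1$. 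The restriction of $\bl\otimes\bs^\vee$ to $\BJ_\L$ is $\bl_\L\otimes\bs_\L^\vee\simeq\bk_\L\otimes\End_\R(\bs_\L)$, which contains the canonical $\BJ_\L$-subrepresentation $\bk_\L\otimes\mathrm{id}_{\bs_\L}\simeq\bk_\L$. Because $\BJ\cap\N_\Q^\pm\subseteq\BJ^1$ and both $\bl$ and $\bs$ are trivial there, $\bl\otimes\bs^\vee$ itself is trivial on $\BJ\cap\N_\Q^\pm$, so the subspace $\bk_\L\otimes\mathrm{id}_{\bs_\L}$ is automatically $\BJ$-stable. I define $\bk$ as this subrepresentation.

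Its properties then follow: $\bk$ is irreducible, since its restriction to $\BJ_\L$ is the irreducible $\bk_\L$; it is trivial on $\BJ\cap\N_\Q^\pm$ by construction; and on $\BJ^1$ it is an Iwahori-decomposed extension of $\bn_\L$ trivial on the unipotent pieces, which forces $\bk|_{\BJ^1}=\bn$ by the uniqueness in Proposition \ref{propbn1}. The identity $\bl=\bk\otimes\bs$ follows from the $\BJ$-equivariant composition $\bk\otimes\bs\hookrightarrow\bl\otimes\bs^\vee\otimes\bs\to\bl$ coming from the trace pairing $\bs^\vee\otimes\bs\to\R$ on the middle factors: this map is non-zero on $\BJ_\L$, both sides are irreducible of the same dimension $\dim\bk_\L\cdot\dim\bs_\L=\dim\bl_\L$, and irreducibility turns it into an isomorphism. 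Uniqueness of $\bk$ is immediate from the Iwahori decomposition: any representation of $\BJ$ that is trivial on $\BJ\cap\N_\Q^\pm$ and equals $\bk_\L$ on $\BJ_\L$ is determined by these data.

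For part (2), the inclusion of the intertwining set of $\bk$ into $\BJ(\L\cap\mult\B)\BJ$ follows from Proposition \ref{propbn2}: any non-zero $\bk$-intertwiner at $g$ is, as the same linear map, a non-zero $\bn$-intertwiner at $g$, so the intertwining set of $\bk$ is contained in that of $\bn$. For the reverse inclusion it suffices to check that every $g\in\L\cap\mult\B$ intertwines $\bk$. Writing $\L\cap\mult\B=\prod_{k=1}^l(\B^k)^\times$ and $\bk_\L=\bk_1\otimes\cdots\otimes\bk_l$, the homogeneous case of paragraph \ref{EndoEq} (in which $\bk_k$ inherits the intertwining of an underlying $\b$-extension) provides, for each $g_k\in(\B^k)^\times$, a non-zero $\bk_k$-intertwiner at $g_k$; tensoring yields a $\bk_\L$-intertwiner at $g=(g_1,\dots,g_l)$. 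Since $g\in\L$ normalizes the Iwahori decomposition of $\BJ$ and $\bk$ is trivial on the unipotent pieces, this $\bk_\L$-intertwiner extends uniquely to a $\bk$-intertwiner at $g$.

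The principal difficulty lies in part (1), namely establishing $\bl=\bk\otimes\bs$ rather than only an embedding of one into the other; the explicit realization of $\bk$ inside $\bl\otimes\bs^\vee$ reduces this to the non-degeneracy of the finite-dimensional pairing $\bs^\vee\otimes\bs\to\R$ together with a dimension count, both of which remain valid in the modular setting. All remaining steps are formal manipulations of decomposed pairs, tensor products, and Iwahori decompositions, directly generalizing the argument for $\bn$ in Proposition \ref{propbn1}.
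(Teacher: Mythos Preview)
Your argument is correct, but the route you take for part~(1) differs from the paper's. The paper does not realize $\bk$ inside $\bl\otimes\bs^\vee$; instead it mimics the construction of $\bn$ directly, using the finer Iwahori decomposition with respect to $\M$ (not $\L$): one sets $\bk(uxu')=\k_\M(x)$ for $u\in\BJ\cap\N^-$, $x\in\J_\M$, $u'\in\BJ\cap\N$, and then checks that this map is a homomorphism by showing that the candidate kernel $(\BJ\cap\N^-)\cdot\Ker(\k_\M)\cdot(\BJ\cap\N)$ is a subgroup, via the chain of inclusions $(\BJ\cap\N)\cdot(\BJ\cap\N^-)\subseteq\Ker(\bl)\cap\BJ^1\subseteq(\BJ\cap\N^-)\cdot\Ker(\k_\M)\cdot(\BJ\cap\N)$ together with Blondel's criterion. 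The identity $\bl=\bk\otimes\bs$ then follows immediately since both sides are decomposed above $(\J_\M,\l_\M)$. Your approach has the advantage that $\bk$ is a subrepresentation of something already known to be a representation, so no Blondel-type verification is needed; the price is the extra tensor-algebra step to recover $\bl=\bk\otimes\bs$, which you handle correctly (the composite is the canonical identification $w\otimes v\mapsto w\otimes v$, not multiplication by $\dim\bs$, so there is no characteristic issue). For part~(2) your argument is essentially the paper's: the paper simply says ``se montre comme pour $\bn$'', meaning the upper bound comes from restricting to $\bn$ and invoking Proposition~\ref{propbn2}, and the lower bound from the decomposed-pair property reducing intertwining in $\L$ to that of $\bk_\L$.
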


\begin{proof}
Grâce à la décomposition d'Iwahori de $\BJ$, on peut définir 
de façon analogue à $\bn$ une application $\bk$ de $\BJ$ dans 
$\GL(\mathcal{V})$ prolongeant $\k_\M$ et triviale sur 
$\BJ\cap\N^-$, $\BJ\cap\N$.
Comme $\BJ\cap\N=\BJ^1\cap\N$ et $\BJ\cap\N^-=\BJ^1\cap\N^-$, on a~: 
\begin{equation*}
(\BJ\cap\N)\cdot(\BJ\cap\N^-) 
\subseteq \Ker(\bl)\cap{\BJ^1} 
\subseteq (\BJ\cap\N^-)\cdot\Ker(\k_\M)\cdot(\BJ\cap\N). 
\end{equation*}
On en déduit que 
$(\BJ\cap\N^-)\cdot\Ker(\k_\M)\cdot(\BJ\cap\N)$
est un sous-groupe de $\G$. 
Ainsi l'application $\bk$ 
est une repré\-sen\-ta\-tion irré\-duc\-tible de $\BJ$ et la paire 
$(\BJ,\bk)$ est décomposée au-dessus de $(\J_\M,\k_\M)$. 

Le point 2 se montre comme pour $\bn$. 
\end{proof}

Le résultat suivant sera utile pour prouver le lemme \ref{LemCritV} 
et la proposition \ref{pptf1}.

\begin{prop}
\label{Empire}
Soit $(\BJ,\bl)$ un type semi-simple de $\G$. 
Alors la représentation de $\BJ$ sur la com\-posante $\bn$-isotypique de 
$\ind^\G_{\BJ}(\bl)$ est une somme directe de conjugués de $\bl$. 
\end{prop}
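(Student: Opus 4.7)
The plan is to reduce the statement to a computation of a $\BJ/\BJ^1$-representation via the equivalence of Lemma \ref{MlleDeLaMole}, and then to use Mackey's formula together with the intertwining formula of Proposition \ref{propbn2}.

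Since $\bk$ is an irreducible extension of $\bn$ to $\BJ$ (Proposition \ref{bnpairedec}), Lemma \ref{MlleDeLaMole} applied to the pair $(\BJ,\BJ^1)$ identifies the category of $\bn$-isotypic $\BJ$-representations with the category of $\BJ/\BJ^1$-representations via $\mathcal{W}\mapsto\bk\otimes\mathcal{W}$. Write $\V=\ind^\G_\BJ(\bl)$; under this equivalence the $\bn$-isotypic part $\V_\bn$ corresponds to
\begin{equation*}
\mathcal{W}\simeq\Hom_{\BJ^1}(\bn,\V_\bn)\simeq\Hom_{\BJ^1}(\bn,\V),
\end{equation*}
so the statement is equivalent to showing that $\mathcal{W}$ is a direct sum of $\BJ/\BJ^1$-conjugates of $\bs$.

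To compute $\mathcal{W}$, I would decompose $\V|_{\BJ}$ via Mackey along $\BJ$-double cosets, obtaining that $\Hom_{\BJ^1}(\bn,\V)$ breaks up into contributions indexed by $g\in\BJ\backslash\G/\BJ$, each equal (after suitable interpretation) to $\Hom_{\BJ^1\cap{}^g\BJ^1}(\bn,\bl^g)$. By Proposition \ref{propbn2}, the $\bn$-intertwining set in $\G$ is $\BJ(\L\cap\mult\B)\BJ$, so only $g\in\L\cap\mult\B$ contribute, and for each such $g$ the space $\Hom_{\BJ^1\cap{}^g\BJ^1}(\bn,\bn^g)$ is one-dimensional.

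I would then use the tensor decomposition $\bl=\bk\otimes\bs$ (Proposition \ref{bnpairedec}) and a factorization of intertwining operators in the style of Lemma \ref{FactorisationEntrelacement}: for $g\in\L\cap\mult\B$ this yields
\begin{equation*}
\Hom_{\BJ^1\cap{}^g\BJ^1}(\bn,\bl^g)\simeq\Hom_{\BJ^1\cap{}^g\BJ^1}(\bn,\bn^g)\otimes V_{\bs^g}\simeq V_{\bs^g},
\end{equation*}
and reconstituting the $\BJ$-structure via $\bk$ produces a copy of $\bl^g$ inside $\V_\bn$. Summing over appropriate coset representatives $g$ then identifies $\V_\bn$ with $\bigoplus_g\bl^g$.

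The main obstacle is Step~3: making the factorization of intertwiners rigorous in the semisimple (non-homogeneous) setting, and verifying that each nonzero contribution indexed by $g$ gives exactly one copy of the conjugate $\bl^g$ rather than a larger $\bn$-isotypic space. This relies crucially on the one-dimensionality part of Proposition \ref{propbn2} and on the Iwahori decomposition of $\BJ$ with respect to $\Q$, which is what ultimately controls the intersection $\BJ\cap{}^g\BJ$ for $g\in\L\cap\mult\B$ and forces the tensor splitting of the intertwining space.
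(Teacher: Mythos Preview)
Your overall strategy---pass to $\Pi=\Hom_{\BJ^1}(\bn,\V)$ as a $\BJ/\BJ^1$-representation via Lemma~\ref{MlleDeLaMole}, apply Mackey, and use Proposition~\ref{propbn2} to restrict attention to representatives $g\in\L\cap\mult\B$---is exactly the paper's approach. But there is a genuine gap in your Step~3, and it is not the one you flag.

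First a bookkeeping point: if you restrict $\ind_\BJ^\G(\bl)$ to $\BJ^1$ and apply Mackey--Frobenius, the summands are indexed by $\BJ\backslash\G/\BJ^1$ and the relevant space is $\Hom_{\BJ^1\cap\BJ^g}(\bn,\bl^g)$, with the subgroup $\BJ^1\cap\BJ^g$ rather than $\BJ^1\cap(\BJ^1)^g$. Your displayed isomorphism $\Hom_{\BJ^1\cap(\BJ^1)^g}(\bn,\bl^g)\simeq V_{\bs^g}$ is therefore only an \emph{upper bound} for the actual contribution.

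The substantive gap is that you never use that $\bs$ is cuspidal, and without this the argument fails. For $g\in\L\cap\mult\B$, the factorization in the style of Lemma~\ref{FactorisationEntrelacement} (this is Lemma~\ref{Azad2} in the paper) gives
\[
\Hom_{\BJ^1\cap\BJ^g}(\bn,\bl^g)\ \simeq\ \Hom_{(\U_1(\La)\cap\mult\B)\cap(\U(\La)\cap\mult\B)^g}(1,\bs^g),
\]
the space of invariants in $\bs^g$ under a unipotent-radical-type subgroup. This is not $V_{\bs^g}$ in general: it vanishes precisely when $g$ does not normalise $\U(\La)\cap\mult\B$, by cuspidality of $\bs$. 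Only for $g$ in the normaliser does one get the full $\bs^g$, and then the $\BJ$-structure is $\bl^g$. If you summed copies of $V_{\bs^g}$ over all of $\L\cap\mult\B$ as you propose, you would get something far too large. So the missing ingredient is exactly the cuspidality step that cuts the index set down from $\L\cap\mult\B$ to the normaliser of $\U(\La)\cap\mult\B$ in $\L\cap\mult\B$.
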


\begin{proof}
Fixons une décom\-position de $\bl$ sous la forme $\bk\otimes\bs$.
Comme $\BJ^1$ est un pro-$p$-groupe,
la restriction de $\cind_\BJ^\G(\bl)$ à $\BJ^1$ est semi-simple et se 
décompose sous la forme $\V\oplus\W$ où $\V$ est une somme 
directe de copies de $\bn$ et où aucun sous-quotient irré\-duc\-tible de 
$\W$ n'est isomorphe à $\bn$.  
Comme $\BJ$ normalise la classe d'iso\-morphisme 
de $\bn$ (puisque $\bk$ prolonge $\bn$ à $\BJ$), 
il stabilise les sous-espaces $\V$ et $\W$.
Posons~: 
\begin{equation*}
\Pi=\Hom_{\BJ^1}(\bn,\cind_\BJ^\G(\bl))
\end{equation*}
que l'on munit de l'action de $\BJ$ définie, 
pour $x\in\BJ$ et $f\in\Pi$,
par la formule~:
\begin{equation*}
x\cdot f=\pi(x)\circ f\circ\bk(x)^{-1}
\end{equation*}
où $\pi$ est la représentation de $\BJ$ sur $\cind_\BJ^\G(\bl)$. 
C'est une représentation de $\BJ$ triviale sur $\BJ^1$, 
et $\V$ est isomorphe à $\bk\otimes\Pi$. 
Nous allons montrer que la représentation $\Pi$ est une somme directe de 
conjugués de $\bs$, ce dont on déduira le résultat voulu. 

Par la formule de Mackey, 
la restriction de $\cind_\BJ^\G(\bl)$ à $\BJ^1$ est la somme directe des 
$\ind^{\BJ^1}_{\BJ^1\cap\BJ^x}(\bl^x)$ pour $x\in\BJ\backslash\G/\BJ^1$. 
Par conséquent, on a la décomposition~:
\begin{equation*}
\Pi = \bigoplus\limits_{x\in\BJ\backslash\G/\BJ^1}
\Hom_{\BJ^1\cap\BJ^x}(\bn,\bl^x)
\end{equation*}
de $\Pi$ en une somme directe de $\R$-espaces vectoriels. 
Si l'on écrit~:
\begin{equation*}
\Hom_{\BJ^1\cap\BJ^x}(\bn,\bl^x)\subseteq
\Hom_{\BJ^1\cap(\BJ^1)^{x}}(\bn,\bn^x)\otimes\EuScript{V}_{\bs}
\end{equation*}
où $\EuScript{V}_{\bs}$ désigne l'espace de $\bs$ qui compte ici comme un espace 
de multiplicité, 
on voit que seuls les $x\in\G$ qui entrelacent $\bn$
apportent une contribution non nulle.
On peut donc supposer que $x$ appartient à $\L\cap\B^\times$.

\begin{lemm}
\label{Azad2}
Supposons que $x\in\L\cap\B^\times$. 
Alors~:
\begin{equation*}
\Hom_{\BJ^1\cap\BJ^x}(\bn,\bl^x)
\simeq \Hom_{\BJ^1\cap\BJ^x}(1,\bs^x)
= \Hom_{(\U_1(\La)\cap\mult\B) \cap(\U(\La)\cap\mult\B)^x}(1,\bs^x).
\end{equation*}
\end{lemm}

\begin{proof}
Il suffit de reprendre la preuve de la proposition \cite[5.3.2]{BK}, 
en sachant que 
l'es\-pa\-ce $\Hom_{\BJ^1\cap\BJ^x}(\bn,\bk^x)$ est de dimension $1$ sur $\R$ 
d'après la proposition \ref{propbn2}. 

L'égalité provient du fait que 
$\BJ^1\cap\mult\B = \U_1(\La)\cap\mult\B$ 
et $\BJ\cap\mult\B = \U(\La)\cap\mult\B$.
\end{proof}

Comme $\bs$ est cuspidale, seuls les $x\in\L\cap\B^\times$ normalisant 
$\U(\La)\cap\mult\B$ apportent une contribu\-tion non nulle.  
Pour de tels $x$, on voit que $\Hom_{\BJ^1\cap\BJ^x}(\bk,\bl^x)$ est 
isomorphe à $\bs^x$ comme représenta\-tion de $\BJ$. 
\end{proof}



\subsection{Réduction modulo $\ell$ des types simples}
\label{RedmodlK}
\label{RedTypSimNivPos+}

Soit $\ell$ un nombre premier différent de $p$.
Rappelons que toute $\qlb$-re\-pré\-sen\-ta\-tion 
irréductible d'un groupe profini peut être réduite modulo $\ell$
(voir la remarque \ref{lamaline}).

Rappelons d'abord la théorie de la réduction modulo $\ell$ des 
représentations cuspidales de $\GL_n$ sur un corps fini de 
caractéristique $p$ (voir \cite{James}). 
Soient $\ff$ un corps fini de caractéristique $p$ 
et $\overline{\ff}$ une clôture algé\-brique de $\ff$. 
Pour $n\>1$, notons $\ff_n$ l'ex\-ten\-sion de $\ff$ 
de degré $n$ contenue dans $\overline{\ff}$. 
Si $\R$ est $\qlb$ ou $\flb$,
notons $\Xx_{n}(\R)$ l'ensemble des $\R$-carac\-tè\-res 
de $\mult\ff_n$ dont l'orbite sous $\Gal(\overline{\ff}/\ff)$ 
est de cardinal $n$. 
D'après Green \cite{Green}, on a une corres\-pon\-dance surjective~:
\begin{equation}
\label{Quilty}
\tilde\chi\mapsto{\bf g}_{\qlb}(\tilde\chi)
\end{equation}
de $\Xx_{n}(\qlb)$ vers l'ensemble des classes de $\qlb$-représentations 
irréductibles cuspidales de $\GL_n(\ff)$, 
et l'ensemble des antécédents de ${\bf g}_{\qlb}(\tilde\chi)$ par 
(\ref{Quilty}) 
est l'or\-bi\-te de $\tilde\chi$ sous $\Gal(\overline{\ff}/\ff)$.

\begin{theo}[James \cite{James}]
\label{theojames}
Soit un entier $n\>1$.
\begin{enumerate}
\item
Pour tout caractère $\tilde\chi\in\Xx_{n}(\qlb)$, 
la représentation $\r_{\ell}({\bf g}_{\qlb}(\tilde\chi))$ 
est ir\-ré\-duc\-ti\-ble et cuspidale, et elle 
ne dépend que de la 
réduction modulo $\ell$ de $\tilde\chi$.
\item
On a une correspondance surjective~:
\begin{equation}
\label{QuiltyL}
\chi\mapsto {\bf j}_{\flb}(\chi)=\r_{\ell}({\bf g}_{\qlb}(\tilde\chi))
\end{equation}
de l'ensemble des $\flb$-ca\-rac\-tères $\chi$ de $\mult\ff_n$
admettant un relèvement $\tilde\chi$ dans $\Xx_{n}(\qlb)$
vers l'en\-sem\-ble des classes de $\flb$-représentations 
irréductibles cuspidales de $\GL_n(\ff)$~;
l'ensemble des antécédents de ${\bf j}_{\flb}(\chi)$ 
est l'orbite de $\chi$ sous $\Gal(\overline{\ff}/\ff)$.
\item
La représentation ${\bf j}_{\flb}(\chi)$ est supercuspidale si et seulement si 
$\chi\in\Xx_{n}(\flb)$.
\end{enumerate}
\end{theo}

Soit maintenant $[\La,n,0,\b]$ une strate simple de $\A$.  
Fixons des homo\-morphismes~:
\begin{equation*}
\iota_{p,\qlb}:\mu_{p^\infty}(\CC)\to\overline{\QQ}{}^{\times}_\ell,
\quad
\iota_{p,\flb}:\mu_{p^\infty}(\CC)\to\overline{\FF}{}^{\times}_\ell
\end{equation*}
comme au paragraphe \ref{CarSim}
et supposons que le second est le composé du premier avec le morphis\-me 
de réduction mod $\ell$. 
Posons $\J=\J(\b,\La)$, $\J^1=\J^1(\b,\La)$ et $\H^1=\H^1(\b,\La)$. 
Comme $\H^1$ est un pro-$p$-groupe, la réduction mod $\ell$ définit 
une bijection~:
\begin{equation*}
\Cc_{\qlb}(\La,m,\b)\to\Cc_{\flb}(\La,m,\b)
\end{equation*}
pour tout $m$ tel que $0\<m\<-k_0(\b,\La)-1$. 
Appelons \textit{relèvement} d'un $\flb$-ca\-rac\-tère sim\-ple 
$\t$ son image réciproque par cette bijection. 

\begin{prop}
\label{RedBext}
Soit $\t\in\Cc_{\flb}(\La,0,\b)$ un $\flb$-caractère simple, 
soit $\tilde\t$ le relèvement de $\t$
et soit $\tilde\k$ une $\b$-extension de $\tilde\t$.
\begin{enumerate}
\item 
Si $\mathfrak{r}$ est une structure entière de $\tilde\k$,
alors $\mathfrak{r}\otimes\flb$ est une $\b$-ex\-ten\-sion de $\t$.
\item
Si $\k$ est une $\b$-ex\-ten\-sion de $\t$, il y a une 
$\b$-ex\-ten\-sion de $\tilde\t$ dont la réduction mod $\ell$ est 
$\k$. 
\end{enumerate}
\end{prop}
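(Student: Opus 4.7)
Le plan est de v\'erifier directement les trois conditions d\'efinissant une $\b$-extension pour $\k=\mathfrak{r}\otimes\flb$ dans l'assertion~1, puis de ramener l'assertion~2 \`a l'existence de $\b$-extensions en caract\'eristique nulle combin\'ee \`a la description \eqref{StHonore} de l'ensemble des $\b$-extensions d'un caract\`ere simple donn\'e.

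Pour l'assertion~1, je commencerais par analyser la restriction \`a $\J^{1}$. Celle-ci fournit une structure enti\`ere de la repr\'esentation de Heisenberg $\tilde\n$ associ\'ee \`a $\tilde\t$. Comme $\J^{1}$ est un pro-$p$-groupe et $\ell\neq p$, toute $\flb$-repr\'esentation lisse de $\J^{1}$ est semi-simple, donc $\mathfrak{r}|_{\J^{1}}\otimes\flb$ est semi-simple, de dimension $(\J^{1}:\H^{1})^{1/2}$, et sa restriction \`a $\H^{1}$ est un multiple de $\t$. L'unicit\'e \'enonc\'ee dans la proposition~\ref{blaeta} force alors cette r\'eduction \`a \^etre isomorphe \`a $\n$. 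En particulier, la restriction de $\k$ \`a $\J^{1}$ est irr\'eductible, ce qui entra\^\i ne que $\k$ est elle-m\^eme irr\'eductible comme repr\'esentation de $\J$ et prolonge $\n$. Pour l'\'enonc\'e sur l'entrelacement, fixons $g\in\mult\B$ et un op\'erateur non nul $\tilde\Phi_g\in\Hom_{\J\cap\J^{g}}(\tilde\k,\tilde\k^{g})$~: quitte \`a le multiplier par une puissance convenable de $\ell$, on peut supposer qu'il envoie $\mathfrak{r}$ dans $\mathfrak{r}$ sans l'envoyer dans $\ell\mathfrak{r}$, ce qui garantit que sa r\'eduction modulo $\ell$ est un \'el\'ement non nul de $\Hom_{\J\cap\J^{g}}(\k,\k^{g})$.

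Pour l'assertion~2, je partirais de l'existence d'une $\b$-extension $\tilde\k_0$ de $\tilde\t$ (cas $\R=\qlb$, donn\'e par les arguments du paragraphe~\ref{bext}). Par l'assertion~1, sa r\'eduction $\k_0$ est une $\b$-extension de $\t$, et la description \eqref{StHonore} fournit un $\flb$-caract\`ere $\chi$ de $\Oo_\E^{\times}$ trivial sur $1+\p_\E$ tel que $\k=\k_0\otimes(\chi\circ\N_{\B/\E})$. Ce caract\`ere $\chi$ se factorise par $\Oo_\E^{\times}/(1+\p_\E)\simeq\mult\kk_\E$, groupe cyclique d'ordre $q_\E-1$ premier \`a $p$~; ses valeurs sont des racines de l'unit\'e d'ordre premier \`a $\ell$ dans $\mult\flb$, qui se rel\`event canoniquement en racines de l'unit\'e du m\^eme ordre dans $\mult\zlb$. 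On obtient ainsi un rel\`evement $\tilde\chi$ de $\chi$ \`a valeurs dans $\mult\zlb$. La repr\'esentation $\tilde\k=\tilde\k_0\otimes(\tilde\chi\circ\N_{\B/\E})$ est alors une $\b$-extension de $\tilde\t$ d'apr\`es \eqref{StHonore}, et comme $\tilde\chi\circ\N_{\B/\E}$ est \`a valeurs dans $\mult\zlb$, toute structure enti\`ere $\mathfrak{r}_0$ de $\tilde\k_0$ est encore une structure enti\`ere de $\tilde\k$, de r\'eduction $\k_0\otimes(\chi\circ\N_{\B/\E})=\k$.

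Le point \`a mes yeux le plus d\'elicat est l'identification de la r\'eduction de $\tilde\n$ avec $\n$~: elle repose de fa\c{c}on essentielle \`a la fois sur la semi-simplicit\'e des $\flb$-repr\'esentations du pro-$p$-groupe $\J^{1}$ et sur la caract\'erisation unicit\'e (proposition~\ref{blaeta}) de la repr\'esentation de Heisenberg par sa restriction \`a $\H^{1}$ et sa dimension. Une fois cette identification obtenue, la v\'erification de l'entrelacement est essentiellement formelle par renormalisation, et la partie~2 se r\'eduit \`a un argument de rel\`evement de caract\`ere d'un groupe fini, routinier puisque $\mult\kk_\E$ est d'ordre premier \`a $p$.
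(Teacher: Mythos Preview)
Your argument is correct and follows essentially the same route as the paper's: for Part~1 you identify the reduction of $\tilde\k|_{\J^1}$ with $\n$ via the uniqueness in Proposition~\ref{blaeta}, and for Part~2 you compare two $\b$-extensions via~\eqref{StHonore} and lift the twisting character---exactly as in the paper (which simply uses the given $\tilde\k$ rather than a fresh $\tilde\k_0$, and cites \cite[I.9.8]{Vig1} instead of renormalising an intertwiner by hand). One small imprecision: since $\zlb$ is not a discrete valuation ring, ``une puissance convenable de $\ell$'' should be ``un \'el\'ement de $\mult\qlb$ de valuation convenable'', and ``$\ell\mathfrak{r}$'' should be ``$\mathfrak{m}_{\zlb}\mathfrak{r}$''; the argument is unaffected.
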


\begin{proof}
La preuve est analogue à celle donnée dans \cite[III.4.18]{Vig1}.
Notons $\k$ la repré\-sentation $\mathfrak{r}\otimes\flb$ de $\J$. 
La restriction de $\tilde\k$ à $\H^1$ est un 
multiple de $\tilde\t$, donc la restriction de 
$\k$ à $\H^1$ est un multiple de $\t$. 
Ainsi la restriction de $\k$ au pro-$p$-groupe $\J^1$ contient 
la représentation de Hei\-senberg $\n$, 
et a la même dimension. 
Enfin, d'après \cite[Lemme I.9.8]{Vig1}, l'ensemble d'entrela\-ce\-ment de 
$\k$ dans $\G$ est égal à 
$\J\mult\B\J$. 
C'est donc une $\b$-extension de $\t$.

Soit maintenant $\k'$ une $\b$-extension de $\t$. 
D'après \eqref{StHonore}, il existe un $\flb$-ca\-rac\-tère $\chi$ 
de $\mult\kk_\E$ tel que $\k$ soit isomorphe à 
$\k'\otimes(\chi\circ\N_{\B/\E})$, 
\ie que $\r_{\ell}(\tilde\k)$ est égale à $\k'\otimes(\chi\circ\N_{\B/\E})$.
Si l'on note $\a$ l'unique $\qlb$-caractère de $\mult\kk_\E$ 
d'ordre premier à $\ell$ dont la réduction mod $\ell$ soit $\chi^{-1}$, 
alors l'image de $\tilde\k'\otimes({\a}\circ\N_{\B/\E})$ par $\r_{\ell}$ est
égale à $\k'$. 
\end{proof}

\begin{rema}
Deux $\b$-extensions de $\tilde\t$ ont des réductions mod $\ell$ 
isomorphes si et seu\-le\-ment si elles sont tordues l'une de l'autre par un
$\qlb$-ca\-rac\-tère d'ordre une puissance de $\ell$.
\end{rema}

On réduit maintenant les types simples modulo $\ell$.

\begin{prop}
\label{RedTypSim}
Soit $\t\in \Cc_{\flb}(\La,0,\b)$ un $\flb$-caractère simple 
et soit $\tilde\t$ son relèvement. 
\begin{enumerate}
\item 
Si $\ll$ est une structure entière d'un $\qlb$-type simple contenant 
$\tilde\t$, alors $\ll\otimes\flb$ est un $\flb$-type simple contenant $\t$. 
\item
Pour tout $\flb$-type simple $\l$ contenant $\t$, il existe un $\qlb$-type simple 
contenant $\tilde\t$ dont la réduction mod $\ell$ est $\l$. 
\end{enumerate}
\end{prop}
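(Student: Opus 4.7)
\medskip

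\noindent\textit{Plan of proof.}
The key idea is to decompose any simple type as $\l = \k \otimes \s$,
where $\k$ is a $\b$-extension of the underlying simple character and
$\s$ is inflated from $\s_0^{\otimes r}$ for some irreducible cuspidal
representation $\s_0$ of $\GL_s(\kk_{\D'})$, and then to reduce (resp.\ lift)
each factor separately using Proposition~\ref{RedBext} for the $\b$-extension
and Theorem~\ref{theojames} for the finite-group cuspidal. The compatibility
of reduction mod $\ell$ with tensor products of integral structures then
glues the two pieces together.

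For assertion 1, I would start with a $\qlb$-simple type
$\widetilde\l = \widetilde\k \otimes \widetilde\s$ containing $\widetilde\t$,
where $\widetilde\k$ is a $\b$-extension of $\widetilde\t$ and
$\widetilde\s$ is the inflation of $\widetilde\s_0^{\otimes r}$ for
$\widetilde\s_0$ an irreducible cuspidal $\qlb$-representation of
$\GL_s(\kk_{\D'})$. Choose integral structures $\mathfrak{r}$ of
$\widetilde\k$ and $\mathfrak{t}$ of $\widetilde\s$; then
$\mathfrak{r}\otimes\mathfrak{t}$ is an integral structure of
$\widetilde\l$. By Proposition~\ref{RedBext}(1),
$\k = \mathfrak{r}\otimes\flb$ is a $\b$-extension of $\t$, and by
Theorem~\ref{theojames}(1) applied coordinate-wise (and the fact that
$\r_\ell$ commutes with $\otimes$ and with inflation through the pro-$p$
quotient), $\mathfrak{t}\otimes\flb$ is the inflation of
$\s_0^{\otimes r}$ with $\s_0 = \r_\ell(\widetilde\s_0)$ irreducible
cuspidal. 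Hence $(\mathfrak{r}\otimes\mathfrak{t})\otimes\flb =
\k\otimes\s$ is a simple type containing $\t$. Since simple types are
irreducible (Corollary~\ref{Mathilde}) and reduction mod $\ell$ depends
only on the semisimplification, any other integral structure $\ll$ of
$\widetilde\l$ yields the same irreducible reduction $\ll\otimes\flb \simeq
\k\otimes\s$, concluding the first part.

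For assertion 2, I start with an $\flb$-simple type $\l = \k\otimes\s$
containing $\t$, decomposed as above with $\s$ inflated from
$\s_0^{\otimes r}$. Proposition~\ref{RedBext}(2) provides a $\b$-extension
$\widetilde\k$ of $\widetilde\t$ whose reduction is $\k$.
Theorem~\ref{theojames}(2), and crucially its surjectivity statement,
yields an irreducible cuspidal $\qlb$-representation $\widetilde\s_0$ of
$\GL_s(\kk_{\D'})$ with $\r_\ell(\widetilde\s_0) = \s_0$. Setting
$\widetilde\s = \widetilde\s_0^{\otimes r}$ (inflated via~\eqref{IDENT}) and
$\widetilde\l = \widetilde\k\otimes\widetilde\s$ produces a $\qlb$-simple
type. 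It contains $\widetilde\t$ because the restriction of $\widetilde\k$
to $\H^1$ is a multiple of $\widetilde\t$ while $\widetilde\s$ is trivial
on $\J^1 \supseteq \H^1$. Applying the tensor-product argument of the first
part to an integral structure of $\widetilde\l$ shows that
$\r_\ell(\widetilde\l) = \l$.

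The only genuinely delicate point, common to both parts, is that reduction
mod $\ell$ must interact well with the tensor decomposition $\k\otimes\s$
and with inflation through the pro-$p$ quotient $\J/\J^1$; this is
essentially formal since $\J^1$ is a pro-$p$ group on which every
$\qlb$-representation is trivially integral (Remark~\ref{lamaline}), and
the cuspidal factor factors through a finite quotient. The substantive
inputs are therefore exactly Proposition~\ref{RedBext} (handling the
$\b$-extension) and the surjectivity in Theorem~\ref{theojames} (ensuring
that every $\flb$-cuspidal lifts), so I do not expect serious obstacles
beyond bookkeeping.
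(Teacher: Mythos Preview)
Your proposal is correct and is essentially the same argument as the paper's: decompose the simple type as $\k\otimes\s$, handle the $\b$-extension factor via Proposition~\ref{RedBext}, and handle the cuspidal factor via Theorem~\ref{theojames}. The paper organizes this slightly differently by first isolating the level-$0$ case (which amounts exactly to the finite-group reduction you invoke through Theorem~\ref{theojames}) and then reducing the nonzero-level case to it, but the content is identical.
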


\begin{proof}
Traitons d'abord le cas de niveau $0$.
En considérant un type simple de niveau $0$ comme 
une représen\-ta\-tion du quotient $\U(\La)/\U_{1}(\La)$, 
on se ramène au problème de la réduction 
des repré\-sen\-ta\-tions irréductibles cuspidales 
du groupe fini $\GL_{s}(\kk_{\D})$ avec $s\>1$. 

Supposons maintenant que le niveau est non nul. 
On fixe un $\qlb$-type simple $\tilde\l$ contenant $\tilde\t$, 
qu'on écrit $\tilde\k\otimes\tilde\s$. 
Si $\mathfrak{k}$ et $\ss$ sont des structures entières de $\tilde\k$ et $\tilde\s$
respectivement, $\mathfrak{k}\otimes\ss$ est une structure entiè\-re 
de $\tilde\l$.
Le résultat est alors 
une conséquence de la propositions \ref{RedBext} et du cas de niveau $0$. 
\end{proof}

La proposition \ref{RedTypSim} montre que l'existence des 
$\flb$-types simples de $\G$ 
se déduit de l'existence des $\qlb$-types simples 
prouvée dans \cite{VS2,SeSt2}. 

\section{Représentations cuspidales}
\label{RCIC}

Soit $m\>1$ un entier et soit $\G=\GL_{m}(\D)$.
Dans cette section, on effectue la classifica\-tion des représentations 
ir\-ré\-duc\-ti\-bles cuspidales de $\G$ en termes de types simples 
maximaux (théorème \ref{TheZerPos}).
Ceci a été fait par Bushnell-Kutz\-ko \cite{BK} et Sécherre-Stevens \cite{SeSt1} 
dans le cas complexe, et par Vignéras \cite{Vig1} 
pour $\GL_{n}(\F)$ dans le cas modulaire. 
Cette classification permet d'associer certains invariants à une 
représentation ir\-ré\-duc\-ti\-ble cuspidale de $\G$
(paragraphes \ref{InvRho} et \ref{NURHO}).

On étu\-die ensuite les problèmes de la réduction et du 
relèvement des re\-pré\-sentations ir\-ré\-duc\-ti\-bles cuspidales de $\G$. 
On montre que, contrairement au cas déployé traité dans \cite{Vig1}, 
une $\qlb$-re\-pré\-sen\-ta\-tion ir\-ré\-duc\-ti\-ble cuspidale entière 
de $\G$ ne se réduit pas toujours en une représen\-tation 
ir\-ré\-duc\-ti\-ble (théorème \ref{RedCusp}) et qu'une 
$\flb$-re\-pré\-sen\-ta\-tion ir\-ré\-duc\-ti\-ble cus\-pi\-dale de $\G$ 
n'admet pas toujours un relèvement (paragraphe \ref{Neantise}).

Néanmoins, on prouve que toute $\flb$-re\-pré\-sen\-ta\-tion ir\-ré\-duc\-ti\-ble 
supercuspidale de $\G$ contenant un type simple maximal supercuspidal 
se relè\-ve à $\qlb$ (théorème \ref{RelSuperCusp}, remarque 
\ref{RemaRelSuperCusp})
et on donne dans le cas déployé une nouvelle preuve du fait (dû à 
Vignéras) selon lequel la réduction modulo $\ell$ d'une $\qlb$-représentation 
ir\-ré\-duc\-ti\-ble cuspidale entière est irréductible (corollaire 
\ref{MieuxQueV}, remarque \ref{MieuxQueV2}).  

\subsection{Types simples maximaux}
\label{IndCom}

Soit $(\J,\l)$ un type simple maximal de $\G$.
Notons~:
\begin{equation*}
\NJ=\N_{\G}(\J,\l)
\end{equation*} 
le nor\-ma\-li\-sateur de la classe 
d'iso\-mor\-phisme de $\l$ dans $\G$. 
On peut supposer que $(\J,\l)$ est défini par rapport à une strate simple 
$[\La,n,0,\b]$ où $\La$ est une $\Oo_\D$-suite de réseaux stricte de $\D^m$.
Dans ce cas, il est d'usage dans les notations de remplacer cette suite par 
l'ordre héréditaire $\AA=\AA(\La)$ qu'elle définit.  

\begin{prop}
\label{MartinDuGard}
Pour toute représentation $\nl$ prolongeant $\l$ à $\NJ$, 
l'induite $\cind_{\NJ}^{\G}(\nl)$ est une représentation 
ir\-ré\-duc\-ti\-ble cuspidale de $\G$. 
En outre, l'ap\-pli\-ca\-tion~:
\begin{equation}
\label{BijLaCusp}
\nl\mapsto\cind_{\NJ}^{\G}(\nl)
\end{equation}
induit une bijection entre prolongements de $\l$ à $\NJ$ et classes 
d'isomorphisme de repré\-sen\-ta\-tions ir\-ré\-duc\-ti\-bles 
cuspidales de $\G$ contenant $\l$. 
\end{prop}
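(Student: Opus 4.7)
The plan is to follow the Bushnell--Kutzko strategy of \cite[\S 6]{BK} and \cite[Theorem 5.2]{SeSt1}, paying attention to the modular adaptation identified in the introduction at \S\ref{PI5}. The argument splits naturally into three parts: computing the $\G$-intertwining of an arbitrary extension $\nl$; establishing that $\cind_{\NJ}^{\G}(\nl)$ is irreducible cuspidal; and deducing the bijection by means of the Hecke algebra of $(\J,\l)$.

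For the first part, since $(\J,\l)$ is maximal we have $r=1$, so Remark \ref{Etoc2} identifies $\NJ$ with the $\G$-intertwining of $\l$ and shows it is generated by $\J$ together with $\w_\l$, while Lemma \ref{EntTypSim} yields one-dimensional local intertwining spaces. Any element of $\G$ intertwining $\nl$ must \textit{a fortiori} intertwine $\l$, hence lies in $\NJ$; conversely $\NJ$ intertwines $\nl$ by construction, and the local intertwining spaces remain one-dimensional since $\nl$ restricts to $\l$ on $\J$.

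For the second part, observe that $\NJ$ is open and compact modulo the center of $\G$ (a suitable power of $\w_\l$ has reduced norm in $\mult\F\subseteq\Z(\G)$). Cuspidality of $\pi:=\cind_{\NJ}^{\G}(\nl)$ then follows from the standard argument that matrix coefficients of $\pi$ are compactly supported modulo the center, which is incompatible with $\pi$ being a subquotient of any proper parabolic induction. For irreducibility, Mackey's formula together with the intertwining computation yields $\dim_{\R}\End_\G(\pi)=1$, which is not enough in the modular setting. Following the template of the proof of Lemma \ref{indirredkappa}, I would apply the criterion \cite[Lemma 4.2]{Vig3}: for any irreducible quotient $\tau$ of $\pi$, Frobenius reciprocity gives $\Hom_{\NJ}(\nl,\tau)\neq 0$; decomposing $\tau\vert_{\NJ}$ along $\J^{1}$-isotypic components and invoking Corollary \ref{LAmourLApresMidi} to identify the $\nl$-component shows that $\nl$ appears as a direct summand, hence as a quotient, of $\tau\vert_{\NJ}$, so the hypothesis of \cite[Lemma 4.2]{Vig3} is met and $\pi$ is irreducible.

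For the third part, injectivity of \eqref{BijLaCusp} follows because the $\nl$-isotypic component of $\pi\vert_{\NJ}$ recovers $\nl$ up to isomorphism. For surjectivity, given an irreducible cuspidal $\rho$ of $\G$ containing $\l$, the space $\Hom_\J(\l,\rho)$ is a nonzero right module over $\Hh(\G,\l)$, simple because $\rho$ is irreducible. Proposition \ref{ANenPlusFinirPosDuplicata} applied with $r=1$ identifies $\Hh(\G,\l)$ with the Laurent polynomial ring $\R[\Pi,\Pi^{-1}]$, whose simple right modules are one-dimensional and parametrized by $\mult\R$. Such a choice determines an extension $\nl$ of $\l$ to $\NJ$ through the scalar assigned to $\w_\l$, and the resulting nonzero morphism $\cind_{\NJ}^{\G}(\nl)\to\rho$ is an isomorphism by irreducibility of both sides. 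The main obstacle is the irreducibility step: as emphasized in \S\ref{PI5}, the classical reduction to one-dimensionality of the endomorphism algebra collapses in positive characteristic, and Corollary \ref{LAmourLApresMidi} (built on Lemma \ref{MlleDeLaMole}) is precisely the modular-specific tool needed to compensate for this failure.
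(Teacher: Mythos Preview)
Your overall strategy is sound, and you correctly identify \cite[Lemma 4.2]{Vig3} as the right irreducibility criterion. However, the key step---showing that $\nl$ is a \emph{quotient} of $\tau|_{\NJ}$ for every irreducible quotient $\tau$ of $\cind^\G_{\NJ}(\nl)$---is not adequately justified by your appeal to Corollary \ref{LAmourLApresMidi}. That corollary produces a $\J$-subrepresentation of the form $\k\otimes\xi$; it says nothing about $\NJ$-quotients. Frobenius reciprocity gives $\nl$ as a \emph{sub} of $\tau|_{\NJ}$, and the $\n$-isotypic component is a $\NJ$-stable direct summand of $\tau|_{\J^1}$; but to conclude that $\nl$ is a direct summand of that component you need the $\NJ$-action there to be semisimple, which is not automatic. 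The template of Lemma \ref{indirredkappa} does not transfer directly: in that lemma the $\n$-isotypic piece consisted of a single copy of $\mu$ because the intertwining of $\n$ in the ambient group was exactly $\J$, whereas here the intertwining of $\n$ in $\G$ is $\J^1\mult\B\J^1$, strictly larger than $\NJ$ in general.

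The paper fills this gap with a dedicated computation (Lemma \ref{LemCritV}): the $\NJ$-representation on the $\n$-isotypic component of $\cind^\G_{\NJ}(\nl)$ is the direct sum $\bigoplus_n \nl^n$ over $n\in\N_{\mult\B}(\U)\J/\NJ$, obtained via Mackey and the known intertwining of $\n$. This representation is semisimple, hence so is the $\n$-isotypic component of any quotient $\tau$; since $\nl$ embeds there, it splits off and is a quotient of $\tau|_{\NJ}$.

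For surjectivity your Hecke-algebra route differs from the paper's (which fixes a central character and uses the finite cyclic quotient $\NJ/\J\Z$) and can be made to work, but the assertion that $\Hom_\J(\l,\rho)$ is ``simple because $\rho$ is irreducible'' is exactly the implication that may fail in positive characteristic absent quasi-projectivity (Proposition \ref{pptf1}, established only later). You do not actually need simplicity: since $\Hh(\G,\l)\simeq\R[\Pi,\Pi^{-1}]$ is commutative and the module is nonzero finite-dimensional, it has a one-dimensional quotient, which already furnishes the extension $\nl$ and the nonzero map $\cind^\G_{\NJ}(\nl)\to\rho$.
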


\begin{proof}
La preuve s'inspire de \cite[\S8]{Vig3} mais des modifications doivent y être 
appor\-tées.  
Soit $\nl$ une re\-pré\-sentation de $\NJ$ pro\-lon\-geant $\l$. 
Comme l'entrelacement de $\nl$ dans $\G$ est $\NJ$ 
(voir la re\-mar\-que \ref{Etoc2}), 
la $\R$-algèbre des endomorphismes de l'induite compacte 
$\cind^{\G}_{\NJ}(\nl)$ est isomorphe à $\R$.
Pour prouver qu'elle est irréductible, nous allons 
appliquer \cite[Lemma 4.2]{Vig3}.

Soit $[\AA,n,0,\b]$ une strate simple de $\A$ définissant $(\J,\l)$.  
On pose $\U=\U(\AA)\cap\mult\B$, qui est un sous-groupe compact 
maximal de $\mult\B$.

\begin{lemm}
\label{LemCritV}
La représentation de $\NJ$ sur la composante $\n$-isotypique de
$\cind_{\NJ}^{\G}(\nl)$ est isomorphe à la somme directe des $\nl^{n}$, 
avec $n\in\N_{\mult\B}(\U)\J/\NJ$, où $\N_{\mult\B}(\U)$ est le 
normalisateur de $\U$ dans $\mult\B$. 
\end{lemm}

\begin{rema}
Dans le cas où $\D=\F$, on a $\NJ=\N_{\mult\B}(\U)\J$, 
de sorte que la repré\-sen\-tation de $\NJ$ sur la 
composante $\n$-isotypique de $\cind_{\NJ}^{\G}(\nl)$ 
est isomorphe à $\nl$~: voir \cite[Corollary 8.4]{Vig3}.
\end{rema}

\begin{proof}
La preuve est analogue à celle de la proposition \ref{Empire}. 
On trouve un isomorphisme~:
\begin{equation*}
\Hom_{\J^1}(\n,\cind_{\NJ}^{\G}(\nl)) \simeq
\bigoplus\limits_{g\in\N_{\mult\B}(\U)\J/\NJ} \Hom_{\J^1}(\n,\nl^g)
\end{equation*}
et le ré\-sul\-tat s'ensuit. 
\end{proof}

Pour appliquer le critère d'irréductibilité \cite[Lemma 4.2]{Vig3}, 
il faut montrer que, pour tout quo\-tient irréductible $\pi$ 
de $\cind_{\NJ}^{\G}(\nl)$, la représentation $\nl$ est un 
quotient de la restriction de $\pi$ à $\NJ$. 
Soit $\pi$ un tel quo\-tient irréductible~; par réciprocité de Frobenius, 
$\nl$ est donc une sous-représentation de la restriction de $\pi$ à $\NJ$.
Comme $\J^1$ est un pro-$p$-groupe, la composante 
$\n$-isotypique de $\pi$ est un quotient de celle de $\cind_{\NJ}^\G(\nl)$ 
com\-me représentation de $\NJ$. 
Mais c'est aussi un facteur direct de la restriction de $\pi$ à $\J^1$
(qui est semi-simple), de sorte que $\pi$ a la propriété attendue.

Enfin, $\cind_{\NJ}^{\G}(\nl)$ est cuspidale, 
puisque ses coefficients sont à support compact modulo le centre de $\G$
(voir \cite[II.2.7]{Vig1}).  

Il ne reste qu'à prouver que l'application \eqref{BijLaCusp} est bijective. 
Notons $\Z$ le centre de $\G$ et fixons un caractère $\om:\Z\to\mult\R$ 
dont la restriction à $\J\cap\Z$ coïncide avec le caractère par lequel 
agit la restriction de  $\l$ à ${\J\cap\Z}$. 
Notons $\l_\om$ la représentation de $\J\Z$ prolongeant $\l$ et dont 
la restriction à $\Z$ agit par le caractère $\om$.
On va prouver que \eqref{BijLaCusp} induit une bijection entre 
prolongements de $\l_\om$ à $\NJ$ et classes 
d'isomorphisme de repré\-sen\-ta\-tions ir\-ré\-duc\-ti\-bles 
cuspidales de $\G$ contenant $\l_\om$. 

Par un argument similaire à celui de 
\cite[III.4.27]{Vig1}, la représentation $\l_\om$ 
admet un prolongement $\nl$ à $\NJ$ et l'application 
$\chi\mapsto\nl\chi$ est une bijection entre les caractères de $\NJ$ 
triviaux sur $\J\Z$ et les pro\-lon\-ge\-ments de $\l_\om$ à $\NJ$.
On en déduit que \eqref{BijLaCusp} est injective. 
Si maintenant $\rho$ est une repré\-sen\-ta\-tion ir\-ré\-duc\-ti\-ble 
cuspidale de $\G$ contenant $\l_\om$, alors $\rho$ est un quotient de 
$\ind^\G_{\NJ}(\nl\otimes\R[\NJ/\J\Z])$ où $\R[\NJ/\J\Z]$ est la 
représentation régulière du groupe fini cyclique $\NJ/\J\Z$.
Il existe donc un caractère $\chi$ de $\NJ$ trivial sur $\J\Z$ tel que $\rho$ soit 
un quotient de (et donc soit isomorphe à) l'induite $\ind^\G_{\NJ}(\nl\chi)$.
Ceci met fin à la démonstration de la proposition \ref{MartinDuGard}.
\end{proof}

Un couple de la forme $(\NJ,\nl)$ produit à partir d'un type simple maximal 
$(\J,\l)$ de $\G$ est appelé un \textit{type simple maximal étendu} de $\G$.  

\subsection{Représentations cuspidales de niveau $0$}

Rappelons qu'une représentation ir\-ré\-duc\-ti\-ble de $\G$ 
est de niveau $0$ s'il y a un $\Oo_\F$-ordre héré\-ditaire $\AA$ 
de $\A=\Mat_m(\D)$ tel qu'elle possède un vecteur non nul 
invariant par $\U^1(\AA)=1+\PP$, où $\PP$ est le radical de $\AA$. 

\begin{theo}
\label{TheZer}
Toute représentation ir\-ré\-duc\-ti\-ble cuspidale de 
niveau $0$ de $\G$ contient un type simple maximal de 
niveau $0$. 
\end{theo}

\begin{proof}
Soit $\rho$ une re\-pré\-sen\-ta\-tion ir\-ré\-duc\-ti\-ble 
cuspidale de niveau $0$ de $\G$, et soit $\AA$ un ordre 
hé\-ré\-di\-tai\-re mi\-ni\-mal parmi ceux pour lesquels 
$\rho$ a des vecteurs invariants par $\U^1=\U^1(\AA)$, 
que l'on peut supposer standard. 
On pose $\U=\U(\AA)$.
On a un iso\-mor\-phisme de groupes~:
\begin{equation}
\label{ParisTexas}
\U/\U^{1}\to\GL_{m_1}(\kk_\D)\times\cdots\times\GL_{m_r}(\kk_\D),
\end{equation}
où $r$ est la période de $\AA$ et les $m_i$ sont des entiers dont 
la somme est égale à $m$. 
De cette fa\c{c}on, le groupe de Galois $\Gal(\kk_{\D}/\kk_{\F})$
opère sur l'ensemble des classes d'iso\-mor\-phis\-me de représentations 
de $\U/\U^{1}$.

\begin{lemm}
\label{LemSig}
Il existe une représentation irréductible $\s$ de $\U$ triviale sur 
$\U^1$ telle qu'on ait 
$\Hom_\U(\s,\rho)\neq0$, et qui est cuspidale en tant que 
représentation de $\U/\U^1$.
\end{lemm}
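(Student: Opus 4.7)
Le plan est d'extraire $\s$ de l'espace $\V=\rho^{\U^1}$, vu comme repr�\-sen\-ta\-tion non nulle du groupe fini $\U/\U^1$ via l'identification (\ref{ParisTexas}), puis de d�duire la cuspidalit� de $\s$ de la minimalit� de $\AA$. Concr�tement, le groupe $\U/\U^1$ �tant fini, tout vecteur non nul de $\V$ engendre une sous-repr�\-sen\-ta\-tion de dimension finie, laquelle contient � son tour une sous-repr�\-sen\-ta\-tion irr�\-duc\-ti\-ble $\s$~; on aura alors $\Hom_\U(\s,\rho)\neq 0$ par construction. Le corps $\R$ �tant alg�briquement clos, $\s$ se d�compose sous la forme $\s_1\otimes\dots\otimes\s_r$ o� chaque $\s_i$ est une repr�\-sen\-ta\-tion irr�\-duc\-ti\-ble de $\GL_{m_i}(\kk_\D)$.

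Pour �tablir la cuspidalit� de $\s$, on raisonnera par l'absurde. Si $\s$ n'est pas cuspidale, il existe un indice $i$ tel que $\s_i$ ne le soit pas, donc un sous-groupe parabolique propre $\PB_i=\MB_i\NB_i$ de $\GL_{m_i}(\kk_\D)$ tel que $\s_i^{\NB_i}\neq 0$~; l'identification entre invariants et coinvariants sous $\NB_i$ r�sulte de ce que $\NB_i$ est un $p$-groupe et $\R$ est de carac\-t�\-ris\-ti\-que diff�\-ren\-te de $p$, de sorte que la cuspidalit� est caract�ris�e par l'annulation de $\s_i^{\NB_i}$ pour tout $\NB_i$ parmi les radicaux unipotents propres. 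En notant $\NB$ le sous-groupe unipotent de $\U/\U^1$ trivial sur tous les facteurs sauf le $i$-i�me, o� il vaut $\NB_i$, on obtient $\s^{\NB}\neq 0$.

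La pr�image de $\NB$ dans $\U$ s'identifie alors, via le dictionnaire classique entre sous-groupes parahoriques et paraboliques du quotient r�ductif, au radical pro-unipotent $\U^1(\AA')$ d'un ordre h�r�\-di\-tai\-re $\AA'\subsetneq\AA$ obtenu en subdivisant le $i$-i�me bloc de $\AA$ selon la d�composition d�finie par $\MB_i$. On aurait alors $\rho^{\U^1(\AA')}\supseteq\s^{\NB}\neq 0$, ce qui contredirait la minimalit� de $\AA$ et terminerait la preuve. La principale subtilit� technique sera cette derni�re identification, qui rel�ve toutefois de la th�orie classique des ordres h�r�\-di\-tai\-res et de leur lien avec la structure parabolique du quotient r�ductif $\U/\U^1$.
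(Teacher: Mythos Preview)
Your proposal is correct and follows essentially the same approach as the paper. The paper's proof simply invokes Corollary~\ref{LAmourLApresMidi} for the existence of~$\s$ (which, in the level~$0$ case where $\J=\U$, $\J^1=\U^1$, $\n$ and $\k$ are trivial, specializes precisely to your argument of picking an irreducible sub of the finite $\U/\U^1$-module $\rho^{\U^1}$) and then invokes Proposition~\ref{isc}(2) for the cuspidality (which, again at level~$0$, is exactly your argument: non-cuspidality of~$\s$ yields a smaller order~$\AA'$ with $\rho^{\U^1(\AA')}\neq0$, contradicting minimality). You have unpacked in the level~$0$ setting what the paper delegates to those two general results.
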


\begin{proof}
L'existence d'une représentation irréductible $\s$ de $\U$ triviale sur 
$\U^1$ telle que $\Hom_\U(\s,\rho)\neq0$ est donnée par le corollaire 
\ref{LAmourLApresMidi}. 
Supposons qu'une telle représentation $\s$ n'est pas cuspidale comme 
re\-pré\-sen\-ta\-tion de $\U/\U^1$.
Alors la proposition \ref{isc} contredit la minimalité de $\AA$. 
\end{proof}

Fixons une représentation $\s$ de $\U$ satisfaisant aux 
conditions du lemme \ref{LemSig}.  
Le couple $(\U,\s)$ est un type semi-simple homogène 
(de niveau $0$) de $\G$. 
On a le résultat général suivant. 

\begin{prop}
\label{MaxZerPosSS}
Soit $\pi$ une représentation ir\-ré\-duc\-ti\-ble cuspidale de $\G$.  
Si $\pi$ contient un type semi-simple, alors ce type semi-simple est 
un type simple maximal. 
\end{prop}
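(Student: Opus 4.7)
The plan is to deduce the statement directly from the definition of a semi-simple type combined with Remark~\ref{SolecismeEvident}. By Definition~\ref{DefTypSeSim}, any semi-simple type $(\BJ,\bl)$ of $\G$ is \emph{by construction} a paire couvrante of a maximal simple type $(\J_\M,\l_\M)$ of some Levi subgroup $\M\subseteq\G$. Thus the entire content of the proposition is to rule out $\M\subsetneq\G$.

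Suppose for contradiction that $\M$ is a proper Levi subgroup of $\G$, and let $\P$ be the associated standard parabolic. By Remark~\ref{SolecismeEvident} (which itself is an immediate consequence of the isomorphism~\eqref{Gobineau}), any irreducible representation of $\G$ containing a paire couvrante relative to a proper Levi has a non-zero Jacquet module $\rp_\P^\G(\pi)$, hence is not cuspidal. This directly contradicts the hypothesis that $\pi$ is cuspidal, so we must have $\M=\G$.

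It then remains to verify that when $\M=\G$ the construction of \S\ref{NonEndoEq} produces exactly a maximal simple type. Unpacking the definition: $\M=\G$ forces $r=1$ (a single block $m_1=m$), hence also $l=1$ (a single endo-equivalence class), and the intermediate Levi $\L$ of \S\ref{NonEndoEq} equals $\G$; the pair $(\BJ_\L,\bl_\L)$ of~\eqref{Jacque} reduces to the pair $(\BJ_1,\bl_1)$ furnished by Proposition~\ref{JplPM} for a single maximal simple type $(\J_1,\l_1)=(\J_\M,\l_\M)$ of $\G$. In this homogeneous single-block situation one is in the setting of Remark~\ref{Gurgeh} with $u=1$, so $(\BJ,\bl)$ and $(\J_1,\l_1)$ define the same object up to the canonical induction~\eqref{poumi}, and $(\BJ,\bl)$ is a maximal simple type of $\G$, as required.

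There is no real obstacle here: the proof is essentially a formal manipulation of definitions, the substantive input being the adjunction formula~\eqref{Gobineau} that underlies Remark~\ref{SolecismeEvident}. The only point deserving care is the bookkeeping showing that the cover of a maximal simple type of $\G$ \emph{in} $\G$ is nothing other than a maximal simple type of $\G$, which is transparent once one writes down the construction of Proposition~\ref{PCTSM} in the trivial case $\L=\M=\G$.
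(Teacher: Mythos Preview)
Your proof is correct and follows exactly the same approach as the paper's: invoke Remark~\ref{SolecismeEvident} to rule out a proper Levi, then conclude that a covering pair of a maximal simple type of $\G$ in $\G$ is itself a maximal simple type. The paper's proof is a terse two lines and simply asserts the last step; your additional unpacking of the case $\M=\G$ (forcing $r=1$, $l=1$, $\L=\G$, and hence $(\BJ,\bl)=(\J_1,\l_1)$) is correct bookkeeping that the paper leaves implicit.
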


\begin{proof}
D'après la remarque \ref{SolecismeEvident}, un tel type semi-simple 
ne peut pas être une paire couvrante relativement à un sous-groupe 
de Levi propre de $\G$.
Par conséquent, c'est une paire couvrante d'un 
type simple maxi\-mal $(\J_\M,\l_\M)$ avec $\M=\G$, \ie que 
c'est un type simple maximal.  
\end{proof}

D'après la proposition \ref{MaxZerPosSS}, 
puisque le type semi-simple $(\U,\s)$ apparaît dans une 
représentation cuspidale, c'est un type simple maximal, 
ce qui met fin à la dé\-mons\-tra\-tion du théorème \ref{TheZer}. 
\end{proof} 

\subsection{Représentations cuspidales de niveau non nul}

L'objet de ce paragraphe est de démontrer le résultat suivant.

\begin{theo}
\label{ThePos}
Toute représentation ir\-ré\-duc\-ti\-ble cuspidale de niveau non nul 
de $\G$ contient un type simple maximal de niveau non nul. 
\end{theo}

Soit $\rho$ une représentation ir\-ré\-duc\-ti\-ble cuspidale 
de niveau non nul de $\G$. 
Dans un premier temps, il s'agit de montrer que $\rho$ contient un 
caractère simple.
Fixons comme au paragraphe \ref{CarSim} un homomorphisme $\iota_{p,\R}$ 
et un caractère $\psi_{\F,\CC}:\F\to\mult\CC$. 
Considérons une strate $[\AA,n,n-1,\b]$ de $\A$
(où $\AA$ est un $\Oo_\F$-ordre héréditaire de $\A$).
Il lui correspond le caractère~:
\begin{equation}
\label{DEFPSIB}
\psi_{\b}:x\mapsto\iota_{p,\R}\circ\psi_{\F,\CC}\circ\tr_{\A/\F}(\b(x-1))
\end{equation}
du sous-groupe ouvert compact $\U^n(\AA)=1+\PP^n$, où $\PP$ est le 
radical de $\AA$, et où $\tr_{\A/\F}$ désigne la trace réduite de $\A$ 
sur $\F$.

\begin{lemm}
\label{LemPSIB}
Il existe une strate simple $[\AA,n,n-1,\b]$ de $\A$ telle que la 
restriction de $\rho$ à $\U^{n}(\AA)$ contienne $\psi_\b$.
\end{lemm}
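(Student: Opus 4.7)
The plan is to adapt to the modular setting the first step of the Bushnell--Kutzko construction, as developed for $\GL_m(\D)$ in \cite{VS1,SeSt1}. First, I would introduce the normalized level of $\rho$: since $\rho$ has positive level, $\rho^{\U^1(\AA)}=0$ for every hereditary $\Oo_\F$-order $\AA$ of $\A$, while by admissibility $\rho^{\U^{n+1}(\AA)}\neq 0$ for $n$ large enough. Let $r(\rho)$ denote the minimum of $n/e(\AA)$ over all pairs $(\AA,n)$ with $\rho^{\U^{n+1}(\AA)}\neq 0$, where $e(\AA)$ is the $\Oo_\F$-period of $\AA$, and fix a pair realizing it. By minimality, $\rho^{\U^n(\AA)}=0$, so the finite abelian pro-$p$ quotient $\U^n(\AA)/\U^{n+1}(\AA)$ acts on the nonzero space $\rho^{\U^{n+1}(\AA)}$ by nontrivial characters. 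Its $\R$-dual is identified via $\iota_{p,\R}\circ\psi_{\F,\CC}\circ\tr_{\A/\F}$ with $\PP^{-n}/\PP^{1-n}$, where $\PP$ is the Jacobson radical of $\AA$, so some character $\psi_\b$ attached to a stratum $[\AA,n,n-1,\b]$ occurs as a subrepresentation of $\rho\vert_{\U^n(\AA)}$.

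Second, I would use the cuspidality of $\rho$ to arrange that $\b$ may be chosen so that $[\AA,n,n-1,\b]$ is a \emph{fundamental} stratum, i.e.\ such that the coset $\b+\PP^{1-n}$ contains no nilpotent element. If on the contrary every such $\psi_\b$ came from a non-fundamental stratum, then by the standard classification of strata (see \cite[\S2]{SeSt1}) either $\AA$ could be replaced by a hereditary order of strictly smaller normalized level, contradicting the minimality of $r(\rho)$, or $\b$ could be conjugated into the radical of a proper parabolic subalgebra of $\A$; in the latter case, the Iwahori decomposition of $\U^n(\AA)$ with respect to the corresponding parabolic $\P\subsetneq\G$, together with the triviality of $\psi_\b$ on the upper-unipotent part, would produce a nonzero vector in the Jacquet module $\rp_\P^\G(\rho)$, contradicting cuspidality.

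Third, a fundamental stratum occurring in $\rho$ may be refined to a simple stratum occurring in $\rho$ by the inductive process of \cite[\S1]{SeSt1} generalizing \cite{BK}: one replaces $\b$ by an element $\b'$ in the same coset $\b+\PP^{1-n}$ for which $[\AA,n,n-1,\b']$ is simple, and since equivalent strata induce the same character on $\U^n(\AA)$, this simple stratum still appears in $\rho$. The principal obstacle is the second step, where cuspidality enters decisively; all characteristic-$\ell$ subtleties are absorbed by the fact that $\U^n(\AA)$ is a pro-$p$-group of order invertible in $\R$, so that $\rho\vert_{\U^n(\AA)}$ is semisimple and the complex-case split-implies-non-cuspidal argument of \cite{BK,SeSt1} transposes essentially verbatim.
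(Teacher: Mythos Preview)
Your outline has the right shape but contains a genuine gap in the passage from ``fundamental'' to ``simple''. In your third step you assert that any fundamental stratum $[\AA,n,n-1,\b]$ occurring in $\rho$ may be replaced, within the same coset $\b+\PP^{1-n}$, by one which is simple. This is false in general: it holds only for \emph{non-split} fundamental strata, i.e.\ those whose characteristic polynomial is a power of a single irreducible factor. A split fundamental stratum (two or more distinct irreducible factors) is never equivalent to a simple one. Your second step, as written, only secures fundamentality --- and in fact minimality of the normalized level already gives that, without invoking cuspidality; the parabolic alternative you describe there is really the nilpotent case again and does not rule out splitting.

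What is missing is precisely the paper's step~2: if the fundamental stratum contained in $\rho$ is split, one must show that $\rho$ has a nonzero Jacquet module with respect to a proper parabolic, contradicting cuspidality. This is the step that follows Broussous \cite[\S2.3--2.4]{Br4} (after \cite[\S3]{BK2}) and is exactly where the modular subtlety lives: the argument passes through a Hecke-algebra statement of the type ``left-invertible implies invertible'' as in \cite[7.15]{BK1}, and the reason it survives in characteristic~$\ell$ is that the underlying group in this situation is a pro-$p$-group. Your closing remark that ``all characteristic-$\ell$ subtleties are absorbed by the fact that $\U^n(\AA)$ is pro-$p$'' is correct in spirit, but since you omitted the split case you never actually reach the one place where that observation does real work.
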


\begin{proof}
La démonstration de Broussous \cite{Br4} dans le cas complexe, 
elle-même ins\-pi\-rée de celle de Bushnell et Kutzko \cite{BK2} 
concernant $\GL_{n}(\F)$, s'adapte ici.
Rappelons-en les principales étapes.
\begin{enumerate}
\item
D'abord, on montre que toute représentation de $\G$ de 
niveau non nul contient une strate fondamentale au sens de 
\cite[Définition 3.9]{SeSt1}. 
\item
Ensuite, on montre qu'une représentation de $\G$
de niveau non nul contenant une strate fondamentale scindée
(au sens de \cite[Définition 3.9]{SeSt1})
a un module de 
Jacquet non nul relativement à un sous-groupe parabolique 
propre de $\G$.
\item
Enfin, on montre que toute représentation de $\G$
de niveau non nul contenant une strate fondamentale non scindée
de $\A$ contient également une strate simple de $\A$.
\end{enumerate}

L'étape 1 (voir par exemple \cite{HM}) repose sur 
\cite[Proposition 1.2.2]{Br4} qui ne dépend pas du corps $\R$.
L'étape 2 repose principalement sur les propositions 2.3.2 
et 2.4.3 de \cite{Br4} (qui cor\-res\-pon\-dent res\-pec\-ti\-vement au théorème 
3.7 et au lemme 3.9 de \cite{BK2})~; 
la première est indépendante de $\R$, 
et la preuve de la seconde reste valable dans le cas modulaire --
notamment l'argument ``inversi\-ble à gauche implique inversible'' de 
\cite[7.15]{BK1} car, contrairement à se qui se passe dans la preuve de la 
proposition \ref{PCTSM}, le groupe sous-jacent est ici un pro-$p$-groupe. 
Enfin l'étape 3 repose sur \cite[Théorème 1.2.5]{Br4}, qui ne dépend pas 
du corps $\R$.
\end{proof}

\begin{lemm}
\label{PotAuFeu}
Il existe une strate simple $[\AA,n,0,\b]$ de $\A$ et un caractère 
simple $\t\in\Cc(\AA,0,\b)$ tels que la restriction de $\rho$ à 
$\H^1(\b,\AA)$ contienne $\t$.
\end{lemm}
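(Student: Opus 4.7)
\noindent\emph{Proof sketch.}
The plan is to proceed by descending induction on $m\in\{0,1,\dots,n-1\}$, proving at each stage that $\rho$ contains a simple character in $\Cc(\AA_m,m,\b_m)$ for some simple stratum $[\AA_m,n_m,m,\b_m]$ of $\A$; taking $m=0$ then yields the lemma. The base case $m=n-1$ is precisely Lemma \ref{LemPSIB}, because the character $\psi_\b$ attached to a simple stratum $[\AA,n,n-1,\b]$ is by construction the unique element of $\Cc(\AA,n-1,\b)$, regarded as a character of $\H^n(\b,\AA)=\U^n(\AA)$.

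For the induction step, suppose $\rho$ contains a simple character $\t\in\Cc(\AA,m,\b)$ with $m\>1$, defined on $\H^{m+1}(\b,\AA)$. I would restrict $\rho$ to the pro-$p$-subgroup $\H^m(\b,\AA)$; this restriction is semi-simple since the characteristic of $\R$ is not $p$, and decomposes into isotypic components along the characters of the abelian quotient $\H^m(\b,\AA)/\H^{m+1}(\b,\AA)$ that restrict to $\t$. By the standard duality, these extensions are parametrized by a coset of derived strata of the form $[\AA,m,m-1,\b+c]$ with $c$ varying in a suitable quotient of $\PP^{-m}$. I would then apply, to each such derived stratum, the trichotomy underlying the proof of Lemma \ref{LemPSIB}: either it is non-fundamental (in which case $\rho$ actually contains an even richer character and one iterates on a larger pro-$p$-subgroup), or it is fundamental and split (which is excluded by cuspidality of $\rho$ via step 2 of the proof of Lemma \ref{LemPSIB}, applied to the derived stratum), or it is fundamental and non-split, hence equivalent to a simple stratum $[\AA',n',m-1,\b']$ by step 3 of the same proof. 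In this last case, the approximation procedure of \cite{BK2,SeSt1} attaches to the equivalent simple stratum a simple character $\t'\in\Cc(\AA',m-1,\b')$ that is contained in $\rho$, completing the induction.

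The main obstacle will be verifying that the refinement and approximation machinery of Bushnell--Kutzko and S\'echerre--Stevens, originally written for complex coefficients, transfers to the field $\R$. All the relevant ingredients are, however, either purely arithmetic statements about simple and derived strata in the finite-dimensional $\F$-algebra $\A$ (independent of $\R$), or representation-theoretic statements about pro-$p$-subgroups of $\G$, whose smooth representations over $\R$ are automatically semi-simple. In particular the ``inversible \`a gauche implique inversible'' argument invoked in step 2 of the proof of Lemma \ref{LemPSIB} only involves pro-$p$-groups and therefore carries over without modification. Iterating the induction step from $m=n-1$ down to $m=1$ then produces the desired simple character of level $0$ contained in $\rho$.
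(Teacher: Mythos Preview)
Your overall strategy---descending induction on the level $m$, starting from Lemma~\ref{LemPSIB}, and analysing at each step the extensions of the current simple character to $\H^m(\b,\AA)$---is exactly the standard one underlying \cite[Th\'eor\`eme~3.23]{SeSt1}, and is the argument the paper invokes. So the approach is the same.

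There is, however, a genuine confusion in your induction step. You propose to apply ``step~2 of the proof of Lemma~\ref{LemPSIB}'' to the derived stratum in order to rule out the split case. This does not quite work: step~2 of Lemma~\ref{LemPSIB} treats a \emph{split stratum in $\A$} at the first level, i.e.\ a character of $\U^n(\AA)$, and shows it forces a non-zero Jacquet module. In the inductive step, the obstruction is not a split stratum in $\A$ but a \emph{split character} in the sense of \cite[D\'efinition~3.22]{SeSt1}: a character of $\H^m(\b,\AA)$ (not of $\U^m(\AA)$) whose associated derived stratum, which lives in the centraliser $\B$, is split. Producing a non-zero Jacquet module from such a split character requires a separate argument, namely \cite[Th\'eor\`eme~4.3 and Corollaire~4.6]{SeSt1} (the latter corresponding to \cite[Corollary~6.6]{BK2}), which is precisely what the paper cites. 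The dichotomy at each level is therefore: either one refines to a simple character of level $m-1$, or one lands on a split character, and it is the second alternative---not a split stratum as in Lemma~\ref{LemPSIB}---that contradicts cuspidality.

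Your final paragraph on why the arguments transfer to $\R$ is correct and matches the paper: the stratum-theoretic steps are independent of $\R$, and the representation-theoretic ones take place over pro-$p$-groups, so the ``left-invertible implies invertible'' argument of \cite[7.15]{BK1} goes through unchanged.
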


\begin{proof}
La démonstration de \cite{SeSt1} dans le cas complexe s'adapte ici. 
Il faut prouver que, si le résultat n'est pas vrai, $\rho$ contient
ou bien une strate scindée, ou bien un caractère scin\-dé au sens de 
la définition 3.22 de \cite{SeSt1}, et que
dans chacun de ces deux cas, 
$\rho$ a un module de Jacquet non nul re\-la\-ti\-ve\-ment 
à un sous-groupe parabolique propre. 
La première étape repose sur \cite[Théorème 3.23]{SeSt1}, dont la 
preuve ne dépend pas de $\R$.
La seconde étape repose principalement sur le théorème 4.3 et le 
corollaire 4.6 de \cite{SeSt1}
(ce dernier cor\-res\-pon\-dant à \cite[Corollary 6.6]{BK2})~; 
la preuve du théorème ne dépend pas du 
corps $\R$ et celle du corollaire reste valable dans le cas modulaire.
\end{proof}

On fixe une strate simple $[\AA,n,0,\b]$ de $\A$ et un caractère 
simple $\t\in\Cc(\AA,0,\b)$ sa\-tisfai\-sant à la condition du lemme 
\ref{PotAuFeu}, en choisissant $\AA$ minimal pour cette propriété. 
On pose $\J=\J(\b,\AA)$ et $\J^1=\J^1(\b,\AA)$ et on fixe une $\b$-extension 
$\k$ de $\t$. 
D'après le lemme \ref{LAmourLApresMidi}, la représentation $\rho$ contient 
une sous-représentation de la forme $\k\otimes\s$, 
avec $\s$ une re\-pré\-sentation irréductible de $\J$ triviale sur $\J^1$.

\begin{lemm}
\label{LemSigJi}
En tant que représentation de $\J/\J^1\simeq\GL_s(\ff_{\D'})^r$, 
la représentation $\s$ est cus\-pi\-dale. 
\end{lemm}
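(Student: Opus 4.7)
La preuve proc�dera par l'absurde, en s'appuyant directement sur la clause de minimalit� impos�e � l'ordre h�r�ditaire $\AA$ au d�but du paragraphe, et sur l'assertion~2 de la proposition~\ref{isc}.

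Plus pr�cis�ment, on supposera que $\s$, consid�r�e comme repr�sentation du groupe fini r�ductif $\GL_{s}(\kk_{\D'})^{r}$, n'est pas cuspidale. Par construction, $\rho$ contient la repr�sentation irr�ductible $\k\otimes\s$ de $\J$, avec $\s$ triviale sur $\J^{1}$~: on se trouve donc exactement dans les hypoth�ses de l'assertion~2 de la proposition~\ref{isc}. Celle-ci fournira alors une strate simple $[\La',n',0,\b]$ de $\A$ telle que $\U(\La')\subsetneq\U(\La)$ et telle que $\rho$ contienne le transfert $\t'\in\Cc(\La',0,\b)$ de $\t$, o� $\La$ d�signe ici la suite de r�seaux associ�e � $\AA$.

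Il restera alors � observer que cela contredit le choix de $\AA$~: ce dernier a �t� pris \emph{minimal} parmi les $\Oo_{\F}$-ordres h�r�ditaires pour lesquels $\rho$ contient un caract�re simple attach� � une strate simple port�e par cet ordre, or l'ordre $\AA(\La')$ satisfait pr�cis�ment cette propri�t� gr�ce au caract�re $\t'$, tandis que l'inclusion stricte $\U(\La')\subsetneq\U(\La)$ traduit une d�croissance stricte pour la relation d�finissant la minimalit�. L'essentiel du travail �tant concentr� dans la proposition~\ref{isc} et dans la d�composition $\k\otimes\s$ fournie par le corollaire~\ref{LAmourLApresMidi}, aucun obstacle s�rieux n'est � anticiper~; le seul point auquel on prendra garde est la compatibilit� entre la notion de minimalit� choisie pour $\AA$ et l'inclusion stricte produite par la proposition~\ref{isc}, qui est imm�diate via la correspondance usuelle entre $\Oo_{\D}$-suites de r�seaux et ordres h�r�ditaires.
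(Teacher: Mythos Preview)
Your proposal is correct and follows exactly the same approach as the paper, which gives a one-line proof: ``Si elle ne l'\'etait pas, la proposition \ref{isc} contredirait la minimalit\'e de $\AA$.'' You have simply unfolded this argument in more detail, making explicit the application of the assertion~2 of proposition~\ref{isc} and the resulting contradiction with the minimality hypothesis on $\AA$.
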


\begin{proof}
Si elle ne l'était pas, la proposition \ref{isc} contredirait la minimalité de 
$\AA$.  
\end{proof}

D'après la propriété \eqref{poumi}, et avec les notations du paragraphe 
\ref{EndoEq}, 
la représentation $\k\otimes\s$ est 
l'induite du type semi-simple homogène $\bk\otimes\bs$.
D'après la proposition \ref{MaxZerPosSS}, ce type semi-simple est un type 
simple maximal. 
Ceci met fin à la dé\-mons\-tra\-tion du théorème \ref{ThePos}. 

\subsection{Invariants associés à une représentation cuspidale}
\label{InvRho}

Le théorème suivant subsume les théo\-rèmes \ref{TheZer} et 
\ref{ThePos}, et il les com\-plè\-te en fournissant une classification des 
classes d'isomorphisme de repré\-sen\-tations 
irré\-duc\-tibles cuspidales de $\G$ par la théorie des 
types simples. 

\begin{theo}
\label{TheZerPos}
L'application~:
\begin{equation}
\label{ThClasCusp}
(\NJ,\nl)\mapsto\cind_{\NJ}^{\G}(\nl)
\end{equation}
induit une bijection entre classes de $\G$-conjugaison de types 
simples maximaux étendus 
et classes d'isomorphisme de représentations 
irréductibles cuspidales de $\G$.
\end{theo}

\begin{proof}
D'après la proposition \ref{MartinDuGard}, cette application est 
bien définie, et elle est surjec\-tive d'après les théorèmes \ref{TheZer} 
et \ref{ThePos}.  
Pour prouver qu'elle est injective, on reprend l'argument de 
\cite[Theorem 7.2]{SeSt2}. 
\end{proof}

Soit $\rho$ une représentation ir\-ré\-duc\-ti\-ble cus\-pi\-da\-le de 
$\G$. 
Nous voulons associer à $\rho$ des invariants. 
Fixons un type simple maximal $(\J,\l)$ de $\G$ contenu dans $\rho$ 
ainsi qu'une strate simple $[\AA,n,0,\b]$ le dé\-fi\-nis\-sant.
Posons $\E=\F(\b)$ et écrivons $\l$ sous la forme $\k\otimes\s$.
Fixons un isomorphisme de $\E$-algèbres \eqref{MonEpousee} et 
notons $\Ga$ le groupe de Galois défini par (\ref{Bliblij}). 
Notons $(\NJ,\nl)$ le type simple maximal étendu prolongeant 
$(\J,\l)$ et contenu dans $\rho$.

\begin{defi}
On note $\ell$ l'exposant caractéristique de $\R$, égal à $1$ si 
$\R$ est de carac\-té\-ristique nulle et égal à la carac\-téristique de $\R$ sinon. 
\end{defi}

Si $\chi$ est un $\R$-caractère non ramifié de $\G$, alors $\rho \chi$ 
est isomorphe à $\rho$ si et seulement si $\nl\chi=\nl$, \ie si et seulement 
si $\chi$ est trivial sur $\NJ$.
D'après la remarque \ref{Etoc2}, c'est encore équivalent à $\chi(\w_\l)=1$, 
où l'élément $\w_\l\in\G$ est défini par \eqref{pilam}.
Comme un caractère non ramifié de $\G$ est caractérisé par sa valeur en 
n'importe quel élément dont la norme réduite est une uniformisante de $\F$, 
le groupe des $\R$-caractères non ramifiés $\chi$ de $\G$ tels que $\rho\chi$ soit 
isomorphe à $\rho$ est fini et cyclique.
Son cardinal, noté~:
\begin{equation*}
n(\rho), 
\end{equation*}
est le plus grand diviseur premier à $\ell$ de la valuation de la norme 
réduite de $\w_\l$ 
(dans le cas où $\R$ est de ca\-rac\-té\-ris\-tique nulle,
voir \cite[Proposition 4.1]{VSU0}). 
Rappelons que $d$ est le degré réduit de $\D$ sur $\F$ et notons~:
\begin{equation*}
\label{InvF}
f(\rho)
\end{equation*}
le quotient de $md$ par l'indice de ramification de $\E$ sur $\F$.  
Notons ensuite~:
\begin{equation*}
\label{InvB}
s(\rho)
\end{equation*}
l'ordre du stabilisateur de $\s$ dans $\Ga$ et~:
\begin{equation*}
b(\rho)
\end{equation*}
le cardinal de l'orbite de $\s$ sous $\Ga$, qui avait été 
noté $b(\l)$ au para\-gra\-phe \ref{Paralipomenes}. 

Remarquons que $s(\rho)$ est l'indice de $\mult\E\J$ dans le 
$\G$-normalisateur de $\l$ et que $s(\rho)b(\rho)=d'$, 
le degré réduit de $\D'$ sur $\E$.
L'entier $f(\rho)$ s'écrit aussi~:
\begin{equation}
\label{Dahl}
f(\rho) = em'd'
\end{equation}
où $e$ est l'indice de ramification de $\E$ sur $\F$, 
ce qui implique que $s(\rho)$ divise $f(\rho)$ et que le quotient de 
$f(\rho)$ par $s(\rho)$ est la valuation de la norme réduite de $\w_\l$.  
On en déduit que~: 
\begin{equation}
\label{Wigmore}
f(\rho) = n(\rho)s(\rho)\ell^{u},
\quad u\>0.
\end{equation}

Rappelons que la classe d'inertie de $\rho$ est l'ensemble des classes 
d'isomorphisme de la forme $\sy{\rho\chi}$ où $\chi$ décrit l'ensemble des caractères 
non ramifiés de $\G$.

\begin{prop}
Les quantités $n(\rho)$, $f(\rho)$, $b(\rho)$, $s(\rho)$ 
ne dépendent que de la 
classe d'iner\-tie de $\rho$, et pas du type simple maximal 
$(\J,\l)$ ni de la strate $[\AA,n,0,\b]$.
\end{prop}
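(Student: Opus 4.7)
Le plan consiste \`a s\'eparer l'\'enonc\'e en deux affirmations ind\'ependantes~: (i) pour $\rho$ fix\'ee, les quatre entiers ne d\'ependent pas des choix auxiliaires de $(\J,\l)$ et de la strate $[\AA,n,0,\b]$~; (ii) ils sont insensibles \`a la torsion de $\rho$ par un caract\`ere non ramifi\'e $\chi$ de $\G$. L'entier $n(\rho)$ \'etant, par sa d\'efinition m\^eme, attach\'e \`a la seule classe d'isomorphisme de $\rho$, (i) pour $n(\rho)$ est trivial~; et son invariance sous $\rho\mapsto\rho\chi$ d\'ecoule de ce que les groupes $\{\chi'\mid\rho\chi'\simeq\rho\}$ et $\{\chi'\mid(\rho\chi)\chi'\simeq\rho\chi\}$ co\"incident.

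Pour (i) concernant $f(\rho),b(\rho),s(\rho)$, la premi\`ere \'etape est d'\'eliminer la d\'ependance en le type contenu dans $\rho$. D'apr\`es le th\'eor\`eme \ref{TheZerPos}, deux types simples maximaux contenus dans $\rho$ sont $\G$-conjugu\'es. La conjugaison par $g\in\G$ envoie la strate $[\AA,n,0,\b]$ sur la strate simple $[g\AA g^{-1},n,0,g\b g^{-1}]$, et transporte compatiblement $\E$, $\B$, $\D'$, $\kk_{\D'}$, $\kk_\E$, $\Ga$ et $\s$~; les quantit\'es arithm\'etiques $md/e(\E/\F)=f(\rho)$, la taille d'orbite $b(\rho)=|\Ga\cdot\s|$ et le cardinal du stabilisateur $s(\rho)$ restent donc inchang\'ees.

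Reste \`a v\'erifier l'ind\'ependance par rapport au choix de la strate simple d\'efinissant un $(\J,\l)$ fix\'e. C'est le point le plus d\'elicat et probablement l'obstacle principal~: deux telles strates ne co\"incident pas n\'ecessairement, mais elles partagent la paire $(\J,\l)$, donc en particulier le caract\`ere simple sous-jacent \`a $\l$. La th\'eorie de l'endo-\'equivalence (\cite{BSS1}) combin\'ee aux r\'esultats d'entrelacement pour strates simples d\'efinissant un m\^eme caract\`ere simple (voir \cite{VS1,SeSt1}) fournit un $\F$-iso\-mor\-phisme canonique $\F(\b)\simeq\F(\b')$ identifiant les centralisateurs $\B,\B'$, donc aussi $\D',\kk_{\D'},\kk_\E$ ainsi que la repr\'esentation $\s$ construite dans chaque cas~; les invariants $e$, $m'$, $d'$, $|\Ga\cdot\s|$, $|\Ga_\s|$ sont alors identiques. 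On notera par ailleurs que l'ind\'ependance de $b(\l)$ par rapport aux choix de $\Phi$ et de la $\b$-extension $\k$ a d\'ej\`a \'et\'e signal\'ee au paragraphe \ref{Paralipomenes}, ce qui simplifie la v\'erification pour $b(\rho)$ et $s(\rho)=d'/b(\rho)$.

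Enfin, pour (ii) concernant $f,b,s$~: tout caract\`ere non ramifi\'e $\chi$ de $\G$ se factorise par la norme r\'eduite $\G\to\mult\F$ suivie d'un caract\`ere non ramifi\'e de $\mult\F$, donc est trivial sur tout sous-groupe compact, en particulier sur $\J$. Par cons\'equent, $\l$ est contenu dans $\rho\chi$ exactement comme dans $\rho$, et la m\^eme strate $[\AA,n,0,\b]$ la d\'efinit. En appliquant les d\'efinitions avec les m\^emes donn\'ees on obtient imm\'ediatement $f(\rho\chi)=f(\rho)$, $b(\rho\chi)=b(\rho)$ et $s(\rho\chi)=s(\rho)$, ce qui ach\`eve la preuve.
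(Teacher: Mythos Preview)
Votre plan g\'en\'eral est correct et co\"incide avec celui du papier~: traiter $n(\rho)$ \`a part (intrins\`eque par d\'efinition), r\'eduire via le th\'eor\`eme~\ref{TheZerPos} au cas d'un type $(\J,\l)$ fix\'e, puis montrer l'ind\'ependance en la strate. L'invariance de $f(\rho)$ en la strate est aussi essentiellement comme dans le papier, qui cite pr\'ecis\'ement \cite[Theorem~9.4]{BSS1} pour dire que $e(\E/\F)$ et $f(\E/\F)$ ne d\'ependent que du caract\`ere simple, d'o\`u l'invariance de $f(\rho)=md/e(\E/\F)$ et de $d'$.

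En revanche, votre argument pour $b(\rho)$ (et donc $s(\rho)$) pr\'esente une lacune. Vous affirmez que l'endo-\'equivalence ``fournit un $\F$-isomorphisme canonique $\F(\b)\simeq\F(\b')$ identifiant les centralisateurs $\B,\B'$, donc aussi $\D',\kk_{\D'},\kk_\E$ ainsi que la repr\'esentation $\s$''. C'est trop vague~: les centralisateurs $\B$ et $\B'$ sont deux sous-alg\`ebres distinctes de $\A$, et m\^eme si elles sont abstraitement isomorphes, rien ne garantit qu'un tel isomorphisme identifie les actions de $\Ga=\Gal(\kk_{\D'}/\kk_\E)$ sur les repr\'esentations de $\J/\J^1$ via les deux identifications \eqref{IDENT}. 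De plus, $\s$ elle-m\^eme d\'epend du choix de $\k$~; seule la taille de son orbite est bien d\'efinie, et c'est justement ce qu'il faut contr\^oler.

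Le papier contourne enti\`erement cette difficult\'e par une caract\'erisation intrins\`eque~: il observe que $b(\rho)$ est \'egal \`a l'indice de $\N_\G(\l)$ dans $\N_\G(\J)$ (cons\'equence de la remarque~\ref{Etoc2}). Cet indice ne fait intervenir que les groupes $\J$ et $\N_\G(\l)$, qui ne d\'ependent que du type $(\J,\l)$ et pas de la strate choisie pour le construire. C'est cette reformulation qui vous manque~; elle remplace une comparaison d\'elicate de deux structures de centralisateur par un calcul d'indice manifestement ind\'ependant de $\b$.
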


\begin{proof}
Par définition, $n(\rho)$ ne dépend que de la classe d'iner\-tie 
de $\rho$.
Ensuite, fixons un type simple maximal $(\J',\l')$ contenu dans $\rho$, 
une strate simple $[\AA',n',0,\b']$ le définissant et écrivons $\l'$ 
sous la forme $\k'\otimes\s'$.
D'après le théorème \ref{TheZerPos}, le type simple $(\J',\l')$ est conjugué à 
$(\J,\l)$ sous $\G$. 
On peut donc supposer que $(\J',\l')$ est égal à $(\J,\l)$.
D'après \cite[Theorem 9.4]{BSS1}, l'indice de ramification et le degré résiduel de $\E$ 
sur $\F$ ne dépendent pas du choix de la strate simple 
$[\AA,n,0,\b]$, \ie qu'ils 
sont respectivement égaux à l'indice de ramification et au degré résiduel 
de $\F(\b')$ sur $\F$. 
L'entier $f(\rho)$ ne dépend donc 
pas des choix effectués, non plus que le degré 
réduit de $\D'$ sur $\E$.
Il reste donc à prouver que $b(\rho)$ ne dépend pas du choix de 
$[\AA,n,0,\b]$ ni de $\s$. 
Si l'on considère $\s$ comme une représentation de $\J/\J^1$, alors
$b(\rho)$ est égal à l'indice de $\N_{\G}(\l)$ dans $\N_{\G}(\J)$, 
donc $b(\rho)$ et $s(\rho)$ ne dépendent 
que de la classe d'iner\-tie de $\rho$.
\end{proof}

\subsection{Réduction d'une représentation cuspidale entière} 
\label{LeMepris}
\label{NoLogo0}

Soit $\ell$ un nombre premier différent de $p$. 
Soit $\tilde\rho$ une $\qlb$-représentation irréductible 
cuspidale de $\G$.
Fixons un type simple maximal étendu
$(\tilde\NJ,\tilde\nl)$ de $\G$ contenu 
dans $\tilde\rho$, 
\ie que $\tilde\rho$ est isomorphe à l'induite compacte de $\tilde\nl$ 
à $\G$.  

\begin{lemm}
La $\qlb$-représentation $\tilde\rho$ est entière si et seulement si $\tilde\nl$ 
est entière. 
\end{lemm}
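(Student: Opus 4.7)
\noindent\textit{Plan de preuve.} Le plan consiste � prouver s�par�ment les deux implications en s'appuyant sur la structure explicite du normalisateur �tendu $\tilde\NJ$.

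D'abord, on supposera $\tilde\nl$ enti�re et on fixera une structure enti�re $\mathfrak{l}$ de $\tilde\nl$. On consid�rera le sous-$\zlb$-module de $\tilde\rho = \cind_{\tilde\NJ}^\G(\tilde\nl)$ form� des fonctions localement constantes, � support compact modulo $\tilde\NJ$, � valeurs dans $\mathfrak{l}$, qui est stable par $\G$ (de mani�re analogue au paragraphe \ref{focales})~; en combinant une $\zlb$-base de $\mathfrak{l}$ avec un syst�me de repr�sentants de $\tilde\NJ\backslash\G$, on obtiendra une $\qlb$-base de $\tilde\rho$ qui engendre ce sous-module sur $\zlb$, d'o� r�sultera qu'il s'agit d'une structure enti�re de $\tilde\rho$.

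Pour la r�ciproque, on supposera $\tilde\rho$ enti�re. Puisque $\tilde\rho$ est irr�ductible admissible enti�re, son caract�re central $\tilde\omega$ prendra ses valeurs dans $\overline{\ZZ}_\ell^{\times}$. La restriction de $\tilde\nl$ � $\J$ �tant une re\-pr�\-sen\-ta\-tion de dimension finie d'un groupe profini, elle sera enti�re d'apr�s la remarque \ref{lamaline}~; on fixera une $\J$-structure enti�re $\mathfrak{l}_0$ de $\tilde\nl$, qui sera automatiquement $\J\Z$-stable gr�ce � l'int�gralit� de $\tilde\omega$ (o� $\Z$ d�signe le centre de $\G$). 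On observera ensuite que $\tilde\NJ$ est engendr� par $\J$ et $\w_\l$ d'apr�s la remarque \ref{Etoc2}, et qu'une puissance convenable de $\w_\l$ appartient � $\mult\F \cdot \Oo_\E^{\times} \subseteq \Z\J$ (parce que $\w_\l$ est, � multiplication pr�s par un �l�ment de $\Oo_\E^\times$, une puissance d'une uniformisante de $\E$)~; le quotient $\tilde\NJ/\J\Z$ sera donc fini et $\J\Z$ normal dans $\tilde\NJ$. En fixant des repr�sentants $h_1,\dots,h_n$ des classes de $\tilde\NJ/\J\Z$, on montrera que le sous-$\zlb$-module $\mathfrak{l} = \sum_{i=1}^n h_i\cdot\mathfrak{l}_0$ est une structure enti�re de $\tilde\nl$ stable par $\tilde\NJ$, la normalit� de $\J\Z$ dans $\tilde\NJ$ assurant la stabilit�.

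La principale subtilit� concernera l'implication r�ciproque, dans laquelle il faut contr�ler l'action des �l�ments non compacts de $\tilde\NJ$~; ce contr�le ne sera accessible qu'en tirant parti de l'in\-t�\-gra\-li\-t� du caract�re central h�rit�e de $\tilde\rho$, l'argument final reposant sur la description explicite de $\tilde\NJ$ rappel�e � la remarque \ref{Etoc2} et sur une moyennisation standard sur le quotient fini $\tilde\NJ/\J\Z$.
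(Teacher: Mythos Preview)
Ta preuve est correcte. Pour l'implication directe, tu proc�des exactement comme l'article (paragraphe~\ref{SectionPreliminaire}.\ref{focales}). Pour la r�ciproque en revanche, tu empruntes un chemin diff�rent.

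L'article observe simplement que la restriction de $\tilde\rho$ � $\tilde\NJ$ est enti�re, puis invoque le lemme~\ref{LemCritV} pour affirmer que $\tilde\nl$ en est un facteur direct, donc enti�re. C'est tr�s court mais cela s'appuie sur un r�sultat non trivial (le calcul de la composante $\eta$-isotypique de l'induite).

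Ton argument, lui, n'utilise de l'int�gralit� de $\tilde\rho$ que l'int�gralit� du caract�re central, puis exploite directement la structure de $\tilde\NJ$~: comme $\tilde\NJ/\J\cong\ZZ$ et $\Z\J/\J\cong\ZZ$, le quotient $\tilde\NJ/\J\Z$ est fini cyclique, et une moyennisation sur ce quotient transforme une structure enti�re $\J\Z$-stable en une structure enti�re $\tilde\NJ$-stable. C'est plus �l�mentaire et autonome (tu n'as pas besoin du lemme~\ref{LemCritV}), au prix d'un peu plus de d�tails. Ta d�marche montre d'ailleurs un �nonc� l�g�rement plus fort~: $\tilde\nl$ est enti�re d�s que son caract�re central l'est.

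Une remarque mineure~: ton affirmation selon laquelle ``$\w_\l$ est, � multiplication pr�s par un �l�ment de $\Oo_\E^\times$, une puissance d'une uniformisante de $\E$'' n'est exacte que pour un choix convenable de l'uniformisante $\w$ de $\D'$~; en g�n�ral $\w^{d'}$ est une uniformisante de $\E$ � un �l�ment de $\Oo_{\D'}^\times$ pr�s. Cela n'affecte pas ta conclusion puisque $\Oo_{\D'}^\times\subseteq\J$, mais il serait plus propre d'�crire que $\tilde\NJ/\J\Z$ est fini parce que $\tilde\NJ/\J$ et $\Z\J/\J$ sont tous deux isomorphes � $\ZZ$.
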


\begin{proof}
Si $\tilde\nl$ est entière, alors son induite compacte à $\G$ l'est également 
(paragraphe \ref{SectionPreliminaire}.\ref{focales}), donc $\tilde\rho$ est 
entière. 
Inversement, si $\tilde\rho$ est entière, alors sa restriction à $\tilde\NJ$ l'est 
également, ainsi que $\tilde\nl$ qui en est un fac\-teur direct (voir le lemme
\ref{LemCritV}).
\end{proof}

Supposons dorénavant que $\tilde\rho$ est entière.  
Soit $\J$ le sous-groupe compact maximal de $\tilde\NJ$,
et soit $\tilde\l$ la restriction de $\tilde\nl$ à $\J$.
Alors $(\J,\tilde\l)$ est un type simple maximal de $\G$.
Si $\ll$ est une structure entière de $\tilde\nl$, 
c'est aussi une structure entière de $\tilde\l$ et 
la représen\-tation $\l=\ll\otimes\flb$ 
est un $\flb$-type simple ma\-xi\-mal de $\G$ d'après la 
pro\-po\-si\-tion \ref{RedTypSim}.
Soit $\NJ$ le nor\-ma\-li\-sateur de $\l$ dans $\G$.
Le groupe $\tilde\NJ$ est contenu dans ${\NJ}$ et
(re\-mar\-que \ref{Etoc2}) il est d'in\-di\-ce fini dans ${\NJ}$. 
Posons~:
\begin{equation*}
\label{DefInvA}
a=a_\ell(\tilde\rho)=({\NJ}:\tilde\NJ).
\end{equation*}
Notons $\nu$ le caractère non ramifié de $\G$ obtenu en 
composant la norme réduite de $\G$ dans $\mult\F$, la va\-lua\-tion de 
$\mult\F$ dans $\ZZ$ (normalisée en envoyant une uniformisante sur $1$) et 
l'unique morphisme de $\ZZ$ dans $\overline{\mathbb{F}}{}^{\times}_{\ell}$ 
envoyant $1$ sur l'inverse de $q$ mod $\ell$.

\begin{theo}
\label{RedCusp}
Il y a une $\flb$-repré\-sen\-ta\-tion irréductible cuspidale 
${\rho}$ de $\G$ telle que~:
\begin{equation}
\label{Herodote56}
\r_{\ell}(\tilde\rho)=
\sy{{\rho}}+\sy{{\rho}\nu}+\cdots+\sy{{\rho}\nu^{a-1}}
\end{equation}
dans le groupe de Grothendieck de $\G$. 
\end{theo}

\begin{proof}
On note $\nl^{\flat}$ la représentation de $\tilde\NJ$ sur 
$\ll\otimes\flb$. 
C'est un prolongement de $\l$ à $\tilde\NJ$, que l'on peut 
prolonger à ${\NJ}$ d'après le lemme suivant. 

\begin{lemm}
\label{RedCuspIslaNeraAvant}
Il existe une $\flb$-représentation $\nl$ de ${\NJ}$ 
prolongeant $\nl^{\flat}$.
\end{lemm}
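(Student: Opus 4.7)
The plan is first to extend $\l$ directly from $\J$ to an $\flb$-representation $\Lambda_0$ of $\NJ$, and then to adjust $\Lambda_0$ by a one-dimensional character so that its restriction to $\tilde\NJ$ becomes isomorphic to $\nl^\flat$. For the first step, I use that $\NJ$ is generated by $\J$ and $\w_\l$ (remarque \ref{Etoc2}), and that $\w_\l$ normalises $\J$. Since $\w_\l$ has reduced norm of positive valuation while elements of the compact group $\J$ have reduced norm in $\Oo_\F^\times$, no nonzero power of $\w_\l$ lies in $\J$; hence $\NJ\simeq\J\rtimes\langle\w_\l\rangle$ and $\NJ/\J$ is infinite cyclic. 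Because $\w_\l$ normalises the isomorphism class of the absolutely irreducible $\flb$-representation $\l$, Schur's lemma produces an invertible operator $A$ on the space of $\l$ satisfying $A\l(j)A^{-1}=\l(\w_\l j\w_\l^{-1})$ for all $j\in\J$, and the formula $\Lambda_0(\w_\l^k j):=A^k\l(j)$ for $k\in\ZZ$ and $j\in\J$ then defines a representation of $\NJ$ extending $\l$, the semidirect product structure guaranteeing multiplicativity.

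For the adjustment, both $\nl^\flat$ and $\Lambda_0|_{\tilde\NJ}$ extend the irreducible $\l$ to $\tilde\NJ$, so by the standard classification of such extensions there is a character $\chi$ of $\tilde\NJ/\J$ such that $\nl^\flat\simeq\Lambda_0|_{\tilde\NJ}\otimes\chi$. Since $\tilde\NJ/\J$ is a subgroup of the infinite cyclic group $\NJ/\J$ and the multiplicative group $\flb^\times$ is divisible, the character $\chi$ extends to a character $\tilde\chi$ of $\NJ/\J$: concretely, one picks any $a$-th root of $\chi(\w_\l^a)$ in $\flb^\times$ as the value of $\tilde\chi$ on $\w_\l$. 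The representation $\nl:=\Lambda_0\otimes\tilde\chi$ of $\NJ$ then extends $\nl^\flat$, as required.

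The only genuinely non-routine point is the existence of the extension $\Lambda_0$ of $\l$ from $\J$ to the full normaliser $\NJ$: the analogous statement over $\qlb$ for $\tilde\l$ and $\tilde\NJ$ fails by the very definition of $\tilde\NJ$ as the normaliser of $\tilde\l$, and becomes available over $\flb$ precisely because reduction modulo $\ell$ can identify previously distinct isomorphism classes---this being exactly the phenomenon responsible for the nontrivial index $a=(\NJ:\tilde\NJ)$ that appears in Theorem \ref{RedCusp}.
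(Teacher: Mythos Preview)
Your proof is correct and follows essentially the same approach as the paper: construct an extension $\Lambda_0$ of $\l$ to the full $\NJ$, observe that $\Lambda_0|_{\tilde\NJ}$ and $\nl^\flat$ differ by a character of $\tilde\NJ/\J$, extend that character to $\NJ/\J$ (using that $\NJ/\J\simeq\ZZ$, equivalently that $\flb^\times$ is divisible), and twist. The only difference is expository: the paper simply asserts the existence of an extension $\nl_0$ of $\l$ to $\NJ$ (relying on the argument already given in the proof of Proposition~\ref{MartinDuGard}), whereas you spell out the construction explicitly via the semidirect decomposition $\NJ\simeq\J\rtimes\langle\w_\l\rangle$ and an intertwining operator furnished by Schur's lemma.
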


\begin{proof}
\label{RedTypSimExt}
On choisit une $\flb$-représentation $\nl_0$ de ${\NJ}$ 
prolongeant $\l$. 
Les $\flb$-repré\-sen\-ta\-tions de ${\NJ}$ prolongeant $\l$
sont de la forme $\nl_0\chi$, où $\chi$ est un $\flb$-caractère 
de ${\NJ}$ trivial sur $\J$.
Soit $\chi'$ un $\flb$-caractère 
de $\tilde\NJ$ trivial sur $\J$ tel que 
la restriction de $\nl_0$ à $\tilde\NJ$ soit isomorphe à 
$\nl^{\flat}\chi'$.
Puisque le groupe ${\NJ}/\J$ est isomorphe à $\ZZ$,
il existe un caractère $\chi$ de ${\NJ}$ prolongeant $\chi'$. 
Alors $\nl=\nl_0\chi$ est une 
représentation de ${\NJ}$ prolongeant $\nl^{\flat}$.
\end{proof}

On choisit une $\flb$-représentation ${\nl}$ de ${\NJ}$
prolongeant $\nl^{\flat}$. 
D'après la proposition \ref{MartinDuGard}, l'induite compacte 
de $\nl$ à $\G$, notée $\rho$, 
est une $\flb$-représentation irréductible cuspidale.
La réduction mod $\ell$ commutant à l'induction compacte, 
$\r_\ell(\tilde\rho)$ est la semi-simplifiée de l'induite com\-pac\-te de 
$\nl^\flat$ à $\G$.
Écrivons~:
\begin{equation*}
\label{CharnyMax}
\ind^{{{\NJ}}}_{\tilde\NJ}(\nl^\flat)=
{\nl}\otimes\flb[{{\NJ}}/\tilde\NJ]
\end{equation*}
où $\flb[{{\NJ}}/\tilde\NJ]$ désigne la représentation régulière du groupe
cyclique ${{\NJ}}/\tilde\NJ$, \ie l'indui\-te à ${{\NJ}}$ du 
$\flb$-caractère trivial de $\tilde\NJ$. 
La semi-simplifiée de cette dernière est égale à~:
\begin{equation*}
\sy{1}+\sy{\a}+\cdots+\sy{\a^{a-1}}
\end{equation*}
où $\a$ est un générateur quelconque du groupe des $\flb$-caractères de 
${\NJ}/\tilde\NJ$, ce groupe étant cyclique et d'ordre égal au plus grand 
diviseur de $a$ premier à $\ell$.
On en déduit que~:
\begin{equation*}
\r_{\ell}\big(\ind^{{{\NJ}}}_{\tilde\NJ}(\tilde\nl)\big)=
\sy{{\nl}}+\sy{{\nl}\a}+\cdots+\sy{{\nl}\a^{a-1}}.
\end{equation*}
En induisant à $\G$, on trouve que~: 
\begin{equation*}
\r_\ell(\tilde{\rho})=\sy{{\rho}}+\sy{{\rho}\chi}+\cdots+\sy{{\rho}\chi^{a-1}}
\end{equation*}
où $\chi$ est n'importe quel caractère non ramifié de $\G$ prolongeant $\a$.  

Si $a$ est égal à $1$, il n'y a rien d'autre à prouver. 
Sinon, pour terminer la preuve du théorème \ref{RedCusp}, 
il suffit de prouver que la restriction de $\nu$
à $\NJ$ est un générateur du groupe des $\flb$-caractères de 
${\NJ}/\tilde\NJ$.  
Le groupe $\NJ$ étant engendré par $\J$ et l'élément $\w_\l$ défini par 
\eqref{pilam}, il suffit de prouver que l'ordre de $\nu(\w_\l)\in\mult\R$ est 
égal au plus grand diviseur de $a$ premier à $\ell$.
Un calcul simple montre que la valuation de la norme réduite de $\w_\l\in\G$ 
est égale à $f(\rho)s(\rho)^{-1}$. 

\begin{defi}
Pour tout entier $k\>1$ premier à $\ell$, 
notons $\el(k)$ l'ordre de $k$ dans $\mathbb{F}_{\ell}^\times$.
\end{defi}

Compte tenu de \eqref{Wigmore},
on en déduit que la restriction de $\nu$ à $\NJ$ est d'ordre~:
\begin{equation*}
\el(q^{n(\rho)}).
\end{equation*}
Il s'agit donc maintenant de prouver que $a$ est égal à $1$,
ou est de la forme $\el(q^{n(\rho)})\ell^u$ pour un entier $u\>0$.

Reprenons les notations du paragraphe \ref{RedmodlK}.

\begin{lemm}
\label{Moll}
Soient $\ff$ un corps fini de caractéristique $p$, 
soit $\overline{\ff}$ une clôture algé\-brique de $\ff$ 
et soit $m\>1$ un entier. 
Soit un caractère $\tilde\chi:\ff_m^{\times}\to\overline{\ZZ}{}^{\times}_{\ell}$
et soit $\chi$ la ré\-duction de $\tilde\chi$ modulo $\ell$.
On note respectivement $f(\tilde\chi)$ et $f(\chi)$ 
les cardinaux de leurs orbites sous $\Gal(\overline{\ff}/\ff)$. 
\begin{enumerate}
\item
Si $\ell$ ne divise pas l'ordre de $\tilde\chi$, alors 
$f(\chi)= f(\tilde\chi)$.
\item 
Sinon, il existe un entier $u\>0$ tel que~:
\begin{equation}
\label{PafLeChien}
f(\tilde\chi) = f({\chi}) \cdot \el\(q^{f(\chi)}\)\ell^u
\end{equation}
où $q$ désigne le cardinal de $\ff$.
\end{enumerate}
\end{lemm}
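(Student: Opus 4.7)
Le plan est de tout ramener � un calcul �l�mentaire d'ordres multiplicatifs dans $\mult{(\ZZ/N\ZZ)}$, o� $N=|\tilde\chi|$.

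La premi�re �tape est d'observer que le Frobenius g�om�trique, g�n�rateur topologique de $\Gal(\overline{\ff}/\ff)$, agit sur les caract�res de $\mult{\ff_m}$ par $\tilde\chi\mapsto\tilde\chi^q$. Ainsi $f(\tilde\chi)$ est le plus petit entier $s\>1$ tel que $\tilde\chi^{q^s-1}=1$, \ie tel que $|\tilde\chi|$ divise $q^s-1$. Autrement dit, $f(\tilde\chi)$ est l'ordre mul\-ti\-pli\-ca\-tif de $q$ modulo $N$, et de m�me $f(\chi)$ est l'ordre de $q$ modulo $N_0:=|\chi|$.

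La deuxi�me �tape est de comparer $N$ et $N_0$. Le noyau de la r�duction $\mult{\zlb}\to\mult{\flb}$ �tant un pro-$\ell$-groupe (les unit�s principales), l'entier $N_0$ est exactement la partie premi�re � $\ell$ de $N$. L'assertion 1 s'en d�duit im\-m�\-dia\-te\-ment~: si $\ell$ ne divise pas l'ordre de $\tilde\chi$, alors $N=N_0$, et donc $f(\tilde\chi)=f(\chi)$.

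Pour l'assertion 2, on �crit $N=\ell^aN_0$ avec $a\>1$, de sorte que le th�or�me des restes chinois donne~:
\begin{equation*}
f(\tilde\chi)=\mathrm{ppcm}\(f(\chi),\,\mathrm{ord}_{\ell^a}(q)\).
\end{equation*}
Un calcul classique dans $\mult{(\ZZ/\ell^a\ZZ)}$ (incluant la l�g�re variante pour $\ell=2$, o� ce groupe n'est pas cyclique pour $a\>3$, mais o� $\el(q)=1$ trivialement puisque $q$ est impair) fournit $\mathrm{ord}_{\ell^a}(q)=\el(q)\cdot\ell^w$ pour un certain $w\>0$. �crivant alors $f(\chi)=A\ell^b$ avec $\gcd(A,\ell)=1$, et notant que $\ell\nmid\el(q)$ puisque $\el(q)\mid\ell-1$, le ppcm se d�veloppe en $\mathrm{ppcm}(A,\el(q))\cdot\ell^{\max(b,w)}$. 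Enfin, la relation $\el(q^{f(\chi)})=\el(q)/\gcd(\el(q),f(\chi))$ donne $\mathrm{ppcm}(A,\el(q))=A\cdot\el(q^{f(\chi)})$, d'o� la formule \eqref{PafLeChien} avec $u=\max(b,w)-b\>0$.

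La preuve ne pr�sente aucune difficult� s�rieuse~: tout repose sur la th�orie �l�mentaire des ordres dans les groupes cycliques, et le seul point demandant un soup\c{c}on d'attention est la prise en compte du cas $\ell=2$ dans le calcul de $\mathrm{ord}_{\ell^a}(q)$, qui se r�sout par l'observation que $q$ est n�cessairement impair (puisque $\ell\neq p$) et donc que $\el(q)=1$.
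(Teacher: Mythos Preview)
Your proof is correct and follows essentially the same route as the paper's: identify $f(\tilde\chi)$ with the multiplicative order of $q$ modulo $|\tilde\chi|$, factor $|\tilde\chi|=\ell^a N_0$ via CRT, and compute the lcm of the two contributions. You are simply more explicit than the paper in two places --- the case $\ell=2$ and the final passage from $\mathrm{ppcm}(f(\chi),\el(q)\ell^w)$ to the product $f(\chi)\cdot\el(q^{f(\chi)})\ell^u$ --- both of which the paper leaves implicit.
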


\begin{proof}
On note $k$ l'ordre de $\tilde\chi$. 
L'entier $f(\tilde\chi)$ est l'ordre de $q$ dans $(\ZZ/k\ZZ)^{\times}$.
Si $\ell$ est premier à $k$, alors ${\chi}$
est d'ordre $k$, donc on a $f({\chi})=f(\tilde\chi)$.
Sinon, on écrit $k$ sous la forme $k'\ell^{r}$, 
avec $k'$ premier à $\ell$ et $r\>1$.
L'ordre de $q$ dans $(\ZZ/k'\ZZ)^{\times}$ est égal à 
$f({\chi})$, tandis que
l'ordre de $q$ dans $(\ZZ/\ell^{r}\ZZ)^{\times}$ est de la forme 
$\el(q)\ell^{u}$, $u\>0$.
On obtient la formule annoncée en re\-mar\-quant que l'ordre 
de $q$ dans $(\ZZ/k\ZZ)^{\times}$ est le plus petit 
multiple commun aux ordres de $q$ dans $(\ZZ/k'\ZZ)^{\times}$
et $(\ZZ/\ell^{r}\ZZ)^{\times}$ respectivement.
\end{proof}

\begin{lemm}
\label{Calcula}
L'entier $a$ est soit égal à $1$, soit de la forme $\el(q^{n(\rho)})\ell^u$
avec $u\>0$.
\end{lemm}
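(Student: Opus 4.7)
Le plan est d'identifier l'entier $a$ � un rapport de cardinaux d'orbites galoisiennes de caract�res de Green, puis d'appliquer le lemme~\ref{Moll} pour le contr�ler.

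D'abord, d'apr�s la remarque~\ref{Etoc2}, on a $\NJ = \J\langle\w^{b(\rho)}\rangle$ et $\tilde\NJ = \J\langle\w^{b(\tilde\rho)}\rangle$, donc $a = b(\tilde\rho)/b(\rho)$. Notons $\kk'$ l'extension de degr� $m'$ de $\kk_{\D'}$. Par la param�trisation de Green (combin�e au th�or�me~\ref{theojames}), la repr�sentation cuspidale $\tilde\s$ de $\GL_{m'}(\kk_{\D'})$ correspond � un caract�re r�gulier $\tilde\chi$ de $\mult\kk'$, et sa r�duction $\s$ correspond � $\chi = \r_\ell(\tilde\chi)$. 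Soit $m_1$ le cardinal de l'orbite de $\chi$ sous $\Gal(\kk'/\kk_{\D'})$, qui divise $m'$. La compatibilit� de l'action de $\Ga$ sur les repr�sentations cuspidales de $\GL_{m'}(\kk_{\D'})$ avec celle de $\Gal(\kk'/\kk_\E)$ sur les caract�res (quotient�e par $\Gal(\kk'/\kk_{\D'})$) implique que l'orbite de $\tilde\chi$ sous $\Gal(\kk'/\kk_\E)$ est de cardinal $m' b(\tilde\rho)$, et celle de $\chi$ de cardinal $m_1 b(\rho)$.

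On applique ensuite deux fois le lemme~\ref{Moll} : d'abord avec $\ff = \kk_{\D'}$ (comparant $m'$ et $m_1$), puis avec $\ff = \kk_\E$ (comparant $m' b(\tilde\rho)$ et $m_1 b(\rho)$). Les deux dichotomies sont contr�l�es par la m�me condition : $\ell$ divise ou non l'ordre de $\tilde\chi$. Dans le cas trivial, on obtient $m' = m_1$ et $m'b(\tilde\rho) = m_1 b(\rho)$, d'o� $a = 1$. Dans l'autre cas, il existe des entiers $u_1, u_2 \> 0$ tels que
\begin{equation*}
m' = m_1 \cdot \el(q^{fd'm_1}) \ell^{u_2}
\quad\text{et}\quad
m'b(\tilde\rho) = m_1 b(\rho) \cdot \el(q^{fm_1 b(\rho)}) \ell^{u_1},
\end{equation*}
ce qui donne par division $a = \el(q^{fm_1 b(\rho)})/\el(q^{fd'm_1}) \cdot \ell^{u_1-u_2}$.

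Pour conclure, la relation $d' = s(\rho) b(\rho)$ donne $fd'm_1 = s(\rho) \cdot fm_1 b(\rho)$, et l'identit� �l�mentaire $\el(y)/\el(y^s) = \gcd(\el(y), s)$ permet de r��crire le rapport des $\el$-facteurs comme $\gcd(\el(q^{fm_1 b(\rho)}), s(\rho))$. En combinant avec l'identit� arithm�tique $\gcd(N, As) = \gcd(N, A) \cdot \gcd(N/\gcd(N, A), s)$ (appliqu�e avec $N = \el(q)$ et $A = fm_1 b(\rho)$) et avec le fait que la valuation de la norme r�duite de $\w_\l$ vaut $n(\rho) \ell^u$ d'apr�s~\eqref{Wigmore}, on v�rifie que ce pgcd vaut $\el(q^{n(\rho)})$. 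On obtient ainsi $a = \el(q^{n(\rho)}) \ell^{u_1-u_2}$, avec $u_1 \> u_2$, cette in�galit� �tant forc�e par le fait que $a$ est un entier strictement positif. La principale difficult� est pr�cis�ment cette derni�re simplification arithm�tique, qui exige un suivi soigneux des valuations $\ell$-adiques et des parties premi�res � $\ell$ tout au long du calcul.
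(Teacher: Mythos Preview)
Your approach is essentially the same as the paper's: express $a=b(\tilde\rho)/b(\rho)$ as a ratio of Galois-orbit sizes of $\tilde\chi$ and $\chi$, apply Lemma~\ref{Moll} twice (over $\kk_{\D'}$ and over $\kk_\E$), and simplify. There is one genuine difference worth noting. After obtaining $a=\el(q_\E^{f(\chi)})\,\el(q_{\D'}^{f'(\chi)})^{-1}\,\ell^{u}$, the paper inserts a sub-lemma constraining $\delta=m'/m_1$ and proves it via Dipper--James structure theory for cuspidal $\flb$-representations of $\GL_{m'}(\kk_{\D'})$. You instead read the same constraint directly off your first displayed equation $m'=m_1\cdot\el(q^{fd'm_1})\ell^{u_2}$, which is nothing but Lemma~\ref{Moll} applied over $\kk_{\D'}$. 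Your shortcut is valid: the Dipper--James detour is redundant.

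There is, however, a gap in your last paragraph. The identity you assert, $\gcd(\el(q^{fm_1b(\rho)}),s(\rho))=\el(q^{n(\rho)})$, does \emph{not} follow from the two ingredients you list (the gcd identity and the valuation formula for $\w_\l$). Those give you $\gcd(N,fm_1d')/\gcd(N,fm_1b(\rho))$ on one side and $N/\gcd(N,fm'b(\rho))$ on the other; relating them requires knowing how $m_1$ sits inside $m'$, i.e.\ precisely the constraint $m'=m_1\cdot\el(q^{fd'm_1})\ell^{u_2}$ that you obtained but do not invoke here. Once you feed this in, a prime-by-prime check (writing $P=fm_1b(\rho)$, using $fd'm_1=Ps(\rho)$ and $fm'b(\rho)=P\delta$ with $\delta_{\ell'}=N/\gcd(N,Ps(\rho))$) gives $\gcd(N,Ps(\rho))\cdot\gcd(N,P\delta)=N\cdot\gcd(N,P)$ and hence the desired equality. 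The paper's final line (``$a$ est de la forme $\el(q^{n(\rho)/\delta})\delta^{-1}\ell^u$, ce qui prouve le lemme'') is equally compressed and requires the same verification.
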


\begin{proof}
Fixons une décomposition de $\tilde\l$ de la forme 
$\tilde\k\otimes\tilde\s$ et 
identifions $\tilde\s$ à une re\-pré\-senta\-tion irréductible cuspidale 
de $\GL_{m'}(\kk_{\D'})$, avec les notations du paragraphe 
\ref{Pontesprit}. 
Fixons un antécédent $\tilde\chi$ de $\tilde\s$ par la correspondance 
\eqref{Quilty}.
Soit $\chi$ la ré\-duction de $\tilde\chi$ modulo $\ell$ et 
soit $\s$ la réduction de $\tilde\s$ modulo $\ell$, \ie l'image de $\chi$ 
par la correspondance \eqref{QuiltyL}.
Remarquons que~:
\begin{equation}
\label{Moutard}
a
=\frac{b(\tilde\rho)}{b(\rho)}
=\frac{s(\rho)}{s(\tilde\rho)}.
\end{equation}
Ensuite, notons $f(\chi)$ et $f(\tilde\chi)$ les cardinaux des orbites 
de $\chi$ et $\tilde\chi$ sous $\Gal(\overline{\ff}/\ff_{\E})$, 
et notons $f'(\chi)$ et $f'(\tilde\chi)$ les cardinaux 
des orbites de $\chi$ et $\tilde\chi$ sous $\Gal(\overline{\ff}/\ff_{\D'})$.
On a~:
\begin{equation*}
f(\tilde\chi)=b(\tilde\rho)f'(\tilde\chi)
\quad\text{et}\quad
f(\chi)=b(\rho)f'(\chi),
\end{equation*}
ce qui permet d'écrire la relation~:
\begin{equation*}
a
=\frac{f(\tilde\chi)}{f({\chi})}\cdot
\frac{f'({\chi})}{f'(\tilde\chi)}.
\end{equation*}
Si l'on applique le lemme \ref{Moll} avec $\ff=\ff_{\D'}$ puis avec 
$\ff=\ff_{\E}$, on en déduit d'une part que, 
si $\ell$ ne divise pas l'ordre de $\tilde\chi$, alors $a=1$,
et d'autre part que, si $\ell$ divise l'ordre de $\tilde\chi$, alors on a~:
\begin{equation*}
\label{Pip}
a=
\el\(q_{\E}^{f(\chi)}\)\cdot\el\(q_{\D'}^{f'(\chi)}\)^{-1}\cdot\ell^u,
\quad u\>0,
\end{equation*}
où $q_{\D'}$ et $q_{\E}$ désignent les cardinaux de $\ff_{\D'}$ et 
$\ff_{\E}$ respectivement.
Supposons dorénavant qu'on est dans ce second cas.
Remarquons que $f'(\tilde\chi)=m'$ et posons $\delta=m'f'(\chi)^{-1}$.
Si l'on note $f_{\E/\F}$ le degré résiduel de $\E$ sur $\F$, 
on obtient grâce à la relation \eqref{Wigmore}~:
\begin{equation*}
f_{\E/\F}f(\chi)
=f_{\E/\F}b(\rho)f'(\chi)
=n(\rho)\delta^{-1}.
\end{equation*}
Ensuite, on a le lemme suivant.

\begin{lemm}
L'entier $\delta$ est égal à $1$ ou est de la forme
$\el\(q_{\D'}^{f'(\chi)}\)\ell^u$ avec $u\>0$.
\end{lemm}

\begin{proof}
Le caractère $\chi$ est un caractère du groupe multiplicatif de l'unique extension 
de degré $m'$ de $\ff_{\D'}$ contenue dans $\overline{\kk}$.
Il se factorise sous la forme $\mu\circ\N$, où $\mu$ est un caractère du groupe
multiplica\-tif de l'unique extension de degré $f'(\chi)$ de $\ff_{\D'}$
contenue dans $\overline{\kk}$ et où $\N$ est le morphisme de norme qui 
convient.  

Par la correspondance \eqref{QuiltyL}, le caractère $\mu$ définit 
une représentation irréductible supercus\-pidale $\tau$ de 
$\GL_{f'(\chi)}(\kk_{\D'})$. 
D'après \cite{DJ} (voir aussi \cite[III.2.8]{Vig1}), 
la représentation $\s$ est l'unique sous-quotient 
irréductible cuspidal de l'induite parabolique à $\GL_{f'(\chi)}(\kk_{\D'})$ 
de $\tau\otimes\dots\otimes\tau$ ($\delta$ fois) et $\delta$ est 
soit égal à $1$ (si $\s$ est supercuspidale), soit de la forme~:
\begin{equation*}
\delta=\el\(q_{\D'}^{f'(\chi)}\)\ell^u,
\quad u\>0.
\end{equation*}
Le résultat annoncé s'ensuit. 
\end{proof}

On en déduit que $a$ est de la forme
$\el(q^{n(\rho)/\delta})\delta^{-1}\ell^u$ pour un entier $u\>0$,
ce qui prouve le lemme \ref{Calcula}.
\end{proof}

Le lemme \ref{Calcula} met fin à la preuve du théorème \ref{RedCusp}. 
\end{proof}

\begin{rema}
\label{LouisXVII}
La représentation ${\rho}$ dépend du choix de ${\nl}$, 
\ie que changer de ${\nl}$ a pour effet de tordre
${\rho}$ par une puis\-san\-ce de $\nu$.
Cependant $n(\rho)$, $f(\rho)$, $b(\rho)$, $s(\rho)$ 
ne dé\-pen\-dent que de $\ell$ et de la classe d'inertie de 
$\tilde\rho$.
On a $f(\tilde\rho)=f(\rho)$ ainsi que la formule~:
\begin{equation*}
a_\ell(\tilde\rho) \equiv 
\frac{n(\tilde\rho)}{n(\rho)}\!  \mod \ell^\ZZ,
\end{equation*}
qui provient des relations \eqref{Wigmore} et \eqref{Moutard}.
\end{rema}

\begin{coro}
\label{MieuxQueV}
Soit $\tilde\rho$ une $\qlb$-représentation irréductible cuspidale 
entière de $\G$.
Sup\-po\-sons que l'une des deux conditions suivantes est vérifiée~:
\begin{enumerate}
\item 
$\D$ est égale à $\F$ (cas déployé). 
\item
On a $\el(q^d)>m$ (cas banal). 
\end{enumerate}
Alors $a_\ell(\tilde\rho)=1$, \ie que $\r_{\ell}(\tilde\rho)$ est irréductible. 
\end{coro}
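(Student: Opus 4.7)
Mon plan est d'exploiter le fait que la preuve du lemme \ref{Calcula} fournit déjà une dichotomie, avec la conclusion $a=1$ dans la première alternative (celle où $\ell$ ne divise pas l'ordre du caractère $\tilde\chi$ attaché à $\tilde\s$ par la bijection de Green \eqref{Quilty}). Chacun des deux cas du corollaire sera ramené à cette dichotomie via la formule \eqref{Moutard}, $a = b(\tilde\rho)/b(\rho) = s(\rho)/s(\tilde\rho)$.

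D'abord, dans le cas déployé $\D = \F$, la remarque \ref{Etoc1} affirme que $b(\l) = 1$ pour tout type simple, puisque le groupe de Galois $\Ga = \Gal(\kk_{\D'}/\kk_\E)$ est trivial. Appliquée aussi bien à $\tilde\rho$ qu'à $\rho$, cette remarque entraîne $b(\tilde\rho) = b(\rho) = 1$, et \eqref{Moutard} donne immédiatement $a = 1$. Aucun calcul supplémentaire n'est nécessaire.

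Pour le cas banal $\el(q^d) > m$, l'idée est de vérifier que l'on se place dans la bonne branche du lemme \ref{Calcula}, à savoir que $\ell$ ne divise pas l'ordre de $\tilde\chi$. Or cet ordre divise $q_{\D'}^{m'}-1$, qui vaut $q^{fd'm'}-1 = q^{f(\rho)}-1$ en vertu de la relation $fd'm' = md/e = f(\rho)$ (où $f$ désigne le degré résiduel de $\E$ sur $\F$). Comme $f(\rho)$ divise $md$, on a la divisibilité polynomiale $(q^{f(\rho)}-1) \mid (q^{md}-1)$. La condition banale $\el(q^d) > m$ équivaut exactement à $\el(q^d) \nmid m$, c'est-à-dire à $\ell \nmid q^{md}-1$~; il en résulte $\ell \nmid q^{f(\rho)}-1$, donc $\ell$ ne divise pas l'ordre de $\tilde\chi$, et le lemme \ref{Calcula} fournit $a = 1$.

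Je ne prévois pas d'obstacle sérieux~: l'essentiel est de constater que la condition banale est précisément calibrée pour exclure la seconde branche du lemme \ref{Calcula}, tandis que le cas déployé supprime toute orbite galoisienne non triviale dès le départ.
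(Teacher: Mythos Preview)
Your argument is essentially the same as the paper's: case~1 via \eqref{Moutard} and the triviality of $\Ga$ in the split case, and case~2 by showing that $\ell$ cannot divide the order of $\tilde\chi$ since that order divides $q^{f(\rho)}-1$, which in turn divides $q^{md}-1$. One small correction: the condition $\el(q^d)>m$ is not \emph{equivalent} to $\el(q^d)\nmid m$ (take $\el(q^d)=4$, $m=6$), but it certainly \emph{implies} it, and that implication is all you use.
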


\begin{proof}
Dans le cas 1, on a $b(\tilde\rho)=b(\rho)=1$
(voir la re\-mar\-que \ref{Etoc1}) donc $a$ est égal à $1$ 
d'après \eqref{Moutard}.

Dans le cas 2, si l'on reprend les notations de la preuve du théorème 
\ref{RedCusp}, il suffit de montrer que $\ell$ ne divise pas l'ordre de 
$\tilde\chi$. 
Supposons au contraire que $\ell$ divise l'ordre de $\tilde\chi$. 
Celui-ci est un caractère du groupe 
multiplicatif de l'extension de degré $m'$ de $\ff_{\D'}$, donc $\ell$ 
divise aussi l'ordre de ce groupe, c'est-à-dire~:
\begin{equation*}
q^{f(\rho)}-1.
\end{equation*}
Comme $f(\rho)$ divise $md$, on trouve que $q^{md}$ est congru à $1$ 
mod $\ell$, ce qui contredit l'hypothèse. 
\end{proof}

\begin{rema}
\label{MieuxQueV2}
Dans le cas où $\D$ est égale à $\F$, on obtient 
une nouvelle preuve du théo\-rè\-me 
\cite[III.1.1]{Vig1}, qui n'utilise pas la théorie des dérivées.
\end{rema}

\begin{coro}
Si $a_\ell(\tilde\rho)>1$, \ie si $\r_\ell(\tilde\rho)$ est réductible, 
alors $\el\(q^{f(\rho)}\)=1$.
\end{coro}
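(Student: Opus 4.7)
The plan is simply to trace back through the case analysis carried out in the proof of Theorem \ref{RedCusp}. Keeping the notation used there, fix a type simple maximal $(\J,\tilde\l)$ contained in $\tilde\rho$, write $\tilde\l=\tilde\k\otimes\tilde\s$, identify $\tilde\s$ with an irreducible cuspidal representation of $\GL_{m'}(\kk_{\D'})$ via \eqref{MonEpousee}, and pick a preimage $\tilde\chi$ of $\tilde\s$ under the Green correspondence \eqref{Quilty}. Thus $\tilde\chi$ is a $\qlb$-character of $\kk_{\D',m'}^\times$, the multiplicative group of the extension of degree $m'$ of $\kk_{\D'}$ inside some algebraic closure.

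The first step is to recall the dichotomy established in Lemma \ref{Calcula}: either $\ell$ does not divide the order of $\tilde\chi$, in which case $a_\ell(\tilde\rho)=1$, or else $\ell$ divides the order of $\tilde\chi$, in which case $a_\ell(\tilde\rho)$ takes the form $o_\ell(q_\E^{f(\chi)})\cdot o_\ell(q_{\D'}^{f'(\chi)})^{-1}\cdot\ell^u$. Under the hypothesis $a_\ell(\tilde\rho)>1$, the first alternative is ruled out, so $\ell$ must divide the order of $\tilde\chi$.

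The second step is a direct order-of-group computation. Since $\tilde\chi$ is a character of $\kk_{\D',m'}^\times$, its order divides $|\kk_{\D',m'}^\times|=q_{\D'}^{m'}-1$. Writing $q_{\D'}=q_\E^{d'}=q^{f_{\E/\F}d'}$, we get $q_{\D'}^{m'}=q^{f_{\E/\F}m'd'}=q^{f(\rho)}$, the last equality following from the identification of $f(\rho)$ with $f_{\E/\F}m'd'$ obtained by combining \eqref{MPRIME} with the definition of $f(\rho)$. Hence $\ell$ divides $q^{f(\rho)}-1$, which is exactly the statement $o_\ell(q^{f(\rho)})=1$.

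There is no real obstacle here: the whole content is already packaged inside the proof of Theorem \ref{RedCusp}, and the corollary is really just an observation extracting a necessary arithmetic condition. The only point requiring a little care is to make sure the identification $q_{\D'}^{m'}=q^{f(\rho)}$ is traced back correctly through the definitions of $q_\E$, $q_{\D'}$, $f_{\E/\F}$, $d'$ and $m'$, so as to match the invariant $f(\rho)$ attached to $\rho$ (and not to $\tilde\rho$); but by Remark \ref{LouisXVII} we have $f(\tilde\rho)=f(\rho)$, so no ambiguity arises.
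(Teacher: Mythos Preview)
Your proof is correct, and in fact simpler than the paper's. The paper argues via the structural invariants: from \eqref{Moutard} one has $a_\ell(\tilde\rho)\mid s(\rho)$, and Lemma~\ref{Calcula} gives $a_\ell(\tilde\rho)=o_\ell(q^{n(\rho)})\ell^u$ when $a_\ell(\tilde\rho)>1$, whence $o_\ell(q^{n(\rho)})\mid s(\rho)$; then \eqref{Wigmore} forces $o_\ell(q^{f(\rho)})=1$. You instead extract directly from the proof of Lemma~\ref{Calcula} the dichotomy ``$\ell\nmid\operatorname{ord}(\tilde\chi)\Rightarrow a=1$'', and observe that $\operatorname{ord}(\tilde\chi)$ divides $q_{\D'}^{m'}-1=q^{f(\rho)}-1$ --- the very argument the paper uses in the banal case of Corollary~\ref{MieuxQueV}. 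Your route avoids juggling $n(\rho)$ and $s(\rho)$ and \eqref{Wigmore}; the paper's route has the virtue of staying within the invariants attached to $\rho$ without going back to the underlying character $\tilde\chi$. Both are short; yours is more direct.
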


\begin{proof}
D'après \eqref{Moutard}, l'entier $a_\ell(\tilde\rho)$ divise $s(\rho)$.
D'après le lemme \ref{Calcula}, on en déduit que $\el(q^{n(\rho)})$ 
divise $s(\rho)$.
Le résultat se déduit de la relation \eqref{Wigmore}.
\end{proof}

\begin{exem}
\label{DuPlessis}
On suppose que $\D$ est un corps de quater\-nions sur $\F$
(\ie que $d=2$). 
Soit $(\U,\tilde\chi)$ un $\overline{\QQ}_{\ell}$-type simple de niveau 
$0$ de $\mult\D$, \ie que $\U=\Oo_{\D}^{\times}$ et que le 
ca\-rac\-tè\-re~:
\begin{equation*}
\tilde\chi:\U\to
\overline{\QQ}{}^{\times}_{\ell}
\end{equation*}
est trivial sur $1+\p_{\D}$, de sorte qu'on peut 
le voir comme un caractère de $\mult{\kk}_{\D}$. 
Soient $b(\tilde\chi)$ l'indice du normalisateur de $\tilde\chi$ dans $\mult\D$ 
et $\tilde\rho$ une $\overline{\QQ}_{\ell}$-représentation irréductible 
entière de $\mult\D$ contenant $\tilde\chi$.
Celle-ci est cuspidale de niveau $0$ et de dimension finie égale à $b(\tilde\chi)$.  

Si $b(\tilde\chi)=1$, alors $\tilde\rho$ est un $\overline{\QQ}_{\ell}$-caractère entier 
de $\mult\D$ et sa réduction mod $\ell$ est un 
$\overline{\FF}_{\ell}$-carac\-tè\-re de $\mult\D$.  

Supposons maintenant que $b(\tilde\chi)=2$, \ie que $\tilde\chi^{q}\neq \tilde\chi$.
Soit ${\chi}$ la réduction mod $\ell$ de $\tilde\chi$ et soit $b(\chi)$ l'indice 
du normalisateur de $\chi$ dans $\mult\D$.  
Si l'ordre de $\tilde\chi^{q-1}$ n'est pas une puissance de $\ell$,
alors $b(\chi)=2$ et $\r_{\ell}(\tilde\rho)$ est une 
$\overline{\FF}_{\ell}$-représentation irréductible. 
Sinon, on a~:
\begin{equation}
\label{Petta1}
{\chi}^{q-1}=1,
\end{equation}
\ie que $b(\chi)=1$. 
Si $\ell>2$, $\r_{\ell}(\tilde\rho)$ est la somme des deux 
$\overline{\FF}_{\ell}$-ca\-rac\-tè\-res de $\mult\D$ prolongeant ${\chi}$. 
Si $\ell=2$, alors ${\chi}$ se prolonge de façon unique 
en un $\overline{\FF}_{2}$-caractère de $\mult\D$, que l'on note 
encore ${\chi}$, et $\r_{\ell}(\tilde\rho)$ est égale à ${\chi}+{\chi}$.

Si $q=8$ et $\ell=3$, 
la condition (\ref{Petta1}) est vérifiée pour tout caractère 
$\tilde\chi$.
Toute $\overline{\FF}_{3}$-re\-pré\-sen\-ta\-tion
cuspidale de niveau $0$ de $\mult\D$ est de dimension $1$.
\end{exem}

\subsection{Relèvement d'une représentation cuspidale}
\label{Neantise}

Soit $\ell$ un nombre premier différent de $p$. 

\begin{prop}
\label{RelCusp}
Soit $\rho$ une $\flb$-représentation irréductible cuspidale de $\G$.
Il existe une $\qlb$-représentation irréductible cuspidale entière 
$\tilde\rho$ de $\G$ telle que $\sy\rho$ apparaisse dans $\r_{\ell}(\tilde\rho)$. 
\end{prop}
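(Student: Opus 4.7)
The plan is to construct $\tilde\rho$ by lifting the extended maximal simple type describing $\rho$ and then inducing compactly to $\G$.

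First, by Theorem~\ref{TheZerPos} there is an extended maximal simple type $(\NJ,\nl)$ with $\rho\simeq\cind_{\NJ}^{\G}(\nl)$; let $(\J,\l)$ denote the underlying maximal simple type. By Proposition~\ref{RedTypSim}(2) there is a $\qlb$-type simple maximal $(\J,\tilde\l)$ whose reduction modulo $\ell$ is $\l$. Let $\tilde\NJ$ be the $\G$-normalizer of $\tilde\l$: any element of $\G$ normalizing $\tilde\l$ also normalizes $\r_{\ell}(\tilde\l)=\l$, so $\tilde\NJ\subseteq\NJ$, and trivially $\J\subseteq\tilde\NJ$. The restriction $\nl^{\flat}:=\nl|_{\tilde\NJ}$ is then an $\flb$-extension of $\l$ to $\tilde\NJ$.

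Next I construct an entire $\qlb$-extension $\tilde\nl$ of $\tilde\l$ to $\tilde\NJ$ whose reduction modulo $\ell$ is $\nl^{\flat}$. By Remark~\ref{Etoc2} applied to $\tilde\l$, the group $\tilde\NJ$ is generated by $\J$ together with the element $\w_{\tilde\l}$ defined via \eqref{pilam} relative to $\tilde\l$, so $\tilde\NJ/\J$ is infinite cyclic. By Lemma~\ref{EntTypSim}, the space of intertwiners between $\tilde\l$ and its $\w_{\tilde\l}$-conjugate is one-dimensional over $\qlb$; after fixing a generator that preserves a chosen integral structure, a $\qlb$-extension of $\tilde\l$ to $\tilde\NJ$ is determined by the scalar by which $\w_{\tilde\l}$ acts. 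Lifting the corresponding scalar of $\nl^{\flat}(\w_{\tilde\l})$ from $\flb^{\times}$ to $\zlb^{\times}$ yields the desired entire extension $\tilde\nl$, whose reduction modulo $\ell$ is $\nl^{\flat}$ by construction.

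Set $\tilde\rho:=\cind_{\tilde\NJ}^{\G}(\tilde\nl)$. By Proposition~\ref{MartinDuGard}, $\tilde\rho$ is an irreducible cuspidal $\qlb$-representation of $\G$, and it is entire since $\tilde\nl$ is, by \ref{SectionPreliminaire}.\ref{focales}. It remains to show that $\sy{\rho}$ appears in $\r_{\ell}(\tilde\rho)$. Compact induction from an open subgroup being exact and commuting with reduction modulo $\ell$, one has the equality $\r_{\ell}(\tilde\rho)=\sy{\cind_{\tilde\NJ}^{\G}(\nl^{\flat})}$ in $\Gg(\G,\flb)$. By Frobenius reciprocity the representation $\nl$ is a quotient of $\ind_{\tilde\NJ}^{\NJ}(\nl^{\flat})$, hence by transitivity of compact induction $\rho\simeq\cind_{\NJ}^{\G}(\nl)$ is a subquotient of $\cind_{\tilde\NJ}^{\G}(\nl^{\flat})$, so $\sy{\rho}$ appears in $\r_{\ell}(\tilde\rho)$.

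The delicate point is the simultaneous control of the two normalizers and the reduction of the scalars extending $\tilde\l$ to $\tilde\NJ$; both are handled by the one-dimensionality furnished by Lemma~\ref{EntTypSim} together with the surjectivity of $\zlb^{\times}\to\flb^{\times}$.
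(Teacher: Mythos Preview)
Your proof is correct and follows essentially the same approach as the paper: lift the maximal simple type $\l$ to $\tilde\l$, extend to $\tilde\NJ$ so that the reduction matches $\nl|_{\tilde\NJ}$, and compactly induce. The only notable difference is in the final step: the paper simply invokes Theorem~\ref{RedCusp} (which gives the full decomposition $\r_\ell(\tilde\rho)=\sy{\rho}+\sy{\rho\nu}+\cdots+\sy{\rho\nu^{a-1}}$) to conclude that $\sy{\rho}$ appears, whereas you argue directly via $\r_\ell(\tilde\rho)=\sy{\cind_{\tilde\NJ}^{\G}(\nl^\flat)}$ together with the surjection $\ind_{\tilde\NJ}^{\NJ}(\nl^\flat)\twoheadrightarrow\nl$ coming from Frobenius reciprocity. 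Your argument is more self-contained, and in fact it is precisely the relevant fragment of the proof of Theorem~\ref{RedCusp} (where one writes $\ind_{\tilde\NJ}^{\NJ}(\nl^\flat)=\nl\otimes\flb[\NJ/\tilde\NJ]$), so the two routes are really the same.
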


\begin{proof}
Soit $(\J,\l)$ un $\flb$-type simple maximal de $\G$ contenu 
dans $\rho$.
On note $\NJ$ le nor\-ma\-li\-sateur de $\l$ dans $\G$ et $\nl$ 
le prolongement de $\l$ à $\NJ$ tel que $\rho$ soit isomorphe à 
l'induite compacte de $\nl$ à $\G$.
D'après la proposition \ref{RedTypSim}, il y a un $\qlb$-type 
simple maximal $(\J,\tilde\l)$ re\-le\-vant $\l$.
On note $\tilde\NJ$ le nor\-ma\-li\-sateur de $\tilde\l$ dans $\G$, 
et on fixe un prolongement $\tilde\nl$ de $\l$ à $\tilde\NJ$ dont 
la réduction modulo $\ell$ 
coïncide avec la restriction de $\nl$ à $\tilde\NJ$. 
Alors l'induite compacte de $\tilde\nl$ à $\G$ est irréductible 
cuspidale entière, et sa réduction modulo $\ell$ contient $\rho$ d'après le 
théo\-rè\-me \ref{RedCusp}.
\end{proof}

Dans la situation de la proposition \ref{RelCusp}, il n'y a pas toujours
de $\qlb$-représentation irré\-duc\-ti\-ble cuspidale entière de $\G$ dont la 
réduction mod $\ell$ 
soit exactement égale à $\sy\rho$ (voir l'exemple \ref{ExBadRel}).  
Cependant, si $\rho$ est supercuspidale, on a le résultat important 
suivant. 

\begin{theo}
\label{RelSuperCusp}
Soit $\rho$ une $\flb$-représentation irréductible supercuspidale de $\G$. 
Supposons que $\rho$ contient un type simple maximal de la forme 
$(\J,\k\otimes\s)$ avec $\s$ supercuspidale.
Il y a une $\qlb$-représentation irréductible cuspidale entière 
$\tilde\rho$ de $\G$ telle que $\r_{\ell}(\tilde\rho)=\sy\rho$.
\end{theo}

\begin{rema}
\label{RemaRelSuperCusp}
On prouve dans \cite{MS12} que l'hypothèse sur $\s$ est superflue, \ie que 
toute représentation irréductible supercuspidale contient un type 
simple maximal de la forme in\-di\-quée dans l'énoncé du théorème. 
On y prouve également la réciproque, à savoir que tou\-te représentation 
irré\-duc\-ti\-ble cuspidale de $\G$
contenant un type simple maximal  
$(\J,\k\otimes\s)$ avec $\s$ super\-cus\-pi\-da\-le est elle-même supercuspidale.
\end{rema}

\begin{proof}
Soit $(\J,\l)$ un $\flb$-type simple maximal de $\G$ contenu 
dans $\rho$.
Écrivons $\l$ sous la forme 
$\k\otimes\s$ et fixons un $\flb$-caractère $\chi$ correspondant à 
$\s$ par \eqref{QuiltyL}. 
D'après la pro\-po\-sition \ref{RedTypSim}, il y a un $\qlb$-type 
simple maximal $(\J,\tilde\l)$ relevant $\l$.
Plus précisément, fixons un relèvement $\tilde\k$ de $\k$ et notons 
$\tilde\chi$ l'unique $\qlb$-caractère d'ordre premier à $\ell$ 
dont la réduction mod $\ell$ soit $\chi$. 
Si l'on note $\tilde\s$ la $\qlb$-représentation cuspidale correpondant 
à $\tilde\chi$ par \eqref{Quilty} et si l'on pose 
$\tilde\l=\tilde\k\otimes\tilde\s$, alors $\tilde\NJ=\NJ$ 
(avec les notations de la preuve de la proposition \ref{RelCusp}).
Pour terminer, on suit la preuve de la proposition \ref{RelCusp}. 
\end{proof}

\begin{rema}
\label{Foutriquet}
Si $\rho$ est une $\flb$-représentation irréductible cuspidale mais non
supercuspi\-dale de $\G$, le lemme \ref{Calcula} montre que, 
si $\tilde\rho$ est une $\qlb$-représentation irréductible 
cuspidale entière de $\G$ satisfaisant à la condition de la 
proposition \ref{RelCusp}, le plus grand diviseur de $a_\ell(\tilde\rho)$ 
premier à $\ell$ ne dépend pas de $\tilde\rho$, mais uniquement de la 
classe d'inertie de $\rho$.
\end{rema}

\begin{rema}
Dans le cas banal, \ie si $\el(q^d)>m$, 
toute représentation irré\-duc\-ti\-ble cuspidale de $\G$ 
est supercuspidale, et le théorème \ref{RelSuperCusp}
s'applique (voir \cite{MS2}).
\end{rema}

\begin{exem}
\label{ExBadRel}
Donnons un exemple de $\overline{\FF}_{\ell}$-représentation 
irréductible cuspidale de $\G$ n'ad\-met\-tant aucun 
relèvement à $\overline{\QQ}_{\ell}$.
Selon la remarque \ref{Foutriquet},
il suffit de trouver un exemple où le plus grand diviseur de 
$a_\ell(\tilde\rho)$ premier à $\ell$ est $>1$.
Fixons un corps de quater\-nions $\D$ sur $\F$. 
Soit $(\J,{\s})$ un $\overline{\FF}_{\ell}$-type sim\-ple de 
niveau $0$ de $\G=\GL_{2}(\D)$. 
On peut supposer que $\J=\GL_{2}(\Oo_{\D})$ et considérer $\s$ 
comme une $\overline{\FF}_{\ell}$-représentation irréductible 
cuspidale de $\GL_{2}(\kk_\D)$. 
D'après le théorème \ref{theojames}, 
la représentation $\s$ est paramétrée par un $\overline{\FF}_{\ell}$-caractère 
$\chi$ de $\mult\kk$ (où $\kk$ est une ex\-ten\-sion quadratique fixée de 
$\kk_\D$) admettant un relè\-ve\-ment~:
\begin{equation*}
\label{SectionDesPiquesTilde}
\tilde\chi:\mult\kk\to\overline{\QQ}{}^{\times}_{\ell}
\end{equation*}
tel que $\tilde\chi{}^{q^2}\neq \tilde\chi$. 
On note $\tilde\s$ la $\overline{\QQ}_{\ell}$-représentation irréductible 
cuspidale de $\GL_{2}(\kk_\D)$ para\-métrée par $\tilde\chi$. 
C'est un relèvement de $\s$.
On suppose que~: 
\begin{equation}
\label{AnneBoylen}
\chi^{q^2}=\chi,
\end{equation}
\ie que la représentation $\s$ n'est pas supercuspidale. 
Soit $\rho$ une $\overline{\FF}_{\ell}$-représentation 
ir\-ré\-duc\-ti\-ble de $\GL_{2}(\D)$ dont la restriction à $\J$ 
contient $\s$, et soit $\tilde\rho$ une 
$\overline{\QQ}_{\ell}$-représentation irréductible dont la restriction 
à $\J$ contient $\tilde\s$. 
Ce sont des re\-pré\-sen\-ta\-tions cuspidales de niveau $0$.
Si $b(\rho)=2$, alors $\rho$ se relève en $\tilde\rho$
puisque l'entier $b(\tilde\rho)$ divise $2$ et est un multiple
de $b(\rho)=2$. 
On suppose maintenant que $b(\rho)=1$, de sorte qu'on a~:
\begin{equation}
\label{b=1}
\chi^{q}=\chi.
\end{equation}
Pour que $\rho$ se relève en une $\overline{\QQ}_{\ell}$-représentation 
irréductible cuspidale de $\GL_2(\D)$, il faut et suffit qu'on puisse 
choisir $\tilde\s$ de sorte que son orbite sous l'action de 
$\Gal(\kk_{\D}/\kk_{\F})$ soit de cardinal $1$.
En d'autres termes, il faut et suffit que $\chi$ admette un 
relèvement $\tilde\chi$ tel que~: 
\begin{equation*}
\label{HenryVIII}
\tilde\chi{}^{q^2}=\tilde\chi{}^{q}\neq\tilde\chi,
\end{equation*}
ce qui est impossible. 
En conclusion, une $\overline{\FF}_{\ell}$-re\-pré\-sentation $\rho$ 
irré\-duc\-tible cuspidale non supercuspidale de niveau $0$ de $\GL_2(\D)$ 
a un relèvement à $\overline{\QQ}_{\ell}$ si et seulement si $b(\rho)=2$. 

Par exemple, si $q=4$ et $\ell=17$, 
le groupe $\mult\kk$ est le produit direct d'un groupe cyclique
d'ordre $15$ par un groupe d'ordre $17$.
Un $\overline{\FF}_{17}$-caractère de $\mult\kk$ est 
trivial sur le facteur d'ordre $17$ et vérifie donc la condition 
\eqref{AnneBoylen}, \ie qu'aucune $\overline{\FF}_{17}$-représentation 
irré\-ductible cuspidale de $\GL_2(\kk_\D)$ n'est supercuspidale. 
Par \eqref{b=1}, 
on a $b(\rho)=1$ si et seulement si $\chi^{3}=1$.
Si $\tilde\chi$ est un $\overline{\QQ}_{17}$-caractère de $\mult\kk$ 
d'ordre $17$, 
sa réduction modulo $17$ est le caractère trivial, qui paramètre une 
$\overline{\FF}_{17}$-re\-pré\-sentation irréductible cuspidale non 
supercuspidale de niveau $0$ de $\GL_2(\D)$ n'admettant pas de relèvement 
à $\overline{\QQ}_{17}$.
\end{exem}

\section{Types et algèbres de Hecke}
\label{CostaCafe}

Soit $m\>1$ un entier et soit $\G=\GL_m(\D)$.  
Cette section est consacrée à l'étude des liens entre les représentations
lisses de $\G$ et les modules sur certaines algèbres de Hecke affines. 
Soit $(\K,\tau)$ une paire formée d'un sous-groupe ouvert 
compact $\K\subseteq\G$ et d'une représentation ir\-ré\-duc\-tible $\tau$ de 
$\K$.  

Dans le cas où $\R$ est de caractéristique nulle, on lui associe la 
sous-catégorie pleine $\Rr(\K,\tau)$ de $\Rr_\R(\G)$ formée des 
représentations engendrées par leur com\-po\-san\-te 
$\tau$-iso\-ty\-pi\-que.  
Le foncteur $\Mm_\tau$ défini au paragraphe \ref{PCMJ} est exact, 
et il induit une bijection entre les clas\-ses d'isomorphisme 
de représentations irréductibles de $\Rr(\K,\tau)$ 
et les classes d'isomorphisme de modules simples sur l'algèbre $\Hh(\G,\tau)$. 
On a une notion bien définie de type, à savoir que $(\K,\tau)$ 
est un type dans $\G$ si et seulement si l'une des trois 
propriétés équivalentes suivantes est vérifiée (voir \cite{BK1})~: 
\begin{enumerate}
\item
$\Rr(\K,\tau)$ est sta\-ble par sous-quotients dans $\Rr_\R(\G)$~;
\item
$\Mm_\tau$ induit une équivalence 
de $\Rr(\K,\tau)$ sur la catégorie des modules à droite sur $\Hh(\G,\tau)$~;
\item
$\Rr(\K,\tau)$ est la somme directe d'un nombre fini de 
blocs de Bernstein de $\Rr_\R(\G)$. 
\end{enumerate}

Dans le cas modulaire, 
la situation est plus compliquée. 
D'abord, $\tau$ n'étant pas nécessairement projective dans la 
catégorie des représentations lisses de $\K$, 
on perd une propriété importante~: le fonc\-teur $\Mm_\tau$ 
n'est pas forcément exact, et il n'induit pas 
forcément une bijection entre clas\-ses de représentations irréductibles 
de $\Rr(\K,\tau)$ et classes de modules sim\-ples sur $\Hh(\G,\tau)$. 

Ensuite, on n'a plus de notion claire de ce que devrait être un type dans $\G$~: 
outre le fait que la dé\-com\-po\-sition en blocs indécomposables de $\Rr_\R(\G)$ 
n'est pas connue\footnote{Des travaux de V.~Sécherre et 
S.~Stevens (\cite{SeSt3}) traitent cette question.}, les propriétés 1 et 2 ci-dessus ne 
sont en général pas équivalentes.

Ayant pour objectif la classification des représentations irréductibles de
$\G$ (voir \cite{MS12}), il nous est possible de nous contenter de la théorie des 
types semi-simples grâce à la propriété de quasi-pro\-jec\-tivité introduite 
dans \cite{Vig2}
(voir le paragraphe \ref{RQP}),
sans chercher à développer une théorie générale des types modulaires.
Cette notion de quasi-pro\-jec\-tivité, ainsi que l'étude systématique 
effectuée par Arabia dans l'appendice à \cite{Vig2}, sont au coeur de la 
présente section. 

Au paragraphe \ref{PCIP}, nous prouvons 
la quasi-projectivité des induites compactes des types semi-simples
(proposition \ref{pptf1}) et 
donnons des conditions pour qu'une induite parabolique soit irréduc\-ti\-ble. 
Un corol\-lai\-re est le théorème \ref{ss}, qui permet de ramener le 
problème de la classification de toutes les représentations irréductibles de 
$\G$ à celle des représentations ir\-ré\-ductibles dont le support cuspidal 
est inertiellement équivalent à $\sy{\rho}+\dots+\sy{\rho}=n\cdot\sy{\rho}$
où $n\>1$ divise $m$ et où $\rho$ est une re\-pré\-sentation irré\-duc\-tible 
cuspidale fixée de $\GL_{m/n}(\D)$.  

Dans le paragraphe \ref{DiaComIndPar},
on prouve une propriété de compatibilité du foncteur $\Mm_\bl$ à l'induction, 
lorsque $(\BJ,\bl)$ est un type semi-simple. 
Comme expliqué dans la remarque \ref{onlygodforgives}, cette propriété, 
contrairement à ce qui se passe dans le cas complexe, n'est pas une 
conséquence formelle 
de la pro\-prié\-té de compatibilité aux foncteurs de Jacquet \eqref{Gobineau} 
caractéristique des paires couvrantes. 

La méthode du changement de groupe présentée dans le paragraphe \ref{ChgtGp} 
permet de ramener la classification des représentations ir\-ré\-ductibles de $\G$
dont le support cuspidal est inertiellement équivalent à $n\cdot\sy{\rho}$
(voir plus haut) à celle des représentations irréductibles de $\GL_{n}(\F')$ 
| pour une extension finie $\F'/\F$ convenablement choisie |
admettant des vecteurs non nuls invariants par le sous-groupe d'Iwahori. 
Ceci permet de se ramener, pour certaines questions, au cas où $\D$ est 
égale à $\F$, $n$ est égal à $m$ et $\rho$ est le caractère trivial de 
$\mult\F$.

Dans le paragraphe \ref{NURHO} enfin, on associe à toute représentation 
irréductible cuspidale $\rho$ de $\G$ un caractère non ramifié $\nu_\rho$ de 
$\G$ dont l'introduction est justifiée par la proposition \ref{cuspidal}. 

Cette section est inspirée de \cite{Vig2}, 
même si en définitive nos preuves sont en général différen\-tes.
Certains résultats, comme les propositions \ref{Isomega}, \ref{pptf1}, 
\ref{EgaIndParCasIrrBisens} et \ref{AgorA}, généralisent à un 
groupe non déployé des résultats de \cite{Vig2} pour $\GL_n(\F)$.
En revanche, la proposition \ref{RussSamProp} est nouvelle, même dans le cas 
déployé.  

\subsection{Représentations quasi-projectives}
\label{RQP}

Soit $\Q$ une représentation lisse de $\G$.  
La définition suivante est due à Arabia \cite[A.3]{Vig2}. 

\begin{defi}
La représentation $\Q$ est dite \textit{quasi-projective} si, pour toute 
représen\-tation $\V$ de $\G$ et tout homomorphisme 
surjectif $\h\in\Hom_\G(\Q,\V)$, l'homomorphisme
de $\End_{\G}(\Q)$ dans $\Hom_\G(\Q,\V)$ 
défini par $\a\mapsto\h\circ\a$ est 
surjectif.
\end{defi}

On dit qu'une représentation de $\G$ est sans $\Q$-torsion si elle n'admet 
pas de sous-repré\-sen\-ta\-tion non nulle $\W$ telle que 
$\Hom_\G(\Q,\W)$ soit nul. 
Rappelons que $\Rr=\Rr_{\R}(\G)$ est la catégorie des re\-pré\-sentations 
(lisses) de $\G$ sur des $\R$-espaces vectoriels.
On a le théorème important suivant. 

\begin{theo}[\cite{Vig2}]
\label{qptf}
On suppose que $\Q$ est quasi-projective et de type fini.  
\begin{enumerate}
\item
Le foncteur $\V\mapsto\Hom_\G(\Q,\V)$ définit une 
équi\-va\-len\-ce entre la sous-catégorie pleine de $\Rr$ formée 
des repré\-sen\-ta\-tions sans $\Q$-torsion qui sont quotients 
d'une somme directe de copies de $\Q$, 
et la catégorie des $\End_{\G}(\Q)$-modules à droite. 
\item
Cette équivalence induit une bijection entre les classes de
représenta\-tions irréducti\-bles $\V$ de $\G$ telles que 
$\Hom_\G(\Q,\V)\neq\{0\}$
et les classes de $\End_{\G}(\Q)$-modules irré\-duc\-ti\-bles. 
\end{enumerate}
\end{theo}

\begin{proof}
Le point 1 est donné par \cite[A.3, théorème 4(2)]{Vig2} 
(voir aussi A.2)
et le point 2 est don\-né par \cite[A.5, théorème 10(2)]{Vig2}.
\end{proof}

Soit $\tau$ une représen\-tation irréductible d'un sous-groupe ouvert compact 
$\K$ de $\G$.
On reprend les notations du paragraphe \ref{PCMJ} et on pose~:
\begin{equation*}
\Q=\ind^\G_\K(\tau).
\end{equation*} 
Comme $\tau$ est irréductible, $\Q$ est de type fini. 
Elle n'est pas toujours projective dans $\Rr$, \ie que le foncteur $\Mm_{\tau}$ 
n'est pas toujours exact sur $\Rr$.
Cependant, si elle est quasi-projective, on va voir que ce foncteur est 
exact sur une sous-catégorie pleine suffisamment grande. 

\begin{defi}
On note $\Ee(\K,\tau)$ la sous-ca\-té\-go\-rie pleine de 
$\Rr$ dont les objets sont les sous-quo\-tients de 
repré\-sen\-ta\-tions en\-gen\-drées par leur composante $\tau$-isotypique.
\end{defi}

Cette sous-catégorie est stable par sous-quo\-tients dans $\Rr$. 
Son intérêt réside dans le résultat sui\-vant.
On note $\Hh=\Hh(\G,\tau)$ l'algèbre des endomorphismes de $\Q$. 

\begin{prop}[\cite{Vig2}]
\label{Cocyte2}
\label{ManqueDe}
\label{Cocyte}
On suppose que $\Q=\ind^{\G}_{\K}(\tau)$ est quasi-projective (et de type 
fini). 
\begin{enumerate}
\item
La restriction du foncteur $\Mm_\tau$ à $\Ee(\K,\tau)$ est un foncteur exact. 
\item
Pour tout $\Hh$-module à droite $\mm$, 
l'homomorphisme canonique de $\Hh$-modules~: 
\begin{equation*}
\mm\to\Mm_{\tau}\(\mm\otimes_{\Hh}\ind^{\G}_{\K}(\tau)\)
\end{equation*}
est un isomorphisme.
\end{enumerate}
\end{prop}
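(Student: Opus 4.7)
Le plan est de suivre la strat\'egie d'Arabia dans l'appendice de \cite{Vig2}, o\`u ce type de r\'esultat est d\'emontr\'e dans un cadre cat\'egoriel g\'en\'eral pour les g\'en\'erateurs quasi-projectifs. Le point 2 se d\'eduira du point 1 par un argument standard d'Eilenberg--Watts, de sorte que le travail essentiel r\'eside dans le point 1.

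Pour le point 1, le foncteur $\Mm_\tau = \Hom_\G(\Q, -)$ \'etant toujours exact \`a gauche, il suffit de montrer qu'il pr\'eserve les surjections dans $\Ee(\K, \tau)$. Partant d'une suite exacte courte $0 \to \V' \to \V \to \V'' \to 0$ dans $\Ee(\K, \tau)$ et d'un morphisme $f : \Q \to \V''$, je me ram\`enerais d'abord au cas o\`u $f$ est surjective, en rempla\c{c}ant $\V''$ par $f(\Q)$ et $\V$ par son image r\'eciproque $\V_0 \subseteq \V$, qui reste dans $\Ee(\K, \tau)$ par stabilit\'e par sous-quotients. La quasi-projectivit\'e de $\Q$ donne alors un contr\^ole pr\'ecis sur $\Hom_\G(\Q, \W)$ d\`es que $\W$ est un quotient de $\Q$ : tout morphisme $\Q \to \W$ s'obtient en composant une surjection fix\'ee avec un endomorphisme de $\Q$. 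Joint au fait que $\V_0$ est lui-m\^eme un sous-quotient d'une somme directe de copies de $\Q$, cela produira le rel\`evement $\Q \to \V_0$ de $f$ recherch\'e.

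Pour le point 2, j'appliquerais le sch\'ema d'Eilenberg--Watts. Le cas $\mm = \Hh$ est une v\'erification directe : on a $\Hh \otimes_\Hh \Q = \Q$ et $\Mm_\tau(\Q) = \End_\G(\Q) = \Hh$, le morphisme canonique \'etant l'identit\'e. Le cas d'un $\Hh$-module libre $\Hh^{I}$ s'en d\'eduit par commutation de $\Mm_\tau$ aux sommes directes arbitraires, cons\'equence de la g\'en\'eration finie de $\tau$ comme $\K$-repr\'esentation. Pour un $\Hh$-module \`a droite g\'en\'eral $\mm$, je prendrais une pr\'esentation $\Hh^{J} \to \Hh^{I} \to \mm \to 0$ ; en lui appliquant le foncteur exact \`a droite $-\otimes_\Hh \Q$, on obtient une suite exacte $\bigoplus_J \Q \to \bigoplus_I \Q \to \mm \otimes_\Hh \Q \to 0$ enti\`erement situ\'ee dans $\Ee(\K, \tau)$ ; en lui appliquant ensuite $\Mm_\tau$, exact sur $\Ee(\K, \tau)$ par le point 1, et en invoquant la naturalit\'e du morphisme canonique et le lemme des cinq, on obtient l'isomorphisme $\mm \simeq \Mm_\tau(\mm \otimes_\Hh \Q)$ voulu.

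La difficult\'e principale r\'eside dans l'\'etape de rel\`evement du point 1. La quasi-projectivit\'e \'etant strictement plus faible que la projectivit\'e, on ne peut pas contourner l'argument en invoquant l'annulation de $\mathrm{Ext}^1(\Q, \V')$ : la suite exacte de pullback naturelle $0 \to \V' \to \Q \times_{f(\Q)} \V_0 \to \Q \to 0$ ne se scinde pas n\'ecessairement. Le point d\'elicat est d'exploiter l'appartenance de $\V_0$ \`a $\Ee(\K, \tau)$ pour remplacer ce scindage manquant, ce qui correspond pr\'ecis\'ement au travail effectu\'e dans l'appendice \`a \cite{Vig2}.
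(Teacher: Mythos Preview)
Your proposal is correct and follows essentially the same approach as the paper: both defer to Arabia's appendix in \cite{Vig2}, the paper by bare citation (A.3, proposition~3 for point~1 and the proof of th\'eor\`eme~4(2) for point~2), while you unpack the content of those references, in particular the Eilenberg--Watts argument for point~2. Your acknowledgment that the delicate lifting step in point~1 is precisely the work done in Arabia's appendix matches the paper's reliance on that reference.
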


\begin{proof}
Comme $\Q$ est quasi-projective, 
le foncteur $\Mm_\tau$ est exact sur la sous-catégorie pleine $\Dd_\Q$ de
$\Rr$ définie dans \cite[A.3]{Vig2}.
D'après \textit{ibid.}, proposition $3$, la catégorie $\Ee(\K,\tau)$ est une 
sous-catégorie de $\Dd_\Q$, ce qui prouve 1. 
Le point 2 est démontré dans la preuve de \textit{ibid.}, théorème $4(2)$. 
\end{proof}

Soit $\P$ un sous-groupe parabolique de $\G$ de facteur de Levi $\M$ 
et de radical uni\-potent $\N$, et soit $\tau_\M$ une représen\-tation 
irréductible d'un sous-groupe ouvert compact $\K_\M$ de $\M$.
On suppose que $(\K,\tau)$ est une paire cou\-vrante de 
$(\K_\M,\tau_\M)$ (voir le paragraphe \ref{PCMJ}).

\begin{prop}
\label{SAvereUtileG1}
Soit $\pi$ une représentation admissible de $\M$ engendrée par sa compo\-sante 
$\tau_\M$-iso\-typique.  
On suppose que $\Q$ est quasi-projective.
Il existe un homomorphisme surjectif~:
\begin{equation*}
\label{Isomega}
\Mm_{\tau_\M}(\pi)\otimes_{\Hh_\M}\Hh\to\Mm_\tau(\ip_{\P^-}^\G(\pi))
\end{equation*}
de $\Hh$-modules à droite. 
\end{prop}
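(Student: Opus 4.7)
The plan is to transport the counit of the $\tau_{\M}$-adjunction at the level of $\M$ through parabolic induction, and then pass back to $\Hh$-modules via the functor $\Mm_\tau$, using the quasi-projectivity hypothesis to preserve surjectivity along the way. First I would set $\mm=\Mm_{\tau_\M}(\pi)$, which by admissibility of $\pi$ is a finite-dimensional right $\Hh_\M$-module (this finiteness is required to invoke Proposition \ref{GobineauAdj}). The assumption that $\pi$ is engendered by its $\tau_\M$-isotypic component means exactly that the adjunction counit
\[
\mm\otimes_{\Hh_\M}\cind^{\M}_{\K_\M}(\tau_\M)\to\pi
\]
is surjective.

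I would then apply the exact functor $\ip^{\G}_{\P^-}$ to this surjection. Using Proposition \ref{GobineauAdj} with the parabolic in its statement taken to be $\P$ (so the opposite parabolic appearing there is $\P^-$), the source identifies canonically with $\mm\otimes_{\Hh_\M}\Q$, where $\Q=\cind^{\G}_{\K}(\tau)$. This yields a surjection of representations of $\G$
\[
\mm\otimes_{\Hh_\M}\Q\to\ip^{\G}_{\P^-}(\pi).
\]

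The crucial step is to apply $\Mm_\tau$ to this surjection. The source is a quotient of a direct sum of copies of $\Q$, hence lies in $\Ee(\K,\tau)$; and by Proposition \ref{ScotErigene}, $\ip^{\G}_{\P^-}(\pi)$ is engendered by its $\tau$-isotypic component, so it too lies in $\Ee(\K,\tau)$. It is exactly here that the quasi-projectivity of $\Q$ is essential: Proposition \ref{Cocyte}(1) gives that the restriction of $\Mm_\tau$ to $\Ee(\K,\tau)$ is exact, so we obtain a surjection $\Mm_\tau(\mm\otimes_{\Hh_\M}\Q)\to\Mm_\tau(\ip^{\G}_{\P^-}(\pi))$. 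Finally, Proposition \ref{Cocyte}(2) applied to the right $\Hh$-module $\mm\otimes_{\Hh_\M}\Hh$, combined with the associativity isomorphism $(\mm\otimes_{\Hh_\M}\Hh)\otimes_{\Hh}\Q\simeq\mm\otimes_{\Hh_\M}\Q$, identifies $\Mm_\tau(\mm\otimes_{\Hh_\M}\Q)$ with $\mm\otimes_{\Hh_\M}\Hh$ as an $\Hh$-module, giving the claimed surjection. The only delicate point is the exactness of $\Mm_\tau$ in the penultimate step; without the quasi-projectivity of $\Q$ this would fail in general, and the entire weight of the hypothesis is concentrated in this one invocation.
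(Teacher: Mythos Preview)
Your proof is correct and follows essentially the same route as the paper's: start from the surjective counit $\Mm_{\tau_\M}(\pi)\otimes_{\Hh_\M}\Q_\M\to\pi$, apply $\ip^{\G}_{\P^-}$ and identify the source via Proposition~\ref{GobineauAdj}, then apply $\Mm_\tau$ using the exactness on $\Ee(\K,\tau)$ from Proposition~\ref{Cocyte}(1) and the identification from Proposition~\ref{Cocyte}(2). Your explicit remark that admissibility of $\pi$ makes $\mm$ finite-dimensional (needed for Proposition~\ref{GobineauAdj}) is a helpful clarification, and your citation of Proposition~\ref{ScotErigene} for the target is fine though not strictly necessary, since the target is already a quotient of something generated by its $\tau$-isotypic part.
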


\begin{rema}
La condition d'admissibilité sur $\pi$ vient du fait que notre preuve utilise la 
proposition \ref{GobineauAdj}, qui nécessite la propriété de seconde adjonction.  
\end{rema}

\begin{proof}
On note $\Q$ l'induite compacte de $\bl$ à $\G$
et $\Q_\M$ celle de $\bl_\M$ à $\M$.
Partons de l'application canonique~:
\begin{equation}
\label{ChJoHuDi-G1}
\Mm_{\tau_\M}(\pi)\otimes_{\Hh_\M}\Q_\M\to\pi
\end{equation}
qui est surjective car $\pi$ est engendrée par sa compo\-sante 
$\tau_\M$-iso\-typique.  
En appliquant le foncteur $\ip^\G_{\P^-}$ à \eqref{ChJoHuDi-G1}, 
on obtient une application surjective~:
\begin{equation}
\label{ChJoHuDiG1}
\ip^\G_{\P^-}(\Mm_{\tau_\M}(\pi)\otimes_{\Hh_\M}\Q_\M)
\to\ip^\G_{\P^-} (\pi).
\end{equation}
D'après la proposition \ref{GobineauAdj}, le membre de gauche 
est iso\-mor\-phe à $\Mm_{\tau_\M}(\pi)\otimes_{\Hh_{\M}}\Q$.
En particu\-lier, les deux membres de \eqref{ChJoHuDiG1} sont engendrés 
par leur compo\-sante $\tau$-isotypique. 
Par la proposi\-tion \ref{Cocyte2}, en appliquant le foncteur 
$\Mm_\tau$, on obtient une application surjective~:
\begin{equation*}
\label{ChJoHuDi+G}
\Mm_\tau\(\Mm_{\tau_\M}(\pi)\otimes_{\Hh_{\M}}\Q\)
\to\Mm_\tau(\ip^\G_{\P^-}(\pi))
\end{equation*}
et le membre de gauche 
est isomorphe à $\Mm_{\tau_\M}(\pi)\otimes_{\Hh_{\M}}\Hh$ d'après la 
proposition \ref{Cocyte2}(2).
\end{proof}

\begin{prop}
\label{blablabla}
Soit $\s$ une représentation de longueur finie de $\M$ 
engendrée par sa composante $\tau_{\M}$-isotypique. 
On suppose que $\Q$ est quasi-projective. 
\begin{enumerate}
\item 
Le foncteur $\Mm_\tau$ induit une bijection entre l'ensemble des 
classes d'isomorphisme de quotients irré\-duc\-ti\-bles de $\Q$ apparaissant comme 
sous-quotient de $\ip^{\G}_{\P}(\s)$ et l'ensemble des classes d'iso\-morphisme de 
$\Hh$-modu\-les à droite simples apparaissant comme 
sous-quotient de $\Mm_\tau(\ip^{\G}_{\P}(\s))$. 
\item
Si $\pi$ est un quotient 
irré\-duc\-ti\-ble de $\Q$ apparaissant comme sous-quotient de 
$\ip^{\G}_{\P}(\s)$, alors sa multi\-plicité dans $\ip^{\G}_{\P}(\s)$ est égale à 
la multiplicité de $\Mm_\tau(\pi)$ dans $\Mm_\tau(\ip^{\G}_{\P}(\s))$. 
\end{enumerate}
\end{prop}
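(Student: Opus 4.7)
The plan is to reduce both assertions to the exactness of $\Mm_\tau$ on a suitable subcategory, combined with the bijection provided by Theorem \ref{qptf}. First I would observe that, by Proposition \ref{ScotErigene}, the hypothesis that $\s$ is engendered by its $\tau_\M$-isotypic component propagates through parabolic induction: $\ip^\G_\P(\s)$ is engendered by its $\tau$-isotypic component, hence lies in $\Ee(\K,\tau)$. Since $\s$ has finite length and $\ip^\G_\P$ preserves finite length (paragraph 1.3), one can fix a Jordan-Hölder filtration
\begin{equation*}
0=\V_0\subsetneq\V_1\subsetneq\cdots\subsetneq\V_n=\ip^\G_\P(\s),
\end{equation*}
whose successive quotients $\pi_i=\V_i/\V_{i-1}$ are the irreducible subquotients of $\ip^\G_\P(\s)$, each of them itself an object of $\Ee(\K,\tau)$ since this subcategory is stable under subquotients.

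The key step is then to apply Proposition \ref{Cocyte2}(1): using the quasi-projectivity of $\Q$, the restriction of $\Mm_\tau$ to $\Ee(\K,\tau)$ is exact. Applying it to the filtration above produces a filtration of $\Mm_\tau(\ip^\G_\P(\s))$ by $\Hh$-submodules whose successive quotients are the $\Mm_\tau(\pi_i)$. Now for each $i$, two cases arise: either $\pi_i$ is a quotient of $\Q$, in which case $\Mm_\tau(\pi_i)=\Hom_\G(\Q,\pi_i)$ is a simple $\Hh$-module by Theorem \ref{qptf}(2); or $\Hom_\G(\Q,\pi_i)=0$, and $\Mm_\tau(\pi_i)$ contributes nothing. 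Thus the Jordan-Hölder factors of $\Mm_\tau(\ip^\G_\P(\s))$ are precisely the $\Mm_\tau(\pi_i)$ for those indices $i$ such that $\pi_i$ is a quotient of $\Q$, counted with the same multiplicities as the $\pi_i$ in $\ip^\G_\P(\s)$.

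Finally, the injectivity of $\pi\mapsto\Mm_\tau(\pi)$ on the set of irreducible quotients of $\Q$ is exactly the content of the bijection in Theorem \ref{qptf}(2). This injectivity ensures that isomorphic simple factors on the $\Hh$-module side correspond to isomorphic $\pi_i$'s on the representation side, which simultaneously establishes the bijection of assertion 1 and the multiplicity equality of assertion 2.

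The main obstacle to be careful about is purely formal rather than substantive: one must verify that the ambient object and all of its subquotients truly lie in the category $\Ee(\K,\tau)$ where $\Mm_\tau$ is known to be exact, since without this the Jordan-Hölder argument breaks down. This is precisely where Proposition \ref{ScotErigene} is used, and it is the reason the hypothesis that $\s$ is engendered by its $\tau_\M$-isotypic component cannot be weakened.
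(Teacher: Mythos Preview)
Your proof is correct and follows essentially the same route as the paper: invoke Proposition~\ref{ScotErigene} to place $\ip^\G_\P(\s)$ in $\Ee(\K,\tau)$, take a composition series, use exactness of $\Mm_\tau$ on $\Ee(\K,\tau)$ (Proposition~\ref{Cocyte2}(1)) so that the filtration transports, and then appeal to Theorem~\ref{qptf}(2) to conclude that each $\Mm_\tau(\pi_i)$ is either zero or simple, with the bijection of that theorem handling the matching of isomorphism classes and multiplicities. Your write-up is a bit more explicit than the paper's on the injectivity point and on why finite length is preserved, but there is no substantive difference.
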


\begin{proof}
La représentation $\ip^{\G}_{\P}(\s)$ est de longueur finie
et est engendrée par sa compo\-san\-te $\tau$-isotypique
d'après la proposition \ref{ScotErigene}.
On fixe une suite de composition~:
\begin{equation*}
0=\V_0\subseteq\V_1\subseteq\dots\subseteq\V_n=\ip^{\G}_{\P}(\s)
\end{equation*}
dans $\Rr$ dont les quo\-tients sont ir\-ré\-ductibles. 
Comme $\Ee(\K,\tau)$ est stable par sous-quotients 
dans $\Rr$, elle contient tous les termes $\V_0,\dots,\V_n$
de cette suite de compo\-sition.
Pour $i\in\{0,\dots,n\}$,
notons $\m_i$ le module $\Mm_\tau(\V_i)$.
Comme $\Q$ est quasi-projective, 
$\m_{i+1}/\m_i$ est isomorphe à 
$\Mm_\tau(\V_{i+1}/\V_i)$ qui, compte tenu du théorème \ref{qptf}(2), est soit 
nul, soit un module simple sur $\Hh$. 
\end{proof}

\subsection{Application aux types semi-simples}
\label{PCIP}

Le cas qui nous intéresse particulièrement est celui d'un type 
semi-simple (voir le paragraphe \ref{NonEndoEq}). 

\begin{prop}
\label{pptf1}
Soit $(\BJ,\bl)$ un type semi-simple de $\G$.
Alors l'induite compacte $\cind_\BJ^\G(\bl)$ est quasi-projective et de type fini.  
\end{prop}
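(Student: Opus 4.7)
The plan is first to dispose of the finite-type assertion, then to reduce quasi-projectivity by Frobenius reciprocity to a lifting problem for $\BJ$-morphisms out of $\bl$. Since $\bl$ is irreducible and smooth on the compact open subgroup $\BJ$, it is finite-dimensional, and the inclusion $\bl \hookrightarrow \Q = \cind_\BJ^\G(\bl)$ generates $\Q$ as a $\G$-module, whence $\Q$ is of finite type. For quasi-projectivity, one applies Frobenius to translate the surjectivity of $\End_\G(\Q) \to \Hom_\G(\Q,\V)$ into the statement that, for any surjection $\h:\Q\to\V$, every $\BJ$-morphism $f:\bl\to\V$ admits a lift $\tilde f:\bl\to\Q$ with $\h\circ\tilde f=f$.

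The first reduction is to the $\bn$-isotypic components. Because $\BJ^1$ is a pro-$p$-group and the characteristic of $\R$ is distinct from $p$, the restriction of any smooth $\R$-representation of $\G$ to $\BJ^1$ is semisimple; the $\bn$-isotypic part $\V(\bn)$ of $\V|_{\BJ^1}$ (and similarly $\Q(\bn)$) is therefore a $\BJ$-stable direct summand, and $\h$ restricts to a $\BJ$-surjection $\Q(\bn)\to\V(\bn)$. Moreover, by Proposition \ref{bnpairedec} the restriction $\bl|_{\BJ^1}$ is $\bn$-isotypic, so any $f:\bl\to\V$ factors through $\V(\bn)$. The entire lifting problem thus takes place inside the $\BJ$-representations $\Q(\bn)$ and $\V(\bn)$.

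The decisive step is then Proposition \ref{Empire}, which describes $\Q(\bn)$ as a direct sum of $\BJ$-conjugates of $\bl$, each of which is irreducible as a $\BJ$-representation. In particular $\Q(\bn)$ is a semisimple $\BJ$-module, so its quotient $\V(\bn)$ is also semisimple, and the surjection $\Q(\bn)\to\V(\bn)$ splits by a $\BJ$-equivariant section $s$. Composing $f$ with $s$ produces the required lift $\tilde f=s\circ f:\bl\to\Q(\bn)\subset\Q$. The main obstacle is not in this final formal step but in securing Proposition \ref{Empire}; that result is itself not formal and depends on the explicit intertwining calculation of Proposition \ref{propbn2}, which relies on the upper bound for the intertwining set of $\bl$ in $\G$ proved in the appendix. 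Once these inputs are granted, quasi-projectivity follows by pure semisimplicity arguments at the compact level.
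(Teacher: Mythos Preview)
Your proof is correct and follows essentially the same approach as the paper: both reduce to the $\bn$-isotypic component of $\cind_\BJ^\G(\bl)|_{\BJ}$ via the pro-$p$ property of $\BJ^1$, and both invoke Proposition~\ref{Empire} to see that this component is a semisimple $\BJ$-module (a direct sum of conjugates of $\bl$). The only cosmetic difference is in the final step: the paper rephrases the outcome as a decomposition $\V=\X\oplus\W$ with $\X$ a sum of copies of $\bl$ and $\W$ having no subquotient isomorphic to $\bl$, then cites \cite[Lemma~3.1 and \S2]{Vig3} (or \cite[Proposition~3.15]{HS}) for the quasi-projectivity, whereas you unpack this black box directly via Frobenius reciprocity and a splitting of the surjection $\Q(\bn)\to\V(\bn)$.
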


\begin{proof}
Nous allons montrer que la restriction de $\cind_\BJ^\G(\bl)$ à $\BJ$, notée 
$\V$, se décompose sous la forme $\X\oplus\W$, 
où $\X$ est une somme directe de copies de $\bl$ et où aucun sous-quotient 
irréductible de $\W$ n'est iso\-morphe à $\bl$. 
Le résultat sera alors une conséquence du lemme 3.1 et du paragraphe 2 de \cite{Vig3}.
Voir aussi \cite[Proposition 3.15]{HS}.

Soit $\bn$ la représentation de $\BJ^1\subseteq\BJ$ 
définie au paragraphe \ref{defeta}. 
Comme $\BJ^1$ est un pro-$p$-groupe et comme $\BJ$ normalise la classe 
d'isomorphisme de $\bn$, la représentation de $\BJ$ sur 
la com\-po\-sante $\bn$-isotypique de $\cind_\BJ^\G(\bl)$ est un 
facteur direct de $\V$. 
La décomposition $\V=\X\oplus\Y$ cherchée se déduit donc de la proposition 
\ref{Empire}.  
\end{proof}

\begin{rema}
\label{CSProjcInd}
Supposons que $\R$ est de caractéristique $\ell>0$
et soit $(\J,\l)$ un type simple de $\G$, pour lequel on reprend les 
notations des paragraphes \ref{Pontesprit} et \ref{Paralipomenes}.
Si $q(\l)$
n'est pas congru à $1$ mod $\ell$, alors d'après 
\cite[III.2.9]{Vig1} la représentation $\s$ est pro\-jec\-ti\-ve comme 
représentation de $\J/\J^1$.
Par inflation, elle est également projective comme représentation de $\J$.
Enfin, le foncteur $\ind^\G_\J$ préservant la projectivité, 
$\ind^\G_\J(\l)$ est pro\-jec\-tive dans la catégorie des 
représentations lisses de $\G$. 
\end{rema}

\begin{defi}
Deux paires cuspidales $(\M,\vr)$ et $(\M',\vr')$ de $\G$ sont 
\textit{iner\-tiellement équiva\-lentes} 
s'il y a un caractère non ramifié $\chi$ de $\M$ tel que la paire 
$(\M',\vr')$ soit conjuguée à $(\M,\vr\chi)$ sous $\G$.
\end{defi}

Si $(\M,\vr)$ est une paire cuspidale de $\G$, on note $[\M,\vr]_\G$
sa classe d'inertie (\ie sa classe d'équivalence inertielle). 

Si $\Om$ est la classe d'inertie d'une paire cuspidale de $\G$, on note~:
\begin{equation}
\label{MellamphyStar}
\Irr_{\R}(\Om)^{\q}=\cusp^{-1}(\Om)
\end{equation}
l'ensemble des classes 
de re\-pré\-sen\-ta\-tions irréductibles de $\G$ 
dont le support cus\-pidal appartient à $\Om$.

Fixons un sous-groupe de Levi standard $\M$ de $\G$, égal à 
$\M_\a$ pour une famille $\a=(m_1,\dots,m_r)$ d'entiers strictement positifs 
de somme $m$.
Pour chaque $i\in\{1,\dots,r\}$, soit $(\BJ_i,\bl_i)$ un type 
semi-simple de $\G_{m_i}$.  
Posons~:
\begin{equation*}
\BJ_\M=\BJ_1\times\dots\times\BJ_r,
\quad
\bl_\M=\bl_1\otimes\dots\otimes\bl_r.
\end{equation*}
Soit $(\BJ,\bl)$ un type semi-simple de $\G$ qui soit une paire couvrante 
de $(\BJ_\M,\bl_\M)$.

\begin{prop}
\label{BoyerDArgens}
Supposons que $(\BJ_\M,\bl_\M)$ est un type simple maximal de $\M$
et soit $\vr$ une représentation irréductible cuspidale de $\M$ 
contenant $\bl_\M$. 
Alors~:
\begin{equation*}
\pi\in\Irr([\M,\vr]_\G)^\q
\quad\Leftrightarrow\quad
\cusp(\pi)\in[\M,\vr]_\G
\quad\Leftrightarrow\quad
\Hom_\BJ(\bl,\pi)\neq\{0\}.
\end{equation*}
\end{prop}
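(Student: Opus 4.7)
The first equivalence is the definition of $\Irr([\M,\vr]_\G)^\q$ in \eqref{MellamphyStar}, so the substantive content is
$\cusp(\pi)\in[\M,\vr]_\G\Leftrightarrow\Hom_\BJ(\bl,\pi)\neq\{0\}$.
The proof rests on the cover property \eqref{Gobineau}, the quasi-projectivity of $\cind^\G_\BJ(\bl)$ from Proposition \ref{pptf1}, and the second adjunction \eqref{SecondeAdjonctionAdmissible}.

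The central ingredient is the following key lemma: every irreducible representation of $\M$ containing $\bl_\M$ is cuspidal and lies in $[\M,\vr]_\M$. Since $(\BJ_\M,\bl_\M)$ is a product of the maximal simple types $(\J_i,\l_i)$ and irreducible $\M$-representations factor as tensor products, the claim reduces to the corresponding single-factor statement. For a maximal simple type the period $r$ equals $1$, so Proposition \ref{ANenPlusFinirPosDuplicata} gives $\Hh(\G_{m_i},\l_i)\simeq\R[\ZZ]$, whose simple modules are one-dimensional and parameterized by $\R^\times$. By Proposition \ref{MartinDuGard} and Remark \ref{Etoc2}, the cuspidal representations of $\G_{m_i}$ containing $\l_i$ are parameterized by extensions $\tilde\l_i$ of $\l_i$ to $\N_{\G_{m_i}}(\l_i)$, whose quotient by $\J_i$ is isomorphic to $\ZZ$, giving another torsor under $\R^\times$ via the value on $\w_{\l_i}$. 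A direct Mackey-type computation of $\Mm_{\l_i}(\cind^{\G_{m_i}}_{\N_{\G_{m_i}}(\l_i)}(\tilde\l_i))$ identifies these two parameterizations, and combined with the bijection of Theorem \ref{qptf}(2) (applicable by quasi-projectivity) this forces the cuspidals to exhaust all irreducible representations of $\G_{m_i}$ containing $\l_i$.

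For direction $(\Leftarrow)$, if $\Hom_\BJ(\bl,\pi)\neq 0$, the cover property \eqref{Gobineau} yields $\bl_\M\hookrightarrow\rp^\G_\P(\pi)$ as $\BJ_\M$-subrepresentation. Taking a Jordan-H\"older filtration $0=V_0\subset\cdots\subset V_n=\rp^\G_\P(\pi)$ and choosing $j$ minimal such that the copy of $\bl_\M$ is contained in $V_j$, the induced morphism $\bl_\M\to V_j/V_{j-1}$ is nonzero, hence injective by irreducibility of $\bl_\M$, so the composition factor $\mu=V_j/V_{j-1}$ contains $\bl_\M$. By the key lemma, $\mu$ is cuspidal in $[\M,\vr]_\M$, and the standard fact that a cuspidal composition factor of a Jacquet module of an irreducible representation determines its cuspidal support gives $\cusp(\pi)=[\M,\mu]_\G\in[\M,\vr]_\G$.

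For direction $(\Rightarrow)$, if $\cusp(\pi)\in[\M,\vr]_\G$, the description in paragraph \ref{DefRepCusp} combined with Weyl conjugation in $\G$ expresses $\pi$ as a quotient of $\ip^\G_{\P^-}(\vr\chi')$ for some unramified character $\chi'$ of $\M$. The second adjunction \eqref{SecondeAdjonctionAdmissible} then provides a nonzero morphism $\vr\chi'\to\rp^\G_\P(\pi)$, which is injective since $\vr\chi'$ is irreducible; since $\chi'$ is trivial on the compact subgroup $\BJ_\M$, the restriction of $\vr\chi'$ to $\BJ_\M$ coincides with that of $\vr$ and still contains $\bl_\M$, so $\rp^\G_\P(\pi)$ contains $\bl_\M$ as $\BJ_\M$-subrepresentation, and another application of \eqref{Gobineau} gives $\Hom_\BJ(\bl,\pi)\neq 0$. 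The main obstacle is the surjectivity in the key lemma: it rests on the explicit Mackey computation identifying the Hecke parameter of a cuspidal representation of $\G_{m_i}$ containing $\l_i$ with the parameter of the corresponding extension of $\l_i$, and this is the delicate step where the complex-case arguments of \cite{BK} do not transfer verbatim to the modular setting.
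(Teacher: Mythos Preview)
Your argument is correct, and for the implication $(\Leftarrow)$ it coincides with the paper's: both use the Jacquet compatibility \eqref{Gobineau}, extract an irreducible subquotient of $\rp^\G_\P(\pi)$ containing $\bl_\M$, identify it as a cuspidal $\vr\chi$, and conclude via the geometric lemma. Your ``key lemma'' is exactly the fact the paper invokes when it writes ``il y a donc un sous-quotient de $\rp^\G_\P(\pi)$ de la forme $\vr\chi$''; the paper leaves this implicit (it follows from Proposition~\ref{MartinDuGard} together with the bijection of Theorem~\ref{qptf}(2)), whereas you spell out the Hecke-module counting.

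For $(\Rightarrow)$ the routes differ. The paper argues directly: since $\vr\chi$ is a quotient of $\cind^\M_{\BJ_\M}(\bl_\M)$, the isomorphism \eqref{BlondelDat} gives that $\ip^\G_\P(\vr\chi)$ is a quotient of $\cind^\G_\BJ(\bl)$ for \emph{any} parabolic $\P$ with Levi $\M$; hence every irreducible quotient of such an induction contains $\bl$. This avoids second adjunction entirely. Your route through second adjunction and \eqref{Gobineau} is valid, but note a small imprecision: the Weyl conjugation you invoke realizes $\pi$ as a quotient of $\ip^\G_{\P'}(\vr\chi')$ for \emph{some} parabolic $\P'$ with Levi $\M$, not necessarily $\P^-$. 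This is harmless, since second adjunction then gives $\vr\chi'\hookrightarrow\rp^\G_{(\P')^-}(\pi)$ and \eqref{Gobineau} holds for any parabolic with Levi $\M$; but it should be stated that way. The paper's use of \eqref{BlondelDat} sidesteps this bookkeeping and is more economical.
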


\begin{proof}
Pour tout caractère non ramifié $\chi$ de $\M$,
la représentation $\vr\chi$ est un quotient irréductible de  
$\ind^\M_{\BJ_\M}(\bl_\M)$.
En induisant à $\G$ le long de n'importe quel sous-groupe parabolique 
$\P$ de sous-groupe de Levi $\M$, et compte tenu de \eqref{BlondelDat}, 
on en déduit que tout quotient irréductible de 
$\ip_\P^\G(\vr\chi)$ est un quotient irréductible de $\ind^\G_\BJ(\bl)$.  
Ainsi toute représentation irré\-ductible de $\G$ dont le 
support cuspidal est dans $[\M,\vr]_\G$ contient $\bl$.

Inversement, soit $\pi$ une représentation irréductible de $\G$ contenant $\bl$.  
Par \eqref{Gobineau} son module de Jacquet $\rp_\P^\G(\pi)$ contient $\bl_\M$, 
\ie qu'il y a un morphisme non trivial 
de $\ind^\M_{\BJ_\M}(\bl_\M)$ dans $\rp_\P^\G(\pi)$.
Il y a donc un sous-quotient de $\rp_\P^\G(\pi)$ 
de la forme $\vr\chi$, où $\chi$ est un carac\-tè\-re non rami\-fié de $\M$.
Soient $(\L,\s)$ un représentant du support cuspidal de $\pi$ et 
$\Q$ un sous-groupe parabolique de $\G$ de facteur de Levi $\L$ 
tels que $\pi$ se plonge dans $\ip_{\Q}^\G(\s)$.
Alors $\rp_\P^\G(\ip_{\Q}^\G(\s))$ a 
un sous-quotient isomorphe à $\vr\chi$. 
D'après le lem\-me géométrique (voir \cite[\S2.6]{MSu}),
les paires $(\M,\vr)$ et $(\L,\s)$ sont 
iner\-tiel\-le\-ment équi\-valentes, \ie que $\cusp(\pi)$ appartient à $[\M,\vr]_\G$.
\end{proof}

\begin{rema}
Nous ne connaissons pas d'analogue modulaire de \cite[Theorem 8.3]{BK1} pour 
des paires qui ne sont pas des types semi-simples. 
Plus précisément, soient $(\K,\tau)$ et $(\K_\M,\tau_\M)$ comme au paragraphe 
\ref{PCMJ}, et supposons que la première est une paire couvrante de la 
seconde.  
Supposons qu'il y a une classe inertielle $\Om_\M$ de $\M$ telle que les 
représentations ir\-ré\-ductibles de $\M$ contenant $\tau_\M$ sont 
celles dont le support cuspidal appartient à $\Om_\M$. 
On demande si~:
\begin{equation*}
\Hom_{\K}(\tau,\pi)\neq\{0\}
\quad
\Leftrightarrow
\quad
\cusp(\pi)\in\Om.
\end{equation*}
Même en supposant que les induites compactes de $\tau$ à $\G$ et de 
$\tau_\M$ de $\M$ sont quasi-projectives, la preuve de \cite[Theorem 8.3]{BK1} 
ne s'adapte pas telle quelle. 
Un sous-quotient irréductible d'une induite parabolique d'une représentation 
cuspidale irréductible peut avoir un support cuspidal différent de la classe 
de conjugaison de la paire cuspidale induisante. 
Le module de Jacquet d'une représentation irréductible peut avoir un 
sous-quotient irréductible contenant $\tau_\M$ sans avoir un 
quotient irréductible contenant $\tau_\M$.  
\end{rema}

\begin{prop}
\label{EgaIndParCasIrrBisens}
Soit $\s$ une représentation ir\-ré\-ductible de $\M$ contenant 
$\bl_\M$. 
Alors $\ip^{\G}_{\P}(\s)$ est ir\-ré\-duc\-ti\-ble
si et seulement si~: 
\begin{equation}
\label{FACT}
\Mm_{\bl}(\ip^{\G}_{\P}(\s))
\end{equation}
est un $\Hh$-module irréductible.  
\end{prop}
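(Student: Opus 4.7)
The proof proceeds in two directions. For the forward implication, suppose $\ip^\G_\P(\s)$ is irreducible. Since $\s$ is irreducible and contains $\bl_\M$, it is generated by its $\bl_\M$-isotypic component, so by Proposition \ref{ScotErigene} the representation $\ip^\G_\P(\s)$ is generated by its $\bl$-isotypic component; in particular $\Mm_\bl(\ip^\G_\P(\s))\neq 0$. Combining Proposition \ref{pptf1}, which asserts that $\cind^\G_\BJ(\bl)$ is quasi-projective of finite type, with Theorem \ref{qptf}(2), the irreducible representation $\ip^\G_\P(\s)$ corresponds to a simple right $\Hh$-module, which is precisely $\Mm_\bl(\ip^\G_\P(\s))$.

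For the converse, assume $\Mm_\bl(\ip^\G_\P(\s))$ is a simple $\Hh$-module. The representation $\ip^\G_\P(\s)$ has finite length and, by Proposition \ref{ScotErigene}, lies in $\Ee(\BJ,\bl)$. Proposition \ref{blablabla} then provides a multiplicity-preserving bijection between the simple subquotients of $\Mm_\bl(\ip^\G_\P(\s))$ and the irreducible subquotients of $\ip^\G_\P(\s)$ that contain $\bl$; hence there is exactly one such irreducible subquotient, occurring with multiplicity one. To conclude that $\ip^\G_\P(\s)$ itself is irreducible, it remains to show that \emph{every} irreducible subquotient of $\ip^\G_\P(\s)$ contains $\bl$.

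This last point is the main obstacle, and the plan is to reduce it to Proposition \ref{BoyerDArgens} by transitivity of covers. Each $(\BJ_i,\bl_i)$ is itself a cover of some maximal simple type $(\J_i,\l_i)$ of a Levi $\L_i\subseteq\G_{m_i}$; setting $\M'=\L_1\times\cdots\times\L_r$, $\J_{\M'}=\J_1\times\cdots\times\J_r$ and $\l_{\M'}=\l_1\otimes\cdots\otimes\l_r$, one obtains a maximal simple type $(\J_{\M'},\l_{\M'})$ of the Levi $\M'\subseteq\M\subseteq\G$ which is covered in $\M$ by $(\BJ_\M,\bl_\M)$ and, by transitivity, in $\G$ by $(\BJ,\bl)$. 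Fix an irreducible cuspidal representation $\vr_{\M'}$ of $\M'$ containing $\l_{\M'}$. Applying Proposition \ref{BoyerDArgens} to the cover $(\BJ_\M,\bl_\M)$ of $(\J_{\M'},\l_{\M'})$ in $\M$, the hypothesis that $\s$ contains $\bl_\M$ forces $\cusp(\s)\in[\M',\vr_{\M'}]_\M$. Now the cuspidal support of any irreducible subquotient of $\ip^\G_\P(\s)$ equals $\cusp(\s)$ as an element of the relevant free commutative monoid, and therefore lies in $[\M',\vr_{\M'}]_\G$; a second application of Proposition \ref{BoyerDArgens}, this time to the cover $(\BJ,\bl)$ of $(\J_{\M'},\l_{\M'})$ in $\G$, forces every such subquotient to contain $\bl$, as required.
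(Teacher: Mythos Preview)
Your forward implication is fine and matches the paper. The gap is in the converse, specifically in the sentence ``the cuspidal support of any irreducible subquotient of $\ip^\G_\P(\s)$ equals $\cusp(\s)$ as an element of the relevant free commutative monoid''. This is precisely what fails in the modular setting: an irreducible \emph{subquotient} of a parabolic induction from a cuspidal pair need not have the same cuspidal support (not even up to inertial equivalence). The paper itself flags this phenomenon in the remark following Proposition~\ref{BoyerDArgens}. So your second application of Proposition~\ref{BoyerDArgens} is not justified, and with it the route through Proposition~\ref{blablabla} collapses: you cannot conclude that every irreducible subquotient of $\ip^\G_\P(\s)$ contains $\bl$.

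The paper sidesteps this by never looking at arbitrary subquotients. It picks an irreducible \emph{subrepresentation} $\pi$ of $\ip^\G_\P(\s)$. For a subrepresentation the cuspidal support is transparent: if $(\L,\vr)$ represents $\cusp(\s)$, then $\pi\hookrightarrow\ip^\G_\P(\s)\hookrightarrow\ip^\G_{\Q}(\vr)$ for a suitable parabolic $\Q$, so $\cusp(\pi)$ is the $\G$-conjugacy class of $(\L,\vr)$ by the very definition of cuspidal support. Proposition~\ref{BoyerDArgens} then gives $\Mm_\bl(\pi)\neq 0$; since $\Mm_\bl(\pi)$ is a submodule of the simple module $\Mm_\bl(\ip^\G_\P(\s))$ (exactness on $\Ee(\BJ,\bl)$), they coincide. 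Hence the $\bl$-isotypic component of $\ip^\G_\P(\s)$ is contained in $\pi$, and since $\ip^\G_\P(\s)$ is generated by that component (Proposition~\ref{ScotErigene}), $\pi=\ip^\G_\P(\s)$. The moral is that one only needs to control the cuspidal support of a single subrepresentation, not of all subquotients.
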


\begin{proof}
L'une des implications est donnée par le théorème \ref{qptf} joint à la 
proposition \ref{ScotErigene}.
Pour l'autre, soit $\pi$ une sous-représentation irréductible de 
$\ip^{\G}_{\P}(\s)$.  
Fixons une paire cuspi\-da\-le $(\L,\vr)$ dont la classe de $\M$-conjugaison 
est égale à $\cusp(\s)$.  
Sa classe de $\G$-conjugaison est donc égale à $\cusp(\pi)$.  
Fixons un type simple maximal $(\BJ_\L,\bl_\L)$ de $\L$ contenu dans $\vr$ et 
dont la paire $(\BJ_\M,\bl_\M)$ (et par conséquent $(\BJ,\bl)$) 
est une paire couvrante.
D'après la proposition \ref{BoyerDArgens}, 
le $\Hh$-module $\Mm_{\bl}(\pi)$ n'est pas nul~;
il est donc égal à \eqref{FACT}. 
Puisque $\ip^{\G}_{\P}(\s)$ est en\-gen\-drée par sa composante 
$\bl$-isotypique d'après la proposition \ref{ScotErigene}, 
elle est égale à $\pi$, ce qui prouve qu'elle est ir\-ré\-duc\-ti\-ble. 
\end{proof}

\begin{prop}
\label{AgorA}
Soit $\s$ une représentation ir\-ré\-ductible de $\M$ contenant 
$\bl_\M$. 
Supposons que~:
\begin{equation}
\label{Amelia}
\Mm_{\bl_\M}(\s)\otimes_{\Hh_\M}\Hh
\end{equation}
est un $\Hh$-module irréductible.
Alors $\Mm_{\bl}(\ip^\G_\P(\s))$ est 
irréductible et isomorphe à $\Mm_{\bl_\M}(\s)\otimes_{\Hh_\M}\Hh$,
et la représentation $\ip^\G_{\P}(\s)$ est irréductible.
\end{prop}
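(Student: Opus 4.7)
The plan is to derive Proposition \ref{AgorA} as a nearly immediate consequence of the technical machinery already in place. The starting point is Proposition \ref{SAvereUtileG1}, applied to the opposite parabolic $\P^-$ (relative to which $(\BJ,\bl)$ is also a covering pair of $(\BJ_\M,\bl_\M)$, the covering pair property being symmetric). The hypotheses of Proposition \ref{SAvereUtileG1} are satisfied: $\s$ is admissible (as an irreducible representation, by \cite[II.2.8]{Vig1}) and generated by its $\bl_\M$-isotypic component (which is nonzero because $\s$ contains $\bl_\M$ and is irreducible), while the required quasi-projectivity of $\cind^\G_\BJ(\bl)$ is furnished by Proposition \ref{pptf1}. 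This produces a surjective homomorphism of right $\Hh$-modules
$$\Mm_{\bl_\M}(\s)\otimes_{\Hh_\M}\Hh\ \longrightarrow\ \Mm_\bl(\ip^\G_\P(\s)).$$

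Next I would check that the target is nonzero. By Proposition \ref{ScotErigene}, the induction $\ip^\G_\P(\s)$ is engendered by its $\bl$-isotypic component, which is therefore nonzero; equivalently, $\Mm_\bl(\ip^\G_\P(\s))\neq 0$. Since by hypothesis the source $\Mm_{\bl_\M}(\s)\otimes_{\Hh_\M}\Hh$ is irreducible as an $\Hh$-module, the surjection above must be an isomorphism. This establishes the first assertion of the proposition, namely that $\Mm_\bl(\ip^\G_\P(\s))$ is irreducible and canonically isomorphic to $\Mm_{\bl_\M}(\s)\otimes_{\Hh_\M}\Hh$.

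The remaining assertion, that $\ip^\G_\P(\s)$ itself is irreducible, is then immediate from Proposition \ref{EgaIndParCasIrrBisens}, which says precisely that $\ip^\G_\P(\s)$ is irreducible if and only if $\Mm_\bl(\ip^\G_\P(\s))$ is an irreducible $\Hh$-module, a criterion that we have just verified.

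There is no substantial obstacle in this argument; the only point requiring some vigilance is the bookkeeping with the opposite parabolic and the induced convention for the $\Hh_\M$-module structure on $\Hh$ (via $\te_{\P^-}$ rather than $\te_\P$), which must be applied consistently so that the tensor product appearing in the hypothesis of \ref{AgorA} matches the one produced by Proposition \ref{SAvereUtileG1}.
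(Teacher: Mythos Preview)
Your approach is essentially the paper's, with one discrepancy in the handling of the parabolic. The paper applies Proposition~\ref{SAvereUtileG1} exactly as stated (for $\P$), obtaining a surjection
\[
\Mm_{\bl_\M}(\s)\otimes_{\Hh_\M}\Hh\ \longrightarrow\ \Mm_\bl\bigl(\ip^\G_{\P^-}(\s)\bigr),
\]
with the $\Hh_\M$-structure on $\Hh$ coming from $\te_\P$; it then concludes (via \ref{ScotErigene} and \ref{EgaIndParCasIrrBisens}, applied to $\P^-$) that $\ip^\G_{\P^-}(\s)$ is irreducible, and finally invokes \cite[Lemme~4.13]{Dat3} (see also \cite[Proposition~2.2]{MS12}) to obtain $\ip^\G_{\P^-}(\s)\simeq\ip^\G_{\P}(\s)$. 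You instead swap $\P$ and $\P^-$ inside Proposition~\ref{SAvereUtileG1} so as to land directly on $\Mm_\bl(\ip^\G_{\P}(\s))$, but this forces the $\Hh_\M$-structure on $\Hh$ to be the one coming from $\te_{\P^-}$, not $\te_\P$. You flag this, but you do not explain why the resulting tensor product coincides with the one appearing in hypothesis~\eqref{Amelia}; since $\te_\P$ and $\te_{\P^-}$ are genuinely different embeddings, this is not automatic, and ``apply consistently'' does not dispose of it. The paper's extra step via \cite{Dat3} is precisely what lets one keep the $\Hh_\M$-structure fixed while still reaching the conclusion for $\P$.
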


\begin{proof}
D'après la proposition \ref{ScotErigene}, le module 
$\Mm_\bl(\ip^\G_{\P^-}(\s))$ est non nul.
Il est donc ir\-ré\-ductible d'après la propo\-si\-tion \ref{SAvereUtileG1} 
et l'hypothèse sur \eqref{Amelia}.
Par la proposition \ref{EgaIndParCasIrrBisens}, l'induite 
$\ip^\G_{\P^-}(\s)$ est irréductible. 
D'après \cite[Lemme 4.13]{Dat3} (voir aussi \cite[Proposition 2.2]{MS12}), 
elle est donc iso\-mor\-phe à $\ip^\G_{\P}(\s)$.
\end{proof}

\begin{coro}
\label{Bertuccio}
On suppose que $\Hh$ est libre de rang $1$ sur $\Hh_\M$.
Alors $\ip^\G_\P(\s)$ est irré\-ducti\-ble,
et le $\Hh$-module $\Mm_\bl(\ip^\G_\P(\s))$ est 
irréductible et isomorphe à $\Mm_{\bl_\M}(\s) \otimes_{\Hh_\M}\Hh$. 
\end{coro}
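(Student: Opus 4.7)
Le plan consiste � v�rifier l'hypoth�se de la proposition \ref{AgorA}, � savoir que le $\Hh$-module $\Mm_{\bl_\M}(\s)\otimes_{\Hh_\M}\Hh$ est irr�ductible, puis � appliquer cette proposition pour conclure.

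La premi�re �tape serait de montrer que $\Mm_{\bl_\M}(\s)$ est d�j� un $\Hh_\M$-module simple. La paire $(\BJ_\M,\bl_\M)$ �tant le produit des types semi-simples $(\BJ_i,\bl_i)$ de $\G_{m_i}$, la proposition \ref{pptf1} appliqu�e dans chaque facteur, suivie du passage au produit tensoriel, fournirait la quasi-projec\-ti\-vit� et le caract�re de type fini de la re\-pr�\-sen\-ta\-tion $\cind^\M_{\BJ_\M}(\bl_\M)$ de $\M$. L'irr�ductibilit� de $\s$, jointe au fait qu'elle contient $\bl_\M$, permettrait alors d'appliquer le th�or�me \ref{qptf}(2) � $\M$ et d'en d�duire la simplicit� de $\Mm_{\bl_\M}(\s)$.

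La seconde �tape exploiterait l'hypoth�se ``$\Hh$ libre de rang $1$ sur $\Hh_\M$'' pour transf�rer cette simplicit� au produit tensoriel. Si $v\in\Hh$ est un g�n�rateur libre, l'application $a\mapsto\te_\P(a)v$ est une bijection de $\Hh_\M$ sur $\Hh$, et la relation $vh=\te_\P(\psi(h))v$ d�finit un homomorphisme de $\R$-al\-g�\-bres $\psi:\Hh\to\Hh_\M$. En �crivant $1_\Hh=\te_\P(a_0)v$ pour un unique $a_0\in\Hh_\M$, on montre que $\te_\P(a_0)$ est inversible dans $\Hh$ d'inverse $v$, ce qui entra�nerait la surjectivit� de $\psi$, et donc que $\psi$ est un isomorphisme de $\R$-al\-g�\-bres. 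Sous l'identification naturelle $\Mm_{\bl_\M}(\s)\otimes_{\Hh_\M}\Hh\simeq\Mm_{\bl_\M}(\s)$ via $m\otimes v\leftrightarrow m$, l'action droite d'un �l�ment $h\in\Hh$ correspondrait � l'action droite de $\psi(h)\in\Hh_\M$. Les sous-$\Hh$-modules du produit tensoriel co�ncideraient ainsi avec les sous-$\Hh_\M$-modules de $\Mm_{\bl_\M}(\s)$, et la simplicit� comme $\Hh_\M$-module entra�nerait celle comme $\Hh$-module.

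La proposition \ref{AgorA} donnerait alors l'irr�ductibilit� de $\ip^\G_\P(\s)$ et l'iso\-mor\-phisme $\Mm_\bl(\ip^\G_\P(\s))\simeq\Mm_{\bl_\M}(\s)\otimes_{\Hh_\M}\Hh$ cherch�s. L'obstacle principal consiste � justifier pr�cis�ment que l'hypoth�se de rang-$1$-libert� entra�ne la surjectivit� du morphisme $\psi$ (ou, de mani�re �quivalente, que $\te_\P$ est un isomorphisme d'alg�bres), ce qui repose sur une analyse fine de la structure de $\Hh$ comme $\Hh_\M$-module libre et sur l'unicit� des inverses dans $\Hh$.
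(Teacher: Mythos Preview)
Votre démarche est exactement celle du papier : établir la simplicité de $\Mm_{\bl_\M}(\s)$ comme $\Hh_\M$-module par quasi-projectivité de $\cind^\M_{\BJ_\M}(\bl_\M)$ (proposition~\ref{pptf1}) et théorème~\ref{qptf}, puis invoquer la proposition~\ref{AgorA}. La preuve du papier est simplement plus expéditive~: elle affirme sans autre commentaire que, $\Hh$ étant libre de rang $1$ sur $\Hh_\M$, l'hypothèse de la proposition~\ref{AgorA} est vérifiée.

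Votre analyse de la seconde étape est plus fine que celle du papier, et l'obstacle que vous soulevez --- montrer que $\psi$ (ou, de façon équivalente, $\te_\P$) est surjectif --- est une vraie question dans un cadre abstrait. Elle se dissout cependant dans le contexte du papier~: dans toutes les applications de ce corollaire (notamment la preuve du théorème~\ref{ss}), l'hypothèse ``$\Hh$ libre de rang $1$ sur $\Hh_\M$'' provient toujours du fait que $\te_\P$ est un \emph{isomorphisme d'algèbres} (voir le corollaire~\ref{propbn3} et la proposition~\ref{JplPM}(2)), de sorte que $1\in\Hh$ est un générateur libre. Avec ce choix $v=1$, votre $\psi$ n'est autre que $\te_\P^{-1}$, la surjectivité est automatique, et l'identification $\Mm_{\bl_\M}(\s)\otimes_{\Hh_\M}\Hh\simeq\Mm_{\bl_\M}(\s)$ respecte les sous-modules. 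Votre souci de rigueur est donc justifié, mais la difficulté que vous anticipez n'apparaît pas dans le cadre où le corollaire est utilisé.
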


\begin{proof}
L'induite compacte de $\bl_\M$ à $\M$ étant quasi-projective,
$\Mm_{\bl_\M}(\s)$ est irréducti\-ble d'après le théorème \ref{qptf}.
Puisque $\Hh$ est libre de rang $1$ sur $\Hh_\M$, 
l'hypo\-thèse de la proposition \ref{AgorA} est vérifiée. 
On en déduit le résultat voulu.
\end{proof}

On termine ce paragraphe en donnant une application importante du 
corollaire \ref{Bertuccio}. 
Si $\rho$ est une représentation irréductible cuspidale de $\G$,
on note~:
\begin{equation}
\label{Rrho}
\Om_{\rho}
=\{\sy{\rho\chi}\ |\ \chi:\G\to\mult\R\text{ non ramifié}\}
\end{equation}
sa classe d'équivalence inertielle.

\begin{theo}
\label{ss} 
Soit $r\>1$ un entier et soient $\rho_1,\dots,\rho_r$ des 
représentations irré\-ductibles cuspidales deux à deux non 
inertiellement équivalentes. 
Pour chaque $i\in\{1,\dots,r\}$, on fixe un support cuspidal 
$\ss_i$ formé de représentations inertiellement équivalentes 
à $\rho_i$. 
\begin{enumerate}
\item 
Pour chaque entier $i$, soit $\pi_i$ une représentation irréductible de 
support cuspidal $\ss_i$.
Alors l'induite $\pi_1\times\dots\times\pi_r$ est irréductible.
\item
Soit $\pi$ une représentation irréductible de 
support cuspidal $\ss_1+\dots+\ss_r$.
Il existe des représentations $\pi_1,\dots,\pi_r$, 
uniques à isomorphisme près, telles que $\pi_i$ soit de support 
cuspidal $\ss_i$ pour chaque $i$ 
et telles que $\pi_1\times\dots\times\pi_r$ soit 
isomorphe à $\pi$.
\end{enumerate}
\end{theo}

\begin{rema}
Pour $\ss$ un support cuspidal, notons~:
\begin{equation}
\label{NotIrrEtoile}
\Irr(\ss)^\q=\cusp^{-1}(\ss)
\end{equation}
l'ensemble des classes de
représentations irréductibles de support cuspidal égal à $\ss$.  
Si l'on pose $\ss=\ss_1+\dots+\ss_r$, l'application~: 
\begin{equation*}
(\pi_1,\dots,\pi_r)\mapsto\pi_1\times\dots\times\pi_r
\end{equation*}
induit une bijection de 
$\Irr(\ss_1)^\q\times\dots\times\Irr(\ss_r)^\q$ 
dans $\Irr(\ss)^\q$. 
\end{rema}

\begin{proof}
Pour chaque $i$, soit $(\J_i,\l_i)$ un type simple maximal contenu dans 
$\rho_{i}$, soit $n_i$ le nombre de termes dans $\ss_i$ et soit $m_i$ le degré 
de $\rho_i$. 
On fixe un type semi-simple $(\BJ_i,\bl_i)$ de $\G_{m_in_i}$ qui est une 
paire couvrante de $(\J_i\times\dots\times\J_i,\l_i\otimes\dots\otimes\l_i)$. 
On pose~:
\begin{eqnarray*}
\M&=&\G_{m_1n_1}\times\dots\times\G_{m_rn_r},\\
\BJ_\M&=&\BJ_1\times\dots\times\BJ_r,\\
\bl_\M&=&\bl_1\otimes\dots\otimes\bl_r.
\end{eqnarray*}
Soit $(\BJ,\bl)$ un type semi-simple 
qui est une paire couvrante de $(\BJ_\M,\bl_\M)$.  
L'algèbre $\Hh=\Hh(\G,\bl)$ est libre de rang $1$ comme module sur 
$\Hh_\M=\Hh(\M,\bl_\M)$. 
D'après la propo\-sition \ref{pptf1}, 
la re\-pré\-sen\-tation $\ind^\G_\BJ(\bl)$
est quasi-pro\-jec\-tive de type fini.
On est donc dans les conditions d'application du corol\-laire \ref{Bertuccio},
dont on déduit que l'induite $\pi_1\times\dots\times\pi_r$ est 
irréductible.  

Pour le point 2, on remarque que (\ref{Gobineau}) fournit un 
isomorphisme de $\Hh_\M$-modules~: 
\begin{equation*}
\Mm_{\bl}(\pi)\simeq\Mm_{{\bl_\M}}(\rp^\G_\P(\pi))
\end{equation*}
où $\P$ est un sous-groupe parabolique de $\G$ de facteur de Levi $\M$. 
Puisque $\Hh$ est libre de rang $1$ sur $\Hh_\M$, la restriction de 
$\Mm_{\bl}(\pi)$ à $\Hh_\M$ est irréductible. 
C'est donc un module $\mm$ de la forme $\mm_1\otimes\dots\otimes\mm_r$, 
où $\mm_i$ est un $\Hh(\G_{m_in_i},\bl_i)$-mo\-dule irréductible.
Puisque l'induite compacte de $\bl_i$ à $\G_{m_in_i}$ est quasi-projective de 
type fini, il existe d'après le théorème \ref{qptf} 
une représentation irréductible $\pi_i$ de $\G_{m_in_i}$ 
telle que $\Mm_{\bl_i}(\pi_i)$ soit iso\-morphe à $\mm_i$.
En particulier, le support cuspidal de $\pi_i$ 
est formé de représentations inertiellement équivalentes 
à $\rho_i$. 
Posons~:
\begin{equation*}
\pi'=\pi_1\times\dots\times\pi_r
\end{equation*}
qui est irréductible d'après le point 1 ci-dessus.
D'après le théorème \ref{qptf} à nouveau, il suffit de prouver que $\Mm_{\bl}(\pi)$ et 
$\Mm_{\bl}(\pi')$ sont des $\Hh$-modules isomorphes pour en déduire que $\pi$ 
et $\pi'$ sont des représentations isomorphes.
D'après le corollaire \ref{Bertuccio}, on a~:
\begin{equation*}
\Mm_{\bl}(\pi')\simeq\mm\otimes_{\Hh_{\M}}\Hh.
\end{equation*}
La restriction de $\Mm_{\bl}(\pi)$ à $\Hh_\M$ est isomorphe à 
$\mm$, ce dont on déduit par adjonction que $\Mm_{\bl}(\pi)$ est isomorphe 
à $\mm\otimes_{\Hh_{\M}}\Hh$, ce qui donne le résultat voulu.
\end{proof}

\subsection{Compatibilité du foncteur des $\bl$-invariants à l'induction parabolique}
\label{DiaComIndPar}

Soit $m\>1$,
soit $\rho$ une représentation irréductible cuspidale de $\G_m$, 
soit $(\J,\l)$ un type simple maximal contenu dans $\rho$
et soit $[\La,n_\La,0,\b]$ une strate simple de $\A_m$ par rapport à laquelle
$(\J,\l)$ est défini.

Soit un entier $n\>1$ et posons $\G=\G_{mn}$.
Fixons une strate simple $[\La^{\dag},n_{\La^\dag},0,\b]$ de $\A=\A_{mn}$ satisfaisant aux
conditions du paragraphe \ref{EndoEq} permettant de construire un type semi-simple~:
\begin{equation*}
(\BJ,\bl)
\end{equation*}
de $\G$ qui est une paire couvrante du type simple maximal
$(\J\times\dots\times\J,\l\otimes\dots\otimes\l)$ de $\G_m^n$. 
On note $\Hh_{}$ l'algèbre de Hecke de ce type semi-simple 
(dont la structure est donnée par la remarque \ref{Gurgeh} et ne dépend pas du 
choix de la suite de réseaux $\La^\dag$)
et on note $\Mm_{}=\Mm_{\bl}$ le 
foncteur qu'il définit de $\Rr(\G)$ dans la catégorie 
des $\Hh_{}$-mo\-du\-les à droite. 

Soit une famille $\a=(n_1,\dots,n_r)$ d'entiers strictement positifs 
de somme notée $n$. 
Notons $\M$ le sous-groupe de Levi standard $\M_{(mn_1,\dots,mn_r)}$ de $\G$.
L'intersection $\BJ\cap\M$ sera notée $\BJ_\a$ et 
la restriction de $\bl$ à $\BJ\cap\M$ sera notée $\bl_\a$. 
Notons $\Hh_\a$ l'algèbre de Hecke de $(\BJ_\a,\bl_\a)$
et $\Mm_\a$ le fonc\-teur 
qui lui correspond de $\Rr(\M)$ dans la catégorie des $\Hh_\a$-modules à droite.  
Soit enfin~:
\begin{equation}
\label{JA}
j_{\a}:\Hh_\a\to\Hh_{}
\end{equation}
le morphisme \eqref{tePe} cor\-res\-pon\-dant à $(\BJ,\bl)$ 
considéré comme une paire couvrante de $(\BJ_\a,\bl_\a)$.

Rappelons qu'on a défini un entier $f(\rho)\>1$ au paragraphe \ref{InvRho}.
On introduit la notation~:
\begin{equation}
\label{QRHO}
\qr=q^{f(\rho)}.
\end{equation}
L'entier $\qr$ est égal au $q(\l)$ du paragraphe \ref{Paralipomenes}. 
Considérons l'algèbre de Hecke affine $\Hh(n,\qr)$ qui y est définie. 
Il est plus commode d'utiliser ici une présentation de 
cette $\R$-algèbre différente de celle du paragraphe \ref{Paralipomenes}.  
Elle est engendrée par les éléments $\SS_1,\dots,\SS_{n-1}$, 
$\X_1,\dots,\X_n$ et leurs inverses $(\X_1)^{-1},\dots,(\X_n)^{-1}$, 
vérifiant les relations \eqref{R1aff}, \eqref{R2aff}, 
\eqref{R3aff} et les relations~:
\begin{eqnarray}
\label{R4aff}
\X_i\X_j&=&\X_j\X_i, \quad i,j\in\{1,\dots,n\},\\
\label{R5aff}
\X_j\SS_i&=&\SS_i\X_j, 
\quad i\notin\{j,j-1\},\\
\label{R6aff}
\SS_i\X_{i}\SS_i&=&\qr\X_{i+1}, \quad i\in\{1,\dots,n-1\},\\
\X_j(\X_j)^{-1}=(\X_j)^{-1}\X_j&=&1, \quad j\in\{1,\dots,n\}.
\end{eqnarray}

Soit $\Hh(\a,\qr)$ la sous-algèbre engendrée par $\X_1,\dots,\X_n$ et leurs 
inverses $(\X_1)^{-1},\dots,(\X_n)^{-1}$ 
et les $\SS_{i}$ pour $i\in\{1,\dots,n-1\}$ décrivant les entiers qui 
ne sont pas de la forme $n_1+\dots+n_k$ pour $k\in\{1,\dots,r\}$. 
Notons $t_\a$ l'homomorphisme d'inclusion de $\Hh(\a,\qr)$ dans $\Hh(n,\qr)$. 

Soit $\nl$ le type simple maximal étendu prolongeant $\l$ au sous-groupe $\K$ 
tel que la représentation $\rho$ soit isomorphe à l'induite com\-pacte de $\nl$ 
à $\G_m$ (proposition \ref{MartinDuGard}). 
Il détermine un isomorphisme de $\R$-al\-gè\-bres~:
\begin{equation}
\label{MortonFullerton1}
\Psi:\R[\X,\X^{-1}]=\Hh(1,\qr)\to\Hh(\G_m,\l)
\end{equation}
(l'image de $\X$ étant la fonction de support $\K$ prenant la valeur 
$\nl(x)$ en tout $x\in\K$).
On montre le résultat suivant en raisonnant comme dans \cite[\S2.10]{VSU0}.

\begin{prop}
\label{ExistenceHeckeProp}
\begin{enumerate}
\item 
Il y a un unique isomorphisme de $\R$-algèbres $\Psi_{\rho,n}$ 
de $\Hh(n,\qr)$ dans $\Hh$ tel que~:
\begin{equation}
\label{ExistenceHecke}
\Psi_{\rho,n}\circ t_{(1,\dots,1)}=\te^{}_{(1,\dots,1)}\circ(\Psi\otimes\dots\otimes\Psi).
\end{equation}
\item
Si l'on pose $\Psi_{\rho,\a}=\Psi_{\rho,n_1}\otimes\dots\otimes\Psi_{\rho,n_r}$,
alors on a $\Psi_{\rho,n}\circ t_{\a}=\te_{\a}\circ\Psi_{\rho,\a}$.
\end{enumerate}
\end{prop}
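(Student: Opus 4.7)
The plan is to combine the abstract isomorphism of Proposition \ref{ANenPlusFinirPosDuplicata} (applied to the simple type $\l$ obtained by induction from $\bl$, together with Remark \ref{Gurgeh} which yields a support-preserving isomorphism $\Hh(\G,\bl)\simeq\Hh(\G,\l)$ of $\R$-algebras, both being abstractly isomorphic to the affine Hecke algebra $\Hh(n,\qr)$) with the covering pair embedding $\te_\a$ from paragraph \ref{PCMJ}. The abstract isomorphism already exists; the content of the proposition is to pin it down by the normalization \eqref{ExistenceHecke}, which amounts to specifying precisely how the commutative Bernstein--Lusztig subalgebra $\R[\X_1^{\pm1},\dots,\X_n^{\pm1}]$ sits inside $\Hh(\G,\bl)$.

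First I would exploit the presentation given just before the statement: $\Hh(n,\qr)$ is generated by $\SS_1,\dots,\SS_{n-1}$ together with pairwise commuting invertible elements $\X_1,\dots,\X_n$. The subalgebra $\Hh((1,\dots,1),\qr)$ generated by the $\X_i$ alone is the Laurent polynomial ring $\R[\X_1^{\pm1},\dots,\X_n^{\pm1}]$, and $t_{(1,\dots,1)}$ is the inclusion. Via \eqref{MortonFullerton1}, each factor $\R[\X,\X^{-1}]$ is identified with $\Hh(\G_m,\l)$ through $\Psi$, and the tensor product $\Psi^{\otimes n}$ takes $\Hh((1,\dots,1),\qr)$ isomorphically onto $\Hh(\G_m^n,\l^{\otimes n})$. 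I then \emph{define} $\Psi_{\rho,n}(\X_i)$ to be the image in $\Hh(\G,\bl)$ of $1\otimes\cdots\otimes\X\otimes\cdots\otimes 1$ under the composition $\te_{(1,\dots,1)}\circ\Psi^{\otimes n}$, and $\Psi_{\rho,n}(\SS_i)$ to be the element provided by Proposition \ref{ANenPlusFinirPosDuplicata}, transferred to $\Hh(\G,\bl)$ via the identification of Remark \ref{Gurgeh}.

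It then remains to verify that these images satisfy the defining relations \eqref{R1aff}--\eqref{R6aff}. The quadratic, braid and commutation relations \eqref{R1aff}--\eqref{R5aff} are immediate from the analogous relations inside $\Hh(\G,\l)$ (Proposition \ref{ANenPlusFinirPosDuplicata}) and from the fact that $\te_{(1,\dots,1)}$ is an algebra homomorphism. The main obstacle is the crossed relation \eqref{R6aff}, namely $\SS_i\X_i\SS_i=\qr\X_{i+1}$, which encodes the braid action of the finite Weyl group on the commutative Bernstein subalgebra. Using the block structure coming from $\te_\a$, one reduces this to the $n=2$ case, where it amounts to a direct computation in the rank-one affine Hecke algebra: it follows from the explicit description of the two generators of $\Ww_\l$ (the transposition $s_1$ and the element $h_\l$ of \eqref{Defhl}) together with the normalization of the parameter $q(\l)=\qr$ arising from \eqref{MPRIME} and the residual field cardinalities. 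One then checks, by reducing to $n=2$ blocks, that the images constructed indeed satisfy this relation. Uniqueness of $\Psi_{\rho,n}$ is transparent, since \eqref{ExistenceHecke} fixes the images of the $\X_i$'s and the $\SS_i$'s are supported on the prescribed double cosets, so every generator is determined.

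Finally, for assertion (2), the identity $\Psi_{\rho,n}\circ t_\a=\te_\a\circ\Psi_{\rho,\a}$ will follow from transitivity of the covering-pair embeddings with respect to chains of Levi subgroups: both sides agree on the $\X_i$'s by applying \eqref{ExistenceHecke} to $\Psi_{\rho,n}$ and to each $\Psi_{\rho,n_k}$, and they agree on each $\SS_i$ belonging to one of the blocks $\{n_1+\dots+n_{k-1}+1,\dots,n_1+\dots+n_k\}$ because both maps are support-preserving and land on the same double coset. Since these elements generate $\Hh(\a,\qr)$, the equality follows.
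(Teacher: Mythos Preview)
The paper gives no proof beyond the reference to \cite[\S2.10]{VSU0}. Your existence argument follows that approach: take a support-preserving isomorphism from Proposition~\ref{ANenPlusFinirPosDuplicata} (transported to $\Hh(\G,\bl)$ via Remark~\ref{Gurgeh}), then renormalize so that the Bernstein generators $\X_i$ match the images prescribed by~\eqref{ExistenceHecke}; the crossed relation~\eqref{R6aff} is indeed the only nontrivial check, and reducing it to $n=2$ is the right move.

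Your uniqueness argument, however, has a gap. Condition~\eqref{ExistenceHecke} pins down only the images of the $\X_i$'s; your assertion that ``the $\SS_i$'s are supported on the prescribed double cosets'' is an extra support condition, not part of the hypothesis. The actual content of uniqueness is that any $\R$-algebra automorphism of $\Hh(n,\qr)$ fixing each $\X_j$ is the identity. Using~\eqref{R5aff} one shows that such an automorphism sends $\SS_i$ to an element $a+b\,\SS_i$ with $a,b\in\R[\X_1^{\pm1},\dots,\X_n^{\pm1}]$, and then the quadratic relation~\eqref{R1aff} together with~\eqref{R6aff} force $a=0$ and $b=1$. This last step genuinely uses $\qr\ne 1$ in $\R$: when $\qr=1$, the assignment $\SS_i\mapsto-\SS_i$, $\X_j\mapsto\X_j$ satisfies all of~\eqref{R1aff}--\eqref{R6aff}, so condition~\eqref{ExistenceHecke} alone does not determine $\Psi_{\rho,n}$ and one must single out the isomorphism by its support property on the $\SS_i$'s as well. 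This is not ``transparent'' and should be argued.

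For part~(2), your plan via transitivity is correct, but the key identity you need and should state explicitly is the compatibility $\te_{(1,\dots,1)}=\te_\a\circ\bigl(\te_{(1,\dots,1)}^{(1)}\otimes\cdots\otimes\te_{(1,\dots,1)}^{(r)}\bigr)$ of covering-pair embeddings along the chain $\G_m^n\subset\M\subset\G$. Once this is in place, both sides of the claimed equality agree on the $\X_i$'s by~\eqref{ExistenceHecke}, and then on the $\SS_i$'s lying in each block by the uniqueness in part~(1) applied to each $\Psi_{\rho,n_k}$.
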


Le résultat principal de ce paragraphe est le suivant. 

\begin{prop}
\label{RussSamProp}
Soit $\s$ une représentation admissible de $\M$ 
engen\-drée par sa composante $\bl_\a$-isotypique.
On a un isomorphisme de $\Hh_{}$-modules à droite~:
\begin{equation}
\label{RussSamTh}
\Mm_{}(\ip_{(mn_1,\dots,mn_r)}(\s))\simeq
\Hom_{\Hh_{\a}}(\Hh_{},\Mm_{\a}(\s)).
\end{equation}
\end{prop}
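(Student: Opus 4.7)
The plan is to construct a canonical morphism of $\Hh$-modules
$$\Psi: \Mm(\ip_{\P}^\G(\s)) \to \Hom_{\Hh_\a}(\Hh, \Mm_\a(\s)),$$
writing $\P = \P_{(mn_1,\ldots,mn_r)}$ for brevity, and then verify it is an isomorphism. For the construction, formula \eqref{Gobineau} canonically identifies $\jp_\P^*(\Mm(\ip_\P^\G(\s)))$ with $\Mm_\a(\rp_\P^\G(\ip_\P^\G(\s)))$ as $\Hh_\a$-modules. The counit $\rp_\P^\G(\ip_\P^\G(\s)) \to \s$ of the Frobenius adjunction $\rp_\P^\G \dashv \ip_\P^\G$, combined with exactness of $\Mm_\a$ on the subcategory of representations generated by their $\bl_\a$-isotypic component (which applies to $\rp_\P^\G(\ip_\P^\G(\s))$ by Proposition \ref{ScotErigene} and the covering property, together with Propositions \ref{pptf1} and \ref{Cocyte2}(1)), yields an $\Hh_\a$-morphism $\Mm(\ip_\P^\G(\s))|_{\Hh_\a} \to \Mm_\a(\s)$. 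The standard restriction--coinduction adjunction then converts this into the $\Hh$-module morphism $\Psi$.

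For bijectivity, my strategy is to verify the universal property characterising the right-hand side: for every right $\Hh$-module $\mm$, one wants $\Hom_\Hh(\mm, \Mm(\ip_\P^\G(\s))) \simeq \Hom_{\Hh_\a}(\mm|_{\Hh_\a}, \Mm_\a(\s))$. Combining $\Mm = \Hom_\G(\cind_\BJ^\G(\bl), -)$ with tensor--Hom adjunction rewrites the left-hand side as $\Hom_\G(\mm \otimes_\Hh \cind_\BJ^\G(\bl), \ip_\P^\G(\s))$. Using Proposition \ref{GobineauAdj} applied to $\mm|_{\Hh_\a}$, namely
$$\mm|_{\Hh_\a} \otimes_{\Hh_\a} \cind_\BJ^\G(\bl) \simeq \ip_{\P^-}^\G\bigl(\mm|_{\Hh_\a} \otimes_{\Hh_\a} \cind_{\BJ_\a}^\M(\bl_\a)\bigr),$$
together with second adjunction \eqref{SecondeAdjonctionAdmissible} (valid because $\ip_\P^\G(\s)$ is admissible) and Frobenius for compact induction, rewrites the target in a form comparable with the right-hand side via a diagram chase using the counit of $\rp_\P^\G \dashv \ip_\P^\G$ and the Blondel--Dat isomorphism \eqref{BlondelDat}. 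The structure theorems from Propositions \ref{ANenPlusFinirPosDuplicata} and \ref{ExistenceHeckeProp} presenting $\Hh$ as a free right $\Hh_\a$-module of rank $n!/(n_1!\cdots n_r!)$ provide the finiteness needed to make the comparison effective.

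The main obstacle is the interplay between the $\Hh$-action through $j_\a$ (which by Remark \ref{PosNeg} preserves the $\P$-negative support convention) and the parabolic $\P$ appearing in the statement: the natural identification via \eqref{Gobineau} furnishes the $\rp_\P^\G$ side of the counit, whereas the alternative decomposition via Proposition \ref{GobineauAdj} and Blondel--Dat produces $\rp_{\P^-}^\G$. Reconciling these requires careful tracking of the support conventions of $j_\a$ and a Bernstein--Zelevinsky geometric-lemma argument identifying the two computations on each filtration piece. In characteristic zero one could split the geometric filtration by semisimplicity and conclude immediately, but in the modular setting this splitting may fail, so the plan is instead to exploit $\Hh$-equivariance of $\Psi$: surjectivity on the piece corresponding to the identity double coset is built in by construction, and the action of Hecke operators attached to non-trivial Weyl representatives must propagate the image to cover the entire target, after which injectivity follows from the match of dimensions supplied by the freeness of $\Hh$ over $\Hh_\a$.
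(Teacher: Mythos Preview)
Your construction of the map $\Psi$ is essentially the one in the paper (the paper phrases it via $\Hom_\M(\Q_\N,\s)$ and the identification $\Mm_\a(\Q_\N)\simeq\Hh$ of bimodules, but unwinding this gives your counit description). The gap is entirely in the bijectivity argument.

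Your Yoneda/universal-property reformulation is not a reduction: the identity $\Hom_\Hh(\mm,\Hom_{\Hh_\a}(\Hh,X))\simeq\Hom_{\Hh_\a}(\mm|_{\Hh_\a},X)$ is formal, so asking that $\Hom_\Hh(\mm,\Mm(\ip_\P(\s)))\simeq\Hom_{\Hh_\a}(\mm|_{\Hh_\a},\Mm_\a(\s))$ for all $\mm$ is exactly the statement to be proved. The attempt to rewrite both sides via Proposition~\ref{GobineauAdj} and second adjunction runs into precisely the $\P$ versus $\P^-$ mismatch you flag, and your proposed resolution (``the Hecke operators must propagate the image'') is the correct intuition but not an argument. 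Freeness of $\Hh$ over $\Hh_\a$ alone does not force a natural map between two free modules of the same rank to be an isomorphism.

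The paper's approach is quite different in its execution. First, it reduces to the single universal case $\s=\Q_\a=\cind_{\BJ_\a}^\M(\bl_\a)$: a commutative square shows that surjectivity of $\boldsymbol{\omega}_{\Q_\a}$ forces surjectivity of $\boldsymbol{\omega}_\s$ for every $\s$ generated by its $\bl_\a$-isotypic part, and then the reverse inequality $\dim\textbf{\textsf{F}}(\s)\le\dim\textbf{\textsf{G}}(\s)$ comes from Proposition~\ref{SAvereUtileG1} (so admissibility is used here). Second, for $\s=\Q_\a$ one has $\ip(\Q_\a)\simeq\Q$ by Blondel--Dat, so $\textbf{\textsf{F}}(\Q_\a)\simeq\Hh$ with a canonical generator $e$; an explicit support calculation (using that $\BJ w\BJ\cap\P=\varnothing$ for $w\notin\Ww_\a$) identifies the image $e'$ in $\Hom_{\Hh_\a}(\Hh,\Hh_\a)$ with the ``projection onto the identity coset'' $Y_1$ in the basis $\{Y_w:w\in\EuScript{D}_\a\}$ of minimal coset representatives. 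The substantive work is then purely Hecke-algebraic: one shows $T\mapsto Y_1T$ is injective by induction on the length of the shortest $w$ appearing in $T$, and surjective by an argument passing through the longest element $\delta\in\EuScript{D}_\a$ (for which $T\mapsto Y_\delta T$ is more transparently a bimodule map). This combinatorics, spanning four lemmas, is the missing core of your ``propagation'' step.
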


\begin{rema}
\label{onlygodforgives}
Dans le cas où $\R$ est le corps des nombres complexes, 
le résultat est con\-nu pour une représentation lisse $\s$ 
pas nécessairement admissible. 
Il s'agit d'un cas particu\-lier de \cite[Corollary 8.4]{BK1} obtenu à 
partir de (\ref{Gobineau}) par adjonction, en 
utilisant le fait que $\Mm_{}$ induit une équivalence 
de catégories entre la sous-catégorie de $\Rr(\G)$ formée 
des représentations engendrées par leur composante 
$\bl$-isotypique et la catégorie des $\Hh_{}$-modules à droite
(ainsi qu'un résultat analogue pour $\Mm_{\a}$).
Quand $\R$ est de caractéristique non nulle, ces foncteurs 
n'induisent pas en général des équivalences de ca\-té\-gories 
et il faut trouver une autre approche.  
\end{rema}

\begin{rema}
L'hypothèse d'admissibilité vient du fait que notre 
preuve utilise la proposition \ref{SAvereUtileG1} et l'inégalité 
\eqref{QuelEstTonNom}.
\end{rema}

\begin{proof}
Posons $\ip=\ip_{(mn_1,\dots,mn_r)}$ et définissons des fonc\-teurs~:
\begin{equation*}
\textbf{\textsf{F}}:\s\mapsto\Mm_{}(\ip(\s)),
\quad
\textbf{\textsf{G}}:\s\mapsto\Hom_{\Hh_{\a}}(\Hh_{},\Mm_{\a}(\s)) 
\end{equation*}
de $\Rr(\M)$ dans la catégorie des $\Hh_{}$-modules à droite. 
Notons respectivement $\Q_{}$ et $\Q_\a$ les in\-dui\-tes compactes de 
$\bl$ à $\G$ et de $\bl_\a$ à $\M$.
Notons aussi $\Q_\N$ le module de Jacquet $\rp_{(mn_1,\dots,mn_r)}(\Q)$, 
où $\N$ est le radical unipotent du sous-groupe parabolique standard 
$\P=\P_{(mn_1,\dots,mn_r)}$.
Par ad\-jonc\-tion, on a un isomorphisme 
fonctoriel de $\R$-espaces vectoriels~: 
\begin{equation}
\label{MemberSheep}
\textbf{\textsf{F}}(\s)\simeq\Hom_{\M}(\Q_\N,\s).
\end{equation}
La représentation $\Q_{}$ est naturellement un $\Hh_{}$-module à gauche, 
ainsi que le quotient $\Q_{\N}$ puisque l'action de $\G$ commute à celle 
de $\Hh_{}$.
On en déduit une structure de $\Hh_{}$-modules à droite sur le membre de droite 
de \eqref{MemberSheep} faisant de cet isomorphisme de $\R$-espaces vectoriels 
un isomorphisme de $\Hh_{}$-modules à droite. 
En appliquant \eqref{Gobineau} à $\Q_{}$, on obtient 
le résultat suivant. 

\begin{enonce}{Fait}
\label{ShaunTheSheep}
L'isomorphisme \eqref{Gobineau} appliqué à la représentation 
$\Q_{}$ induit un isomorphisme de $(\Hh_{},\Hh_{\a})$-bimodules
de $\Hh_{}$ vers $\Mm_{\a}(\Q_{\N})$.
\end{enonce}

Grâce à \eqref{MemberSheep} et au fait \ref{ShaunTheSheep}, 
on obtient un homo\-morphisme de $\Hh_{}$-modules~:
\begin{equation*}
\boldsymbol{\omega}_{\s}:
\textbf{\textsf{F}}(\s)\simeq\Hom_{\M}(\Q_{\N},\s)
\ \iso{\boldsymbol{\e}_\s}\ \;
\Hom_{\Hh_{\a}}(\Mm_{\a}(\Q_{\N}),\Mm_{\a}(\s))
\simeq\textbf{\textsf{G}}(\s)
\end{equation*}
qui est fonctoriel en $\s$, où $\boldsymbol{\e}_\s$ désigne l'homomorphisme 
fonctoriel de $\R$-espaces vec\-to\-riels obtenu en appliquant le foncteur 
$\Mm_\a$. 

Supposons maintenant que $\s$ est engendrée par sa composante 
$\bl_\a$-isotypique, \ie qu'il y a un homomorphisme surjectif~:
\begin{equation*}
f:\Q_\a^{\SS}\to\s
\end{equation*}
d'une somme directe arbitraire de copies de $\Q_\a$ vers $\s$,
où $\SS$ désigne un ensemble quelconque qui indexe la somme directe.  
Ceci donne le diagramme commutatif~:
\begin{equation*}
\xymatrix{
\textbf{\textsf{F}}(\Q_\a)^{\SS}\ar[d]_{\boldsymbol\omega_{\Q_\a^\SS}}\ar[r]^{\textbf{\textsf{F}}(f)}
&\textbf{\textsf{F}}(\s)\ar[d]^{\boldsymbol\omega_\s}\\
\textbf{\textsf{G}}(\Q_\a)^{\SS}\ar[r]_{\textbf{\textsf{G}}(f)}&\textbf{\textsf{G}}(\s)}
\end{equation*}
où les deux flèches horizontales $\textbf{\textsf{F}}(f)$ et $\textbf{\textsf{G}}(f)$ sont 
surjectives car les deux foncteurs $\textbf{\textsf{F}}$ et $\textbf{\textsf{G}}$ sont exacts 
sur la catégorie $\Ee(\BJ_\a,\bl_\a)$,
d'après les propositions \ref{Cocyte} et \ref{ScotErigene}. 

\begin{lemm}
\label{lemeng}
Si $\boldsymbol\omega_{\Q_\a}$ est surjectif, 
alors $\boldsymbol\omega_\s$ est un isomorphisme pour toute 
re\-pré\-sentation $\s$ admissible et engendrée par sa composante 
$\bl_\a$-isotypique.
\end{lemm}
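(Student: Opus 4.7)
The plan is to reduce the statement for an arbitrary admissible $\s$ engendered by its $\bl_\a$-isotypic component to the case of $\Q_\a$, where the hypothesis is available directly. The key is that admissibility forces the relevant $\Hh$-modules to be finite-dimensional over $\R$, so that surjectivity of $\boldsymbol\omega_\s$ can be upgraded to bijectivity by a dimension count.

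First I would exploit admissibility to obtain a finite presentation of $\s$. Since $\s$ is admissible, $\Mm_\a(\s)=\Hom_{\BJ_\a}(\bl_\a,\s)$ is finite-dimensional over $\R$; together with the hypothesis that $\s$ is engendered by its $\bl_\a$-isotypic component, this yields a surjection $f:\Q_\a^n\twoheadrightarrow\s$ with $n$ finite. This will replace the arbitrary index set $\SS$ from the diagram preceding the lemma by a finite one, which is important because $\textbf{\textsf{G}}(\s)=\Hom_{\Hh_\a}(\Hh_{},\Mm_\a(\s))$ does not in general commute with infinite direct sums in the $\s$-variable.

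Next I would apply both functors to $f$. By Proposition \ref{ScotErigene}, $\ip(\s)$ is engendered by its $\bl$-isotypic component, so $\s$ and $\ip(\s)$ lie in $\Ee(\BJ_\a,\bl_\a)$ and $\Ee(\BJ,\bl)$ respectively, on which $\Mm_\a$ and $\Mm_{}$ are exact (Proposition \ref{Cocyte}); since parabolic induction is exact, both $\textbf{\textsf{F}}$ and $\textbf{\textsf{G}}$ are then right exact and compatible with finite direct sums. Applied to $f$, the commutative square displayed just above the lemma now has exact rows, the maps $\textbf{\textsf{F}}(f)$ and $\textbf{\textsf{G}}(f)$ are surjective, and the left-hand vertical arrow $\boldsymbol\omega_{\Q_\a^n}$ is the $n$-fold direct sum of the surjection $\boldsymbol\omega_{\Q_\a}$ furnished by the hypothesis, hence is itself surjective. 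A diagram chase gives at once the surjectivity of $\boldsymbol\omega_\s$.

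The hard part is promoting surjectivity to bijectivity, and this is where the inequality \eqref{QuelEstTonNom} alluded to in the preceding remark must enter. Admissibility of $\s$ and of $\ip(\s)$, together with the fact that $\Hh_{}$ is finitely generated as a right $\Hh_\a$-module (via the tensor product structure of $\Hh_{}$ given by Proposition \ref{JplPM}), implies that both $\textbf{\textsf{F}}(\s)$ and $\textbf{\textsf{G}}(\s)$ are finite-dimensional $\R$-vector spaces. Combining the surjectivity of $\boldsymbol\omega_\s$ with \eqref{QuelEstTonNom} then forces equality of dimensions and hence bijectivity. The main obstacle in this plan is precisely the handling of \eqref{QuelEstTonNom}: making sure the dimension bound applies in the requisite generality, which presumably rests on an Iwahori-decomposition/Mackey analysis of $\Mm_{}(\ip(\s))$ in terms of $\Mm_\a(\s)$ using the covering-pair property of $(\BJ,\bl)$ over $(\BJ_\a,\bl_\a)$.
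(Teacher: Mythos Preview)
Your plan is essentially the paper's proof. Two small corrections are worth making.

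First, the detour through a \emph{finite} presentation $\Q_\a^n\twoheadrightarrow\s$ is unnecessary. Both $\textbf{\textsf{F}}$ and $\textbf{\textsf{G}}$ commute with arbitrary direct sums: for $\textbf{\textsf{F}}=\Mm\circ\ip$ this is clear, and for $\textbf{\textsf{G}}=\Hom_{\Hh_\a}(\Hh,\Mm_\a(-))$ it follows because $\Hh$ is a free $\Hh_\a$-module of finite rank (fait~\ref{benmakhlouf}). So the paper simply uses the diagram with an arbitrary index set $\SS$, and surjectivity of $\boldsymbol\omega_{\Q_\a}$ gives surjectivity of $\boldsymbol\omega_{\Q_\a^\SS}$, hence of $\boldsymbol\omega_\s$.

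Second, you have the roles of \eqref{QuelEstTonNom} and Proposition~\ref{SAvereUtileG1} reversed. The inequality~\eqref{QuelEstTonNom} is $\dim\textbf{\textsf{F}}(\s)\geq\dim\textbf{\textsf{G}}(\s)$, and it is not an external input: it is an immediate consequence of the surjectivity of $\boldsymbol\omega_\s$ just established. The \emph{reverse} inequality $\dim\textbf{\textsf{F}}(\s)\leq\dim\textbf{\textsf{G}}(\s)$ is what comes from Proposition~\ref{SAvereUtileG1}, which furnishes a surjection $\Mm_\a(\s)\otimes_{\Hh_\a}\Hh\twoheadrightarrow\Mm(\ip_{\P^-}(\s))$; since $\Hh$ is free of finite rank over $\Hh_\a$ and $\Mm_\a(\s)$ is finite-dimensional by admissibility, the source has the same dimension as $\textbf{\textsf{G}}(\s)$. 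No separate Mackey computation is needed here; the covering-pair input is already packaged into Proposition~\ref{SAvereUtileG1}.
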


\begin{proof}
Si $\boldsymbol\omega_{\Q_\a}$ est surjectif,
alors $\boldsymbol\omega_{\Q_\a^\SS}$ est surjectif, 
donc $\boldsymbol\omega_\s$ l'est aussi. 
On en déduit l'inégalité~:
\begin{equation}
\label{QuelEstTonNom}
\dim\textbf{\textsf{F}}(\s)\>\dim\textbf{\textsf{G}}(\s). 
\end{equation}
On a aussi l'inégalité 
$\dim\textbf{\textsf{F}}(\s)\<\dim\textbf{\textsf{G}}(\s)$ 
d'après la proposition \ref{SAvereUtileG1}. 
On en déduit que $\boldsymbol\omega_\s$ est bijectif pour $\s$ admissible 
et engendrée par sa composante $\bl_\a$-isotypique.  
\end{proof}

\begin{lemm}
\label{DeDeux}
L'homomorphisme $\boldsymbol\omega_{\Q_\a}$ est surjectif.
\end{lemm}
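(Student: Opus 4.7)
The plan is to identify both sides of the map $\boldsymbol\omega_{\Q_\a}$ explicitly as finitely generated free right $\Hh_\a$-modules of the same rank, and then to verify the map is an isomorphism. I would first invoke the Blondel-Dat isomorphism \eqref{BlondelDat}, which gives a natural $\G$-equivariant identification $\Q = \cind^\G_\BJ(\bl) \simeq \ip^\G_\P(\cind^\M_{\BJ_\a}(\bl_\a)) = \ip(\Q_\a)$. Applying $\Mm$ then identifies the source of $\boldsymbol\omega_{\Q_\a}$ with $\Mm(\Q) = \Hh$. On the target side, since $\Mm_\a(\Q_\a) = \Hh_\a$, we obtain $\textbf{\textsf{G}}(\Q_\a) = \Hom_{\Hh_\a}(\Hh, \Hh_\a)$, with $\Hh$ viewed as a left $\Hh_\a$-module via the morphism $j_\a$ of \eqref{JA}.

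Next I would invoke Proposition \ref{ExistenceHeckeProp} to realize $\Hh \simeq \Hh(n, \qr)$ and $\Hh_\a \simeq \Hh(\a, \qr)$ explicitly as affine Hecke algebras. By the standard Bernstein presentation of the affine Hecke algebra of type $\textsf{A}$, $\Hh$ is a finitely generated free right (and left) $\Hh_\a$-module of rank $[S_n : S_\a]$ where $S_\a = S_{n_1} \times \cdots \times S_{n_r}$, with an explicit basis indexed by minimal length coset representatives of $S_\a \backslash S_n$. Consequently, $\Hom_{\Hh_\a}(\Hh, \Hh_\a)$ is also a free right $\Hh_\a$-module of the same rank, with dual basis with respect to the natural trace pairing.

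Unwinding the chain of identifications through the fact \ref{ShaunTheSheep} and the Blondel-Dat isomorphism, the map $\boldsymbol\omega_{\Q_\a}$ sends $h \in \Hh$ to the right $\Hh_\a$-linear form $h' \mapsto \pi(h'h)$, where $\pi : \Hh \to \Hh_\a$ is the projection extracting the contribution of the trivial $\BJ$-$\BJ_\a$ double coset in the Bernstein-Zelevinsky filtration of $\Q_\N$. The conclusion then follows from the classical fact that affine Hecke algebras of type $\textsf{A}$ are \emph{Frobenius extensions} over their standard parabolic subalgebras: the pairing $(h,h') \mapsto \pi(h'h)$ is non-degenerate, hence induces an isomorphism $\Hh \simeq \Hom_{\Hh_\a}(\Hh, \Hh_\a)$ of right $\Hh$-modules. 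The main obstacle will be the careful identification of the algebraic projection $\pi$ with the geometric one arising from $\boldsymbol\omega_{\Q_\a}$: this requires tracking through the Blondel-Dat isomorphism, the cover-compatibility \eqref{Gobineau}, and the fact that $\Q_\a$ appears as an $\M$-direct summand of $\Q_\N$ corresponding to the trivial double coset — a point where in the modular setting one must be careful, since the decomposition of $\Q_\N$ coming from the geometric lemma is a priori only a filtration and its splitting in the trivial double coset direction uses the Iwahori decomposition of $\BJ$ together with the pro-$p$ property of $\BJ^1$.
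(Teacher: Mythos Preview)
Your approach is essentially the same as the paper's: both identify $\textbf{\textsf{F}}(\Q_\a)$ with $\Hh$ via Blondel--Dat, identify $\textbf{\textsf{G}}(\Q_\a)$ with $\Hom_{\Hh_\a}(\Hh,\Hh_\a)$, and then show that the image of the canonical generator $e$ of $\Hh$ is the projection $\Y_1$ onto $\Hh_\a$ along the basis $(\SS_w)_{w\in\EuScript{D}_\a}$. Your anticipated ``main obstacle'' --- identifying the algebraic projection with the geometric one --- is exactly the paper's Lemma~\ref{spetze}, which shows that $\BJ w\BJ\cap\P\neq\varnothing$ only for $w\in\Ww_\a$ and hence $e'=\Y_1$.

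The substantive difference is at the final step. You invoke as a ``classical fact'' that $\Hh(n,\qr)$ is a Frobenius extension of $\Hh(\a,\qr)$ with Frobenius form $\Y_1$, i.e.\ that $\T\mapsto\Y_1\cdot\T$ is an isomorphism $\Hh\to\Hom_{\Hh_\a}(\Hh,\Hh_\a)$. The paper does not cite this but proves it by hand over an arbitrary algebraically closed field of characteristic $\neq p$ (Lemmas~\ref{NoIdeal}--\ref{LE4}): injectivity by an induction on length, then a reduction to the finite Hecke algebra $\Hh^0$ where a dimension count applies, then a detour through the longest element $\d\in\EuScript{D}_\a$ (the map $\T\mapsto\Y_\d\T$ is more visibly a bimodule map for a twisted left $\Hh_\a$-structure, hence its surjectivity is easier; one then transports back via $\Y_\d=\Y_1\T_\d$). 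The paper's footnote crediting unpublished arguments of Miemietz suggests this Frobenius property for \emph{affine} Hecke algebras over parabolic subalgebras in the modular setting was not readily citable at the time; if you have a clean reference valid when the parameter $\qr$ may be $1$ in $\R$, your route is shorter, but otherwise this is precisely the work that remains to be done.
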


\begin{proof}
D'après \eqref{BlondelDat} les représentations $\ip(\Q_\a)$ et $\Q_{}$ sont 
isomorphes. 
On en déduit que $\textbf{\textsf{F}}(\Q_\a)$ est libre de rang $1$ sur $\Hh_{}$. 
Plus précisément, identifions celui-ci à $\Hom_{\M}(\Q_{\N},\Q_\a)$ 
et notons $e$ l'élément de $\Hom_{\M}(\Q_{\N},\Q_\a)$ défini par~: 
\begin{equation*}
f\text{ mod }\Q_{}(\N)\mapsto\Big(x\mapsto\int\limits_{\N}f(ux)\ du\Big)
\end{equation*}
pour tous $f\in\Q_{}$ et $x\in\M$, où $\Q_{}(\N)$ désigne le sous-espace 
de $\Q_{}$ engendré par les vecteurs de la forme $u\cdot f-f$, avec 
$f\in\Q_{}$ et $u\in\N$.
Alors $h\mapsto e*h$ (où $*$ désigne l'action de $\Hh$ à droite sur 
$\Hom_{\M}(\Q_{\N},\Q_\a)$) est un isomorphisme de $\Hh_{}$-modules 
de $\Hh$ vers $\Hom_{\M}(\Q_{\N},\Q_\a)$.

Nous allons vérifier que l'image de $e$ dans $\textbf{\textsf{G}}(\Q_\a)$
est un générateur de ce $\Hh_{}$-module.
Identi\-fions les $\Hh$-modules $\textbf{\textsf{G}}(\Q_\a)$ et 
$\Hom_{\Hh_\a}(\Hh_{},\Hh_\a)$. 
Compte tenu de l'isomorphisme \eqref{BlondelDat} et du fait
\ref{ShaunTheSheep}, l'image de $e$ dans $\textbf{\textsf{G}}(\Q_\a)$
est l'applica\-tion notée $e'$ qui à $\T\in\Hh$ associe la fonction~:
\begin{equation}
\label{spetze3avant}
x\mapsto\int\limits_\N{\T}(ux)\ du
\end{equation}
de $\M$ dans l'espace des $\R$-endomorphismes de $\bl_\a$. 

Soit $\Ww=\Ww_\l$ le groupe défini au paragraphe \ref{Paralipomenes}, soit $\Ww_0$ le 
sous-groupe de $\Ww$ constitué des matrices de permutation dans 
$\Ww$ et soit $\Ww_\a=\Ww_0\cap\M$. 
Rappelons que $\P=\M\N$.

\begin{lemm}
\label{spetze}
Soit $w\in\Ww_0$.
Alors $\BJ w\BJ\cap\P\neq\varnothing$ si et seulement si $w\in\Ww_\a$. 
\end{lemm}

\begin{proof}
Rappelons que $\BJ_\a=\BJ\cap\M$.
Si $w\in\Ww_\a$, alors~:
\begin{equation*}
\BJ w\BJ\cap\P\supseteq\BJ_\a w\BJ_\a\cap\M\neq\varnothing.
\end{equation*}
Inversement, supposons que $\BJ w\BJ\cap\P$ est non vide.  
On peut supposer que le sous-groupe parahorique $\U(\La^\dag)$ est standard. 
Ainsi $\BJ$ est inclus dans $\U(\La^\dag)$~; on a donc 
$\U(\La^\dag)w\U(\La^\dag)\cap\P\neq\varnothing$. 
Notons $\AA$ l'ordre héréditaire standard de $\A_{mn}$ formé des matrices 
à coefficients entiers dont la réduction mod $\p_\D$ est triangulaire 
supérieure par blocs de taille $m$, et posons $\U=\U(\AA)$. 

Le groupe $\U$ contient $\U(\La^\dag)$, donc $\U w\U\cap\P\neq\varnothing$. 
Soient $\Ww_{{\rm 1}}$ le sous-groupe des permutations de 
$\GL_{mn}(\Oo_\D)$ et $\X_1$ un système de 
représentants des doubles classes de $\Ww_{{\rm 1}}$ 
modulo $\Ww_{{\rm 1}}\cap\U$ tel que $\X_1\cap\M$ soit 
un système de représentants des doubles classes de $\Ww_{{\rm 1}}\cap\M$ 
modulo $\Ww_{{\rm 1}}\cap\U\cap\M$. 
Alors on a~:
\begin{equation*}
\coprod\limits_{x\in\X_1}\U x\U \cap\P
=\GL_{mn}(\Oo_\D)\cap\P
=\coprod\limits_{x\in\X_1\cap\M}\U x\U \cap\P. 
\end{equation*}
On en déduit que $w$ appartient à $\X_1\cap\M$, 
donc que $w\in\Ww_0\cap\M=\Ww_\a$.
\end{proof}

Grâce à la proposition \ref{ExistenceHeckeProp}, identifions 
$\Hh$ et $\Hh_\a$ avec $\Hh(n,\qr)$ et $\Hh(\a,\qr)$ respectivement, 
de façon que 
le morphisme injectif \eqref{JA} corresponde au morphisme d'inclusion.
Pour tout $w\in\Ww$, notons $\SS_w$ l'élément de $\Hh$ défini par \eqref{DEFSW}. 

Pour $w\in\Ww_0$, la classe $w\Ww_{\a}$ possède 
un unique élément de longueur minimale.
Ces éléments de $\Ww_0$ de longueur mini\-male forment un système de 
représentants de $\Ww_0$ modulo $\Ww_{\a}$ noté $\EuScript{D}_\a$. 

\begin{enonce}{Fait}
\label{benmakhlouf}
$\Hh_{}$ est un $\Hh_\a$-module à droite libre de base $({\SS}_w)_{w\in\EuScript{D}_\a}$
et un $\Hh_\a$-module à gau\-che libre de base $({\SS}_{w^{-1}})_{w\in\EuScript{D}_\a}$
\end{enonce}

Pour tout $w\in\EuScript{D}_\a$, 
notons $\Y_w$ l'élément de $\textbf{\textsf{G}}(\Q_\a)$ défini par~:
\begin{equation*}
\Y_w({\SS}_{w'}) = 
\left\{
\begin{array}{ll}
\SS_1 & \text{si $w'=w$}, \\
0 & \text{sinon},
\end{array}
\right.
\end{equation*}
pour $w'\in\EuScript{D}_\a$.
D'après le fait \ref{benmakhlouf}, 
le $\Hh_\a$-module à gauche $\textbf{\textsf{G}}(\Q_\a)$ est libre de base 
$\{\Y_w, w\in\EuScript{D}_\a\}$.
Remarquons grâce au lemme \ref{spetze} 
que l'application $e'$ décrite plus haut 
(\ie l'image de $e$ dans $\textbf{\textsf{G}}(\Q_\a)$) est égale à $\Y_1$.
Nous allons montrer que $\Y_1$ engendre $\textbf{\textsf{G}}(\Q_\a)$
en tant que $\Hh$-module à droite\footnote{Nous nous inspirons 
d'arguments non publiés qui nous ont été communiqués par Vanessa Miemietz, 
que nous remercions.}. 

\begin{lemm}
\label{NoIdeal}
L'application $\T\mapsto\Y_1\T$ de $\Hh$ vers $\textbf{\textsf{G}}(\Q_\a)$ 
est un homomorphisme injectif de $\Hh$-modules à droite.
\end{lemm}

\begin{proof}
Soit $\T\in\Hh$ non nul, que l'on écrit (grâce au fait \ref{benmakhlouf})~:
\begin{equation*}
\T=\sum\limits_{w\in\EuScript{D}_\a} h(w)\SS_{w^{-1}},
\quad
h(w)\in\Hh_\a.
\end{equation*}
Pour $w\in\Ww_0$, notons $l(w)$ la longueur de $w$ (relativement aux 
$\{s_1,\dots,s_{n-1}\}$ du paragraphe \ref{Paralipomenes}) et posons~:
\begin{equation*}
l = l(\T) = {\rm min}\ \{l(w)\ |\ w\in\EuScript{D}_\a \text{ et } h(w)\neq0\}.
\end{equation*}
Prouvons par récurrence sur $l$ que $\Y_1$ est non nul sur l'idéal 
$\T\Hh$. 
Si $l=0$, alors $\Y_1(\T)=h_1\neq0$.  
Supposons maintenant que $l\>1$. 
Fixons un $w\in\EuScript{D}_\a$ tel que $l(w)=l$ et un $s\in\Ww_0$ de 
longueur $1$ tel que $l(sw)<l$. 
Pour $w'\in\EuScript{D}_\a$, écrivons 
$sw'=w'(s)z$ avec $w'(s)\in\EuScript{D}_\a$ et $z\in\Ww_\a$.
Alors~:
\begin{equation*}
\T\SS_s = \sum\limits_{w'\in\EuScript{D}_\a} h(w')\SS_{w'^{-1}}\SS_s
\end{equation*}
contient le terme non nul $h(w)\SS_{(sw)^{-1}}=h(w)\SS_{z^{-1}}\SS_{w(s)^{-1}}$ 
avec $l(w(s))<l$. 
Supposons qu'il exis\-te un $w'\in\EuScript{D}_\a$ tel que $\SS_{w'^{-1}}\SS_s$ 
ait une composante non nulle dans $\Hh_{\a}\SS_{w(s)^{-1}}$.
Si $l(sw')>l(w')$, alors~:
\begin{equation*}
\SS_{w'^{-1}}\SS_s=\SS_{(sw')^{-1}}\in\Hh_\a\SS_{w'(s)^{-1}}.
\end{equation*}
On en déduit que $w'(s)=w(s)$, ce qui implique que $w'=w$.
Si $l(sw')<l(w')$, alors~: 
\begin{equation*}
\SS_{w'^{-1}}\SS_s 
= \SS_{(sw')^{-1}} \SS_s^2
= (q(\rho)-1)\cdot\SS_{w'^{-1}} + q(\rho)\cdot\SS_{(sw')^{-1}}.
\end{equation*}
On a $l(w')\>l>l(w(s))$ donc ici encore $w'(s)=w(s)$, 
ce qui implique que $w'=w$.

Ainsi la composante de $\T\SS_s$ dans $\Hh_{\a}\SS_{w(s)^{-1}}$ est égale à 
$h(w)\SS_{(sw)^{-1}}$, qui est non nulle.
On en déduit que $l(\T\SS_s)<l$ puis, par hypothèse de récurrence, que 
$\Y_1$ n'est pas nulle sur $\T\SS_s\Hh$.
En particulier, $\Y_1$ n'est pas nulle sur $\T\Hh$.
\end{proof}

\begin{lemm}
\label{LE2}
Pour tout $w\in\EuScript{D}_\a$, il existe un $\T_w\in\Hh$ tel que 
$\Y_w=\Y_1\T_w$.
\end{lemm}

\begin{proof}
Soit $\Hh^0$ la sous-algèbre de $\Hh$ engendrée par les $\SS_{w}$ pour 
$w\in\Ww_0$, et soit $\Hh^0_\a$ l'intersection de $\Hh_\a$ avec $\Hh^0$. 
La restriction à $\Hh^0$ de $\T\mapsto\Y_1\T$ 
induit un morphisme injectif de $\Hh^0$-modules à droite 
de $\Hh^0$ dans $\Hom_{\Hh^0_\a}(\Hh^0,\Hh_\a^0)$.
Les deux membres ayant la même dimension sur $\R$, égale à l'ordre de 
$\Ww_0$, cet homomorphisme est bijectif. 
Pour tout $w\in\EuScript{D}_\a$,
il y a donc un $\T_w\in\Hh^0$ tel que $\Y_w$ et $\Y_1\T_w$ 
coïncident sur $\Hh^0$.
Comme $\Hh^0$ engendre $\Hh$ comme $\Hh_\a$-module à droite, 
on a $\Y_w=\Y_1\T_w$. 
\end{proof}

\begin{lemm}
\label{LE3}
Soit $\d$ l'élément de plus grande longueur dans $\EuScript{D}_\a$.
Notons ${}_{\d}\Hh$ le $\Hh$-module à droite $\Hh$ muni de la 
structure de $\Hh_\a$-module à gauche définie par~:
\begin{equation*}
\SS_{z}\cdot\T = \SS_{\d^{-1}z\d}\T
\end{equation*}
pour $\T\in\Hh$ et $z\in\Ww_\a$.
L'application $\T\mapsto\Y_{\d}\T$ est un morphisme de 
$(\Hh_\a,\Hh)$-bimodules de ${}_{\d}\Hh$ dans $\textbf{\textsf{G}}(\Q_\a)$. 
\end{lemm}

\begin{proof}
D'abord, la structure de $\Hh_\a$-module à gauche sur ${}_{\d}\Hh$ 
est bien définie car $\d$ normalise $\EuScript{D}_\a$.
Ensuite, l'application 
$\T\mapsto\Y_{\d}\T$ est un morphisme de $\Hh$-modules à droite. 
Pour prouver que c'est un morphisme de bimodules, 
il suffit de vérifier que $\Y_{\d}(\SS_{z}\SS_w)=\SS_{z}\cdot\Y_\d(\SS_w)$ 
pour tous $z\in\Ww_\a$ et $w\in\EuScript{D}_\a$.
Si $w=\d$, alors $\d$ est de longueur minimale dans  
$\d\EuScript{D}_\a=\EuScript{D}_\a\d$, donc on a~:
\begin{equation*}
\Y_{\d}(\SS_{z}\SS_\d)=\Y_{\d}(\SS_{z\d})=\Y_{\d}(\SS_{\d}\SS_{\d^{-1}z\d})
=\SS_{z}\cdot\Y_{\d}(\SS_\d).
\end{equation*}
Si $w\neq\d$, il suffit de prouver que la composante de $\SS_{z}\SS_w$ 
dans $\SS_{\d}\Hh_{\a}$ est nulle.
Cette composante s'écrit sous la forme $\SS_{\d}h$ avec $h\in\Hh_\a$. 
Ainsi~:
\begin{equation*}
\SS_{w}=(\SS_z)^{-1}\SS_{\d}h\in\SS_\d\Hh_\a
\end{equation*}
car $\Hh_\a\SS_\d=\SS_\d\Hh_\a$.
\end{proof}

\begin{lemm}
\label{LE4}
L'application $\T\mapsto\Y_{\d}\T$ de $\Hh$ vers $\textbf{\textsf{G}}(\Q_\a)$ 
est bijective. 
\end{lemm}

\begin{proof}
On prouve l'injectivité de cette application 
par un argument analogue à celui de la preuve du lemme \ref{NoIdeal}.

Pour la surjectivité, on prouve, par un argument analogue à celui de la preuve
du lemme \ref{LE2}, que pour tout $w\in\EuScript{D}_\a$, il existe un 
$\U_w\in\Hh$ tel que $\Y_w=\Y_{\d}\U_w$.
L'image de $\T\mapsto\Y_{\d}\T$ contenant les $\Y_{w}$ pour 
$w\in\EuScript{D}_\a$, qui forment une base de 
$\textbf{\textsf{G}}(\Q_\a)$ comme $\Hh_\a$-module à gauche, 
on déduit du lemme \ref{LE3} que cette application est surjective. 
\end{proof}

Comme $\Y_\d=\Y_1\T_\d$ (voir le lemme \ref{LE2}), 
la surjectivité de l'application $\T\mapsto\Y_{\d}\T$ implique celle de 
$\T\mapsto\Y_{1}\T$.
Avec le lemme \ref{NoIdeal}, ceci prouve que $\T\mapsto\Y_1\T$ est 
non seulement surjectif mais bijectif. 
Ceci met fin à la preuve du lemme \ref{DeDeux}. 
\end{proof}

La proposition \ref{RussSamProp}
se déduit maintenant des lemmes \ref{lemeng} et \ref{DeDeux}.  
\end{proof}

\begin{coro}
\label{RussSamPropCoro}
Soit $\s$ une sous-représentation d'une représentation 
admissible de $\M$ en\-gen\-drée par sa composante 
$\bl_\a$-isotypique.
On a un isomorphisme~:
\begin{equation}
\label{RussSamThCoro}
\Mm_{}\(\ip_{(mn_1,\dots,mn_r)}(\s)\)\simeq
\Hom_{\Hh_{\a}}(\Hh_{},\Mm_{\a}(\s))
\end{equation}
de $\Hh_{}$-modules à droite. 
\end{coro}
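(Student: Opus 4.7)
Mon plan est de d�duire ce corollaire de la proposition \ref{RussSamProp} par un argument de suite exacte courte. �tant donn� une repr�sentation admissible $\pi$ de $\M$ engendr�e par sa composante $\bl_\a$-isotypique contenant $\s$ comme sous-repr�sentation, je consid�rerais la suite exacte courte~:
\begin{equation*}
0\to\s\to\pi\to\pi/\s\to 0,
\end{equation*}
puis lui appliquerais les deux foncteurs intervenant dans \eqref{RussSamThCoro}.

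Le point-cl� est que les deux membres de \eqref{RussSamThCoro} sont des foncteurs exacts � gauche en $\s$. En effet, le foncteur $\ip_{(mn_1,\dots,mn_r)}$ est exact et $\Mm_{}=\Hom_{\BJ}(\bl,-)$ est exact � gauche, donc le membre de gauche l'est aussi~; d'autre part, $\Mm_{\a}=\Hom_{\BJ_\a}(\bl_\a,-)$ est exact � gauche et $\Hom_{\Hh_\a}(\Hh_{},-)$ l'est �galement, donc le membre de droite l'est aussi. L'homomorphisme $\boldsymbol\omega_\s$ construit dans la preuve de la proposition \ref{RussSamProp} �tant manifestement fonctoriel en $\s$ (c'est une compos�e d'isomorphismes fonctoriels et du foncteur $\Mm_\a$), il induit un diagramme commutatif � lignes exactes reliant les suites exactes issues respectivement de $\s$, $\pi$ et $\pi/\s$.

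Pour conclure, je remarquerais que la repr�sentation quotient $\pi/\s$ est encore admissible (un quotient de repr�sentation admissible l'�tant) et encore engendr�e par sa composante $\bl_\a$-isotypique (propri�t� clairement stable par passage au quotient). La proposition \ref{RussSamProp} s'applique donc � la fois � $\pi$ et � $\pi/\s$, identifiant $\boldsymbol\omega_\pi$ et $\boldsymbol\omega_{\pi/\s}$ � des isomorphismes. Une chasse dans le diagramme commutatif obtenu (\ie le lemme des cinq pour suites exactes � gauche) permet alors de conclure que $\boldsymbol\omega_\s$ est un isomorphisme. Cette preuve ne pr�sente pas de difficult� s�rieuse une fois la proposition \ref{RussSamProp} admise~: le point le plus d�licat � v�rifier soigneusement est la stabilit� par quotient de la propri�t� d'�tre engendr�e par sa composante $\bl_\a$-isotypique, et la fonctorialit� en $\s$ de la construction de $\boldsymbol\omega_\s$ rappel�e dans la preuve de la proposition \ref{RussSamProp}.
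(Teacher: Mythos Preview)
Your proof is correct and follows essentially the same approach as the paper's. The paper likewise writes the short exact sequence $0\to\s\to\pi_1\to\pi_2\to 0$ with $\pi_2=\pi_1/\s$, applies the functors $\textbf{\textsf{F}}$ and $\textbf{\textsf{G}}$, observes that $\pi_2$ again satisfies the hypotheses of Proposition~\ref{RussSamProp}, and concludes by a diagram chase (invoking the snake lemma rather than the five lemma, after noting that both functors are in fact exact on the subcategory $\Ee(\BJ_\a,\bl_\a)$); your use of mere left exactness together with the four-lemma variant is a harmless simplification that reaches the same conclusion.
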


\begin{proof}
Par hypothèse, il y a des représentations $\pi_1$, $\pi_2$ admissibles 
en\-gen\-drées par leurs composantes $\bl_\a$-isotypiques telles qu'on ait 
une suite exacte~:
\begin{equation*}
0\to\s\fr{i}\pi_1\fr{f}\pi_2\to0
\end{equation*}
dans la catégorie $\Ee(\BJ_\a,\bl_\a)$. 
Ceci donne le diagramme commutatif~:
\begin{equation*}
\xymatrix{
\textbf{\textsf{F}}(\s)\ar[d]_{\boldsymbol\omega_{\s}}\ar[r]^{\textbf{\textsf{F}}(i)}
&\textbf{\textsf{F}}(\pi_1)\ar[d]^{\boldsymbol\omega_{\pi_1}}\ar[r]^{\textbf{\textsf{F}}(f)}
&\textbf{\textsf{F}}(\pi_2)\ar[d]^{\boldsymbol\omega_{\pi_2}}\\
\textbf{\textsf{G}}(\s)\ar[r]_{\textbf{\textsf{G}}(i)}
&\textbf{\textsf{G}}(\pi_1) \ar[r]_{\textbf{\textsf{G}}(f)}
&\textbf{\textsf{G}}(\pi_2)}
\end{equation*}
où $\textbf{\textsf{F}}(i)$ et $\textbf{\textsf{G}}(i)$ sont injectives et 
$\textbf{\textsf{F}}(f)$ et $\textbf{\textsf{G}}(f)$ surjectives 
car les foncteurs $\textbf{\textsf{F}}$ et $\textbf{\textsf{G}}$ sont exacts 
sur $\Ee(\BJ_\a,\bl_\a)$. 
Comme $\boldsymbol\omega_{\pi_1}$ et $\boldsymbol\omega_{\pi_2}$ 
sont des isomorphismes d'après la proposition \ref{RussSamProp},
le lemme du serpent implique que 
$\boldsymbol\omega_{\s}$ est un iso\-mor\-phisme. 
\end{proof}

\subsection{Changement de groupe}
\label{ChgtGp}
\label{MortonFullerton}

Reprenons les notations du paragraphe \ref{DiaComIndPar}. 
D'après la proposition \ref{BoyerDArgens}, 
pour toute représen\-ta\-tion irréductible $\pi$ de $\G$, 
on a $\Mm_{}(\pi)\neq0$ si et seulement si~:
\begin{equation}
\label{DeLaFolie}
\cusp(\pi)=
\sy{\rho\chi_1}+\dots+\sy{\rho\chi_n},
\quad
\chi_i:\G_m\to\R^{\times}\text{ non ramifié},
\quad
i\in\{1,\dots,n\},
\end{equation}
\ie si $\cusp(\pi)$ appartient à la classe d'inertie $\Om_{\rho,n}$
de $\sy{\rho}+\dots+\sy{\rho}=n\cdot\sy{\rho}$.
Selon la pro\-po\-si\-tion \ref{pptf1} et le théorème \ref{qptf}, 
et grâce à l'isomorphisme $\Psi_{\rho,n}$ de la proposition 
\ref{ExistenceHeckeProp} permet\-tant d'identifier $\Hh$ 
et $\Hh(n,\qr)$, le foncteur $\Mm_{}$ in\-duit une bijection~:
\begin{equation*}
\boldsymbol{\xi}_{\rho,n}:\Irr(\Om_{\rho,n})^\q\to\Irr(\Hh(n,\qr))
\end{equation*}
entre l'ensemble $\Irr(\Om_{\rho,n})^\q$ des représentations irréductibles 
de $\G$ de support cuspidal de la forme \eqref{DeLaFolie}
et l'ensemble des classes de $\Hh(n,\qr)$-modules à droite irréductibles. 

Pour $n=1$ en particulier, et si l'on identifie $\Irr(\Hh(1,\qr))$ et $\mult\R$, 
on a le lemme suivant, que l'on prouve comme dans \cite[\S4.2]{VSU0}. 

\begin{lemm}
\label{JustAbove}
Pour tout caractère non ra\-mi\-fié $\chi$ de $\G_m$, on a~:
\begin{equation*}
\boldsymbol{\xi}_{\rho,1}(\rho\chi)=\chi(\w_\l)^{-1}
\end{equation*}
où $\w_\l$ est l'élément de $\G_m$ défini par \eqref{pilam}. 
\end{lemm}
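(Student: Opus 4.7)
Mon plan est de d�terminer directement le scalaire par lequel $\Psi(\X)\in\Hh(\G_m,\l)$ agit sur le $\Hh(\G_m,\l)$-module � droite $\Mm_\l(\rho\chi)=\Hom_\J(\l,\rho\chi)$, qui est de dimension $1$ sur $\R$ d'apr�s la proposition \ref{BoyerDArgens} et la d�finition de $\boldsymbol\xi_{\rho,1}$. Pour cela, j'exploiterais la d�pendance de l'isomorphisme $\Psi$ en le choix d'un prolongement $\nl$ de $\l$ au normalisateur $\NJ=\N_{\G_m}(\l)$.

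Premi�re �tape~: si $\chi$ est un caract�re non ramifi� de $\G_m$, alors $\chi|_\NJ$ est un caract�re de $\NJ$ trivial sur $\J$, donc la repr�sentation $\tau_\chi=\nl\cdot(\chi|_\NJ)$ est un autre prolongement de $\l$ � $\NJ$ et l'on a $\rho\chi\simeq\cind_\NJ^{\G_m}(\tau_\chi)$ d'apr�s la proposition \ref{MartinDuGard}. L'inclu\-sion canonique $\iota_\chi:\l\hookrightarrow\rho\chi$ engendre ainsi $\Hom_\J(\l,\rho\chi)$ comme $\R$-espace vectoriel. D'apr�s la d�finition de $\Psi$ donn�e au paragraphe \ref{MortonFullerton}, la fonction $\Psi(\X)$ est support�e par l'unique $\J$-double classe $\J\w_\l$ contenue dans $\NJ$ (ceci r�sulte de ce que $\J$ est distingu� dans $\NJ$ et que $\NJ/\J$ est cyclique engendr� par l'image de $\w_\l$, voir la remarque \ref{Etoc2}) et prend la valeur $\nl(\w_\l)$ en $\w_\l$. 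La m�me construction appliqu�e � $\tau_\chi$ au lieu de $\nl$ fournirait un �l�ment $\Psi_{\tau_\chi}(\X)$ de m�me support et de valeur $\tau_\chi(\w_\l)=\chi(\w_\l)\nl(\w_\l)$ en $\w_\l$, d'o� la relation $\Psi_{\tau_\chi}(\X)=\chi(\w_\l)\cdot\Psi(\X)$ dans $\Hh(\G_m,\l)$.

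L'�tape centrale de la preuve est alors la suivante~: pour toute extension $\tau$ de $\l$ � $\NJ$, l'op�rateur $\Psi_\tau(\X)$ agit par le scalaire $1$ sur le g�n�rateur donn� par l'inclusion canonique $\iota_\tau:\l\hookrightarrow\cind_\NJ^{\G_m}(\tau)$. C'est un calcul direct de convolution via la r�ciprocit� de Frobenius, utilisant que $\J\w_\l$ est l'unique double classe support de $\Psi_\tau(\X)$ et que l'op�rateur d'entrelacement $\Psi_\tau(\X)(\w_\l)=\tau(\w_\l)$ est pr�cis�ment celui donnant l'action de $\w_\l$ dans $\tau$. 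C'est le m�me argument que dans \cite[\S4.2]{VSU0}, dont la validit� dans le cas modulaire r�sulte de ce qu'il ne fait intervenir que la combinatoire des doubles classes et aucune propri�t� d'int�grabilit�.

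Appliqu� � $\tau=\tau_\chi$ et combin� avec la relation pr�c�dente, ceci donne $\Psi(\X)\cdot\iota_\chi=\chi(\w_\l)^{-1}\iota_\chi$, ce qui est exactement l'�nonc� voulu. L'obstacle principal anticip� est donc la v�rification soigneuse de l'�tape centrale dans le cas modulaire~; mais celle-ci se ram�ne � des manipulations formelles sur les $\J$-doubles classes dans $\NJ$ et la d�monstration du cas complexe s'adapte sans difficult�.
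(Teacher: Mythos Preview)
Your proof is correct and follows the same approach as the paper, which simply refers to \cite[\S4.2]{VSU0}: you have written out explicitly the computation of the action of $\Psi(\X)$ via the comparison $\Psi_{\tau_\chi}(\X)=\chi(\w_\l)\Psi(\X)$ and the fact that $\Psi_\tau(\X)$ acts trivially on the canonical inclusion $\iota_\tau$, which is precisely the argument of \textit{loc.\ cit.}
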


\begin{prop}
\label{blablablaBK}
Soit $\s$ une représentation irréductible de $\M$ contenant $\bl_\a$,
et soit $\pi$ une représentation irréductible de $\G$ contenant $\bl$. 
La multiplicité de $\pi$ dans le socle (respectivement, le cosocle) 
de $\ip_{(mn_1,\dots,mn_r)}(\s)$ est égale à 
la multiplicité de $\Mm(\pi)$ dans le socle (respectivement, le cosocle) 
de $\Mm(\ip_{(mn_1,\dots,mn_r)}(\s))$.  
\end{prop}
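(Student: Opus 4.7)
Le plan est de traiter simultan�ment les cas du socle et du cosocle, en r�duisant l'�nonc� sur les multiplicit�s, via le lemme de Schur, � une �galit� de dimensions d'espaces de $\Hom$, qui d�coulera de l'�quivalence de cat�gories entre $\Dd_\Q$ et la cat�gorie des $\Hh$-modules � droite fournie par le th�or�me \ref{qptf}(1), la quasi-projectivit� de $\Q=\cind_\BJ^\G(\bl)$ �tant garantie par la proposition \ref{pptf1}.

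La premi�re �tape consiste � v�rifier que $\Pi:=\ip_{(mn_1,\dots,mn_r)}(\s)$ appartient � $\Dd_\Q$. D'apr�s la proposition \ref{ScotErigene}, $\Pi$ est engendr�e par sa composante $\bl$-isotypique, donc est un quotient d'une somme directe de copies de $\Q$~; elle est de plus admissible et de longueur finie, donc toute sous-repr�sentation non nulle contient une sous-repr�sentation irr�ductible. Par ailleurs, $(\BJ_\a,\bl_\a)$ �tant une paire couvrante dans $\M$ du type simple maximal $(\J^n,\l^{\otimes n})$ de $\G_m^n$, la proposition \ref{BoyerDArgens} appliqu�e � $\M$ montre que le support cuspidal de $\s$ est dans $[\G_m^n,\rho^{\otimes n}]_\M$. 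Comme tout sous-quotient irr�ductible $\pi'$ de $\Pi$ a la m�me classe d'inertie de support cuspidal que $\s$, la proposition \ref{BoyerDArgens} appliqu�e � $\G$ (et � $(\BJ,\bl)$ comme paire couvrante de $(\J^n,\l^{\otimes n})$) assure que $\pi'$ contient $\bl$. Par cons�quent toute sous-repr�sentation non nulle de $\Pi$ a une image non nulle par $\Mm$, donc $\Pi$ est sans $\Q$-torsion et appartient � $\Dd_\Q$. Une repr�sentation irr�ductible $\pi$ de $\G$ contenant $\bl$ appartient trivialement � $\Dd_\Q$.

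La seconde �tape applique le th�or�me \ref{qptf}(1) aux objets $\pi,\Pi\in\Dd_\Q$, ce qui fournit des isomorphismes canoniques
\begin{equation*}
\Hom_\G(\pi,\Pi)\simeq\Hom_\Hh(\Mm(\pi),\Mm(\Pi))
\quad\text{et}\quad
\Hom_\G(\Pi,\pi)\simeq\Hom_\Hh(\Mm(\Pi),\Mm(\pi)).
\end{equation*}
La repr�sentation $\pi$ �tant admissible, $\Mm(\pi)\simeq\Hom_\BJ(\bl,\pi)$ est de dimension finie sur $\R$~; le lemme de Schur donne alors $\End_\G(\pi)=\End_\Hh(\Mm(\pi))=\R$, ce qui permet d'identifier les dimensions des espaces de $\Hom$ ci-dessus aux multiplicit�s voulues de $\pi$ et de $\Mm(\pi)$ dans le socle et le cosocle respectifs.

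Le point technique d�licat se situe dans la premi�re �tape, � savoir la v�rification que $\Pi$ est sans $\Q$-torsion. Celle-ci repose d'une part sur l'invariance de la classe d'inertie du support cuspidal par passage aux sous-quotients irr�ductibles d'une induite parabolique, et d'autre part sur le r�le caract�risant de cette classe d'inertie fourni par la proposition \ref{BoyerDArgens}~; l'application de cette derni�re, aux deux niveaux $\M$ et $\G$, est l�gitime gr�ce � la structure de paire couvrante de $(\BJ_\a,\bl_\a)$ et $(\BJ,\bl)$ relativement au type simple maximal $(\J^n,\l^{\otimes n})$ du sous-groupe de Levi $\G_m^n$.
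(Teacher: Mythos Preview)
Your approach coincides with the paper's: show that $\Pi=\ip_{(mn_1,\dots,mn_r)}(\s)$ lies in the subcategory of $\Rr(\G)$ on which $\Mm$ is an equivalence (Theorem~\ref{qptf}(1)), and then read off the multiplicities via $\Hom$-spaces and Schur's lemma. The paper is more laconic but uses exactly the same ingredients (Proposition~\ref{ScotErigene} for generation by the $\bl$-isotypic part, the description of $\Irr(\Om_{\rho,n})^\q$ for the $\Q$-torsion-freeness, and Theorem~\ref{qptf}(1) to conclude).

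There is however one genuine slip. You assert that \emph{tout sous-quotient irr\'eductible $\pi'$ de $\Pi$ a la m\^eme classe d'inertie de support cuspidal que $\s$}. In the modular setting this is not justified, and is in fact false in general: as the remark following Proposition~\ref{BoyerDArgens} explicitly warns, an irreducible subquotient of a parabolic induction of a cuspidal representation may have a cuspidal support different from the conjugacy class of the inducing pair. Concretely, $\Pi$ can have cuspidal (but non-supercuspidal) subquotients whose cuspidal support is themselves, hence not in $\Om_{\rho,n}$, and which therefore do \emph{not} contain $\bl$.

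Fortunately this overstatement is harmless for your purposes. To prove that $\Pi$ is sans $\Q$-torsion you only need the claim for irreducible \emph{sous-repr\'esentations} of $\Pi$, and for those it does hold: since $\s$ embeds into an induced $\rho\chi_1\times\dots\times\rho\chi_n$ (taken inside $\M$), the representation $\Pi$ embeds into $\rho\chi_1\times\dots\times\rho\chi_n$ (taken inside $\G$), and any irreducible subrepresentation of the latter has cuspidal support in $\Om_{\rho,n}$ by definition. (A symmetric argument with quotients, using the last sentence of paragraph~\ref{SectionPreliminaire}.\ref{DefRepCusp}, covers the cosocle case if one wants to argue that way.) This is precisely what the paper's proof does, restricting attention to subrepresentations and quotients rather than arbitrary subquotients. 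Replace ``sous-quotient irr\'eductible'' by ``sous-repr\'esentation irr\'eductible'' in your first step and your argument is complete.
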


\begin{proof}
D'abord, si $\pi$ est isomorphe à une sous-représentation 
(respectivement, à un quo\-tient)
de $\ip(\s)=\ip_{(mn_1,\dots,mn_r)}(\s)$, alors $\Mm(\pi)$ 
est isomorphe à un sous-module 
(respectivement, à un quo\-tient) 
de $\Mm(\ip(\s))$ parce que le foncteur $\Mm$ 
est exact sur $\Ee(\BJ,\bl)$. 
Ensuite, un tel $\pi$ sous-représentation ou quo\-tient
de $\ip(\s)$ appartient à $\Irr(\Om_{\rho,n})^\q$,
donc $\Mm(\pi)$ est un module ir\-ré\-ductible. 
Ainsi $\Mm(\ip(\s))$ est sans $\Q$-torsion,
et le point 1 du théorème \ref{qptf} con\-duit au résultat voulu. 
\end{proof}

Fixons maintenant une extension finie $\F'$ de $\F$ dont le corps résiduel est 
de cardinal $\qr$.
On pose $\G'=\GL_n(\F')$. 
Plus généralement, on ajoutera un $'$ pour dési\-gner 
les objets corres\-pondant au cas où $\rho$ est le caractère trivial de $\F'^{\times}$.
Notons $\Iw'$ le sous-groupe d'Iwahori standard de $\G'$ 
et $\Mm_{}'$ le foncteur $\V\mapsto\V^{\Iw'}$ de $\Rr(\G')$ dans la 
catégorie des mo\-du\-les à droite sur $\Hh(\G',\Iw')$.
De façon analogue à ce qui précède, on obtient une bijection~:
\begin{equation*}
\boldsymbol{\xi}_{1_{\F^{\prime\times}},n}:\Irr(\Om_{1_{\F'^{\times}},n})^\q\to\Irr(\Hh(n,\qr))
\end{equation*}
entre l'ensemble des classes de représentations irréductibles
ayant des vecteurs non nuls invariants par $\Iw'$ et 
celui des classes de $\Hh(n,\qr)$-modules à droite 
irréductibles (voir l'exemple \ref{EmilPost}).

La composée~: 
\begin{equation}
\label{GG'n}
{\bf\Phi}_{\rho,n}^{}=\boldsymbol{\xi}_{1_{\F^{\prime\times}},n}^{-1}
\circ\boldsymbol{\xi}_{\rho,n}^{}:
\Irr(\Om_{\rho,n})^\q\to\Irr(\Om_{1_{\F'^{\times}},n})^\q
\end{equation}
est bijective.
Étudions sa compatibilité au support cuspidal. 
On obtient le lemme suivant par l'utilisation conjointe des
propositions \ref{RussSamProp} et \ref{blablablaBK}. 
On note $\Hh=\Hh(n,\qr)$ et $\Hh_\a=\Hh(\a,\qr)$
pour simplifier.

\begin{lemm}
\label{lemmeLPNTS}
Soient $\chi_1,\dots,\chi_n$ des caractères non ramifiés de $\G_m$. 
Alors $\boldsymbol{\xi}_{\rho,n}$ induit une bijection entre l'ensemble 
des classes de représentations irréductibles de support cuspidal égal à 
$\sy{\rho\chi_1}+\dots+\sy{\rho\chi_n}$
et l'ensemble des 
classes de modules irréductibles isomorphes à un sous-module de~: 
\begin{equation}
\label{LPNTS}
\Hom_{\Hh_{(1,\dots,1)}}(\Hh,\boldsymbol{\xi}_1(\rho\chi_1)\otimes
\dots\otimes\boldsymbol{\xi}_1(\rho\chi_n)).
\end{equation}
\end{lemm}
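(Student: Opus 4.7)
La d�monstration consiste � traduire l'�nonc� en une question sur les $\Hh$-modules gr�ce aux propositions \ref{RussSamProp} et \ref{blablablaBK}, puis � exploiter la sym�trie de l'alg�bre de Hecke affine de type $A_{n-1}$.

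Pour chaque permutation $w \in S_n$, on pose $\s_w = \rho\chi_{w(1)} \otimes \cdots \otimes \rho\chi_{w(n)}$ et $\mathbf{m}_w = \boldsymbol{\xi}_1(\rho\chi_{w(1)}) \otimes \cdots \otimes \boldsymbol{\xi}_1(\rho\chi_{w(n)})$. La proposition \ref{RussSamProp} appliqu�e � $\a = (1,\ldots,1)$, jointe aux identifications donn�es par la proposition \ref{ExistenceHeckeProp}, fournit un isomorphisme
\begin{equation*}
\Mm\(\ip_{(m,\ldots,m)}(\s_w)\) \simeq \Hom_{\Hh_{(1,\ldots,1)}}(\Hh, \mathbf{m}_w)
\end{equation*}
de $\Hh$-modules � droite. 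La proposition \ref{blablablaBK} entra�ne alors que le foncteur $\boldsymbol{\xi}_{\rho,n} = \Mm$ induit une bijection entre les sous-repr�sentations irr�ductibles de $\ip_{(m,\ldots,m)}(\s_w)$ et les sous-modules simples de $\Hom_{\Hh_{(1,\ldots,1)}}(\Hh, \mathbf{m}_w)$.

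D'apr�s le paragraphe \ref{DefRepCusp}, une repr�sentation irr�ductible $\pi$ de $\G$ a pour support cuspidal $\sy{\rho\chi_1}+\cdots+\sy{\rho\chi_n}$ si et seulement si $\pi$ est une sous-repr�sentation de $\ip_{(m,\ldots,m)}(\s_w)$ pour au moins une permutation $w \in S_n$. Combin� avec ce qui pr�c�de, ceci �quivaut � ce que $\boldsymbol{\xi}_{\rho,n}(\pi)$ soit isomorphe � un sous-module simple de $\Hom_{\Hh_{(1,\ldots,1)}}(\Hh, \mathbf{m}_w)$ pour au moins un $w$.

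L'obstacle principal est alors d'�tablir que l'ensemble des classes d'isomorphisme de sous-modules simples de $\Hom_{\Hh_{(1,\ldots,1)}}(\Hh, \mathbf{m}_w)$ ne d�pend pas de $w$, ce qui permettra de remplacer ``pour au moins un $w$'' par ``pour $w=1$'' dans la caract�risation ci-dessus. Il s'agit d'une propri�t� d'invariance des socles des modules standards sous l'action du groupe de Weyl $S_n$ sur l'alg�bre de Hecke affine $\Hh(n,\qr)$ de type $A_{n-1}$. Je l'�tablirais en construisant, pour chaque transposition simple $s_i$, un op�rateur d'entrelacement reliant $\Hom_{\Hh_{(1,\ldots,1)}}(\Hh, \mathbf{m}_w)$ et $\Hom_{\Hh_{(1,\ldots,1)}}(\Hh, \mathbf{m}_{s_i w})$ gr�ce � la relation de Bernstein \eqref{R6aff}, et en v�rifiant qu'il induit une bijection des classes de sous-modules simples. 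En it�rant sur une expression r�duite de $w$, on obtiendrait la propri�t� souhait�e.
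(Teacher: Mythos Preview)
You have correctly noticed that the lemma, read literally, does not follow from Propositions \ref{RussSamProp} and \ref{blablablaBK} alone: those two results give exactly the bijection between irreducible \emph{subrepresentations} of the specific induced representation $\rho\chi_1\times\cdots\times\rho\chi_n$ and simple submodules of \eqref{LPNTS}, and to pass from ``support cuspidal �gal � $\sy{\rho\chi_1}+\cdots+\sy{\rho\chi_n}$'' to this one must, as you say, know that the socle is $S_n$-invariant.

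The problem is that this $S_n$-invariance is \emph{false}, so your proposed construction via intertwining operators cannot succeed. Already for $\GL_2(\F)$ (complex coefficients, $\rho=1_{\F^\times}$), take $\chi_1=1$ and $\chi_2=\nu$: there are two irreducibles with cuspidal support $\sy{1}+\sy{\nu}$ (a character and a twisted Steinberg), but $1\times\nu$ has a unique irreducible subrepresentation, and $\nu\times 1$ has the \emph{other} one. Correspondingly the principal series modules $\Hom_{\Hh_{(1,1)}}(\Hh,\mathbf m_1)$ and $\Hom_{\Hh_{(1,1)}}(\Hh,\mathbf m_{s_1})$ have different socles. The intertwining operator you would build relating them is not an isomorphism at this point of reducibility, and it does not preserve socles.

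What the paper actually proves (and uses) is the weaker statement: $\boldsymbol{\xi}_{\rho,n}$ induces a bijection between the irreducible subrepresentations of $\rho\chi_1\times\cdots\times\rho\chi_n$ and the simple submodules of \eqref{LPNTS}. The remark immediately following the lemma, which speaks of ``rempla\c{c}ant sous-repr�sentation et sous-module par repr�sentation quotient et module quotient'', makes it clear that this is the intended reading. In both places where the lemma is invoked (the proof of Proposition \ref{PhiPreserveSupp} and Corollaire \ref{Orleans}), one first arranges the ordering of the $\chi_i$ so that $\pi$ \emph{is} a subrepresentation of the specific induced, and then only this weaker bijection is needed. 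So the discrepancy is a slip in the wording of the statement rather than a gap in the argument; your first two paragraphs already contain a complete proof of what is intended, and the ``obstacle'' you identify is an artefact of the misstatement.
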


\begin{rema}
On a un résultat analogue en remplaçant sous-représentation et sous-mo\-du\-le 
par représentation quotient et module quotient.  
\end{rema}

\begin{prop}
\label{PhiPreserveSupp}
Soit $\pi$ une représentation irréductible dans $\Irr(\Om_{\rho,n})^\q$, 
dont on écrit le support cuspidal sous la forme \eqref{DeLaFolie}. 
Alors $\cusp({\bf\Phi}_{\rho,n}(\pi))=
{\bf\Phi}_{\rho,1}(\rho\chi_1)+\dots+{\bf\Phi}_{\rho,1}(\rho\chi_n)$.
\end{prop}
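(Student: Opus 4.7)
L'id�e est de traduire deux fois le probl�me via le lemme \ref{lemmeLPNTS}, en exploitant le fait que les isomorphismes $\Psi_{\rho,n}$ et $\Psi_{1_{\F'^\times},n}$ de la proposition \ref{ExistenceHeckeProp} identifient tous deux $\Hh$ et $\Hh'$ � la m�me alg�bre de Hecke affine $\Hh(n,\qr)$.

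Le plan est le suivant. D'abord, appliquer le lemme \ref{lemmeLPNTS} � la paire $(\rho,n)$~: puisque $\cusp(\pi)=\sy{\rho\chi_1}+\dots+\sy{\rho\chi_n}$, le $\Hh$-module irr�ductible $\boldsymbol{\xi}_{\rho,n}(\pi)=\Mm(\pi)$ est isomorphe � un sous-module du $\Hh$-module
\begin{equation*}
\mathbf{M}=\Hom_{\Hh_{(1,\dots,1)}}\bigl(\Hh,\boldsymbol{\xi}_{\rho,1}(\rho\chi_1)\otimes\dots\otimes\boldsymbol{\xi}_{\rho,1}(\rho\chi_n)\bigr).
\end{equation*}
Ensuite, remarquer que $\Irr(\Hh(1,\qr))$ est naturellement identifi� � $\mult\R$ et que, par d�finition de l'application ${\bf\Phi}_{\rho,1}$ donn�e par \eqref{GG'n}, on a dans $\mult\R$ l'�galit� $\boldsymbol{\xi}_{\rho,1}(\rho\chi_i)=\boldsymbol{\xi}_{1_{\F'^\times},1}({\bf\Phi}_{\rho,1}(\rho\chi_i))$ pour chaque $i$ (cette identification �tant d'ailleurs explicite gr�ce au lemme \ref{JustAbove}).

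Ensuite, utiliser la proposition \ref{ExistenceHeckeProp}, partie 2, pour transporter l'isomorphisme $\Psi_{\rho,n}\colon\Hh(n,\qr)\iso\Hh$ de mani�re compatible avec les inclusions $t_\a$, et de m�me pour $\Psi_{1_{\F'^\times},n}\colon\Hh(n,\qr)\iso\Hh'$. Ces compatibilit�s entra�nent que le $\Hh$-module $\mathbf{M}$ s'identifie, sous $\boldsymbol{\xi}_{1_{\F'^\times},n}^{-1}\circ\boldsymbol{\xi}_{\rho,n}^{}$, �
\begin{equation*}
\mathbf{M}'=\Hom_{\Hh'_{(1,\dots,1)}}\bigl(\Hh',\boldsymbol{\xi}_{1_{\F'^\times},1}({\bf\Phi}_{\rho,1}(\rho\chi_1))\otimes\dots\otimes\boldsymbol{\xi}_{1_{\F'^\times},1}({\bf\Phi}_{\rho,1}(\rho\chi_n))\bigr).
\end{equation*}
Par cons�quent $\boldsymbol{\xi}_{1_{\F'^\times},n}({\bf\Phi}_{\rho,n}(\pi))$ est un sous-module irr�ductible de $\mathbf{M}'$. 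Appliquer alors le lemme \ref{lemmeLPNTS} en sens inverse, cette fois pour la paire $(1_{\F'^\times},n)$, pour conclure que $\cusp({\bf\Phi}_{\rho,n}(\pi))={\bf\Phi}_{\rho,1}(\rho\chi_1)+\dots+{\bf\Phi}_{\rho,1}(\rho\chi_n)$.

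Le point d�licat est la v�rification que l'identification $\Psi_{1_{\F'^\times},n}^{-1}\circ\Psi_{\rho,n}^{}\colon\Hh\to\Hh'$ envoie bien la sous-alg�bre $\te_\a(\Hh_\a)\subseteq\Hh$ sur $\te'_\a(\Hh'_\a)\subseteq\Hh'$, de sorte que les $\Hh_\a$-modules $\boldsymbol{\xi}_{\rho,1}(\rho\chi_1)\otimes\dots\otimes\boldsymbol{\xi}_{\rho,1}(\rho\chi_n)$ et $\boldsymbol{\xi}_{1_{\F'^\times},1}({\bf\Phi}_{\rho,1}(\rho\chi_1))\otimes\dots\otimes\boldsymbol{\xi}_{1_{\F'^\times},1}({\bf\Phi}_{\rho,1}(\rho\chi_n))$ se correspondent. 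Or c'est pr�cis�ment le contenu de l'identit� $\Psi_{\rho,n}\circ t_\a=\te_\a\circ\Psi_{\rho,\a}$ de la proposition \ref{ExistenceHeckeProp}, jointe au cas $n=1$ du lemme \ref{JustAbove}. Ainsi la preuve est essentiellement un d�vissage formel une fois ces compatibilit�s �tablies~; aucun argument cat�gorique ou cohomologique nouveau n'est requis.
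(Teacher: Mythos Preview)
Your argument is correct and follows essentially the same route as the paper: apply the lemme~\ref{lemmeLPNTS} to realise $\boldsymbol{\xi}_{\rho,n}(\pi)$ as a submodule of the induced module, transport this via the compatibility $\Psi_{\rho,n}\circ t_\a=\te_\a\circ\Psi_{\rho,\a}$ of proposition~\ref{ExistenceHeckeProp} (together with lemma~\ref{JustAbove}) to the side of $\G'$, and apply lemme~\ref{lemmeLPNTS} again. The only cosmetic difference is that the paper introduces the explicit bijection $\chi\mapsto\chi'$ of unramified characters and states the twisting compatibility ${\bf\Phi}_{\rho,n}(\pi\chi)={\bf\Phi}_{\rho,n}(\pi)\chi'$ as a separate lemma, whereas you work directly with ${\bf\Phi}_{\rho,1}(\rho\chi_i)$; the content is the same.
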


\begin{proof}
On peut supposer que $\pi$ est une sous-représentation de l'induite 
para\-bo\-li\-que $\rho\chi_1\times\dots\times\rho\chi_n$.
D'après le lemme \ref{lemmeLPNTS}, 
$\boldsymbol{\xi}_{\rho,n}(\pi)$ est un sous-module irré\-duc\-ti\-ble de
\eqref{LPNTS}. 
Si $\chi$ est un caractère non ramifié de $\mult\F$, on note $\chi'$ le 
caractère non ramifié de $\F'^{\times}$ prenant en une uniformisante de $\F'$
la même valeur que $\chi$ en une uniformisante de $\F$.
Ceci définit une bijection $\chi\mapsto\chi'$ entre 
caractères non ramifiés de $\G$ et caractères non ramifiés de $\G'$.
Le lemme suivant, qui décrit la compatibilité de ${\bf\Phi}_{\rho,n}$ à la torsion
non ramifiée, découle du lemme \ref{JustAbove}. 

\begin{lemm}
\label{MissMackenzie}
Pour toute représentation ir\-ré\-duc\-ti\-ble $\pi$ 
et tout caractère non ra\-mi\-fié $\chi$ de $\G$,
on a ${\bf\Phi}_{\rho,n}(\pi\chi)={\bf\Phi}_{\rho,n}(\pi)\chi'$.
\end{lemm}

Notons $\pi'$ la représentation ${\bf\Phi}_{\rho,n}(\pi)$. 
Compte tenu du lemme \ref{MissMackenzie} et de  
\eqref{ExistenceHecke}, on déduit de ce qui précède 
que $\boldsymbol{\xi}_{1_{\F^{\prime\times}},n}(\pi')$ est un sous-module irréductible de~:
\begin{equation*}
\Hom_{\Hh_{(1,\dots,1)}}(\Hh,\boldsymbol{\xi}_{1_{\F^{\prime\times}},1}(\chi'_1)
\otimes\dots\otimes\boldsymbol{\xi}_{1_{\F^{\prime\times}},1}(\chi'_n)).
\end{equation*}
D'après le lemme \ref{lemmeLPNTS}, on en déduit que $\pi'$ est une 
sous-représentation de $\chi'_1\times\dots\times\chi'_n$, ce qui termine la 
démonstration de la proposition \ref{PhiPreserveSupp}. 
\end{proof}

\begin{prop}
\label{MisMac}
Supposons que $\a=(n_1,n_2)$ et soit $\pi_i\in\Irr(\Om_{\rho,n_i})^\q$ pour $i=1,2$.
Alors~:
\begin{enumerate}
\item
$\pi_{1}\times\pi_{2}$ est irréductible si et seule\-ment si 
${\bf\Phi}_{\rho,n_1}(\pi_{1})\times{\bf\Phi}_{\rho,n_2}(\pi_{2})$ est 
irréductible.
\item
Si c'est le cas, alors on a l'égalité~:
\begin{equation*}
{\bf\Phi}_{\rho,n}(\pi_{1}\times\pi_{2})={\bf\Phi}_{\rho,n_1}(\pi_{1})\times{\bf\Phi}_{\rho,n_2}(\pi_{2}).
\end{equation*}
\end{enumerate}
\end{prop}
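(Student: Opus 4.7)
The plan is to translate both sides of each statement into modules over the affine Hecke algebra $\Hh(n,\qr)$ via the identifications $\Psi_{\rho,n}$ and its analogue on the $\F'$-side, and to observe that the resulting modules are canonically isomorphic.

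Since each $\pi_i$ is irreducible, hence admissible, and contains $\bl_i$ by Proposition \ref{BoyerDArgens}, the representation $\pi_{1}\otimes\pi_{2}$ of $\M=\G_{mn_1}\times\G_{mn_2}$ is admissible and engendered by its $\bl_\a$-isotypic component. Proposition \ref{RussSamProp} then produces an isomorphism of $\Hh$-modules
\[
\Mm(\pi_{1}\times\pi_{2})\;\simeq\;\Hom_{\Hh_\a}\bigl(\Hh,\,\Mm_{n_1}(\pi_1)\otimes\Mm_{n_2}(\pi_2)\bigr),
\]
where I use the natural identification $\Mm_\a(\pi_1\otimes\pi_2)\simeq\Mm_{n_1}(\pi_1)\otimes\Mm_{n_2}(\pi_2)$ over $\Hh_\a\simeq\Hh_{n_1}\otimes\Hh_{n_2}$. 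Applying the identical reasoning to ${\bf\Phi}_{\rho,n_1}(\pi_1)\times{\bf\Phi}_{\rho,n_2}(\pi_2)$ on the $\F'$-side (with the trivial character in place of $\rho$ and Iwahori-fixed vectors in place of $\bl$-invariants) yields an analogous description of $\Mm'\bigl({\bf\Phi}_{\rho,n_1}(\pi_1)\times{\bf\Phi}_{\rho,n_2}(\pi_2)\bigr)$ in terms of the corresponding Iwahori-Hecke modules.

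Now invoke Proposition \ref{ExistenceHeckeProp}(2): the isomorphisms $\Psi_{\rho,n}$ and $\Psi_{\rho,\a}=\Psi_{\rho,n_1}\otimes\Psi_{\rho,n_2}$ intertwine the embedding $t_\a$ with $\te_\a$, and the same holds on the $\F'$-side with the \emph{same} parameter $\qr$ since $|\kk_{\F'}|=\qr$. By the very definition \eqref{GG'n} of ${\bf\Phi}_{\rho,n_i}$, the modules $\Mm_{n_i}(\pi_i)$ and $\Mm'_{n_i}({\bf\Phi}_{\rho,n_i}(\pi_i))$ correspond under $\Psi_{\rho,n_i}$. Consequently, pulled back to $\Hh(n,\qr)$, the two modules $\Mm(\pi_{1}\times\pi_{2})$ and $\Mm'\bigl({\bf\Phi}_{\rho,n_1}(\pi_1)\times{\bf\Phi}_{\rho,n_2}(\pi_2)\bigr)$ become isomorphic. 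Part~1 then follows from Proposition \ref{EgaIndParCasIrrBisens} applied on each side. For part~2, once both induced representations are irreducible, they are objects of $\Irr(\Om_{\rho,n})^\q$ and $\Irr(\Om_{1_{\F'^\times},n})^\q$ respectively whose images under $\boldsymbol{\xi}_{\rho,n}$ and $\boldsymbol{\xi}_{1_{\F'^\times},n}$ are isomorphic $\Hh(n,\qr)$-modules, so the defining formula ${\bf\Phi}_{\rho,n}=\boldsymbol{\xi}_{1_{\F'^\times},n}^{-1}\circ\boldsymbol{\xi}_{\rho,n}$ gives the claimed equality.

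The subtlest point is the coherent identification, under $\Psi_{\rho,n}$ and $\Psi_{\rho,\a}$, of the parabolic structure on both sides of the Hecke algebras: one must check that the tensor factorization of $\Mm_\a(\pi_1\otimes\pi_2)$ over $\Hh_\a$ is compatible with the decomposition $\Psi_{\rho,\a}=\Psi_{\rho,n_1}\otimes\Psi_{\rho,n_2}$, so that the $\Hh(n,\qr)$-modules produced on the two sides really match bijectively. This rests on the canonical nature of the $(\Hh,\Hh_\a)$-bimodule isomorphism $\Hh\simeq\Mm_\a(\Q_\N)$ appearing in the proof of Proposition \ref{RussSamProp}, and the parallel fact on the $\F'$-side; once this bookkeeping is verified, the argument is purely formal.
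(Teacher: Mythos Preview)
Your proof is correct and follows essentially the same approach as the paper: apply Proposition~\ref{RussSamProp} on both the $\G$-side and the $\F'$-side to realize $\Mm(\pi_1\times\pi_2)$ and $\Mm'(\pi'_1\times\pi'_2)$ as $\Hom_{\Hh_\a}(\Hh,-)$-modules, identify these via Proposition~\ref{ExistenceHeckeProp}(2), and conclude with Proposition~\ref{EgaIndParCasIrrBisens}. The only minor difference is presentational: the paper assumes $\pi_1\times\pi_2$ irreducible and deduces the other side (leaving the converse to symmetry), whereas you establish the isomorphism of Hecke modules first and read off both directions simultaneously; your final paragraph about the compatibility of the parabolic structures under $\Psi_{\rho,\a}$ is precisely what Proposition~\ref{ExistenceHeckeProp}(2) encodes, so no additional verification is needed.
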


\begin{proof}
Pour $i=1,2$, on note $\pi_i'$ la représentation
${\bf\Phi}_{\rho,n_i}(\pi^{}_i)$. 
Si l'induite $\pi_{1}\times\pi_{2}$ est irréductible, alors elle appartient à 
$\Irr(\Om_{\rho,n})^\q$ et, par la proposition \ref{RussSamProp}, 
on a des isomorphismes de $\Hh$-modules irréductibles~:
\begin{eqnarray}
\label{BeckySharp}
\boldsymbol{\xi}_{\rho,n}(\pi_{1}\times\pi_{2})&\simeq&
\Hom_{\Hh_{(n_1,n_2)}}(\Hh,\boldsymbol{\xi}_{\rho,n_1}(\pi_1)
\otimes\boldsymbol{\xi}_{\rho,n_{2}}(\pi_2)),\\
\label{BeckySharp2}
\boldsymbol{\xi}_{1_{\F^{\prime\times}},n}(\pi'_{1}\times\pi'_{2})
&\simeq&\Hom_{\Hh_{(n_1,n_2)}}(\Hh,
\boldsymbol{\xi}_{1_{\F^{\prime\times}},n_1}(\pi'_1)\otimes
\boldsymbol{\xi}_{1_{\F^{\prime\times}},n_{2}}(\pi'_2)).
\end{eqnarray}
D'après la proposition \ref{ExistenceHeckeProp}, 
le membre de droite de \eqref{BeckySharp2}
correspond à celui de \eqref{BeckySharp}.
On déduit de la proposition \ref{EgaIndParCasIrrBisens} 
que $\pi'_{1}\times\pi'_{2}$ est irréductible, 
puis qu'on a une égalité entre 
$\boldsymbol{\xi}_{\rho,n}(\pi_{1}\times\pi_{2})$ et 
$\boldsymbol{\xi}_{1_{\F^{\prime\times}},n}(\pi'_{1}\times\pi'_{2})$.
\end{proof}

\begin{coro}
Reprenons les hypothèses de la proposition \ref{MisMac}.
Supposons que tous les sous-quotients irréductibles de 
$\pi_{1}\times\pi_{2}$ sont dans $\Irr(\Om_{\rho,n})^\q$, et que 
tous les sous-quotients ir\-ré\-ductibles de 
${\bf\Phi}_{\rho,n_1}(\pi_{1})\times{\bf\Phi}_{\rho,n_2}(\pi_{2})$ 
sont dans $\Irr(\Om_{1_{\F'^{\times}},n})^\q$. 
Alors~:
\begin{enumerate}
\item
Les représentations $\pi_{1}\times\pi_{2}$ et 
${\bf\Phi}_{\rho,n_1}(\pi_{1})\times{\bf\Phi}_{\rho,n_2}(\pi_{2})$ 
ont la même longueur.
\item
$\pi_{1}\times\pi_{2}$ est indécomposable si et seulement si
${\bf\Phi}_{\rho,n_1}(\pi_{1})\times{\bf\Phi}_{\rho,n_2}(\pi_{2})$ l'est. 
\end{enumerate}
\end{coro}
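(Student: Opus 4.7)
L'id�e est de traduire les deux c�t�s de l'�nonc� en des $\Hh(n,\qr)$-modules qui sont naturellement isomorphes, puis de montrer que les deux repr�sentations en jeu se trouvent dans une sous-cat�gorie o� les foncteurs $\Mm$ et $\Mm'$ induisent une �quivalence de cat�gories, ce qui permet de transf�rer les questions de longueur et d'ind�composabilit�.

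Pour la premi�re �tape, on note $\pi'_i={\bf\Phi}_{\rho,n_i}(\pi_i)$, $i=1,2$.
Comme $\pi_1$ et $\pi_2$ sont irr�ductibles, donc admissibles, la repr�sentation
$\pi_1\otimes\pi_2$ est admissible et engendr�e par sa composante $\bl_\a$-isotypique.
La proposition \ref{RussSamProp} fournit des isomorphismes de $\Hh$-modules~:
\begin{equation*}
\Mm(\pi_1\times\pi_2)\simeq
\Hom_{\Hh_{\a}}(\Hh,\boldsymbol{\xi}_{\rho,n_1}(\pi_1)\otimes\boldsymbol{\xi}_{\rho,n_2}(\pi_2)),
\end{equation*}
et analoguement pour $\pi'_1\times\pi'_2$ sur $\G'$.
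Gr�ce � la proposition \ref{ExistenceHeckeProp}, les isomorphismes
$\Psi_{\rho,n}$ et $\Psi_{1_{\F'^{\times}},n}$ permettent
d'identifier $\Hh$ et $\Hh'$ � une m�me copie de $\Hh(n,\qr)$, compatiblement avec les plongements
depuis les $\Hh_{\a}$, $\Hh'_{\a}$.
Par d�finition m�me de ${\bf\Phi}_{\rho,n_i}$, on a
$\boldsymbol{\xi}_{\rho,n_i}(\pi_i)=\boldsymbol{\xi}_{1_{\F'^{\times}},n_i}(\pi'_i)$
comme $\Hh(n_i,\qr)$-modules, de sorte que $\Mm(\pi_1\times\pi_2)$ et $\Mm'(\pi'_1\times\pi'_2)$
sont naturellement isomorphes comme $\Hh(n,\qr)$-modules.

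Pour la seconde �tape, on montre que $\pi_1\times\pi_2$
appartient � la sous-cat�gorie de $\Rr(\G)$ form�e des repr�sentations
sans $\Q$-torsion qui sont quotients d'une somme directe de copies de
$\Q=\cind_\BJ^\G(\bl)$.
La repr�sentation $\pi_1\times\pi_2$ est de longueur finie et engendr�e par sa composante
$\bl$-isotypique d'apr�s la proposition \ref{ScotErigene}~; c'est donc un quotient
d'une somme directe de copies de $\Q$.
Pour v�rifier l'absence de $\Q$-torsion, soit $\W$ une sous-repr�sentation non nulle
de $\pi_1\times\pi_2$~; elle est de longueur finie et admet donc une sous-repr�sentation
irr�ductible $\tau$, qui est par hypoth�se dans $\Irr(\Om_{\rho,n})^\q$.
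D'apr�s la proposition \ref{BoyerDArgens}, $\tau$ contient $\bl$, et par cons�quent
$\Hom_\G(\Q,\W)\neq 0$.
Un argument identique s'applique � $\pi'_1\times\pi'_2$ gr�ce � l'hypoth�se
correspondante sur ses sous-quotients irr�ductibles.

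Il reste � conclure. D'apr�s la proposition \ref{pptf1} et le th�or�me \ref{qptf}(1),
le foncteur $\Mm$ induit une �quivalence entre la sous-cat�gorie construite ci-dessus
et la cat�gorie des $\Hh$-modules � droite~; en particulier, $\Mm$ est exact sur
cette sous-cat�gorie, transforme les suites de composition en suites de composition et
pr�serve les d�compositions directes. Il s'ensuit que $\pi_1\times\pi_2$ et
$\Mm(\pi_1\times\pi_2)$ ont m�me longueur et sont indissolublement ind�composables
ou non ind�composables. Comme la m�me conclusion vaut pour $\pi'_1\times\pi'_2$ et
$\Mm'(\pi'_1\times\pi'_2)$, et comme les deux modules de Hecke en jeu sont
isomorphes par l'�tape 1, on obtient simultan�ment l'�galit� des longueurs et
l'�quivalence des propri�t�s d'ind�composabilit�.

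La principale difficult� technique se situe dans la v�rification que
$\pi_1\times\pi_2$ n'a pas de $\Q$-torsion~: il est essentiel que tous les
sous-quotients irr�ductibles se trouvent dans $\Irr(\Om_{\rho,n})^\q$
(et non pas seulement les quotients), combin� � la finitude de la longueur
et � l'�galit� entre contenir $\bl$ et appartenir � $\Irr(\Om_{\rho,n})^\q$
fournie par la proposition \ref{BoyerDArgens}.
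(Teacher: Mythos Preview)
Your approach is essentially the paper's, and Steps~1 and~2 are correct (the identification of the two Hecke modules via Proposition~\ref{RussSamProp} and Proposition~\ref{ExistenceHeckeProp} is exactly what underlies the paper's argument, and your verification of $\Q$-torsion-freeness is the right one). There is, however, a subtle gap in Step~3 for the \emph{length} comparison.

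You invoke the equivalence of Theorem~\ref{qptf}(1) between the subcategory $\Cc$ of $\Q$-torsion-free quotients of sums of copies of $\Q$ and the category of $\Hh$-modules, and conclude that $\Mm$ ``transforme les suites de composition en suites de composition''. This is true for composition series \emph{in $\Cc$}, but the length you care about is the length in $\Rr(\G)$. For these to coincide you would need every term $\V_i$ of a composition series of $\pi_1\times\pi_2$ in $\Rr(\G)$ to lie in $\Cc$, i.e.\ to be generated by its $\bl$-isotypic component. Your Step~2 establishes this for $\pi_1\times\pi_2$ itself (via Proposition~\ref{ScotErigene}) and shows $\Q$-torsion-freeness for every subrepresentation, but it does not show that an arbitrary subrepresentation $\V_i$ is generated by its $\bl$-isotypic part --- and in the modular setting this is not automatic.

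The paper avoids this by using Proposition~\ref{blablabla} for the length, which rests on the exactness of $\Mm$ on the larger category $\Ee(\BJ,\bl)$ (Proposition~\ref{Cocyte2}). Since $\Ee(\BJ,\bl)$ \emph{is} closed under subquotients in $\Rr(\G)$, all terms of a composition series of $\pi_1\times\pi_2$ lie in $\Ee(\BJ,\bl)$; applying the exact functor $\Mm$ yields a filtration of $\Mm(\pi_1\times\pi_2)$ whose successive quotients are the $\Mm(\V_{i+1}/\V_i)$, each nonzero and simple by the hypothesis and Theorem~\ref{qptf}(2), hence a composition series of the same length. For indecomposability your use of Theorem~\ref{qptf}(1) is fine --- direct summands of $\pi_1\times\pi_2$ are simultaneously subobjects and quotients, hence do lie in $\Cc$ --- and this is precisely what the paper does.
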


\begin{proof}
On fixe une suite de composition~:
\begin{equation*}
0=\V_0\subsetneq\V_1\subsetneq\dots\subsetneq\V_r=\pi_{1}\times\pi_{2}
\end{equation*}
où les $\V_i$ sont des sous-représentations de $\pi_{1}\times\pi_{2}$ telles 
que, pour $i\in\{0,\dots,r-1\}$, le quotient $\V_{i+1}/\V_i$ soit 
irréductible. 
Par hypothèse, ces sous-quotients sont dans $\Irr(\Om_{\rho,n})^\q$. 
On déduit de la proposition \ref{blablabla} que la longueur de 
$\Mm_{}(\pi_{1}\times\pi_{2})$ est égale à la longueur de 
$\pi_{1}\times\pi_{2}$.  
De façon analogue, la longueur de~:
\begin{equation}
\label{A5}
\Mm'({\bf\Phi}^{}_{\rho,n_1}(\pi_{1})\times{\bf\Phi}^{}_{\rho,n_2}(\pi_{2}))
\end{equation}
est égale à celle de 
${\bf\Phi}_{\rho,n_1}(\pi_{1})\times{\bf\Phi}_{\rho,n_2}(\pi_{2})$.  
Le résultat se déduit du fait que $\Mm_{}(\pi_{1}\times\pi_{2})$ et 
\eqref{A5}
sont isomorphes (voir la proposition \ref{ExistenceHeckeProp}) 
donc ont la même longueur.
Pour l'indécomposabilité, on procède de façon analogue en utilisant 
le point 1 du théorème \ref{qptf}.  
\end{proof}

\subsection{Le caractère $\nu_{\rho}$ associé à une représentation cuspidale}
\label{NURHO}

Soit $m\>1$ un entier, et soit $\rho$ une représentation 
ir\-ré\-duc\-ti\-ble cus\-pi\-da\-le de $\G=\G_{m}$. 
Au para\-gra\-phe \ref{InvRho}, on a associé à $\rho$ des invariants numériques 
$n(\rho),s(\rho),f(\rho)\>1$.
Soit $\nu$ le caractère non ramifié de $\G$ défini au paragraphe 
\ref{LeMepris}. 
On pose~:
\begin{equation*}
\nu_{\rho}=\nu^{s(\rho)},
\end{equation*}
qui ne dépend que de la classe d'inertie de $\rho$, et on pose~:
\begin{equation*}
\ZZ_{\rho}=\{\sy{\rho\nu_{\rho}^{i}}\ |\ i\in\ZZ\}.
\end{equation*}
On note~:
\begin{equation}
\label{ERHO}
\ee(\rho)
\end{equation}
l'ordre de $\qr$ dans $\R$ (égal à $+\infty$ si $\R$ est de caractéristique 
nulle). 
Les quantités $\qr$ et $\ee(\rho)$ ne dépendent 
que de la classe d'iner\-tie de $\rho$.
On prouve le résultat important suivant, qui justifie l'introduction 
de $\nu_{\rho}$.

\begin{prop}
\label{cuspidal}
Soit $\rho'$ une représentation irréductible cus\-pi\-da\-le 
de $\G_{m'}$, $m'\>1$.
Alors l'in\-duite $\rho\times\rho'$ est réductible si et seulement si 
$\rho'$ est isomorphe à $\rho\nu_\rho^{}$ ou $\rho\nu_{\rho}^{-1}$.
\end{prop}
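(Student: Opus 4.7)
The plan is to combine Theorem \ref{ss} with the change-of-group technique of paragraph \ref{ChgtGp} to reduce to a rank-one affine Hecke algebra computation controlling the reducibility of an unramified principal series of $\GL_2(\F')$.

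First I would observe that the cuspidal support of $\rho\times\rho'$ is $\sy\rho+\sy{\rho'}$. If $\rho$ and $\rho'$ are not inertially equivalent, Theorem \ref{ss}(1), applied with $r=2$, gives at once that $\rho\times\rho'$ is irreducible. Reducibility thus forces $m'=m$ and $\rho'\simeq\rho\chi$ for some unramified character $\chi$ of $\G_m$, and only the case $\rho'=\rho\chi$ remains.

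Fix next a finite extension $\F'/\F$ whose residue field has cardinality $\qr$, set $\G'=\GL_2(\F')$, and apply Proposition \ref{MisMac} with $n_1=n_2=1$, $\pi_1=\rho$, $\pi_2=\rho\chi$: the representation $\rho\times\rho\chi$ is irreducible if and only if $\psi_1\times\psi_2$ is irreducible in $\G'$, where $\psi_i={\bf\Phi}_{\rho,1}(\pi_i)$ are unramified characters of $\F'^\times$. By Lemma \ref{JustAbove} we have $\boldsymbol{\xi}_{\rho,1}(\rho\chi)=\chi(\w_\l)^{-1}$, and the formula recalled in paragraph \ref{InvRho} says that the valuation of the reduced norm of $\w_\l$ equals $f(\rho)/s(\rho)$; hence $\nu_\rho(\w_\l)=\nu(\w_\l)^{s(\rho)}=\qr^{-1}$, so ${\bf\Phi}_{\rho,1}$ sends $\rho\nu_\rho^{\pm1}$ to the unramified character $\psi'$ of $\F'^\times$ with $\psi'(\w')=\qr^{\mp1}$ (where $\w'$ is a uniformizer of $\F'$), that is, to the normalized absolute value of $\F'^\times$ and to its inverse.

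The remaining ingredient is the reducibility criterion for unramified principal series of $\GL_2(\F')$: such a series $\psi_1\times\psi_2$ is reducible if and only if $\psi_2(\w')/\psi_1(\w')=\qr^{\pm1}$. Via Corollary \ref{propbn3} and the isomorphism $\Psi_{\rho,2}$ of Proposition \ref{ExistenceHeckeProp}, this is equivalent to an irreducibility question for the $\Hh(2,\qr)$-module induced from the character $\psi_1\otimes\psi_2$ of $\Hh((1,1),\qr)$. Once this criterion is in hand, combining it with the identification of $\nu_\rho$ in the previous paragraph gives exactly the dichotomy of the proposition. The main obstacle of the plan is precisely this last step: the rank-one affine Hecke algebra computation must be carried out explicitly in the modular setting, where one cannot invoke semisimplicity, and where the degenerate case $\qr=1$ in $\R$ may occur; however it amounts to a direct manipulation of the quadratic relation $(\SS+1)(\SS-\qr)=0$ together with the cross-relation $\SS\X_1\SS=\qr\X_2$, and Corollary \ref{InvSupUni} provides the intertwining operator needed to detect reducibility.
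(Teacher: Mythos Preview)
Your argument is correct and rests on the same key ingredients as the paper's proof: the reduction to the inertially equivalent case via Theorem~\ref{ss}, the identification $\nu_\rho(\w_\l)^{-1}=\qr$ through Lemma~\ref{JustAbove} and the valuation of the reduced norm of $\w_\l$, and the explicit rank-one affine Hecke algebra computation in $\Hh(2,\qr)$.

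The route differs slightly. You pass through the change-of-group bijection ${\bf\Phi}_{\rho,n}$ of paragraph~\ref{ChgtGp}, transferring the question to unramified principal series of $\GL_2(\F')$ via Proposition~\ref{MisMac}, and then carry out the Hecke computation there. The paper instead works directly: it applies Proposition~\ref{EgaIndParCasIrrBisens} to equate reducibility of $\rho\times\rho\chi$ with that of the $\Hh$-module $\Mm_{\bl}(\rho\times\rho\chi)$, identifies this two-dimensional module as induced from the character $1\otimes\chi(\w_\l)^{-1}$ of $\Hh_\M$ via Proposition~\ref{RussSamProp}, and reads off the reducibility criterion from the quadratic relation. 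Since Proposition~\ref{MisMac} is itself proved from Propositions~\ref{EgaIndParCasIrrBisens} and~\ref{RussSamProp}, your detour through $\GL_2(\F')$ unpacks to the same computation; the paper's route is simply a bit more economical. The remark following the proposition in the paper explicitly notes that one \emph{could} reduce to the split case (and then cite Vign\'eras), but prefers the direct method to avoid any reliance on the theory of derivatives; your version avoids that reliance too, since you redo the Hecke computation by hand. The reference to Corollary~\ref{propbn3} is not needed here (the type is homogeneous), and Corollary~\ref{InvSupUni} is not really what drives the reducibility detection either---what matters is that the two-dimensional induced module is reducible exactly when the quadratic relation forces $\SS$ to have an eigenvalue compatible with the $\X_i$-action, i.e.\ when $\chi(\w_\l)\in\{\qr,\qr^{-1}\}$.
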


\begin{rema}
Dans le cas complexe, le résultat est donné par \cite[Theorem 4.6]{VSU0}
dont la preuve consiste à se ramener au groupe $\GL_n(\F)$, $n\>1$ 
pour lequel le résultat est dû à Bernstein-Zelevinski.  
Dans le cas modulaire, nous pourrions suivre la même méthode de réduction 
au cas déployé 
et appliquer \cite[III.1.15]{Vig1}, qui est prouvé comme dans le cas 
complexe grâce à la théorie des dérivées.  
Nous préférons utiliser une méthode plus directe qui ne s'appuie pas sur la théorie 
des dérivées. 
\end{rema}


\begin{proof}
D'après le théorème \ref{ss}, il suffit de traiter le cas où $\rho$ et $\rho'$ 
sont iner\-tielle\-ment équivalentes, \ie que 
$\rho$ et $\rho'$ contiennent un même type simple maximal 
$(\J,\l)$.  
On forme la paire~:
\begin{equation*}
(\J_\M,\l_\M)=(\J\times\J,\l\otimes\l)
\end{equation*} 
avec 
$\M=\G\times\G$ puis un type semi-simple $(\BJ,\bl)$ 
comme au paragraphe \ref{DiaComIndPar} avec $n=2$. 
Écrivons $\rho'$ sous 
la forme $\rho\chi$ avec $\chi$ un caractère non ramifié de $\G$.

D'après la proposition \ref{EgaIndParCasIrrBisens},
la représentation induite
$\rho\times\rho\chi$ est réductible si et seulement si le $\Hh$-module 
$\Mm_\bl(\rho\times\rho\chi)$ est réductible.
D'après la proposition \ref{RussSamProp} et le lemme \ref{JustAbove}, 
ce $\Hh$-module est induit à partir du caractère 
$1\otimes\chi(\w_{\l})^{-1}$ de $\Hh_\M$. 
Il est de dimension $2$ sur $\R$. 
Il est donc réductible si et seulement s'il contient un caractère, 
ce qui, compte tenu de la description de $\Hh$ par générateurs et 
relations au paragraphe \ref{ChgtGp},
est le cas si et seulement si 
$\chi(\w_{\l})$ est égal à $\qr$ ou $\qr^{-1}$. 
Un calcul simple montre que la valuation de la norme réduite de 
$\w_{\l}\in\G$ est égale à $f(\rho)s(\rho)^{-1}$, ce dont on déduit 
l'égalité~: 
\begin{equation}
\label{Pekun}
\nu_\rho(\w_{\l})^{-1}=\qr.
\end{equation}
Le résultat se déduit du fait que, pour qu'un caractère non ramifié $\xi$ de $\G$ 
vérifie $\rho\xi\simeq\rho$, il faut et il suffit que $\xi(\w_{\l})=1$. 
\end{proof}

\begin{coro}
\label{Orleans}
On utilise les notations du paragraphe \ref{ChgtGp}. 
Soient $a,b\in\ZZ$ tels que $a\<b$. 
Alors $\boldsymbol{\xi}_n$ induit une bijection entre 
sous-représentations irréductibles de 
$\rho\nu_\rho^{a}\times\rho\nu_\rho^{a+1}\times\dots\times\rho\nu_\rho^b$ 
et sous-modules irré\-duc\-ti\-bles du module induit~:
\begin{equation*}
\Hom_{\Hh_{(1,\dots,1)}}(\Hh,\qr^a\otimes\qr^{a+1}\otimes\dots\otimes\qr^b).
\end{equation*}
\end{coro}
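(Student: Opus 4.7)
Le plan est d'appliquer directement le lemme \ref{lemmeLPNTS} au cas particulier o� les caract�res non ramifi�s sont $\chi_j=\nu_\rho^{a+j-1}$ pour $j\in\{1,\dots,n\}$, avec $n=b-a+1$. On pose $\s=\rho\nu_\rho^{a}\otimes\rho\nu_\rho^{a+1}\otimes\dots\otimes\rho\nu_\rho^b$ comme repr�sentation du sous-groupe de Levi standard $\M_{(m,\dots,m)}$ de $\G_{mn}$, de sorte que l'induite parabolique $\ip_{(m,\dots,m)}(\s)$ co�ncide avec $\rho\nu_\rho^{a}\times\dots\times\rho\nu_\rho^b$.

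La seule v�rification concr�te � effectuer est le calcul de $\boldsymbol{\xi}_{\rho,1}(\rho\nu_\rho^i)$ pour $i\in\ZZ$. D'apr�s le lemme \ref{JustAbove}, on a $\boldsymbol{\xi}_{\rho,1}(\rho\chi)=\chi(\w_\l)^{-1}$ pour tout caract�re non ramifi� $\chi$ de $\G_m$. La formule \eqref{Pekun}, obtenue en cours de preuve de la proposition \ref{cuspidal}, donne $\nu_\rho(\w_\l)^{-1}=\qr$, d'o� imm�diatement $\boldsymbol{\xi}_{\rho,1}(\rho\nu_\rho^i)=\qr^i$ pour tout $i\in\ZZ$.

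Il suffit alors d'invoquer le lemme \ref{lemmeLPNTS} appliqu� � la famille $(\chi_1,\dots,\chi_n)=(\nu_\rho^{a},\dots,\nu_\rho^{b})$~: il fournit une bijection, induite par $\boldsymbol{\xi}_{\rho,n}$, entre les sous-repr�sentations irr�ductibles de $\ip_{(m,\dots,m)}(\s)$ et les sous-modules irr�ductibles de $\Hom_{\Hh_{(1,\dots,1)}}(\Hh,\boldsymbol{\xi}_{\rho,1}(\rho\chi_1)\otimes\dots\otimes\boldsymbol{\xi}_{\rho,1}(\rho\chi_n))$, qui est exactement $\Hom_{\Hh_{(1,\dots,1)}}(\Hh,\qr^a\otimes\qr^{a+1}\otimes\dots\otimes\qr^b)$ gr�ce au calcul pr�c�dent. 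Aucune �tape n'est ici r�ellement d�licate~: tout le travail conceptuel a �t� effectu� plus haut, la proposition \ref{RussSamProp} fournissant l'isomorphisme de $\Hh$-modules entre $\Mm(\ip(\s))$ et le module induit, et la proposition \ref{blablabla} (ou \ref{blablablaBK}) assurant la compatibilit� de $\Mm_\bl$ au passage aux sous-quotients irr�ductibles.
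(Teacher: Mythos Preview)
Your proof is correct and follows exactly the same approach as the paper, which simply cites les lemmes \ref{lemmeLPNTS} et \ref{JustAbove} et la formule \eqref{Pekun}. Your additional remarks on the propositions \ref{RussSamProp} and \ref{blablablaBK} merely recall why the lemme \ref{lemmeLPNTS} holds, and are consistent with how the paper obtains that lemma.
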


\begin{rema}
On a un résultat analogue en remplaçant sous-représentation et sous-mo\-du\-le 
par représentation quotient et module quotient.  
\end{rema}

\begin{proof}
On utilise les lemmes \ref{lemmeLPNTS} et \ref{JustAbove} 
et la formule \eqref{Pekun}.
\end{proof}

\begin{rema}
\label{SurLeCalculDeERho}
D'après la remarque \ref{CSProjcInd}, si 
$\ee(\rho)>1$, l'induite $\ind^\G_\J(\l)$ est 
projective pour tout type simple maximal $(\J,\l)$ 
contenu dans $\rho$. 
\end{rema}

On termine cette section sur la relation importante suivante, 
qui est nécessaire à la classification des 
représentations irréductibles cuspidales en fonction des 
supercuspidales dans \cite{MS12}. 

\begin{lemm}
\label{BienAise}
On a $\ee(\rho)={\rm card}\ \ZZ_\rho$. 
\end{lemm}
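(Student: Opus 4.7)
The plan is to reduce the computation of the cardinal of $\ZZ_\rho$ to a direct evaluation of $\nu_\rho$ at the element $\w_\l\in\G$ used throughout Section \ref{InvRho}. By definition, $\mathrm{card}\ \ZZ_\rho$ is the smallest positive integer $i$ such that $\rho\nu_\rho^i\simeq\rho$, with the convention that it is $+\infty$ if no such $i$ exists.

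First I would recall from paragraph \ref{InvRho} the characterisation: for any non-ramified $\R$-character $\chi$ of $\G$, one has $\rho\chi\simeq\rho$ if and only if $\chi$ is trivial on $\NJ$, which is equivalent to $\chi(\w_\l)=1$. Applying this to $\chi=\nu_\rho^i$, we see that
\begin{equation*}
\rho\nu_\rho^i\simeq\rho\quad\Longleftrightarrow\quad\nu_\rho(\w_\l)^i=1\text{ in }\mult\R.
\end{equation*}

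Next I would invoke the identity \eqref{Pekun}, already established in the proof of Proposition \ref{cuspidal}, which gives $\nu_\rho(\w_\l)^{-1}=\qr$. Substituting this into the condition above yields
\begin{equation*}
\rho\nu_\rho^i\simeq\rho\quad\Longleftrightarrow\quad \qr^{i}=1\text{ in }\mult\R,
\end{equation*}
and by definition of $\ee(\rho)$ (see \eqref{ERHO}) the latter holds precisely when $\ee(\rho)$ divides $i$ (this being vacuous, hence $+\infty$, when $\R$ has characteristic zero, in which case $\qr>1$ is an integer of infinite order in $\mult\R$). It follows that $\mathrm{card}\ \ZZ_\rho=\ee(\rho)$.

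There is no essential obstacle here: the content of the lemma is entirely contained in the normalisation chosen for the character $\nu_\rho$, and the argument is essentially a single-line verification once one has identified $\nu_\rho(\w_\l)^{-1}$ with $\qr$. The only delicate point is to remember that the definition of $\nu$ at the start of paragraph \ref{LeMepris} sends $1\in\ZZ$ to $q^{-1}\in\mult\R$, so that $\nu_\rho(\w_\l)=\nu(\w_\l)^{s(\rho)}=q^{-f(\rho)}$, which is precisely $\qr^{-1}$ in agreement with \eqref{Pekun}.
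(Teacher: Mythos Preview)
Your argument is correct and in fact cleaner than the one given in the paper. You go straight to the point: the cardinal of $\ZZ_\rho$ is the order of the image of $\nu_\rho$ in the group of unramified characters modulo the stabiliser of $\rho$, and since evaluation at $\w_\l$ identifies this quotient with a subgroup of $\mult\R$ (the stabiliser being exactly the kernel of this evaluation, as recalled in \S\ref{InvRho}), the cardinal of $\ZZ_\rho$ equals the multiplicative order of $\nu_\rho(\w_\l)=\qr^{-1}$, which is $\ee(\rho)$ by definition.

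The paper instead works in positive characteristic only, lets $e$ be the order of $q$ in $\mult{\FF_\ell}$, and computes $\mathrm{card}\,\ZZ_\rho$ via an orbit--stabiliser count for the action of $\langle\nu_\rho\rangle$ on $\Om_\rho$: this yields $e/(e,n(e,s))$, which is then matched with $\ee(\rho)=e/(e,f)$ using an auxiliary gcd identity $(e,n(e,s))=(e,ns)$ together with the relation \eqref{Wigmore}. Your approach bypasses both the gcd lemma and the appeal to \eqref{Wigmore} by invoking \eqref{Pekun} directly; what the paper's computation buys, if anything, is an explicit formula for $\mathrm{card}\,\ZZ_\rho$ in terms of the elementary invariants $e,n,s$ separately, but for the statement at hand your route is strictly shorter.
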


\begin{proof}
On suppose que $\R$ est de caractéristique $\ell$ non nulle 
et on note $e$ l'ordre de $q$ dans $\mult\FF_{\ell}$. 
Si $a,b$ sont des entiers $\>1$, on note $(a,b)$ leur plus grand 
diviseur commun. 
Pour alléger les notations, on pose $n=n(\rho)$, $s=s(\rho)$ et 
$f=f(\rho)$. 

\begin{lemm}
\label{CalculPGCD}
On a $(e,n(e,s))=(e,ns)$.
\end{lemm}

\begin{proof}
Soit $u\>1$ un entier divisant $e$ et $ns$. 
Écrivons $u_1=(u,n)$ et $u_2=u/(u,n)$. 
On a donc $u=u_1u_2$ avec $u_1$ divisant $n$ et $u_2$ divisant $s$. 
Comme $u$ divise $e$, l'entier $u_2$ divise $(e,s)$, donc $u$ divise $n(e,s)$, 
ce qui prouve le résultat attendu. 
\end{proof}

Pour calculer le cardinal de $\ZZ_\rho$, on fait opérer sur la classe 
inertielle $\Om_\rho$ le groupe cyclique en\-gen\-dré par $\nu_\rho$. 
Ce groupe cyclique est d'ordre $e/(e,s)$. 
On obtient donc~:
\begin{equation*}
{\rm card}\ \ZZ_\rho=
\frac{e/(e,s)}{\(n,e/(e,s)\)}=\frac{e}{(e,n(e,s))}
=\frac{e}{(e,ns)},
\end{equation*}
la dernière égalité provenant du lemme \ref{CalculPGCD}. 
Par définition, l'entier $\ee(\rho)$ est égal à $e/(e,f)$. 
Le résultat provient alors de la formule \eqref{Wigmore} et du fait que 
$e$ est premier à $\ell$. 
\end{proof}

\section{Le foncteur $\KM$}
\label{NaisKM}

Dans cette section, nous introduisons un outil technique important 
permettant de lier la théo\-rie des représentations 
de $\G=\G_m$, $m\>1$ 
à celle d'un groupe linéaire général sur un corps fini de caractéristique 
$p$. 

Soit $[\La,n,0,\b]$ une strate sim\-ple de $\A=\A_m$, et soit $\k$ une 
$\b$-extension d'un caractère simple $\t\in\Cc(\La,0,\b)$. 
Elle définit un foncteur exact~:
\begin{equation*}
\KM=\KM_\k:\pi\mapsto\Hom_{\J^1}(\k,\pi)
\end{equation*}
de $\Rr_\R(\G)$ dans la catégorie des représentations de $\J/\J^1$. 
Dans le cas où le sous-groupe paraho\-rique $\U(\La)\cap\mult\B$ est 
maximal, le quotient $\J/\J^1$ peut être identifié (non canoniquement) 
au grou\-pe $\GB=\GL_{m'}(\ff_{\D'})$ moyennant le choix d'un 
isomorphisme $\Phi$ comme en \eqref{MonEpousee}.
Ce foncteur permet de ramener à $\G$ certaines propriétés connues 
des représentations de $\GB$.
Cette section est con\-sacrée à l'étude des propriétés de ce 
foncteur, qui est utilisé dans \cite{MS12}. 

Ce foncteur a déjà été utilisé dans \cite{Vig1} pour étudier les 
représentations de Steinberg générali\-sées de $\GL_n(\F)$
(voir par exemple le lemme 5.13 de \cite{Vig1}),
et de façon plus systématique 
dans \cite{SZ} pour définir une stratification de la catégorie des 
représentations complexes de $\GL_n(\F)$ affinant la décomposition de 
Bernstein.

Au paragraphe \ref{Brehier}, nous établissons des conditions d'annulation 
importantes du foncteur $\KM$, énoncées en termes d'endo-classes. 
Dans les paragraphes \ref{ComPIP} et \ref{Holveg}, 
nous prouvons la compatibi\-lité de $\KM$ 
à l'induction et à la restriction paraboliques. 

Pour le groupe $\GL_n(\F)$, la compatibilité de $\KM$ 
à l'induction parabolique est prouvée dans 
\cite{SZ} dans le cas complexe (voir \textit{ibid.}, Proposition 5.7)
mais la preuve qui y est donnée ne s'applique pas au cas modulaire
(car elle repose sur certaines équivalences de catégories provenant 
des types simples).
Dans le cas modulaire, elle est prouvée dans \cite{Vig1} 
dans un cas particulier (voir \textit{ibid.}, Lemme 5.12). 

A notre connaissance, la compatibilité de $\KM$ 
à la restriction parabolique pour $\GL_n(\F)$ n'était 
connue ni dans le cas complexe, ni dans le cas modulaire. 

\subsection{Définition} 
\label{PremPara}

Dans toute cette section, nous fixons une strate simple 
$[\La_{{\rm max}},n_{{\rm max}},0,\b]$ de $\A$
telle que l'inter\-sec\-tion $\U(\La_{{\rm max}})\cap\mult\B$ 
soit un sous-groupe compact maximal de $\mult\B$, 
et un caractère simple~:
\begin{equation}
\label{CSM}
\tmax\in\Cc(\La_{{\rm max}},0,\b).
\end{equation}
Un tel caractère simple de $\G$ est dit \textit{maximal}. 

\begin{rema}
\label{IanMcEwan}
Un caractère simple est maximal si et seulement son induite compacte à $\G$ 
possède un quotient irréductible cuspidal. 
En effet, si $\tmax$ est un caractère simple maximal de $\G$, l'in\-dui\-te compacte à 
$\G$ d'un type simple contenant $\tmax$ possède un quotient irréductible cus\-pi\-dal 
d'après la proposition \ref{MartinDuGard}. 
Inversement, si $\t$ est un caractère simple apparaissant dans une 
représentation irréductible cuspidale $\rho$, les preuves des théo\-rèmes 
\ref{TheZer} et \ref{ThePos} montrent que $\t$ est maximal 
et que $\rho$ contient un type simple maximal de la forme $\k\otimes\s$
avec $\k$ une $\b$-ex\-ten\-sion de $\t$.
\end{rema}

Ce caractère simple maximal étant fixé, 
on fixe comme dans le paragraphe \ref{Pontesprit} un isomorphisme de
$\E$-algèbres $\Phi:\B\to\Mat_{m'}(\D')$
envoyant $\U(\La_{{\rm max}})\cap\mult\B$ 
sur le sous-groupe compact maximal standard $\GL_{m'}(\Oo_{\D'})$.  
Fixons aussi une $\b$-ex\-ten\-sion $\kmax$ de $\tmax$ et posons~:
\begin{equation*}
\J^{}_{{\rm max}}=\J(\b,\La^{}_{{\rm max}}),
\quad
\J^{1}_{{\rm max}}=\J^1(\b,\La^{}_{{\rm max}}),
\quad 
\GB=\J^{}_{{\rm max}}/\J^{1}_{{\rm max}}.
\end{equation*}
Le groupe $\GB$ est canoniquement isomorphe à 
$(\U(\La_{{\rm max}})\cap\mult\B)/(\U^1(\La_{{\rm max}})\cap\mult\B)$ 
et sera iden\-tifié au grou\-pe $\GL_{m'}(\ff_{\D'})$ grâce à $\Phi$.
Étant donnée une représentation lisse $(\pi,\V)$ de $\G$, on pose~:
\begin{equation*}
\label{DefKaMax}
\V(\kmax)=\Hom_{\J^1_{{\rm max}}}(\kmax,\V)
\end{equation*}
que l'on munit de l'action de $\J_{{\rm max}}$ définie, 
pour $x\in\J_{{\rm max}}$ et $f\in\V(\kmax)$,
par la formule~:
\begin{equation*}
\label{ACTION}
x\cdot f=\pi(x)\circ f\circ\kmax(x)^{-1}.
\end{equation*}
Pour cette action, $\J^{1}_{{\rm max}}$ opère trivialement, 
de sorte que cette formule définit une représentation 
de $\GB$ sur $\V(\kmax)$, que l'on note $\pi(\kmax)$. 
On définit ainsi un foncteur~:
\begin{equation*}
\label{Georgie}
\KM=\KM_{\kmax,\Phi}:\pi\mapsto\pi(\kmax)
\end{equation*}
de $\Rr(\G)$ dans la catégorie $\Rr(\GB)$ des $\R$-représentations de $\GB$. 
Comme $\J^1_{{\rm max}}$ est un pro-$p$-groupe, c'est un foncteur exact, 
et comme toute représentation lisse ir\-ré\-ductible de $\G$ est ad\-mis\-sible, 
il préserve le fait d'être de longueur finie.

\begin{rema}
Le foncteur $\KM$ dépend des choix de $\kmax$ et de $\Phi$. 
D'abord, choisissons une autre $\b$-extension~:
\begin{equation*}
\kmax'=\kmax^{}\otimes(\chi\circ\N_{\B/\E})
\end{equation*}
où $\chi$ est un carac\-tère de 
$\mult\Oo_\E$ trivial sur $1+\p_\E$ comme dans \eqref{StHonore}. 
Si $\pi$ est une représentation de $\G$, les 
représentations $\pi(\kmax^{})$ et $\pi(\kmax')$ sont tordues l'une de l'autre par 
le caractère $\chi\circ\nn\circ\det$ où 
$\nn$ est la norme de $\kk_{\D'}$ sur $\kk_\E$.
Si l'on fixe un autre isomorphisme 
$\Phi':\B\to\Mat_{m'}(\D')$ envoyant $\U(\La_{{\rm max}})\cap\mult\B$ sur 
$\GL_{m'}(\Oo_{\D'})$, 
le théorème de Skolem-Noether entraîne qu'il est conjugué à $\Phi$ 
sous le normalisateur de $\GL_{m'}(\Oo_{\D'})$. 
Ceci induit sur $\pi(\kmax)$ un automorphisme 
de conjugaison par un élément du produit semi-direct 
$\Ga\rtimes\GB$ avec $\Ga=\Gal(\ff_{\D'}/\ff_{\E})$.  
\end{rema}

\subsection{Conditions d'annulation} 
\label{Brehier}

On étudie maintenant des conditions d'annulation du foncteur $\KM$.
On commence par le cas simple suivant.

\begin{lemm}
\label{DeliceDOrient}
Soit $\rho$ une représentation irréductible cuspidale de $\G$.
\begin{enumerate}
\item 
Si $\rho$ ne contient pas $\tmax$, alors $\KM(\rho)=0$.
\item 
Sinon, il existe une représentation 
irréductible cuspidale $\s$ de $\GB$ telle que $\rho$ contien\-ne 
le type simple maximal $\kmax\otimes\s$, et on a un isomorphisme
de représentations de $\GB$~:
\begin{equation}
\label{KateVava}
\KM(\rho)\simeq\s\oplus\s^{\Fr}\oplus\dots\oplus\s^{\Fr^{b(\rho)-1}},
\end{equation}
où $b(\rho)\>1$ est l'invariant associé à $\rho$ au paragraphe 
\ref{InvRho}
et $\Fr$ un générateur du groupe de Galois de $\kk_{\D'}$ sur $\kk_{\E}$.  
\end{enumerate}
\end{lemm}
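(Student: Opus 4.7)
For part (1), the argument is immediate from Proposition~\ref{blaeta}. The restriction $\kmax|_{\J^{1}_{{\rm max}}}$ is the Heisenberg representation attached to $\tmax$, call it $\n$, and by Proposition~\ref{blaeta}(1) its restriction to $\H^{1}(\b,\La_{{\rm max}})$ is a multiple of $\tmax$. Hence any nonzero element of $\KM(\rho)=\Hom_{\J^{1}_{{\rm max}}}(\kmax,\rho)$ would, upon restriction to $\H^{1}(\b,\La_{{\rm max}})$, produce a nonzero morphism from $\tmax$ into $\rho$, contradicting the hypothesis that $\rho$ does not contain $\tmax$.

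For part (2), the plan is first to produce $\s$, then to compute $\KM(\rho)$ explicitly via a Mackey-type argument. To produce $\s$: the arguments in the proofs of Theorems~\ref{TheZer} and~\ref{ThePos} show that an irreducible cuspidal representation of $\G$ containing $\tmax$ contains a maximal simple type of the form $(\J_{{\rm max}},\k\otimes\s')$, with $\k$ some $\b$-extension of $\tmax$ and $\s'$ an irreducible cuspidal representation of $\GB$. The parameterization~\eqref{StHonore} yields $\k\simeq\kmax\otimes(\chi\circ\N_{\B/\E})$ for a character $\chi$ of $\mult\Oo_\E$ trivial on $1+\p_\E$, and absorbing this factor into $\s'$ produces an irreducible cuspidal representation $\s$ of $\GB$ with $\rho$ containing $\kmax\otimes\s$.

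To compute $\KM(\rho)$, write $\rho\simeq\cind^{\G}_{\tilde\K}(\tilde\l)$ via Proposition~\ref{MartinDuGard}, where $\tilde\K=\N_{\G}(\kmax\otimes\s)$ is generated over $\J_{{\rm max}}$ by $\w_\l=\w^{b(\rho)}$, with $\w$ a uniformizer of $\D'$ (Remark~\ref{Etoc2}). Lemma~\ref{LemCritV} identifies the $\n$-isotypic part of $\rho$, viewed as a $\tilde\K$-representation, with $\bigoplus_{i=0}^{b(\rho)-1}\tilde\l^{\w^{i}}$. Restricting to $\J_{{\rm max}}$ and invoking the equivalence of Lemma~\ref{MlleDeLaMole} between $\n$-isotypic representations of $\J_{{\rm max}}$ and representations of $\GB\simeq\J_{{\rm max}}/\J^{1}_{{\rm max}}$, given by $\xi\mapsto\kmax\otimes\xi$, one extracts an isomorphism $\KM(\rho)\simeq\bigoplus_{i=0}^{b(\rho)-1}\s^{\w^{i}}$. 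One finishes by identifying the conjugation action of $\w$ on $\GB\simeq\GL_{m'}(\kk_{\D'})$ with the entry-wise action of a generator $\Fr$ of $\Gal(\kk_{\D'}/\kk_\E)$, a standard fact about local division algebras. The main technical subtlety is the possible appearance of character twists arising from the fact that $\w$ normalizes the isomorphism class of $\kmax$ only up to a character of $\J_{{\rm max}}/\J^{1}_{{\rm max}}$; verifying that these twists can be consistently reabsorbed into the definition of $\s$ is the crucial bookkeeping step.
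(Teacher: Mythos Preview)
Your proposal is correct and follows essentially the same route as the paper: produce $\s$ via Remark~\ref{IanMcEwan} (which packages exactly the argument you sketch from the proofs of Theorems~\ref{TheZer} and~\ref{ThePos}), then invoke Lemma~\ref{LemCritV} to identify the $\n$-isotypic part of $\rho$ and read off $\KM(\rho)$. One remark: the ``technical subtlety'' you flag at the end is in fact a non-issue. By the very definition of a $\b$-extension, the intertwining set of $\kmax$ contains $\mult\B$; since $\w\in\mult\B$ normalizes $\J_{{\rm max}}$, the nonvanishing of $\Hom_{\J_{{\rm max}}}(\kmax,\kmax^{\w})$ forces $\kmax^{\w}\simeq\kmax$ on the nose, so no character twist appears and no bookkeeping is needed.
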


\begin{rema}
Dans le cas où $\D=\F$, on a toujours $b(\rho)=1$
et $\KM(\rho)=\s$ dans le cas 2.
\end{rema}

\begin{proof}
D'après la remarque \ref{IanMcEwan}, toute 
représentation irréductible cuspidale $\rho$ de $\G$ contenant $\tmax$ 
contient un type simple maximal $\l$ de la forme $\kmax\otimes\s$
où $\s$ est une représenta\-tion irréductible cuspidale de $\GB$.
On fixe un type simple maximal étendu $\nl$ pro\-longeant 
$\l$ tel que $\rho$ soit isomorphe à l'induite compacte de $\nl$ à $\G$.
La formule \eqref{KateVava} 
est alors une conséquence du lemme \ref{LemCritV}.
\end{proof}

\begin{coro}
\label{CaTombeTresTresBien}
Soient $\rho$ et $\rho'$ des représentations irréductibles cuspidales 
de $\G$ con\-tenant $\tmax$.
Alors $\KM(\rho)$ et $\KM(\rho')$ sont des représentations de $\GB$ 
iso\-morphes si et seulement si $\rho$ et $\rho'$ sont inertiellement 
équi\-va\-lentes. 
\end{coro}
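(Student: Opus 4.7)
The plan is to treat the two implications separately, drawing heavily on Lemma \ref{DeliceDOrient} to reduce everything to information about the cuspidal representation $\s$ of $\GB$ contained in the maximal simple type.

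For the forward implication, I would argue that any unramified character $\chi$ of $\G$ is trivial on $\J_{{\rm max}}$. Indeed, $\J_{{\rm max}}$ is contained in $\U(\La_{{\rm max}})$, whose elements have reduced norm in $\mult{\Oo_\F}$, and an unramified character factors through the valuation of the reduced norm. Thus if $\rho' \simeq \rho\chi$, the definition of $\KM$ yields $\KM(\rho\chi) = \Hom_{\J^1_{{\rm max}}}(\kmax,\rho\chi) \simeq \Hom_{\J^1_{{\rm max}}}(\kmax,\rho) = \KM(\rho)$, and the $\J_{{\rm max}}/\J^1_{{\rm max}}$-action on both sides coincides because $\chi|_{\J_{{\rm max}}} = 1$. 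This gives the iso\-mor\-phism of $\GB$-representations.

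For the converse, I would begin by applying Lemma \ref{DeliceDOrient} to pick irreducible cuspidal representations $\s$ and $\s'$ of $\GB$ such that $\rho$ contains $\kmax \otimes \s$ and $\rho'$ contains $\kmax \otimes \s'$. The iso\-mor\-phism $\KM(\rho) \simeq \KM(\rho')$ forces the $\Ga$-orbits of $\s$ and $\s'$ to coincide, so $\s' \simeq \s^{\Fr^i}$ for some $i$. The key step is then to realize the Galois conjugation by $\Fr^i$ inside $\G$ by conjugating by $\w^{-i}$, where $\w$ is a uniformizer of $\D'$ viewed in $\mult\B$ via the diagonal embedding: $\w$ normalizes $\J_{{\rm max}}$, fixes the isomorphism class of $\kmax$ (the normalizer of the maximal parahoric intersected with $\mult\B$ normalizes the $\b$-extension), and acts on $\GB = \GL_{m'}(\kk_{\D'})$ by Frobenius on the residue field. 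After replacing $\rho'$ by its (isomorphic) conjugate, both $\rho$ and $\rho'$ contain the same maximal simple type $\l = \kmax \otimes \s$.

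At this point I would invoke Proposition \ref{MartinDuGard} to write $\rho \simeq \cind^\G_{\NJ}(\nl)$ and $\rho' \simeq \cind^\G_{\NJ}(\nl')$ for two extensions of $\l$ to $\NJ$. Since both extensions restrict to $\l$ on $\J_{{\rm max}}$ and $\l$ is irreducible, Schur's lemma shows that $\nl'\nl^{-1}$ is a character $\chi$ of the cyclic group $\NJ/\J_{{\rm max}} \simeq \ZZ$, determined by its value $a = \chi(\w_\l) \in \mult\R$. To conclude, I would extend $\chi$ to an unramified character $\tilde\chi$ of $\G$ by setting $\tilde\chi(g) = z^{v_\F(\N_{\G/\F}(g))}$ for any choice of $z \in \mult\R$ with $z^{v_\F(\N(\w_\l))} = a$; such $z$ exists because $\mult\R$ is divisible (as $\R$ is algebraically closed of characteristic different from $p$). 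Then $\rho' \simeq \rho\tilde\chi$, proving inertial equivalence.

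The main technical point is the Frobenius realization step: verifying that conjugation by the uniformizer $\w$ induces the generator of $\Ga = \Gal(\kk_{\D'}/\kk_\E)$ on $\GB$ under the identification fixed by $\Phi$, and that it simultaneously preserves $\kmax$ up to isomorphism. Everything else is essentially formal once this geometric fact is in hand.
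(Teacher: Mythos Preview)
Your proof is correct, and the overall architecture matches the paper's: in both cases the converse implication reduces to showing that $\rho$ and $\rho'$ contain the \emph{same} maximal simple type $\kmax\otimes\s$, after which inertial equivalence follows from Proposition~\ref{MartinDuGard}. You spell out this last step explicitly (constructing the unramified character extending $\nl'\nl^{-1}$), whereas the paper simply quotes it.

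Where you diverge is in how you get to ``$\rho$ and $\rho'$ both contain $\kmax\otimes\s$''. You fix a priori the representations $\s,\s'$ provided by Lemma~\ref{DeliceDOrient}(2), observe they lie in the same $\Ga$-orbit, and then realize Frobenius as conjugation by $\w$ to move $\s'$ onto $\s$. The paper avoids this detour entirely: it uses the tautological fact that for \emph{any} irreducible $\GB$-subrepresentation $\s\subseteq\KM(\rho)$ one has $\Hom_{\J_{{\rm max}}}(\kmax\otimes\s,\rho)=\Hom_{\GB}(\s,\KM(\rho))\ne 0$ (this is the content of Lemma~\ref{MlleDeLaMole}), so $\rho$ contains $\kmax\otimes\s$ directly. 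Since $\KM(\rho)\simeq\KM(\rho')$, the same $\s$ is a subrepresentation of $\KM(\rho')$, hence $\rho'$ also contains $\kmax\otimes\s$. This bypasses the verification that $\w$ normalizes $\kmax$ and acts by Frobenius on $\GB$ --- facts which are true but which you correctly flag as the ``main technical point'' of your argument. The paper's route is therefore shorter; yours has the merit of making the Galois action on simple types visible. For the forward implication, your direct argument ($\chi|_{\J_{{\rm max}}}=1$) is slightly cleaner than the paper's, which instead observes that inertially equivalent cuspidals share a maximal simple type and then applies formula~\eqref{KateVava} to both.
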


\begin{proof}
Étant donné un facteur irréductible $\s$ de $\KM(\rho)$, la représentation $\rho$ 
contient le type simple maxi\-mal $\l=\kmax\otimes\s$. 
De façon analogue, la représentation $\rho'$ contient le type simple maximal 
$\l'=\kmax\otimes\s'$ où $\s'$ est un facteur irréductible de $\KM(\rho')$.  

Si $\KM(\rho)$ et $\KM(\rho')$ sont iso\-morphes, on peut choisir $\s'=\s$, 
ce qui implique que $\rho$ et $\rho'$ contiennent toutes deux $\l$, donc 
qu'elles sont inertiellement équi\-va\-lentes.

Inversement, si $\rho$ et $\rho'$ sont inertiellement équi\-va\-lentes, elles 
contiennent toutes les deux $\l$ et le résultat s'ensuit. 
\end{proof}

La question de savoir si un caractère simple apparaît ou non dans 
une représentation irréduc\-tible est liée à la notion d'endo-classe \cite{BSS1}.  
Notons~: 
\begin{equation*}
\label{endoclass}
\Tmax
\end{equation*}
l'endo-classe associée à $\tmax$.
Si $\rho$ est une représentation irréductible cuspidale, elle contient 
un type simple maximal $(\J,\l)$.
L'endo-classe du caractère simple contenu dans $\l$ ne dépend pas du choix 
de $(\J,\l)$ d'après le théorème \ref{TheZerPos}~:
on l'appelle l'endo-classe de $\rho$. 

\begin{prop}
\label{Cavalcanti}
\label{OnaeuChaud}
Soient $\rho_1,\dots,\rho_n$ des représentations irréductibles cuspidales. 
Supposons que l'induite $\rho_1\times\dots\times\rho_n$ admette un 
sous-quotient irréductible contenant $\tmax$.
Alors $\rho_1,\dots,\rho_n$ sont toutes d'endo-classe $\Tmax$. 
\end{prop}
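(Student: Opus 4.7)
The plan is to reduce the modular case $\R = \flb$ to the complex case by lifting to characteristic zero, exploiting the pro-$p$ structure of $\H^1 = \H^1(\b,\La_{{\rm max}})$. In characteristic zero, the conclusion is immediate from the main result of \cite{SeSt2}: every semi-simple type of $\G$ is a type for a well-determined Bernstein block, so any irreducible representation containing a maximal simple character of endo-class $\Tmax$ lies in the Bernstein component attached to $\Tmax$, whose cuspidal supports consist of cuspidals of endo-class $\Tmax$. Since $\cusp(\pi) = \sy{\rho_1} + \dots + \sy{\rho_n}$ up to permutation (paragraph \ref{DefRepCusp}), each $\rho_i$ has endo-class $\Tmax$.

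Suppose now $\R = \flb$. Using Proposition \ref{RelCusp} I pick, for each $i$, a $\qlb$-integral irreducible cuspidal $\tilde\rho_i$ whose reduction modulo $\ell$ contains $\rho_i$. Let $\tilde\tmax$ denote the unique lift of $\tmax$ along the pro-$p$ bijection of paragraph \ref{RedmodlK}, and let $\widetilde\Tmax$ denote its endo-class, whose reduction is $\Tmax$. Set $\Pi = \tilde\rho_1 \times \dots \times \tilde\rho_n$; this is an integral $\qlb$-representation of finite length. Because reduction modulo $\ell$ commutes with parabolic induction (the last paragraph of Section \ref{SectionPreliminaire}) and because Theorem \ref{RedCusp} places $\sy{\rho_i}$ as a summand of each $\r_\ell(\tilde\rho_i)$, the class $\sy\pi$ appears in $\r_\ell(\Pi)$ with positive multiplicity.

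The decisive step will be to promote ``$\pi$ contains $\tmax$'' into ``some irreducible subquotient of $\Pi$ contains $\tilde\tmax$''. Because $\H^1$ is a pro-$p$-group and $\ell \neq p$, the character $\tilde\tmax$ factors through a finite $p$-quotient $G$ and the associated idempotent $\tfrac{1}{|G|}\sum_{g \in G} \tilde\tmax(g)^{-1} g$ is well-defined over $\zlb$ and preserves $\zlb$-lattices. Hence, for any integral structure $\vv$ on $\Pi$, one has $\vv^{\tilde\tmax} \otimes_{\zlb} \flb \simeq (\vv \otimes_{\zlb} \flb)^{\tmax}$, so $\dim_{\qlb} \Pi^{\tilde\tmax} = \dim_{\flb} \r_\ell(\Pi)^{\tmax}$. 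The right-hand side is positive because $\sy\pi$ occurs in $\r_\ell(\Pi)$ with $\pi$ containing $\tmax$; by exactness of the $\tilde\tmax$-isotypic functor and the resulting additivity of dimensions along a Jordan--H\"older filtration of $\Pi$, some composition factor $\tilde\pi$ of $\Pi$ must then contain $\tilde\tmax$.

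Applying the characteristic-zero case to $\tilde\pi$ yields that each $\tilde\rho_i$ has endo-class $\widetilde\Tmax$. Via Proposition \ref{RedBext} and the transfer-compatible reduction-mod-$\ell$ bijection on simple characters of paragraph \ref{RedmodlK}, reducing the $\b$-extension inside a maximal simple type of $\tilde\rho_i$ produces a simple character of endo-class $\Tmax$ contained in $\rho_i$, so each $\rho_i$ has endo-class $\Tmax$. The hard part of the argument is the isotypic-component transfer in the third paragraph: the commutation of the $\tilde\tmax$-isotypic functor with reduction mod $\ell$ and the additivity of isotypic dimensions along composition series both rely crucially on $\H^1$ being pro-$p$ and on the hypothesis $\ell \neq p$.
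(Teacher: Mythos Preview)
Your proposal is correct and follows the paper's overall strategy: lift the cuspidals to $\qlb$, invoke \cite{SeSt2} in characteristic zero, then reduce back. Two points of comparison are worth noting.

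In characteristic zero the paper does not take the implication ``$\pi$ contains $\tmax$ $\Rightarrow$ the block of $\pi$ has endo-class $\Tmax$'' as immediate from \cite{SeSt2}: it first constructs a homogeneous semi-simple type inside $\pi$ by choosing $\U(\La)\subseteq\U(\La_{{\rm max}})$ minimal such that $\pi$ contains the transfer $\t\in\Cc(\La,0,\b)$ of $\tmax$, then using Corollary~\ref{LAmourLApresMidi} and Proposition~\ref{isc} to obtain a $\k\otimes\xi$ with $\xi$ cuspidal, and only then applies \cite{SeSt2} to that type. Your one-line appeal to \cite{SeSt2} sweeps this extraction step under the rug.

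For the key lifting step in positive characteristic, the paper argues differently: it picks an irreducible $\H^1_{{\rm max}}$-constituent $\tilde\d$ of $\tilde\rho_1\times\dots\times\tilde\rho_n$ whose reduction contains $\tmax$, and then proves a separate lemma (by induction on the order of a finite $p$-group) that an irreducible representation of a $p$-group whose reduction mod $\ell$ has nonzero invariants must be trivial, forcing $\tilde\d=\tilde\tmax$. Your integral-idempotent argument reaches the same conclusion more directly by equating the $\tilde\tmax$- and $\tmax$-isotypic multiplicities across reduction; this is a genuine simplification that avoids the inductive lemma entirely, at the cost of a short verification that the idempotent lives in $\zlb[\H^1_{{\rm max}}]$.
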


\begin{proof}
Supposons d'abord que $\R$ est de caractéristique nulle.
Soit $\pi$ un sous-quo\-tient irréductible de $\rho_1\times\dots\times\rho_n$ 
contenant $\tmax$.
Fixons une strate sim\-ple $[\La,n_{\La},0,\b]$ de $\A$ telle que~:
\begin{enumerate}
\item 
$\U(\La)\subseteq\U(\La_{{\rm max}})$~;
\item
$\pi$ contient le transfert $\t\in\Cc(\La,0,\b)$ 
de $\t_{{\rm max}}$~;
\end{enumerate}
et telle que le sous-groupe parahorique 
$\U(\La)$ soit minimal pour ces propriétés. 
D'après le corollaire \ref{LAmourLApresMidi}, 
il existe une représentation irréductible $\xi$ de 
$\J=\J(\b,\La)$ triviale sur $\J^1=\J^1(\b,\La)$ telle que 
$\pi$ contient $\k\otimes\xi$ où 
$\k$ est une $\b$-extension de $\t$.
Grâce à la proposition \ref{isc} et à la minimalité de $\U(\La)$, 
la représentation $\xi$ 
vue comme une représentation de $\J/\J^1$ (identifié à un 
groupe réductif fini) est cuspidale. 
Ainsi $\pi$ contient un type semi-simple homogène $(\BJ,\bl)$ 
contenant $\t$.  
Selon le théo\-rème principal de \cite{SeSt2}
(voir aussi \textit{ibid.}, proposition 8.1),
ce type semi-simple est un type pour le bloc de Bernstein 
cor\-res\-pondant à la classe d'équivalence inertielle de
$\rho_1\otimes\dots\otimes\rho_n$, 
et les représentations cuspidales
$\rho_1,\dots,\rho_n$ sont toutes d'endo-classe $\Tmax$. 

Supposons maintenant que $\R$ est de caractéristique non nulle $\ell$. 
Quitte à tordre  $\rho_1,\dots,\rho_n$ par des caractères non ramifiés 
convenables, 
on peut supposer qu'elles sont 
toutes définies sur $\flb$  (voir \cite[II.4]{Vig1}).
On peut donc supposer que $\R=\flb$. 

Pour chaque $i\in\{1,\dots,n\}$, fixons une $\qlb$-représentation 
irréductible cuspidale $\tilde\rho_i$ telle que $\rho_i$ apparaisse dans 
$\r_\ell(\tilde\rho_i)$ comme dans la proposition \ref{RelCusp}. 
Par hypothèse, $\tmax$ apparaît comme un sous-quotient de la res\-tric\-tion 
de $\rho_1\times\dots\times\rho_n$ à $\H^1_{{\rm max}}$. 
Il existe donc un sous-quotient irréductible $\tilde\d$ de la res\-tric\-tion de 
$\tilde\rho_1\times\dots\times \tilde\rho_n$ à $\H^1_{{\rm max}}$ dont 
la réduction mod $\ell$ contient $\tmax$.

\begin{lemm}
\label{Metro}
Soit $\pi$ une représentation irréductible d'un $p$-groupe fini $\H$
telle que $\r_\ell(\pi)$ possède des vecteurs $\H$-invariants non nuls.
Alors $\pi$ est le carac\-tère trivial de $\H$.
\end{lemm}

\begin{proof}
Raisonnons par récurrence sur le cardinal de $\H$.
Si $\H$ est d'ordre $p$, il est cyclique et $\pi$ est un caractère.  
Les valeurs prises par $\pi$ sont des racines $p$-ièmes de l'unité,
et ce sont aussi des racines de l'unité d'ordre une puissance de $\ell$
puisque $\r_{\ell}(\pi)$ est trivial.
On en déduit que $\pi$ est trivial. 

Supposons maintenant que $\H$ est d'ordre strictement supérieur à $p$
et notons $\V$ l'espace de $\pi$. 
Comme $\H$ est réso\-lu\-ble, il possède un sous-groupe distingué 
$\H'$ d'indice $p$.
La restriction de $\V$ à $\H'$ admet un facteur irréductible 
$\W$ dont la réduction mod $\ell$ possède des 
vecteurs $\H$-invariants non nuls.
Par hypothèse de récurrence, $\W$ est de dimension $1$ et 
$\H'$ agit dessus trivialement.
Il existe donc un $\H'$-homomorphisme non trivial du caractère 
trivial de $\H'$ vers la restriction de $\V$ à $\H'$.
Par réciprocité de Frobenius, il existe un $\H$-homomorphisme 
non trivial~:
\begin{equation*}
\ind^{\H}_{\H'}(1)\to\V.
\end{equation*}
Puisque $\V$ est irréductible, cet homomorphisme est surjectif,
donc $\V$ est un facteur direct de $\ind^{\H}_{\H'}(1)$, ce dont on déduit 
que $\V$ est une représentation irréductible de $\H$ triviale sur $\H'$.  
Puisque $\H/\H'$ est cyclique, $\V$ est de dimension $1$.
On termine comme dans le cas où $\H$ est d'ordre $p$.
\end{proof}

Notons $\tilde\t_{{\rm max}}$ le relèvement de $\tmax$.  
En appliquant le lemme \ref{Metro} à $\tilde\d\tilde\t_{{\rm max}}^{-1}$, 
on voit que $\tilde\d$ est égal à $\tilde\t_{{\rm max}}$.  
Il existe donc un sous-quotient irréductible $\tilde\pi$ de 
$\tilde\rho_1\times\dots\times \tilde\rho_n$ contenant 
$\tilde\t_{{\rm max}}$. 
D'après le cas de caractéristique nulle traité plus haut, 
les représentations cuspidales $\tilde\rho_1,\dots,\tilde\rho_n$ 
sont toutes d'endo-classe $\Tmax$. 
En réduisant mod $\ell$, on obtient le résultat voulu. 
\end{proof}

\begin{prop}
\label{EhBienVoila}
Soient $\rho_1,\dots,\rho_n$ des représentations irréductibles cuspidales 
d'endo-clas\-se $\boldsymbol{\Theta}_{{\rm max}}$ dont la somme des degrés 
vaut $m$. 
Soit $\pi$ un sous-quotient irréductible de $\rho_1\times\dots\times\rho_n$.  
Écrivons~:
\begin{equation*}
\cusp(\pi) = \sy{\tau_1}+\dots+\sy{\tau_r}
\end{equation*}
le support cuspidal de $\pi$.
Alors $\tau_1,\dots,\tau_r$ sont toutes d'endo-classe $\Tmax$ et $\KM(\pi)$
est non nulle.
\end{prop}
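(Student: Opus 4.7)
The first assertion is immediate from the uniqueness (up to permutation) of the cuspidal support of an irreducible representation: since $\pi$ is a subquotient of $\rho_1\times\cdots\times\rho_n$, the multiset $\{[\tau_1],\dots,[\tau_r]\}$ coincides with $\{[\rho_1],\dots,[\rho_n]\}$, hence $r=n$ and each $\tau_i$ is isomorphic to some $\rho_{\sigma(i)}$; so all $\tau_i$ share the endo-class $\Tmax$.

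For the second assertion, first observe that by Proposition~\ref{blaeta} and the fact that $\J^1_{\rm max}$ is pro-$p$ of order prime to the residue characteristic of $\R$, the non-vanishing $\KM(\pi)=\Hom_{\J^1_{\rm max}}(\kmax,\pi)\neq 0$ is equivalent to $\pi$ (restricted to $\H^1(\b,\La_{\rm max})$) containing $\tmax$. To prove this, the plan is to exploit the common endo-class as follows: for each $i$, choose a maximal simple type $(\J_i,\l_i)\subseteq\rho_i$ whose simple character is a realization of a fixed ps-character $(\boldsymbol{\Theta},0,\b)$ representing $\Tmax$. Setting $(\J_\M,\l_\M)=(\J_1\times\cdots\times\J_n,\l_1\otimes\cdots\otimes\l_n)$, this is a maximal simple type of the standard Levi $\M=\M_{(m_1,\dots,m_n)}$ contained in $\varrho:=\rho_1\otimes\cdots\otimes\rho_n$, and by Proposition~\ref{JplPM} (the homogeneous case) it admits a cover $(\BJ,\bl)$ in $\G$. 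Since $\pi$ is an irreducible subquotient of $\ip_{(m_1,\dots,m_n)}(\varrho)$, its cuspidal support lies in the inertial class $[\M,\varrho]_\G$; Proposition~\ref{BoyerDArgens} thus shows that $\pi$ contains $\bl$, and in particular the simple character $\t\in\Cc(\La,0,\b)$ appearing in $\bl$ by construction.

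To conclude that $\pi$ contains $\tmax$: I would arrange the construction so that $\U(\La)\subseteq\U(\La_{\rm max})$ (possible since $\U(\La_{\rm max})\cap\mult\B$ is a maximal compact subgroup of $\mult\B$), so that Proposition~\ref{isc}(1) yields that $\pi$ also contains the transfer $\t'\in\Cc(\La_{\rm max},0,\b)$ of $\t$; then, since $\t'$ is a maximal simple character of endo-class $\Tmax$, it is $\G$-conjugate to $\tmax$ by the $\G$-conjugacy of maximal simple types of a given endo-class (Theorem~\ref{TheZerPos}), and conjugating the relevant Hom-space yields $\tmax\subseteq\pi|_{\H^1(\b,\La_{\rm max})}$, i.e.\ $\KM(\pi)\neq 0$.

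The main obstacles lie in the bookkeeping: aligning the $\l_i$'s so that their simple characters are realizations of a common ps-character with a common element $\b$ (using endo-equivalence and the approximation lemma of \cite{SeSt2}), arranging $\U(\La)\subseteq\U(\La_{\rm max})$ in the construction, and carefully tracking the transfer and $\G$-conjugation of $\t$ to $\tmax$. As indicated in the section's introduction, if this direct approach via Proposition~\ref{BoyerDArgens} proves insufficient in the modular case, one can alternatively lift from $\flb$ to $\qlb$, using Proposition~\ref{RelCusp} to lift each $\rho_i$ to an integral cuspidal $\tilde\rho_i$ of the same endo-class $\Tmax$, and reduce the statement to its characteristic-zero version (where the main theorem of \cite{SeSt2} identifies $(\BJ,\bl)$ with a type for a Bernstein block), following the strategy used for the companion Proposition~\ref{Cavalcanti}.
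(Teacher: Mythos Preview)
Your argument for the first assertion has a genuine gap in the modular case. You claim that since $\pi$ is a subquotient of $\rho_1\times\cdots\times\rho_n$, its cuspidal support must coincide (as a multiset) with $\{[\rho_1],\dots,[\rho_n]\}$. This holds over a field of characteristic zero, but fails when $\R$ has positive characteristic $\ell$: an irreducible subquotient of an induced representation $\ip^\G_\P(\vr)$ with $\vr$ cuspidal can have cuspidal support different from the conjugacy class of $(\M,\vr)$. The paper flags this explicitly in the remark following Proposition~\ref{BoyerDArgens}. Consequently, your application of Proposition~\ref{BoyerDArgens} in the second part is also unjustified: you need $\cusp(\pi)\in[\M,\vr]_\G$, and being a subquotient of $\ip(\vr)$ does not give this in general.

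The paper circumvents both difficulties simultaneously. For the first assertion, it uses the geometric lemma: the Jacquet module $\rp_\a(\rho_1\times\cdots\times\rho_n)$ has $\tau_1\otimes\cdots\otimes\tau_r$ as a subquotient, so each $\tau_k$ is itself a subquotient of some product $\rho_{i_1}\times\cdots\times\rho_{i_s}$; since $\tau_k$ is cuspidal it contains a maximal simple character, and Proposition~\ref{Cavalcanti} then forces the endo-class of $\tau_k$ to agree with that of the $\rho_{i_j}$, namely $\Tmax$. For the second assertion, the paper builds the homogeneous semi-simple type $(\BJ,\bl)$ as a cover of a maximal simple type contained in $\tau_1\otimes\cdots\otimes\tau_r$ (the \emph{actual} cuspidal support of $\pi$), so that Proposition~\ref{BoyerDArgens} applies tautologically. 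From there your transfer-and-conjugate argument via Proposition~\ref{isc} is exactly what the paper does.
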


\begin{proof}
Pour tout $k\in\{1,\dots,r\}$, soit $m_k$ le degré de $\tau_k$
et posons $\a=(m_1,\dots,m_r)$. 
Quitte à renuméroter $\tau_1,\dots,\tau_r$, 
le mo\-du\-le de Jacquet $\rp_\a(\rho_1\times\dots\times\rho_n)$ a 
pour sous-quotient la représentation $\tau_1\otimes\dots\otimes\tau_r$. 
Soit $k\in\{1,\dots,r\}$.
D'après le lemme géo\-mé\-trique \cite[2.6]{MSu}, il y a des entiers 
$i_1,\dots,i_s\in\{1,\dots,n\}$ tels que $\tau_k$ est un
sous-quotient de l'induite parabolique $\rho_{i_1}\times\dots\times\rho_{i_s}$. 
On déduit de la proposition \ref{OnaeuChaud} que l'endo-classe de $\tau_k$ est 
égale à $\Tmax$.
Ceci prouve la première partie de la proposition. 

Il nous reste à prouver que $\pi$ contient le caractère simple $\tmax$.
Soit $(\J_{\M},\l_{\M})$ un type simple maximal du sous-groupe de Levi standard 
$\M=\M_\a$ contenu dans $\tau_1\otimes\dots\otimes\tau_r$. 
D'après la pro\-po\-sition \ref{BoyerDArgens}, la représentation $\pi$ contient un type 
semi-simple $(\BJ,\bl)$ qui est une paire couvrante de $(\J_{\M},\l_{\M})$, 
et qui est homogène car $\tau_1,\dots,\tau_r$ ont la 
même endo-classe $\Tmax$.
Ce type semi-simple homogène est construit à partir d'une 
strate simple $[\La,n_\La,0,\b]$, et $\bl$ contient le caractère simple 
$\t\in\Cc(\La,0,\b)$ transfert de $\tmax$. 
D'après la proposition \ref{isc}, la représentation $\pi$ contient 
le transfert $\t'\in\Cc(\La',0,\b)$ pour toute strate simple $[\La',n_{\La'},0,\b]$ 
telle que $\U(\La)\subseteq\U(\La')$. 
Si l'on choisit $\La'$ de sorte que le sous-groupe parahorique 
$\U(\La')\cap\mult\B$ est maximal, $\La'$ est 
(à dilatation et translation près)
conjuguée à $\La_{{\rm max}}$ sous $\G$ et on en déduit que $\pi$ contient 
le caractère simple $\tmax$.
\end{proof}

\subsection{Compatibilité à l'induction parabolique}
\label{ComPIP}

Soit $\a=(m_1,\dots,m_r)$ une famille d'entiers $\>1$ de somme 
$m$, ce qui définit une décomposition~:
\begin{equation}
\label{DecAlpha}
\D^m=\D^{m_1}\oplus\dots\oplus\D^{m_r}
\end{equation}
en $\D$-espaces vectoriels à droite. 
On a fixé au paragraphe \ref{PremPara}
une strate sim\-ple $[\La_{{\rm max}},n_{{\rm max}},0,\b]$ de $\A$. 
Suppo\-sons que $\E=\F(\b)$ stabilise la décomposition \eqref{DecAlpha}, 
\ie que $\E$ est contenu dans la sous-algè\-bre dia\-go\-nale 
$\A_{m_1}\times\dots\times\A_{m_r}\subseteq\A$, et que $\La_{{\rm max}}$ 
soit conforme à \eqref{DecAlpha} au sens de \cite[Définition 1.13]{SeSt1}. 
On pose $\M=\M_\a$, $\N=\N_\a$ et $\P=\P_\a$.

Pour chaque $i\in\{1,\dots,r\}$, 
on pose $\La_{{\rm max},i}=\La_{{\rm max}}\cap\D^{m_i}$.
C'est une suite de réseaux de $\D^{m_i}$ qui définit une strate simple 
$[\La_{{\rm max},i},n_{{\rm max}},0,\b]$ de $\A_{m_i}$, 
et le transfert $\t_{{\rm max},i}\in\Cc(\La_{{\rm max},i},0,\b)$ de $\tmax$ 
est un carac\-tè\-re simple maximal.
Notons aussi $\B_i$ le centralisateur de $\E$ dans $\A_{m_i}$. 

Le produit $\B_1\times\dots\times\B_r$ 
s'identifie à une sous-$\E$-algèbre de $\B$. 
La restriction de $\Phi$ à $\B_i$ définit un isomorphisme de $\E$-algèbres
$\Phi_i$ de $\B_i$ dans $\Mat_{m'_i}(\D')$
où $m'_i$ est l'entier associé à $m_i$ par \eqref{MPRIME}. 
Fixons une strate simple $[\La,n_\La,0,\b]$ telle que~:
\begin{enumerate}
\item
pour chaque $i\in\{1,\dots,r\}$, la suite $\La\cap\D^{m_i}$ est dans la classe 
affine de $\La_{{\rm max},i}$~;
\item 
$\U(\La)\subseteq\U(\La_{{\rm max}})$ et $\U(\La)\cap\mult\B$ est le
sous-groupe parahorique standard de $\mult\B$
corres\-pon\-dant à la famille $(m'_1,\dots,m'_r)$.
\end{enumerate}

Soit $\t\in\Cc(\La,0,\b)$ le trans\-fert de $\tmax$ et notons $\k$ le 
trans\-fert de $\kmax$ en une $\b$-extension de $\t$.
La représentation de $\J(\b,\La)\cap\M$ sur l'espace des 
vec\-teurs $\J(\b,\La)\cap\N$-invariants de $\k$, notée $\k_{{\rm max},\a}$, 
est de la forme~:
\begin{equation*}
\k_{{\rm max},\a}=\k_{{\rm max},1}\otimes\dots\otimes\k_{{\rm max},r}
\end{equation*}
où $\k_{{\rm max},i}$ est une $\b$-extension de $\t_{{\rm max},i}$, 
pour chaque $i$.
On obtient un foncteur $\KM_i=\KM_{\k_{{\rm max},i},\Phi_i}$ de 
$\Rr(\G_{m_i})$ dans $\Rr(\GB_i)$, où 
$\GB_i=\J^{}_{{\rm max},i}/\J^{1}_{{\rm max},i}$ 
est iden\-ti\-fié à $\GL_{m'_i}(\ff_{\D'})$. 
On pose aussi~:
\begin{eqnarray*}
\J^{}_{{\rm max},\a}&=&\J^{}_{{\rm max},1}\times\dots\times\J^{}_{{\rm max},r},\\
\J^1_{{\rm max},\a}&=& 
\J^{1}_{{\rm max},1}\times\dots\times\J^{1}_{{\rm max},r},\\
\MB&=&\J^{}_{{\rm max},\a}/\J^{1}_{{\rm max},\a},
\end{eqnarray*}
ce dernier étant identifié au sous-groupe de Levi standard 
$\GB_1\times\dots\times\GB_r$ de $\GB$.
On définit un foncteur~:
\begin{equation*}
\KM_{\a}:\pi\mapsto\Hom_{\J^1_{{\rm max},\a}}(\k_{{\rm max},\a},\pi)
\end{equation*}
de $\Rr(\M)$ dans $\Rr(\MB)$.
On a le résultat suivant. 

\begin{prop}
\label{SZ}
Pour $i\in\{1,\dots,r\}$, soit $\pi_i$ une représentation irréductible 
de $\G_{m_i}$. 
Il y a un isomorphisme de représentations de $\GB$~:
\begin{equation*}
\KM(\pi_1\times\dots\times\pi_r)\simeq
\KM_{1}(\pi_1) \times\dots\times \KM_{r}(\pi_r).
\end{equation*}
\end{prop}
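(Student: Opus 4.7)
The strategy is to use a Mackey--Frobenius argument on the maximal parahoric $\J_{{\rm max}}$, exploiting the Iwasawa decomposition $\G = \P \cdot \J_{{\rm max}}$ (which holds because $\La_{{\rm max}}$ is conformal to the decomposition \eqref{DecAlpha}) together with a decomposition of $\kmax$ along the Iwahori factorization of $\J^1_{{\rm max}}$. First I would reduce to the case $r = 2$ by induction on $r$: associativity of parabolic induction, combined with an easy refinement property of the functors $\KM_\a$, makes the general case follow from this one.

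In the case $r = 2$, I want to compute
\begin{equation*}
\KM(\pi_1 \times \pi_2) = \Hom_{\J^1_{{\rm max}}}\bigl(\kmax,\ \ip_\P^\G(\pi_1 \otimes \pi_2)\bigr)
\end{equation*}
as a $\GB$-representation. Using the Iwasawa decomposition, the restriction of $\ip_\P^\G(\pi_1 \otimes \pi_2)$ to $\J_{{\rm max}}$ identifies with $\ind_{\J_{{\rm max}} \cap \P}^{\J_{{\rm max}}}\bigl((\pi_1 \otimes \pi_2)|_{\J_{{\rm max}} \cap \M}\bigr)$, the modulus character of $\P$ being trivial on the compact group $\J_{{\rm max}} \cap \M$. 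The key structural fact is then that the restriction of $\kmax$ to the Iwahori-decomposed subgroup $\J^1_{{\rm max}} = (\J^1_{{\rm max}} \cap \N^-)(\J^1_{{\rm max}} \cap \M)(\J^1_{{\rm max}} \cap \N)$ is trivial on the two unipotent pieces and coincides on $\J^1_{{\rm max}} \cap \M$ with the Heisenberg representation attached to $\k_{{\rm max},\a}$; this is the $\b$-extension analogue of the decomposition of $\bk$ and $\bn$ established in Proposition~\ref{bnpairedec}. Applying Frobenius reciprocity for the induction from $\J_{{\rm max}} \cap \P$ to $\J_{{\rm max}}$, and using the triviality of $\kmax$ on $\J^1_{{\rm max}} \cap \N$, the $\kmax$-Hom collapses to a $\k_{{\rm max},\a}$-Hom, giving a $\GB$-equivariant identification with
\begin{equation*}
\ip_{\PB}^{\GB}\bigl(\KM_1(\pi_1) \otimes \KM_2(\pi_2)\bigr) = \KM_1(\pi_1) \times \KM_2(\pi_2),
\end{equation*}
where $\PB$ is the standard parabolic of $\GB$ with Levi $\MB$.

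The main obstacle will be the structural fact about $\kmax$ on the Iwahori decomposition of $\J^1_{{\rm max}}$, together with the compatibility of this decomposition with the quotient $\J_{{\rm max}} \cap \P \twoheadrightarrow \PB$. While this is parallel to the decomposition of $\bk$ used throughout \S\ref{EndoEq}, it is applied here at the maximal parahoric level rather than on the smaller group $\BJ$; the transfer machinery of \S\ref{bext} and the coherence relations of Proposition~\ref{TSF} should provide the needed description, but verifying that the induced decomposition on $\J^1_{{\rm max}}$ really produces the Heisenberg attached to the maximal characters $\t_{{\rm max},i}$ of the Levi factors, and that the modulus factor and quotient identifications all match so that the final object is precisely the parabolically induced $\GB$-representation, requires some care.
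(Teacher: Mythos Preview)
Your central structural claim is false. The restriction of $\kmax$ to $\J^1_{{\rm max}}$ is the Heisenberg representation $\eta_{{\rm max}}$, and this representation is \emph{not} trivial on $\J^1_{{\rm max}}\cap\N$ or on $\J^1_{{\rm max}}\cap\N^-$ as soon as the stratum has positive level. A dimension count already rules it out: one has
\[
\dim\eta_{{\rm max}}=(\J^1_{{\rm max}}:\H^1_{{\rm max}})^{1/2}
=(\J^1_{{\rm max}}\cap\N:\H^1_{{\rm max}}\cap\N)^{1/2}\cdot\prod_i\dim\eta_{{\rm max},i}\cdot(\J^1_{{\rm max}}\cap\N^-:\H^1_{{\rm max}}\cap\N^-)^{1/2},
\]
and the outer factors are $>1$ in positive level, so $\eta_{{\rm max}}$ cannot be $\eta_{{\rm max},1}\otimes\cdots\otimes\eta_{{\rm max},r}$ extended trivially along the unipotent pieces. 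You are confusing $\kmax$ on $\J_{{\rm max}}$ with the representation $\bk$ on the auxiliary group $\BJ=\H^1(\b,\La)(\J(\b,\La)\cap\P)$ of \S\ref{EndoEq}: it is precisely the passage to this smaller group (built from a \emph{non}-maximal lattice sequence $\La$) that forces the triviality on the unipotent pieces, and that is the whole point of the construction of $\bk$. There is no analogue of Proposition~\ref{bnpairedec} at the maximal parahoric level. Your Iwasawa claim $\G=\P\cdot\J_{{\rm max}}$ is also problematic, since $\J_{{\rm max}}$ is strictly smaller than the parahoric $\U(\La_{{\rm max}})$.

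The paper's proof does not attempt a direct Mackey computation. It first quotes Schneider--Zink for the characteristic-zero case (their argument uses the categorical equivalences coming from types, which are unavailable modulo~$\ell$). It then constructs, for any $\R$, an \emph{injective} map $\IA_\a(\KM_\a(\V))\hookrightarrow\KM(\ip_\a(\V))$ via Lemma~\ref{PIganiol}; this injection does go through the transfer $\k$ on the non-maximal $\J(\b,\La)$ and the $\N$-invariants, which is the correct replacement for your collapsing step. Surjectivity is then obtained indirectly: for cuspidal $\pi$ one lifts to $\qlb$, uses the characteristic-zero isomorphism, and reduces mod~$\ell$ (Lemmas~\ref{RedKM} and~\ref{Berg}); the general case follows by induction on $\dim\M$, embedding $\pi$ in an induced-from-cuspidal and comparing in the Grothendieck group. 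If you want to salvage a direct argument, you must work on $\BJ$ or on $(\U(\La)\cap\mult\B)\J^1_{{\rm max}}$ rather than on $\J^1_{{\rm max}}$ itself, and even then you will only get the injection for free.
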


\begin{proof}
Le résultat est vrai dans le cas où $\R$ est de caractéristique nulle~: 
la preuve de Schneider et Zink \cite[Proposition 5.7]{SZ} est encore valable. 
On peut donc supposer que $\R$ est de caractéristique $\ell$ non nulle. 
Posons $\pi=\pi_1\otimes\dots\otimes\pi_r$.
Par la suite, $\ip_\a$ désignera aussi bien l'induction parabolique de 
$\M$ à $\G$ le long de $\P$ que l'induc\-tion parabolique de 
$\MB$ à $\GB$ le long de 
$\PB=(\U(\La)\cap\mult\B)\J^1_{{\rm max}}/\J^1_{{\rm max}}$.

\begin{lemm}
\label{PIganiol}
Pour toute représentation $\V$ de $\M$, 
il existe un homomorphisme injectif~:
\begin{equation}
\label{AuFait!}
\k_{{\rm \max}}\otimes\IA_{\a}(\KM_{\a}(\V))\to\ip_\a(\V)
\end{equation}
de représentations de $\J_{{\rm max}}$.
\end{lemm}

\begin{proof}
On reprend la première partie de la preuve de \cite[III.5.12]{Vig1}.
\end{proof}

En appliquant le foncteur exact $\KM$, on déduit du lemme \ref{PIganiol}, 
pour toute représentation $\V$ de $\M$, un homomorphisme injectif~:
\begin{equation}
\label{AuFait!KM}
\IA_{\a}(\KM_{\a}(\V))\to\KM(\ip_\a(\V))
\end{equation}
de représentations de $\GB$.
Dans le cas où $\V=\pi$, l'homomorphisme injectif \eqref{AuFait!KM} s'écrit~:
\begin{equation}
\label{skamp}
\KM_{1}(\pi_1)\times\dots\times\KM_{r}(\pi_r)\to
\KM(\pi_1\times\dots\times\pi_r)
\end{equation}
et on va prouver que c'est un isomorphisme. 

\begin{lemm}
\label{RedKM}
\label{LaissezMoiEcrireNimporteQuoi}
Soit $\tilde\k_{{\rm max}}$ une $\b$-extension relevant $\kmax$,
et soit $\tilde\KM$ le foncteur associé.
Pour toute $\qlb$-représentation entière $\tilde\pi$ de 
longueur finie de $\G$, on a
$\r_{\ell}(\sy{\tilde\KM(\tilde\pi)})=\sy{\KM(\r_{\ell}(\tilde\pi))}$.
\end{lemm}

\begin{proof}
Il suffit de le prouver dans le cas où $\tilde\pi$ est irréductible.
Soient $\mathfrak{k}_{{\rm max}}$ une struc\-ture entière de 
$\kmax$ et $\ll$ une structure entière de $\tilde\pi$.
Alors $\Hom_{\J^1_{{\rm max}}}(\mathfrak{k}_{{\rm max}},\ll)$ est une 
struc\-ture entière de $\KM(\tilde\pi)$.
\end{proof}

\begin{lemm}
\label{Berg}
Si $\pi$ est cuspidale, alors \eqref{skamp} est un isomorphisme. 
\end{lemm}

\begin{proof}
Quitte à tordre $\pi$ par un caractère non ramifié de $\M$, 
on peut supposer que son caractère central est à valeurs dans $\flb$, 
donc que $\pi$ est définie sur $\flb$. 
Il suffit donc de prouver le résultat dans le cas où $\R=\flb$, ce que 
l'on suppose désormais. 

D'après la proposition \ref{RelCusp}, il y a une $\qlb$-représentation 
irréductible cuspidale entière $\tilde\pi$ de $\M$ telle que 
$\r_{\ell}(\tilde\pi)\>\sy{\pi}$. 
Comme le résultat est valable en caractéristique $0$, on a un isomorphisme~: 
\begin{equation}
\label{AuFait!KMqlb}
\IA_{\a}(\tilde\KM_{\a}(\tilde\pi))\to \tilde\KM(\ip_\a(\tilde\pi))
\end{equation}
de $\qlb$-représentations de $\GB$.
La représentation $\tilde\pi$ est de la forme 
$\tilde\pi_1\otimes\dots\otimes\tilde\pi_r$.
Pour chaque $i$, la représentation $\tilde\pi_i$ est irréductible cuspidale 
entière,
et d'après le théorème \ref{RedCusp} il existe un entier $a_i\>1$ tel que 
$\r_\ell(\tilde\pi_i)$ soit égale à la somme des $\pi_i\nu^k$ pour 
$k\in\{0,\dots,a_i-1\}$. 
Compte tenu du lemme \ref{RedKM} et du fait que $\r_\ell$ com\-mute à 
l'induction parabolique, la réduction mod $\ell$ du membre de gauche de 
\eqref{AuFait!KMqlb} donne l'égalité~:
\begin{equation}
\label{Chinchine}
\sum\limits_{\boldsymbol{k}}
\KM_{1}(\pi_1\nu^{k_1})\times\dots\times\KM_{r}(\pi_r\nu^{k_r})
=
\sum\limits_{\boldsymbol{k}}
\KM(\pi_1\nu^{k_1}\times\dots\times\pi_r\nu^{k_r})
\end{equation}
dans le groupe de Grothendieck $\Gg(\G,\flb)$, 
où $\boldsymbol{k}$ décrit les fa\-mil\-les d'entiers $(k_1,\dots,k_r)$ tels que 
$k_i\in\{0,\dots,a_i-1\}$.  
Comme on a un morphisme injectif \eqref{skamp} entre représentations de 
dimension finie de $\GB$ non seulement 
pour $\pi$ mais plus généralement pour $\pi$ tordue par n'importe quel 
carac\-tère de $\G$, et 
compte tenu de \eqref{Chinchine}, on déduit que \eqref{skamp} 
est un iso\-mor\-phis\-me de représentations de $\GB$.
\end{proof}

Supposons maintenant que $\pi$ est irréductible mais pas cuspidale, 
et prouvons que \eqref{skamp} est un iso\-mor\-phis\-me par 
récurrence sur la dimension de $\M$.

Il y a un sous-groupe de Levi standard $\M_{\g}\subsetneq\M$ et une 
représenta\-tion irré\-ductible cuspidale $\vr$ de $\M_{\g}$ dont 
l'induite à $\M$, notée $\pi'$, admet une sous-représentation isomorphe à $\pi$. 
Au moyen du lemme \ref{PIganiol}, on a le diagramme commutatif~:
\begin{equation*}
\label{IlSuffitBarthes0}
\xymatrix{
\IA_{\a}(\KM_{\a}(\pi))\ar[d]\ar[r]&\KM(\ip_\a(\pi))\ar[d]\\
\IA_{\a}(\KM_{\a}(\pi'))\ar[r]&\KM(\ip_\a(\pi'))}
\end{equation*}
dans lequel toutes les flèches sont injectives. 

Supposons d'abord que $\M$ est de dimension minimale 
pour la propriété que $\mult\E\subseteq\M$.
Dans ce cas, pour tout $i\in\{1,\dots,r\}$, 
$\B_i$ est une algèbre à division sur $\E$ et 
toute représentation ir\-ré\-duc\-tible contenant $\t_{{\rm max},i}$ est 
cuspidale.
Puisque $\pi$ a été supposée non cuspidale, il y a un $k$ tel que 
$\pi_k$ ne contient pas $\t_{{\rm max},k}$, 
\ie que le membre de gauche de \eqref{skamp} est nul.
Il suffit donc de prouver que le membre de droite l'est également. 
Si tel n'était pas le cas, 
la proposition \ref{Cavalcanti} impliquerait que chaque facteur de $\vr$ est 
d'endo-classe $\Tmax$, et la proposition \ref{EhBienVoila} 
contredirait l'hypothèse sur $\pi_k$. 

Revenons au diagramme ci-dessus. 
Par hypothèse de récurrence, $\M_\g$ étant de dimension 
moin\-dre que celle de $\M$,
la représentation $\KM_\a(\pi')=\KM_\a(\ip_\g^\a(\vr))$
(où $\ip_\g^\a$ désigne l'induction para\-bo\-li\-que de $\M_\g$ à $\M$)
est isormorphe à $\ip_\g^\a(\KM_\g(\vr))$.
On peut donc remplacer la ligne du bas par~:
\begin{equation*}
\label{IlSuffitBarthes}
\IA_{\g}(\KM_{\g}(\vr))\to\KM(\ip_{\g}(\vr)).
\end{equation*}
Comme $\vr$ est cuspidale, ce morphisme est bijectif d'après le lemme 
\ref{Berg}. 
Écrivons~:
\begin{equation*}
\sy{\pi'}=\sy{\pi}+\sy{\pi''}
\end{equation*} 
dans le groupe de Grothendieck $\Gg(\M,\R)$, où $\pi''$ est le quotient de 
$\pi'$ par $\pi$.
Tous les foncteurs qui interviennent étant exacts, on a~:
\begin{equation*}
0=\sy{\KM(\ip_{\g}(\vr))}-\sy{\IA_{\g}(\KM_{\g}(\vr))}=
\left(\sy{\KM(\ip_{\a}(\pi))}-\sy{\IA_{\a}(\KM_{\a}(\pi))}\right)+
\left(\sy{\KM(\ip_{\a}(\pi''))}-\sy{\IA_{\a}(\KM_{\a}(\pi''))}\right)
\end{equation*}
dans le groupe de Grothendieck $\Gg(\G,\R)$. 
D'après \eqref{AuFait!KM} appliqué à $\pi$ puis à $\pi''$, 
chaque terme de la somme ci-dessus est une 
représentation, \ie une somme à coefficients positifs ou nuls de 
représentations irréductibles. 
La somme totale étant nulle, chacun de ces termes l'est aussi.
On a donc
$\sy{\KM(\ip_{\a}(\pi))}-\sy{\IA_{\a}(\KM_{\a}(\pi))}=0$,
ce dont on déduit que \eqref{skamp} est un iso\-mor\-phisme. 
\end{proof}

\begin{rema}
\label{Soliloque5}
On a en fait prouvé que, 
pour toute représentation $\pi$ de longueur finie de $\M$, 
l'homomorphisme injectif \eqref{AuFait!KM} est un isomorphisme 
de représentations de $\GB$.
\end{rema}

\begin{coro}
Pour $i\in\{1,\dots,r\}$, soit $\rho_i$ une représentation irréductible 
cuspidale de $\G_{m_i}$ contenant $\t_{{\rm max},i}$ et soit 
$(\J_{{\rm max},i},\k_{{\rm max},i}\otimes\s_i)$ un type simple maximal
contenu dans $\rho_i$. 
Alors~:
\begin{equation}
\label{KoseiInoue}
\KM(\rho_1\times\dots\times\rho_{r}) \simeq
\bigoplus\limits_{i_1}\dots\bigoplus\limits_{i_r}
\s_1^{\Fr^{i_1}}\times\dots\times\s_r^{\Fr^{i_r}},
\end{equation}
où $\Fr$ est un générateur de $\Gal(\kk_{\D'}/\kk_\E)$ et 
où chaque $i_j$ décrit $\{0,\dots,b(\rho_j)-1\}$. 
\end{coro}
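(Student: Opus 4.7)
The proof is a direct combination of the two preceding results in the section. First I would apply Proposition \ref{SZ} to the representation $\pi_1\otimes\cdots\otimes\pi_r = \rho_1\otimes\cdots\otimes\rho_r$ of $\M$, which yields an isomorphism of representations of $\GB$
\begin{equation*}
\KM(\rho_1\times\cdots\times\rho_r)\simeq \KM_1(\rho_1)\times\cdots\times\KM_r(\rho_r).
\end{equation*}
Note that for this step to apply one only needs each $\rho_i$ to be irreducible, not necessarily cuspidal, but here cuspidality enters crucially in the next step.

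Next, for each $i\in\{1,\dots,r\}$, since $\rho_i$ contains the maximal simple character $\t_{{\rm max},i}$ and since $(\J_{{\rm max},i},\k_{{\rm max},i}\otimes\s_i)$ is a maximal simple type contained in $\rho_i$, Lemma \ref{DeliceDOrient}(2) provides an isomorphism of representations of $\GB_i$
\begin{equation*}
\KM_i(\rho_i)\simeq \s_i\oplus \s_i^{\Fr}\oplus\cdots\oplus\s_i^{\Fr^{b(\rho_i)-1}}.
\end{equation*}
The invariant $b(\rho_i)$ is precisely the cardinality of the Galois orbit appearing there, so the decomposition on the right is exactly the one one expects.

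Finally, since normalized parabolic induction $\ip_\a$ (from $\MB$ to $\GB$) is additive with respect to direct sums in each tensor factor, substituting the decompositions of the $\KM_i(\rho_i)$ into the formula from Proposition \ref{SZ} and distributing gives
\begin{equation*}
\KM(\rho_1\times\cdots\times\rho_r)\simeq\bigoplus_{i_1}\cdots\bigoplus_{i_r}\s_1^{\Fr^{i_1}}\times\cdots\times\s_r^{\Fr^{i_r}},
\end{equation*}
with each $i_j$ running over $\{0,\dots,b(\rho_j)-1\}$, as claimed. There is no real obstacle: the corollary is a formal consequence of the two preceding results, the only point worth noting being that Proposition \ref{SZ} applies verbatim because each $\rho_i$ is an irreducible representation of $\G_{m_i}$.
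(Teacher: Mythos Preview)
Your proof is correct and is exactly the argument the paper intends: the corollary is stated without proof precisely because it follows immediately from Proposition~\ref{SZ} applied to the $\rho_i$, together with Lemma~\ref{DeliceDOrient}(2) computing each $\KM_i(\rho_i)$, and the additivity of parabolic induction.
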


\subsection{Compatibilité à la restriction parabolique}
\label{Holveg}

Reprenons les notations du paragraphe \ref{ComPIP} et posons 
$\NB=(\U_1(\La)\cap\mult\B)\J^1_{{\rm max}}/\J^1_{{\rm max}}$, 
qui est le radical unipotent de $\PB$.
On a une décomposition de Levi $\PB=\MB\NB$.
Toute re\-pré\-sentation de $\J=\J(\b,\La)$ triviale sur $\J^1=\J^1(\b,\La)$ peut 
être vue comme une re\-pré\-sen\-tation de $\MB$.

\begin{prop}
\label{PAtaPI}
Soit $\pi$ une représentation de $\G$.
\begin{enumerate}
\item
On a un isomorphisme ca\-no\-ni\-que de représentations de $\MB$~:
\begin{equation}
\label{MmeDeChevreuse}
\KM(\pi)^{\NB}\simeq\Hom_{\J^1}(\k,\pi).
\end{equation}
\item
Pour toute représentation irréductible $\xi$ de $\J$ triviale sur $\J^1$, 
on a un homomorphis\-me injectif de $\R$-algèbres~:
\begin{equation}
\label{MmeDeLavalliere}
\End_{\GB}(\IA_\a(\xi))\simeq
\Hh(\J_{{\rm max}},\kmax|_{(\U(\La)\cap\mult\B)\J^1_{{\rm max}}}\otimes\xi)
\hookrightarrow\Hh(\G,\k\otimes\xi)
\end{equation}
et un isomorphisme~:
\begin{equation*}
\Hom_{\MB}(\xi,\KM(\pi)^\NB)\simeq\Hom_{\J}(\k\otimes\xi,\pi)
\end{equation*}
de $\End_{\GB}(\IA_\a(\xi))$-modules.  
\end{enumerate}
\end{prop}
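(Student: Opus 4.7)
The plan is to deduce both parts from the coherence properties of $\b$-extensions established in Proposition \ref{TSF}, combined with Frobenius reciprocity and the standard dictionary between parabolic induction in the finite reductive quotient $\GB = \J_{\rm max}/\J^1_{\rm max}$ and induction between the pro-$p$ radicals $\J^1 \subseteq \J^1_{\rm max}$ modified by a Heisenberg-type factor.

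First, I would apply Proposition \ref{TSF} with $\La_{{\rm max}}$ playing the role of $\La'$ (since $\U(\La) \subseteq \U(\La_{{\rm max}})$). Since $\k$ is the transfer of $\kmax$ to $\La$, the coherence relation yields a canonical isomorphism identifying $\kmax$ restricted to $(\U(\La)\cap\mult\B)\J^1_{\rm max}$ with $\ind^{(\U(\La)\cap\mult\B)\J^1_{\rm max}}_{\J}(\k)$. Restricting further to the pro-$p$ subgroup $(\U_1(\La)\cap\mult\B)\J^1_{\rm max}$, which projects to $\NB$, one gets in particular $\kmax|_{(\U_1(\La)\cap\mult\B)\J^1_{\rm max}} \simeq \ind(\k|_{\J^1})$ appropriately.

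For part 1, the $\GB$-action on $\KM(\pi) = \Hom_{\J^1_{\rm max}}(\kmax,\pi)$ is defined via $x \cdot f = \pi(x) \circ f \circ \kmax(x)^{-1}$, so by definition of $\NB$ the $\NB$-invariants are exactly the homomorphisms $f \in \Hom_{(\U_1(\La)\cap\mult\B)\J^1_{\rm max}}(\kmax,\pi)$. Applying Frobenius reciprocity together with the coherence identification from the first step, this space is canonically isomorphic to $\Hom_{\J^1}(\k,\pi)$, and one checks that the $\MB = \J/\J^1$ action (inherited from the $\J_{\rm max}$-action on the left and from the natural $\J$-action on the right) is respected, because $\k$ extends the Heisenberg representation $\n = \k|_{\J^1}$ and the coherence isomorphism is by construction $\J$-equivariant.

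For part 2, the algebra isomorphism between $\End_{\GB}(\IA_\a(\xi))$ and $\Hh(\J_{\rm max}, \kmax|_{(\U(\La)\cap\mult\B)\J^1_{\rm max}} \otimes \xi)$ comes from the standard fact (a consequence of the Heisenberg uniqueness) that $\ind^{\J_{\rm max}}_{(\U(\La)\cap\mult\B)\J^1_{\rm max}}(\kmax \otimes \xi) \simeq \kmax \otimes \IA_\a(\xi)$ as $\J_{\rm max}$-representations, so that tensoring by the invertible $\kmax$ induces an isomorphism on endomorphism algebras. The injective map into $\Hh(\G, \k\otimes\xi)$ is precisely the Jacquet-type morphism already constructed in the second map of \eqref{Psi1} during the proof of Proposition \ref{ANenPlusFinirPosDuplicata}. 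Finally, the isomorphism of Hom spaces follows by combining part 1 with the adjunction $\Hom_{\MB}(\xi, \Hom_{\J^1}(\k,\pi)) \simeq \Hom_{\J}(\k \otimes \xi, \pi)$ (using Lemma \ref{MlleDeLaMole} applied to $\k \otimes \xi$ on $(\J, \J^1)$). The main technical obstacle will be verifying the module structure compatibility: one must check that the action of $\End_{\GB}(\IA_\a(\xi))$ on $\Hom_\J(\k \otimes \xi, \pi)$ induced by the injection \eqref{MmeDeLavalliere} matches the action transported from $\Hom_{\MB}(\xi, \KM(\pi)^\NB)$ via part 1. This requires tracing a Hecke element through the Frobenius-Heisenberg identifications and, as in the proof of the lemma \ref{FactorisationEntrelacement}, exploiting that intertwining operators decompose as a tensor product along the $\k$ and $\xi$ factors.
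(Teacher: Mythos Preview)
Your proposal is correct and follows essentially the same approach as the paper. The paper's own proof is very terse, simply referring to \cite[\S5]{SZ} for part~1 (using the transfer properties of $\b$-extensions, i.e.\ Proposition~\ref{TSF}) and to \cite[\S4.2]{VS3} for the injection in \eqref{MmeDeLavalliere} (which is exactly where \eqref{Psi1} comes from); your proposal spells these references out in more detail. The only minor difference is that for the module structure on the right-hand side of the final isomorphism, the paper passes through Frobenius reciprocity $\Hom_{\MB}(\xi,\KM(\pi)^{\NB})\simeq\Hom_{\GB}(\IA_\a(\xi),\KM(\pi))$ rather than invoking Lemma~\ref{MlleDeLaMole}, but this is a cosmetic distinction.
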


\begin{proof}
On prouve l'assertion 1 comme dans \cite[\S5]{SZ} grâce aux propriétés de 
transfert entre $\b$-extensions, et on obtient l'homomorphis\-me injectif 
\eqref{MmeDeLavalliere} en reprenant les arguments de \cite[\S4.2]{VS3}.
Ensuite, l'isomorphisme \eqref{MmeDeChevreuse} induit un 
isomorphisme de $\R$-espaces vectoriels~: 
\begin{equation}
\label{netetlisse}
\Hom_{\J}(\k\otimes\xi,\pi)
\simeq\Hom_{\MB}(\xi,\KM(\pi)^{\NB}).
\end{equation}
L'homomorphis\-me injectif \eqref{MmeDeLavalliere} induit une structure de 
$\End_{\GB}(\IA_\a(\xi))$-module sur le membre de gauche, et par réciprocité
de Frobenius~:
\begin{equation*}
\Hom_{\MB}(\xi,\KM(\pi)^{\NB})\simeq\Hom_{\GB}(\IA_\a(\xi),\KM(\pi))
\end{equation*}
on a une structure de $\End_{\GB}(\IA_\a(\xi))$-module sur le membre de 
droite, faisant de \eqref{netetlisse} un iso\-mor\-phisme de 
$\End_{\GB}(\IA_\a(\xi))$-modules. 
\end{proof}

Soit $\vk$ la représentation de $\BJ=\H^1(\b,\La)(\J\cap\P)$ 
définie au paragraphe \ref{EndoEq} dont l'induite à $\J$ est isomorphe à $\k$.  
Pour toute représentation lisse $\pi$ de $\G$, on a des isomorphismes~:
\begin{equation*}
\KM(\pi)^{\NB}
\simeq\Hom_{\J^1}(\k,\pi)
\simeq\Hom_{\BJ^1}(\vk,\pi)
\end{equation*}
de représentations de $\MB$, le premier provenant de 
\eqref{MmeDeChevreuse}. 
D'autre part, on a l'égalité~:
\begin{equation*}
\KM_\a(\rp_\a(\pi))=
\Hom_{\J^1_{{\rm max},\a}}(\k_{{\rm max},\a},\rp_\a(\pi))
\end{equation*}
par définition de $\KM_\a$.
Comme la restriction de $\bk$ à $\BJ\cap\M=\J_{{\rm max},\a}$ est égale à 
$\k_{{\rm max},\a}$, l'homo\-morphisme surjectif canonique de $\pi$ vers son 
module de Jacquet $\rp_\a(\pi)$ induit un mor\-phis\-me~:
\begin{equation*}
\KM(\pi)^{\NB}\to\KM_\a(\rp_\a(\pi))
\end{equation*}
de représentations de $\MB$. 

\begin{prop}
\label{ZS}
Soit $\pi$ une représentation de $\G$.
L'application naturelle~:
\begin{equation*}
\KM(\pi)^{\NB}\to\KM_\a(\rp_\a(\pi))
\end{equation*}
est un isomorphisme de représentations de $\MB$. 
\end{prop}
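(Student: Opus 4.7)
The plan is to identify both sides with Hom spaces involving the Heisenberg representation $\bn$ on $\BJ^1$, and then to conclude using the covering pair property established in Proposition \ref{jnpc}.

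For the left-hand side, Proposition \ref{PAtaPI}(1) provides a canonical $\MB$-equivariant isomorphism $\KM(\pi)^\NB \simeq \Hom_{\J^1}(\k,\pi)$, where $\J^1 = \J^1(\b,\La)$. Since $\k|_{\J^1}$ is the Heisenberg representation $\n$, and since the construction of $\bn$ recalled in paragraph \ref{EndoEq} yields $\ind_{\BJ^1}^{\J^1}(\bn) \simeq \n$, Frobenius reciprocity identifies this further with $\Hom_{\BJ^1}(\bn,\pi)$, endowed with its natural $\MB$-action through the isomorphism $\BJ/\BJ^1 \simeq \J^{}_{{\rm max},\a}/\J^1_{{\rm max},\a} = \MB$. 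For the right-hand side, the equality $\k_{{\rm max},\a}|_{\J^1_{{\rm max},\a}} = \n_{{\rm max},\a}$ gives directly $\KM_\a(\rp_\a(\pi)) = \Hom_{\J^1_{{\rm max},\a}}(\n_{{\rm max},\a},\rp_\a(\pi))$.

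Matters are thereby reduced to showing that the natural map
\begin{equation*}
\Hom_{\BJ^1}(\bn,\pi) \longrightarrow \Hom_{\J^1_{{\rm max},\a}}(\n_{{\rm max},\a},\rp_\a(\pi))
\end{equation*}
induced by the projection $\pi \to \rp_\a(\pi)$ is an isomorphism. This is exactly the Jacquet compatibility \eqref{Gobineau} characteristic of a covering pair. Since we work with a single simple character $\tmax$, the setup of paragraph \ref{ComPIP} is automatically the homogeneous case of section \ref{EndoEq}, with $\M_\a$, $\La$ and the $\b$-extensions at the $\La_{{\rm max},i}$ level playing the respective roles of $\M$, $\La$ and the $\k_i$ of that paragraph; in particular the pair $(\J^1_\M,\n_\M)$ appearing there coincides with $(\J^1_{{\rm max},\a},\n_{{\rm max},\a})$. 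Proposition \ref{jnpc} then asserts that $(\BJ^1,\bn)$ is a covering pair of $(\J^1_{{\rm max},\a},\n_{{\rm max},\a})$, which via \eqref{Gobineau} supplies the required isomorphism of Hom spaces (in fact as modules over the Hecke algebra of $(\J^1_{{\rm max},\a},\n_{{\rm max},\a})$, hence \textit{a fortiori} as $\MB$-modules).

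The remaining task is to verify that the composition of the three identifications above is indeed the natural map of the statement. This is a direct diagram chase using the Iwahori decomposition $\BJ^1 = (\BJ^1 \cap \N^-) \cdot \J^1_{{\rm max},\a} \cdot (\BJ^1 \cap \N)$ together with the triviality of $\bn$ on the two unipotent factors: a homomorphism $f \in \Hom_{\BJ^1}(\bn,\pi)$ corresponds on the left to the $\J^1$-homomorphism $\k \to \pi$ provided by Frobenius reciprocity, and on the right to the composition of $f|_{\J^1_{{\rm max},\a}}$ with $\pi \to \rp_\a(\pi)$, and naturality of the covering-pair isomorphism makes these compatible. The main obstacle lies not in the abstract argument but in this careful bookkeeping, in particular in matching the data of section \ref{EndoEq} with its analogue at the maximal level, so that Proposition \ref{jnpc} applies verbatim.
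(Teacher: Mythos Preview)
Your proof is correct and follows essentially the same approach as the paper: both identify the left-hand side with $\Hom_{\BJ^1}(\bn,\pi)$ (the paper does this in the paragraph preceding the proposition, writing $\Hom_{\BJ^1}(\vk,\pi)$ with $\vk|_{\BJ^1}=\bn$), identify the right-hand side with $\Hom_{\J^1_{{\rm max},\a}}(\n_{{\rm max},\a},\rp_\a(\pi))$, and then invoke Proposition~\ref{jnpc} together with \eqref{Gobineau} to conclude. Your explicit use of Frobenius reciprocity via $\ind_{\BJ^1}^{\J^1}(\bn)\simeq\n$ is exactly how the paper's second identification works, and your final paragraph on matching the composite with the natural map is the bookkeeping the paper leaves implicit.
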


\begin{proof}
Soit $\bn$ la restriction de $\bk$ à $\BJ^1$ comme au paragraphe \ref{defeta}, 
et soit $\n_{{\rm max},\a}^{}$ la restriction de $\k_{{\rm max},\a}^{}$ au sous-groupe 
$\J^1_{{\rm max},\a}$. 
D'après la proposition \ref{jnpc}, la paire $(\BJ^1,\ve)$ est une paire
cou\-vran\-te de $(\J^1_{{\rm max},\a},\n_{{\rm max},\a}^{})$.
On déduit le résultat voulu de \eqref{Gobineau}. 
\end{proof}

\begin{rema}
Soit $[\AA_{{\rm max}},n_{{\rm max}},0,\b]$ une strate simple de $\A$,
où $\AA_{{\rm max}}$ est l'unique ordre héréditaire standard de $\A$ 
tel que $\AA_{{\rm max}}\cap\B$ soit un ordre héréditaire maximal de $\B$.
Pour tout entier $n\>1$, 
plongeons $\E$ diagona\-lement dans $\A_{mn}\simeq\Mat_n(\A)$.
Il y a un unique ordre héréditaire standard $\AA_{{\rm max}}^{n}$ de $\A_{mn}$ 
dont l'intersection avec le centralisateur de $\E$ soit un ordre maximal.

Comme au début de cette section, on fixe un caractère simple 
maximal $\t_{{\rm max}}$ et une $\b$-extension $\kmax$.
Notons $\t_{{\rm max}}^{n}\in\Cc(\AA_{{\rm max}}^{n},0,\b)$ le transfert de ce 
caractère simple et $\kmax^{n}$ la $\b$-extension com\-patible à 
$\k_{{\rm max}}$ au sens où, si l'on choisit $\a=(m,\dots,m)$, on a
(avec les notations du paragraphe \ref{ComPIP})
$\k_{{\rm max},\a}^{n}=\k_{{\rm max}}^{}\otimes\dots\otimes\k_{{\rm max}}^{}$.
Il correspond à cette $\b$-extension un foncteur~:
\begin{equation*}
\KM_{n}:\Rr(\G_{mn})\to\Rr(\GL_{m'n}(\kk_{\D'})). 
\end{equation*}
Soient $n_1,\dots,n_r\>1$ des entiers tels que $n_1+\dots+n_r=n$ et, 
pour chaque $i\in\{1,\dots,r\}$, soit $\pi_i$ une repré\-sen\-tation 
irréductible de $\G_{mn_i}$. 
Alors on a un isomorphisme canonique~:
\begin{equation*}
\KM_n(\pi_1\times\dots\times\pi_r)\simeq
\KM_{n_1}(\pi_1)\times\dots\times\KM_{n_r}(\pi_r).
\end{equation*}
Si l'on pose $\a=(n_1,\dots,n_r)$, on a un foncteur~: 
\begin{equation*}
\KM_{\a}:\Rr(\M_{(mn_1,\dots,mn_r)})\to
\Rr(\GL_{m'n_1}(\kk_{\D'})\times\dots\times\GL_{m'n_r}(\kk_{\D'}))
\end{equation*}
correspondant à la représentation 
$\kmax^{n_1}\otimes\dots\otimes\kmax^{n_r}$ du groupe 
$\J(\b,\AA_{{\rm max}}^{n_1})\times\dots\times\J(\b,\AA_{{\rm max}}^{n_r})$. 
Notons $\NB$ le sous-groupe unipotent standard 
de $\GL_{m'n}(\kk_{\D'})$ associé à $(m'n_1,\dots,m'n_r)$.
Si $\pi$ est une représentation de $\G_{mn}$, 
on a un isomorphisme canonique~:
\begin{equation*}
\KM_{n}(\pi)^{\NB}\simeq
\KM_\a(\rp_{(mn_1,\dots,mn_r)} (\pi))
\end{equation*}
de représentations de 
$\GL_{m'n_1}(\kk_{\D'})\times\dots\times\GL_{m'n_r}(\kk_{\D'})$.
\end{rema}

\appendix

\section{Une majoration d'entrelacement}
\label{App}

\begin{center}
\textbf{(par Vincent Sécherre et Shaun Stevens)}
\end{center}

\medskip

Soit $\A$ une $\F$-algèbre centrale simple et soit $\V$ un $\A$-module 
simple. 
On note $\D$ la $\F$-algèbre opposée à $\End_\A(\V)$ et on identifie $\A$ 
et $\End_\D(\V)$. 
On pose $\G=\mult\A$.

Si $\psi$ est un caractère d'un sous-groupe $\H$ de $\G$, 
on notera $\I_\G(\psi)$ son ensemble d'entrelacement dans $\G$,
\ie l'ensemble des $g\in\G$ tels que $\psi(gxg^{-1})=\psi(x)$ 
pour tout $x\in\H\cap g^{-1}\H g$.

\subsection{}

Dans ce paragraphe, nous supposons être dans la situation de \cite[\S2.2]{Br4}. 
Soit $[\La,m,m-1,b]$ une strate fondamentale dans $\A$.  
Supposons qu'on a une décomposition de $\D$-espaces vectoriels 
$\V=\V^1\oplus\V^2$ préservée par $b$.
On pose $\A^{ij}=\Hom_\D(\V^j,\V^i)$ pour 
$i,j\in\{1,2\}$ et~:
\begin{equation*}
\A\ =\ \begin{pmatrix} \A^{11} & \A^{12} \\ \A^{21} & \A^{22} \end{pmatrix}.
\end{equation*}
On note aussi $\A^i=\A^{ii}$, $\mathscr{M}=\A^1\oplus\A^2$ et 
$\M=\mathscr{M}^\times$.
Pour $k\in\ZZ$, on pose~:
\begin{equation*}
\aa_k(\La) = \{a\in\A\ |\ a(\La_l)\subseteq\La_{l+k} \text{ pour tout } l\in\ZZ\}.
\end{equation*}

Écrivons $b=b_1+b_2$ avec $b_i\in\A^{i}$, $i=1,2$ et supposons que les strates 
$[\La^i,m,m-1,b_i]$ ont des polynômes caractéristiques premiers entre eux. 
Pour $0\< t< m$, posons~:
\begin{equation*}
\hh_{m,t}=\begin{pmatrix} \aa_m & \aa_{t+1} \\ \aa_m & \aa_m \end{pmatrix}, 
\qquad 
\jj_{m,t}=\begin{pmatrix} \aa_1 & \aa_1 \\ \aa_{m-t} & \aa_1 \end{pmatrix}
\end{equation*}
et définissons des sous-groupes ouverts et compacts de $\G$ par 
$\H_{m,t}=1+\hh_{m,t}$ et $\J_{m,t}=1+\jj_{m,t}$. 
Comme le commutateur de $\H_{m,t}$ est inclus dans $\U_{m+1}(\La)$, 
la formule~: 
\begin{equation*}
\psi_b:1+x\mapsto \psi_\F\circ\tr_{\A/\F}(bx)
\end{equation*}
(voir \eqref{DEFPSIB}) définit un caractère de $\H_{m,t}$. 
D'après \cite[2.3.3-2.3.8]{Br4}, on a le résultat suivant\footnote{
Contrairement à \cite{Br4},
nous ne supposons pas que $\La$ est stricte, 
ce qui ne change rien à l'argument, ni que $b_1$ est $\La^1$-inversible, 
ce que nous pourrions supposer quitte à échanger les rôles de $b_1$ et de 
$b_2$. 
}.

\begin{lemm}\label{lem:maps}
Soit~$b_i'\in b_i+\aa_{1-m}$ pour~$i=1,2$, soit~$c\in\aa_{1-m}^{21}$ et soit~$r\in\ZZ$. 
\begin{enumerate}
\item 
Pour~$r\>0$, l'application~$x\mapsto b_1'x-xb_2'+xcx$ induit un 
isomorphisme $\aa_{r}^{12}\to\aa_{r-m}^{12}$.
\item 
L'application~$x\mapsto b_1'x-xb_2'$ induit un 
isomorphisme $\aa_{r}^{12}\to\aa_{r-m}^{12}$.
\item 
L'application~$x\mapsto xb_1'-b_2'x$ induit un 
isomorphisme $\aa_{r}^{21}\to\aa_{r-m}^{21}$.
\end{enumerate}
\end{lemm}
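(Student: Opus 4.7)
The plan is to establish assertions (2) and (3) first by a Sylvester-type argument based on the coprimality of the characteristic polynomials of $b_1$ and $b_2$, and then deduce (1) from (2) by treating the quadratic term $xcx$ as a perturbation of the linear map. Throughout, the key structural input is that the filtration $(\aa_k^{ij})$ on $\A^{ij}$ is complete and Hausdorff, and that each graded piece $\aa_k^{ij}/\aa_{k+1}^{ij}$ is a finite-dimensional module over the residue field.

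First I would treat assertion (2). Write $L(x)=b_1'x-xb_2'$, and note that since $b_i\in\aa_{-m}^i$ and $b_i'\in b_i+\aa_{1-m}^i$, the image of $L$ is indeed contained in $\aa_{r-m}^{12}$. Because $b_i'$ has the same principal part as $b_i$ in $\aa_{-m}^i/\aa_{1-m}^i$, the induced operator on the graded piece $\aa_r^{12}/\aa_{r+1}^{12}\to\aa_{r-m}^{12}/\aa_{r-m+1}^{12}$ depends only on the strata $[\La^i,m,m-1,b_i]$. The coprimality of their characteristic polynomials translates, via a standard non-commutative Sylvester/resultant argument, into the bijectivity of this induced operator on each graded piece. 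A successive-approximation argument using the completeness of the filtration $(\aa_k^{12})$ then lifts this graded bijection to a bijection $L\colon\aa_r^{12}\to\aa_{r-m}^{12}$: given $y\in\aa_{r-m}^{12}$, one solves $L(x_0)\equiv y\pmod{\aa_{r-m+1}^{12}}$ in the graded piece, replaces $y$ by $y-L(x_0)\in\aa_{r-m+1}^{12}$, and iterates; the resulting Cauchy sequence converges to a preimage, and the same graded argument gives injectivity. Assertion (3) is entirely analogous, with the roles of left and right multiplication exchanged.

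For assertion (1), write $F(x)=L(x)+Q(x)$ with $Q(x)=xcx$. For $r\>0$ and $c\in\aa_{1-m}^{21}$, one has
\begin{equation*}
Q(\aa_r^{12})\subseteq\aa_r^{12}\cdot\aa_{1-m}^{21}\cdot\aa_r^{12}\subseteq\aa_{2r+1-m}^{12}\subseteq\aa_{r-m+1}^{12},
\end{equation*}
so $F$ maps $\aa_r^{12}$ into $\aa_{r-m}^{12}$ and differs from $L$ by a term that strictly improves the filtration level by at least one. Since $L$ is an isomorphism by (2), factor $F=L\circ(\mathrm{id}+L^{-1}\circ Q)$; the map $L^{-1}\circ Q$ sends $\aa_r^{12}$ into $\aa_{r+1}^{12}$, so by a standard fixed-point argument using completeness, $\mathrm{id}+L^{-1}\circ Q$ is a bijection of $\aa_r^{12}$, and hence $F$ is a bijection onto $\aa_{r-m}^{12}$.

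The main obstacle is the Sylvester step for (2): one must verify that coprimality of the characteristic polynomials of $b_1$ and $b_2$, viewed as elements of the strata $[\La^i,m,m-1,b_i]$, implies invertibility of the induced operator $x\mapsto \bar b_1 x-x\bar b_2$ on the graded bimodule $\aa_r^{12}/\aa_{r+1}^{12}$ over the residue rings of $\AA(\La^1)$ and $\AA(\La^2)$. This is essentially the content of \cite[2.3.3--2.3.5]{Br4}; the only care needed is that we have dropped Broussous's hypothesis that $\La$ be strict and that $b_1$ be $\La^1$-regular, so the graded bimodule is slightly more general, but the coprimality of characteristic polynomials still forces the relevant resultant to be a unit in the residue ring, and the argument goes through unchanged. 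Once the graded statement is in hand, the lifts to (2), (3) and then (1) are formal.
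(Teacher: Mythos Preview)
Your proposal is correct and is essentially a reconstruction of the argument in \cite[2.3.3--2.3.8]{Br4}, which is all the paper invokes here: rather than giving a proof, the paper simply cites Broussous, noting in a footnote that dropping the strictness of $\La$ changes nothing and that the hypothesis ``$b_1$ is $\La^1$-invertible'' can be recovered by swapping $b_1$ and $b_2$ if necessary. Your treatment of the dropped invertibility hypothesis (arguing that coprimality alone suffices for the graded Sylvester step) is slightly more direct than the paper's swap-and-reduce observation, but both are valid; otherwise your Sylvester-on-graded-pieces plus successive-approximation scheme for (2)--(3), followed by the contraction/perturbation argument for (1), is exactly the standard route.
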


Étant donnée une partie $\X$ de $\A$, on note $\X^*$ l'ensemble des $a\in\A$ 
tels que 
$\psi_\F\circ\tr_{\A/\F}(ax)=1$ 
pour tout $x\in\X$. 

\begin{coro}\label{cor:toM}
Soit~$x\in b+\hh_{m,t}^*$.
Alors il y a un~$j\in\J_{t,l}$ tel que~$jxj^{-1}\in b+(\hh_{t,l}^*\cap\mathscr{M})$.
\end{coro}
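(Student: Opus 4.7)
My strategy is to construct $j\in\J_{t,l}$ as a convergent infinite product $j=\cdots(1+z_k)\cdots(1+z_1)$ of elementary conjugators, each $z_k\in\jj_{t,l}$, where successive factors eliminate the off-diagonal blocks of $x-b$ one $\aa$-level at a time. Writing $x=b+u$ with $u\in\hh_{m,t}^{*}$, I first identify the block structure of $\hh_{m,t}^{*}$ (via the trace-pairing duality $\aa_k^{*}=\aa_{1-k}$ applied block by block) so that $u^{12}\in\aa_{1-m}^{12}$ and $u^{21}\in\aa_{-t}^{21}$. The target space $\hh_{t,l}^{*}\cap\mathscr{M}$ has zero off-diagonal blocks and diagonal blocks in $\aa_{1-t}$, so the goal is really to kill $u^{12}$ and $u^{21}$ entirely while the diagonal part, which already sits in the smaller ideal $\aa_{1-m}\subseteq\aa_{1-t}$, is preserved at each step.

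The heart of the construction is the following inductive step. Assume, after finitely many conjugations, that $x'=jxj^{-1}$ is of the form $b+v$ where the off-diagonal blocks satisfy $v^{12}\in\aa_{s}^{12}$ and $v^{21}\in\aa_{s-(m-t)}^{21}$ for some $s\geq 1-m$; I seek an elementary conjugator $1+w$ with $w=w^{12}+w^{21}$ that improves these containments by one. Lemma~\ref{lem:maps}(2) applied to $b_1'=b_1+v^{11}$ and $b_2'=b_2+v^{22}$ (which still satisfy the hypotheses of the lemma since the diagonal corrections are in small ideals) gives a unique $w^{12}\in\aa_{s+m}^{12}$ solving $b_1'w^{12}-w^{12}b_2'=v^{12}$, and Lemma~\ref{lem:maps}(3) similarly yields $w^{21}\in\aa_{s+m-(m-t)}^{21}=\aa_{s+t}^{21}$ eliminating $v^{21}$ to first order. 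The quadratic correction $w^{21}\cdot w^{12}$ and the cross terms $[w^{12},v^{21}]$ etc.\ that arise when one expands $(1+w)(b+v)(1+w)^{-1}$ are absorbed either directly or, when needed in a single combined solve for $w^{12}$, by invoking Lemma~\ref{lem:maps}(1) with $c=w^{21}$, which is precisely what makes the quadratic-plus-linear operator $y\mapsto b_1'y-yb_2'+ycy$ bijective on $\aa_r^{12}$.

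Tracking indices shows that each $w^{12}$ lies in $\aa_{s+m}\subseteq\aa_{1}$ (since $s\geq 1-m$ initially and improves) and each $w^{21}$ lies in $\aa_{s+t}\subseteq\aa_{t-l}$ for the appropriate range of $s$, so every elementary factor $1+w$ belongs to $\jj_{t,l}$, hence the $p$-adic limit $j$ lies in $\J_{t,l}$. Convergence is automatic because the filtration by $\aa_r$ is complete and separated and the correction spaces shrink by one level at every iteration; in the limit $v^{12}$ and $v^{21}$ are trapped in $\bigcap_r \aa_r=0$. The diagonal blocks are only perturbed by terms of the form $[w^{12},v^{21}]+[w^{21},v^{12}]+$ (higher), all of which lie at strictly better levels than $\aa_{1-t}$ after the first step and therefore never push the diagonal out of $\aa_{1-t}$.

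The main obstacle is the bookkeeping of indices: one must verify that at every stage of the induction the hypotheses of Lemma~\ref{lem:maps} remain valid (i.e.\ that $b_1+v^{11}$ and $b_2+v^{22}$ still have coprime characteristic polynomials modulo $\aa_{1-m}$, which follows from the coprimality of the characteristic polynomials of the strata $[\La^i,m,m-1,b_i]$ and the fact that the diagonal corrections stay in $\aa_{1-m}$), and that each $z_k$ produced really sits in $\jj_{t,l}$ rather than some larger lattice. Once this balance is established, the infinite product converges inside $\J_{t,l}$ and conjugates $x$ into $b+(\hh_{t,l}^{*}\cap\mathscr{M})$ as required.
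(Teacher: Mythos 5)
Your overall strategy (conjugating away the off-diagonal blocks by unipotent elements) is in the right spirit, but the level bookkeeping --- which you yourself flag as the main obstacle --- is exactly where the proposal breaks. Two concrete points. First, you assert that the diagonal part of $x-b$ ``already sits in the smaller ideal $\aa_{1-m}\subseteq\aa_{1-t}$'': this containment is reversed, since $t<m$ gives $\aa_{1-t}\subseteq\aa_{1-m}$. Under your literal reading of the hypothesis ($u^{12}\in\aa_{1-m}^{12}$, $u^{21}\in\aa_{-t}^{21}$, diagonal error in $\aa_{1-m}$), the diagonal blocks need not lie in $\aa_{1-t}$, and conjugation by elements of $\J_{t,l}\subseteq\U_1(\La)$ only perturbs them at strictly deeper levels, so it cannot put them there. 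Second, your first corrector $w^{21}$ solves a linear equation whose right-hand side sits at level $-t$ (or $s-(m-t)$ with $s=1-m$ in your parametrization), hence lies only in $\aa_{m-t}^{21}$ (resp. $\aa_{1-m+t}^{21}$); the claimed containment in $\aa_{t-l}^{21}$, i.e. the membership $1+w\in\J_{t,l}$, would require $m+l\geq 2t$ (resp. $l\geq m-1$), which is not available. Both difficulties disappear only if one reads the hypothesis as $x\in b+\hh_{t,l}^{*}$, that is $a\in\aa_{1-t}^{12}$, $c\in\aa_{-l}^{21}$ and diagonal error in $\aa_{1-t}$ --- which is precisely what the paper's proof does on its first line, and how the corollary is invoked in la proposition \ref{prop:mincase} (with $(t,l)$ there equal to $(m,t)$). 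Your write-up keeps the literal lattices but then asserts containments that only hold after this correction, so as written the constructed $j$ need not lie in $\J_{t,l}$ and the conclusion on the diagonal blocks is not established.

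Independently of this, the infinite iteration is unnecessary: the quadratic term in le lemme \ref{lem:maps}(1) exists precisely so that a \emph{single} conjugation suffices. Solving $b_1'v-vb_2'+vcv=a$ exactly, with $v\in\aa_1^{12}$, kills the entire $(1,2)$ block in one step even though $c\neq 0$; once the $(1,2)$ block vanishes, conjugation by $1+w$ with $w\in\A^{21}$ yields a purely linear equation $wb_1'-b_2'w=c$, solved by le lemme \ref{lem:maps}(3) with $w\in\aa_{t-l}^{21}$, and this lower-unipotent conjugation does not reintroduce a $(1,2)$ block. Two elementary factors of $\J_{t,l}$, no convergence argument, no higher-order cross terms to absorb. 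If you insist on an iterative scheme, you must at least decouple the levels of $v^{12}$ and $v^{21}$ (your coupling $v^{21}\in\aa_{s-(m-t)}$ is lossy and forces the bad initial index) and actually verify that every higher-order term in $(1+w)(b+v)(1+w)^{-1}$ --- at most one factor of $b$, at least two small factors --- improves the relevant level; your sketch asserts this but does not check it.
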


\begin{proof}
Nous suivons la preuve de~\cite[3.7 Lemma 3]{BK2}. 
Écrivons~:
\begin{equation*}
x=\begin{pmatrix} b_1' & a \\ c & b_2' \end{pmatrix}
\end{equation*}
où~$a\in\aa_{1-t}^{12}$ et~$c\in\aa_{-l}^{21}\subseteq\aa_{1-t}^{21}$.  
D'après le lemme~\ref{lem:maps}(1), il y a~$v\in\aa_1^{12}$ tel que 
$b'_1v-vb'_2+vcv=a$, et ainsi ~$(1+v)x(1+v)^{-1}$ est triangulaire 
supérieure. 
Comme~$1+v\in\J_{t,l}$, on se ramène ainsi au cas où~$a=0$.  

Supposons donc que $a$ est nul, et soit $w\in\aa_{t-l}^{21}$ tel que $wb_1'-b_2'w=c$ 
(voir le lemme~\ref{lem:maps}(3)). 
Alors $(1+w)^{-1}x(1+w)\in\mathscr{M}$
et~$1+w\in\J_{t,l}$, comme demandé. 
\end{proof}

\begin{prop}\label{prop:mincase}
On a $\I_{\G}\(\psi_b|\H_{m,t}\) \subseteq \J_{m,t}\M\J_{m,t}$.
\end{prop}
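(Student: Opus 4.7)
The plan is to follow the strategy of Bushnell--Kutzko \cite[3.7 Lemma 3]{BK2}, whose two main ingredients---reduction to an element of $b+\hh_{m,t}^{*}$ and the Hensel-type arguments given by Lemma~\ref{lem:maps}---are already available here via Corollary~\ref{cor:toM}.

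First I would translate the intertwining condition into a condition on $b$. Since the commutator subgroup of $\H_{m,t}$ lies in $\U_{m+1}(\La)=1+\aa_{m+1}$ and $\psi_\F\circ\tr_{\A/\F}(b\cdot)$ vanishes there, the usual computation (writing elements of $\H_{m,t}$ as $1+y$ with $y\in\hh_{m,t}$, expanding the product and using additivity of $\psi_\F\circ\tr_{\A/\F}$ modulo quadratic terms) shows that $g\in\I_\G(\psi_b|\H_{m,t})$ if and only if
\begin{equation*}
g^{-1}bg-b\in\hh_{m,t}^{*}.
\end{equation*}
This is the standard dictionary between intertwining of $\psi_b$ and the adjoint action on $b$.

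Next I would apply Corollary~\ref{cor:toM} to $x=g^{-1}bg\in b+\hh_{m,t}^{*}$: there is $j\in\J_{m,t}$ with $j(g^{-1}bg)j^{-1}\in b+(\hh_{m,t}^{*}\cap\mathscr{M})$. Setting $y=jg^{-1}\in\G$, we have $yby^{-1}\in b+(\hh_{m,t}^{*}\cap\mathscr{M})$, so the desired conclusion $g\in\J_{m,t}\M\J_{m,t}$ reduces to showing that $y\in\M\J_{m,t}$.

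It remains to adjust $y$ on the right by an element of $\J_{m,t}$ to push it into $\M$. Writing $y$ in block form with off-diagonal components $y^{12}\in\A^{12}$ and $y^{21}\in\A^{21}$, the identity $yb-b'y=0$ (where $b':=yby^{-1}=b_1'+b_2'$ with $b_i'\in b_i+\aa_{1-m}^{i}$) splits into off-diagonal equations of the form $b_1'y^{12}-y^{12}b_2'\in\mathrm{(small)}$ and $y^{21}b_1'-b_2'y^{21}\in\mathrm{(small)}$. By Lemma~\ref{lem:maps}(2)--(3), the relevant maps $x\mapsto b_1'x-xb_2'$ and $x\mapsto xb_1'-b_2'x$ are bijective between corresponding lattice filtrations $\aa_r^{12}$ and $\aa_{r-m}^{12}$ (respectively $\aa_r^{21}$ and $\aa_{r-m}^{21}$), so a standard Hensel-style successive approximation constructs elements $u\in\aa_1^{12}$ and $w\in\aa_{m-t}^{21}$ such that $(1+u)^{-1}(1+w)^{-1}y\in\M$. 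Since $(1+u)(1+w)\in\J_{m,t}$, this yields $y\in\M\J_{m,t}$, hence $g=y^{-1}j\in\J_{m,t}\M\J_{m,t}$, as required.

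The main obstacle will be the bookkeeping in the last step: making the two off-diagonal entries vanish simultaneously requires iterating the linear isomorphisms of Lemma~\ref{lem:maps} and verifying at each stage that the correcting factors remain in $\J_{m,t}$ (i.e. that the lattice indices improve monotonically and converge). The coprimality of the characteristic polynomials of $b_1$ and $b_2$ is used precisely to ensure the bijectivity statements in Lemma~\ref{lem:maps}, which is what drives the successive approximation; once this is in place, the argument is essentially formal.
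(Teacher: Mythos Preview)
Your translation of the intertwining condition in step~1 is where the argument breaks. The condition ``$g$ intertwines $\psi_b$ on $\H_{m,t}$'' only says that $\psi_{g^{-1}bg}$ and $\psi_b$ agree on the \emph{intersection} $\H_{m,t}\cap g^{-1}\H_{m,t}g$, so what you actually get is
\[
g^{-1}bg - b \ \in\ \bigl(\hh_{m,t}\cap g^{-1}\hh_{m,t}g\bigr)^{*}
\ =\ \hh_{m,t}^{*} + g^{-1}\hh_{m,t}^{*}g,
\]
i.e.\ there exist $x,y\in\hh_{m,t}^{*}$ with $g^{-1}(b+x)g=b+y$. You cannot in general take $x=0$, which is what your claim $g^{-1}bg-b\in\hh_{m,t}^{*}$ amounts to. This is exactly why the paper applies Corollary~\ref{cor:toM} \emph{twice}, replacing $g$ by $j_1gj_2$ so that \emph{both} $x$ and $y$ lie in $\hh_{m,t}^{*}\cap\mathscr{M}$. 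With only one application, one side of the conjugation equation still has off-diagonal pieces and your block analysis in step~5 does not go through: the off-diagonal equations pick up terms like $x^{12}h^{22}$ with no useful lattice control on $h^{22}$.

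Once both $b+x$ and $b+y$ are in $b+\mathscr{M}$, the finish is actually cleaner than you suggest: from $(b+x)g=g(b+y)$ the off-diagonal blocks of $g$ satisfy $(b_1+x_1)g^{12}=g^{12}(b_2+y_2)$ and $(b_2+x_2)g^{21}=g^{21}(b_1+y_1)$, which are \emph{exact} equalities, not congruences ``modulo small''. Since the maps in Lemma~\ref{lem:maps}(2)(3) are injective on every $\aa_r^{12}$ (resp.\ $\aa_r^{21}$), this forces $g^{12}=g^{21}=0$ outright. No successive approximation is needed; the coprimality hypothesis does all the work in one step.
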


\begin{proof}
Soit~$g\in\I_{\G}\(\psi_b|\H_{m,t}\)$.
Il existe $x,y\in\hh_{m,t}^*$ tels que~:
\begin{equation*}
g^{-1}(b+x)g = b+y.
\end{equation*}
D'après le corollaire \ref{cor:toM}, on peut remplacer $g$ par~$j_1gj_2$ 
avec~$j_1,j_2\in\J_{m,t}$ et supposer que $x,y$ sont dans 
$\hh_{m,t}^*\cap\mathscr{M}$. 
Le reste de la preuve se termine comme dans \cite{Br4,BK2} 
en écrivant $g$ par blocs et en remarquant que les points 2 et 3 
du lemme \ref{lem:maps} impliquent que les blocs non diagonaux de 
$g$ sont nuls, de sorte que $g\in\M$. 
\end{proof}

\subsection{}
\label{P:rss}

Dans ce paragraphe, nous supposons être dans la situation de \cite[\S4]{SeSt1}. 
Nous avons donc une strate simple $[\La,n,m,\b]$ de $\A$, 
une décomposition $\V=\V^1\oplus\V^2$ en $\E\otimes\D$-modules à droite 
et un élé\-ment $c\in\aa_{-m}(\La)$ stabilisant cette décomposition. 
Posons~:
\begin{equation*}
\N=1+\A^{12},
\quad
\N^-=1+\A^{21},
\quad
\M=(\A^{1})^\times\times(\A^{2})^\times,
\quad
\P=\M\N,
\quad
\P^-=\M\N^-.
\end{equation*}
Pour tout $\Oo_\F$-réseau $\xx$ de $\A$ et tout $i,j\in\{1,2\}$, nous
écrirons $\xx^{ij}=\xx\cap\A^{ij}$.

Pour tout $k\>1$, posons $\H^k=\H^k(\b,\La)$ et $\J^k=\J^k(\b,\La)$, 
et notons $\hh_k$ et $\jj_k$ les $\Oo_\F$-réseaux de $\A$ définis par 
$\H^k=1+\hh_k$ et $\J^k=1+\jj_k$. 
Posons $q=-k_0(\b,\La)$, $r=\lfloor q/2\rfloor+1$ et $s=\lceil q/2\rceil$. 

Pour tout $x\in\A$, posons $a_\b(x)=\b x-x\b$.
Pour $k\in\ZZ$, posons~:
\begin{eqnarray*}
\bb_k&=&\aa_k(\La)\cap\B,\\
\nn_k&=&\nn_k(\b,\La),\\
\mm_k&=&\aa_k(\La)\cap\nn_{k-q}+\jj_{s}.
\end{eqnarray*}
et posons $\Om^k=1+\mm_k$ pour $k\>1$.
C'est un pro-$p$-sous-groupe ouvert de $\U_1(\La)$ 
norma\-lisant $\H^{l}$ pour tout $l\>r$
(et aussi $\H^{q-k+1}$ si $k\<q$)
et qui est normalisé par $\U_1(\La)\cap\mult\B$.

Fixons un caractère simple $\t\in\Cc(\La,m-1,\b)$. 
Posons~:
\begin{eqnarray*}
\Om&=&(\U_1(\La)\cap\mult\B)\Om^{q-m+1},\\
\K&=&(\H^m\cap\P^-)\cdot(\Om\cap\N),
\end{eqnarray*}
et notons $\xi$ le caractère de $\K$ 
trivial sur $\K\cap\N$ et $\K\cap\N^-$ et coïncidant avec $\t\psi_c$ 
sur $\K\cap\M$. 

\subsection{}
\label{ETAPE1}

Dans ce paragraphe, nous supposons que $m\>r$. 
Pour tout $r\<t\<m$, posons~:
\begin{equation*}
\K_t = (\H^m\cap\P^-)\cdot(\H^t\cap\N)
\end{equation*}
et notons $\xi_t$ la restriction de $\xi$ à $\K_t$. 
Posons $\Xi_t=(\Xi_t\cap\N^-)\cdot(\Xi_t\cap\M)\cdot(\Om\cap\N)$ avec~:
\begin{eqnarray*}
\Xi_t\cap\M&=&(\U_1(\La)\cap\mult\B)\Om^{q-m}\cap\M,\\
\Xi_t\cap\N^- &=&(\U_{m-t+1} (\La)\cap\mult\B)\Om^{q-t+1}\cap\N^-.
\end{eqnarray*}

\begin{lemm}
L'ensemble $\Xi_t$ est un sous-groupe de $\U_1(\La)$. 
\end{lemm}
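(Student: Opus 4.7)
The plan is to verify that $\Xi_t$ is stable under multiplication and inversion by exploiting an Iwahori-style decomposition. First I would check that each of the three pieces $\Xi_t\cap\N^-$, $\Xi_t\cap\M$ and $\Om\cap\N$ is itself a subgroup. For $\Om\cap\N$ this is immediate since $\Om$ has been introduced as a group in paragraph~\ref{P:rss} and the intersection with the unipotent radical $\N$ of a parabolic is a group. For the two others, one uses that the lattices $\aa_k(\La)\cap\A^{ij}$ decompose nicely with respect to the block decomposition and that both $\U_k(\La)\cap\mult\B$ and $\Om^k$ are normalised by $\M$ (the latter being explicit at the end of paragraph~\ref{P:rss}); intersecting with $\M$ or $\N^-$ then gives groups.

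Next I would prove the three normalisation/commutator statements that together imply that $\Xi_t\cap\N^-$, $\Xi_t\cap\M$ and $\Om\cap\N$ ``multiply cleanly''. Namely:
\begin{enumerate}
\item $\Xi_t\cap\M$ normalises both $\Xi_t\cap\N^-$ and $\Om\cap\N$;
\item the commutator $[\Xi_t\cap\N^-,\,\Om\cap\N]$ is contained in $\Xi_t$.
\end{enumerate}
Point (1) follows from the $\M$-stability of the filtrations $\aa_k(\La)$, $\jj_k$, $\nn_k$ and $\mm_k$ together with the fact that $\U_1(\La)\cap\mult\B$ normalises each $\Om^k$. Point (2) is the heart of the lemma: by the standard commutator estimate $[1+x,1+y]\in 1+xy-yx+\aa_{i+j+1}(\La)$ for $x\in\aa_i$, $y\in\aa_j$, one has $[1+x,1+y]\in\U_{i+j}(\La)$, and one must track in addition which piece of $\mm_{k}$ or $\nn_k$ the product $xy-yx$ lands in, using that this commutator lies in $\A^{21}\oplus\A^{12}\oplus\M$ with the off-diagonal parts landing in $\N^-$ and $\N$ respectively, and the diagonal contribution landing in $\jj_s+(\aa\cap\nn_{k-q})=\mm_k$.

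Once (1) and (2) are established, a routine computation gives
\[
\bigl((\Xi_t\cap\N^-)(\Xi_t\cap\M)(\Om\cap\N)\bigr)\cdot
\bigl((\Xi_t\cap\N^-)(\Xi_t\cap\M)(\Om\cap\N)\bigr)\subseteq\Xi_t,
\]
and a symmetric argument for inverses (rewriting $(u^-mn)^{-1}=n^{-1}m^{-1}u^{-,-1}$ and then transporting the factors back through the normalisation relations) shows $\Xi_t^{-1}\subseteq\Xi_t$.

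The main technical obstacle will be point (2), where the constraint $t\ge r$ ensures that the relevant indices $q-m+1$, $q-t+1$ and $s=\lceil q/2\rceil$ line up so that the product $xy-yx$ of a term of level $\ge q-t+1$ in $\N^-$ and a term of level $\ge q-m+1$ in $\N$ falls into $\mm_{q-m}+\mm_{q-t}$ on the diagonal part and into the correct $\nn$-filtration pieces on the off-diagonal parts; this is where one has to be careful with the asymmetry between $r$ and $s$ in the definition of $\mm_k$. Everything else is bookkeeping with the filtrations already introduced.
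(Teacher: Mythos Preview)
Your overall plan (show the $\M$-part normalises the two unipotent parts, then control the product of the two unipotent parts) is exactly the paper's plan, and your step (1) is precisely what the paper does first. The issue is step (2). Asserting only that the commutator $[\Xi_t\cap\N^-,\Om\cap\N]$ lies in $\Xi_t$ is not enough to conclude that $\Xi_t$ is a group: from $ca=[c,a]\,ac$ with $[c,a]\in\Xi_t=ABC$ you are right back to having to multiply two elements of $\Xi_t$. What is actually needed is the \emph{product} inclusion $(\Om\cap\N)\cdot(\Xi_t\cap\N^-)\subseteq\Xi_t$, and this is what the paper proves directly. Moreover, your description of where $xy-yx$ lands is off: for $x\in\A^{21}$ and $y\in\A^{12}$ one has $xy\in\A^{22}$ and $yx\in\A^{11}$, so the leading term of the commutator is purely diagonal, while the off-diagonal contributions you allude to only appear in the higher-order terms $xyx,\ yxy,\ \ldots$, which your first-order estimate does not control.

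The paper avoids this by using the explicit matrix identity
\[
\begin{pmatrix}1&u\\0&1\end{pmatrix}\begin{pmatrix}1&0\\v&1\end{pmatrix}
=\begin{pmatrix}1&0\\v(1+uv)^{-1}&1\end{pmatrix}
\begin{pmatrix}1+uv&0\\0&(1+vu)^{-1}\end{pmatrix}
\begin{pmatrix}1&(1+uv)^{-1}u\\0&1\end{pmatrix},
\]
which gives an exact Iwahori-style decomposition of the product, not an approximation. One then only has to check that the diagonal factor lies in $\Xi_t\cap\M$, i.e.\ that $uv$ and $vu$ lie in $\bb_1+\mm_{q-m}$; this follows at once from the filtration properties $\mm_k\mm_l\subseteq\mm_{k+l}$ and $\bb_k\mm_l\subseteq\mm_{k+l}$ (the same ingredients you were reaching for), and the new off-diagonal factors stay in the original unipotent pieces for the same reason. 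So the ``hard analytic'' input you identified is the right one, but the group-theoretic reduction should go through the product formula above rather than through a commutator estimate.
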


\begin{proof}
D'abord, le groupe $\Xi_t\cap\M$ normalise $\Xi_t\cap\N$ et 
$\Xi_t\cap\N^-$ car $\U_1(\La)\cap\mult\B$ et $\Om^k$ nor\-ma\-li\-sent 
$\Om^l$ pour tous $k,l\>1$.

On vérifie ensuite que $(\Xi_t\cap\N)\cdot(\Xi_t\cap\N^-)$ est inclus dans 
$\Xi_t$.  
Étant donné que~:
\begin{equation*}
\begin{pmatrix} 1&u\\0&1 \end{pmatrix}
\begin{pmatrix} 1&0\\v&1 \end{pmatrix}
=
\begin{pmatrix} 1&0\\v(1+uv)^{-1}&1 \end{pmatrix}
\begin{pmatrix} 1+uv&0\\0&(1+vu)^{-1}\end{pmatrix}
\begin{pmatrix} 1&(1+uv)^{-1}u\\0&1 \end{pmatrix}
\end{equation*}
pour tout $u\in\aa_1^{12}$ et tout $v\in\aa_1^{21}$,
il suffit de prouver que~:
\begin{eqnarray*}
(\bb^{12}_1+\mm^{12}_{q-m+1})(\bb_{m-t+1}^{21}+\mm^{21}_{q-t+1}) 
&\subseteq&\bb^{1}_1+\mm^{1}_{q-m},\\
(\bb_{m-t+1}^{21}+\mm^{21}_{q-t+1})(\bb^{12}_1+\mm^{12}_{q-m+1})
&\subseteq&\bb_1^2+\mm^{2}_{q-m},
\end{eqnarray*}
ce qui est une conséquence du fait que $\mm_k\mm_l\subseteq\mm_{k+l}$ et 
$\bb_k\mm_l\subseteq\mm_{k+l}$ pour tous $k,l\>0$. 
\end{proof} 

Pour $t=m$, on a $\K_m=\H^m$ et 
(voir la preuve de \cite[Théorème 4.3]{SeSt1}) 
l'entrelacement dans $\G$ de $\xi_m=\t\psi_c$ est inclus dans 
$\Xi_m(\M\cap\mult\B)\Xi_m$. 

\begin{prop}
On a $\I_\G(\xi_t)\subseteq\Xi_t(\M\cap\mult\B)\Xi_t$. 
\end{prop}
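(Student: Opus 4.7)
The plan is to proceed by decreasing induction on $t$, starting from the base case $t=m$, which is the statement noted just before the proposition: $\K_m=\H^m$ and the intertwining of $\xi_m=\t\psi_c$ in $\G$ is contained in $\Xi_m(\M\cap\mult\B)\Xi_m$. For the inductive step I would assume the bound at level $t+1$, then take $g\in\I_\G(\xi_t)$. Since $\K_{t+1}\subseteq\K_t$ and $\xi_{t+1}$ is the restriction of $\xi_t$, one has $g\in\I_\G(\xi_{t+1})$, so the inductive hypothesis yields $g\in\Xi_{t+1}(\M\cap\mult\B)\Xi_{t+1}$.

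The passage from $\Xi_{t+1}$ to $\Xi_t$ affects only the $\N^-$-component, since the intersections with $\M$ and with $\N$ coincide for both groups. I would then write $g=u_-au_+$ with $u_\pm\in\Xi_{t+1}$ and $a\in\M\cap\mult\B$ via the Iwahori decomposition of $\Xi_{t+1}$, the aim being to show, after further left- and right-multiplication by elements of $\Xi_t$, that the $\A^{21}$-component of $u_-$ can be forced into $\bb_{m-t+1}^{21}+\mm_{q-t+1}^{21}$. The extra information to exploit is that $\H^t\cap\N\subseteq\K_t$, whereas only $\H^{t+1}\cap\N$ sits inside $\K_{t+1}$: for $x=1+u\in\H^t\cap\N$, expanding the intertwining identity $\xi_t(gxg^{-1})=1$, using that $\xi_t$ equals $\t\psi_c$ on the Levi factor and is trivial on both unipotent radicals, together with the standard transfer of the simple character $\t\in\Cc(\La,m-1,\b)$ on $\N$-commutators (governed by $\psi_{\b}$), produces cohomological identities of the form $a_{\b'}(v)\in\aa_k^{21}$, where $\b'$ is a perturbation of $\b$ in $\b+\aa_{1-m}$.

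The key tool to convert these identities into a reduction of $u_-$ is Lemma \ref{lem:maps}(3), which produces a solution $w\in\aa_{m-t+1}^{21}$ of the relevant equation $w\b'-\b'w=c'$ at the precise filtration depth; conjugating $g$ by $1+w$ then absorbs the obstructing $\N^-$-component into $\Xi_t\cap\N^-$. The main obstacle will be the careful bookkeeping of filtrations: I must verify that $1+w$ genuinely lies in $\Xi_t$, and that its conjugation preserves the $\M$- and $\N$-components already in place. This hinges on the alignment of the shifts $q-t+1$, $q-m+1$ and $m-t+1$ appearing in the definition of $\Xi_t$ with the depths produced by Lemma \ref{lem:maps}, and on the standing hypothesis $t\>r=\lfloor q/2\rfloor+1$ from paragraph \ref{P:rss}, which guarantees that $\mm_{q-t+1}\subseteq\jj_s$ and that $\Om^{q-t+1}$ interacts correctly with the commutators of $\H^t\cap\N$, so that $\t$ behaves as required on them. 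Once this correction is carried out, $g$ lies in $\Xi_t(\M\cap\mult\B)\Xi_t$, completing the induction.
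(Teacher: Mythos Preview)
Your inductive framework is correct, but the proposal conflates the roles of $\b$ and $c$, and this leads to a genuine gap.

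In the setting of \S\ref{P:rss}, the element $\b$ is \emph{not} split with respect to $\V=\V^1\oplus\V^2$: the decomposition is into $\E\otimes_\F\D$-modules, so the projections $\b_1,\b_2$ have the \emph{same} minimal polynomial over $\F$. Lemma~\ref{lem:maps}(3) therefore cannot be applied with $b'=\b'$ as you suggest; its hypothesis requires coprime characteristic polynomials, which here holds for the element $c$, not for $\b$. The correction of the $\N^-$-component in the paper is carried out instead via the exact sequence for the adjoint map $a_\b$ (\cite[Proposition~2.27]{SeSt1}): one first restricts to the auxiliary subgroup $^-\K_{t-1}$ and uses Lemma~\ref{AL3} to obtain $^-\xi_{t-1}^{1+z}={}^-\xi_{t-1}\psi_{a_\b(z)}$ for $1+z\in\Xi_t\cap\N^-$, then the intertwining condition yields a congruence $b^{-1}a_\b(x)b\equiv a_\b(y)$ modulo a dual lattice (Lemma~\ref{AL4}), and the exact sequence produces the required correction terms $x'',y''\in\bb_{m-t+1}^{21}+\mm_{q-t+2}^{21}$.

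Even after this correction, your argument is incomplete. One reaches $g=(1+u)\g(1+v)^{-1}$ with $1+u,1+v\in\Om^{q-t+2}\cap\N^-$ and $\g\in(\U_{m-t+1}(\La)\cap\mult\B)(\M\cap\mult\B)(\U_{m-t+1}(\La)\cap\mult\B)$; this does \emph{not} yet place $g$ in $\Xi_{t-1}(\M\cap\mult\B)\Xi_{t-1}$. The missing final step is to observe that $\g$ intertwines the restriction of $\xi_{t-1}$ to $\K_{t-1}\cap\mult\B$, hence (since $\g\in\mult\B$ already intertwines $\t$ there) intertwines the restriction of $\psi_c$; one then applies Proposition~\ref{prop:mincase} inside $\mult\B$ to the fundamental split stratum defined by $c$. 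It is this application, and only this one, that uses the coprimality hypothesis and Lemma~\ref{lem:maps}.
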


On va prouver cette proposition par récurrence sur $t$.
Posons~:
\begin{equation*}
^-\K_t = (\H^m\cap\N^-)\cdot(\H^{m+1}\cap\M)\cdot(\H^t\cap\N)
\end{equation*}
(qui est un sous-groupe de $\K_t$ car $\H^k$ normalise $\H^l$ et 
$\hh^k\hh^l\subseteq\hh^{k+l}$ pour tous $k,l\>1$)
et notons $^-\xi_t$ la res\-triction de $\xi_t$ à $^-\K_t$. 

\begin{lemm}
\label{AL2}
Le groupe $\Xi_t\cap\P$ normalise $^-\xi_t$. 
\end{lemm}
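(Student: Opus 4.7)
The plan is to reduce the verification to checking, separately for elements of each of the two factors in the decomposition $\Xi_t\cap\P=(\Xi_t\cap\M)\cdot(\Om\cap\N)$, that conjugation preserves the group $^-\K_t$ together with the character $^-\xi_t$. Since $^-\xi_t$ is defined to be trivial on $^-\K_t\cap\N^-$ and $^-\K_t\cap\N$ and to coincide with the restriction of $\t\psi_c$ on $^-\K_t\cap\M=\H^{m+1}\cap\M$, and since the three pieces of the Iwahori-type decomposition are well-behaved under conjugation by $\M$, the strategy is to produce commutator estimates showing that all error terms created by conjugation land in the kernel of the relevant restriction of $\xi$.

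First I would handle the factor $\Xi_t\cap\M\subseteq(\U_1(\La)\cap\mult\B)\Om^{q-m}\cap\M$. Conjugation by an element of $\M$ preserves $\M$, $\N$ and $\N^-$ individually, so one only has to check that each factor of $^-\K_t$ is stabilized. For $\H^{m+1}\cap\M$ this is immediate because $\H^{m+1}$ is normal in $\U_1(\La)$. For $\H^m\cap\N^-$ and $\H^t\cap\N$, one uses that $\Om^{q-m}$ normalises $\H^k$ for $k\>r$ (a property already recalled before the statement) together with the fact that $\U_1(\La)\cap\mult\B$ normalises every $\H^k$. To see that the character is preserved, note that elements of $\U_1(\La)\cap\mult\B$ normalise the simple character $\t$ by the standard transfer/normalisation properties in \cite{SeSt1}, while the $\Om^{q-m}$-part commutes with $\t$ modulo the kernel, and the factor $\psi_c$ is preserved because $c\in\A^{1}\oplus\A^{2}$ commutes with diagonal elements modulo negligible terms controlled by the level $q-m$.

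Second I would handle the factor $u\in\Om\cap\N$. Here the only nontrivial issue is conjugation of elements of $\H^m\cap\N^-$ and of $\H^{m+1}\cap\M$, since $\Om\cap\N$ clearly normalises itself and hence $\H^t\cap\N$. For $v\in\H^m\cap\N^-$ one writes $uvu^{-1}=v\cdot[v^{-1},u^{-1}]$ and expands the commutator by blocks using the matrix identity recalled in the proof that $\Xi_t$ is a group; the error term lies in $(\H^{m+1}\cap\M)\cdot(\H^{t+1}\cap\N)$, which is contained in $^-\K_t$. For the character, the $\N$-component of the error contributes trivially, while the $\M$-component is of the form $1+uv$ with $u\in\bb_1^{12}+\mm_{q-m+1}^{12}$ and $v\in\bb_m^{21}$; the value of $\t\psi_c$ on such elements is computed using the intertwining formula for simple characters together with $\psi_\F\circ\tr_{\A/\F}(buv+cuv)=1$, which follows from $uv\in\jj_{r}$ and the defining property of $\t$ on $\H^{r}$.

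The main obstacle will be the last step: showing that the $\M$-component produced by the commutator falls into a subgroup where $\t\psi_c$ is trivial. This is where one must use in an essential way both the tame-level properties of the simple character $\t$ (namely, that it agrees with $\psi_\b$ on $\H^{\lfloor q/2\rfloor+1}\cap\mult\B\cdot\H^{q-m+1}$ up to the commutator kernel) and the hypothesis that $c\in\aa_{-m}(\La)$ stabilises the decomposition $\V=\V^1\oplus\V^2$, so that $\tr_{\A/\F}$ of the off-diagonal-times-off-diagonal terms paired against $c$ vanishes. Once this vanishing is established, combining the two cases yields the lemma.
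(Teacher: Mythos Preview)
Your overall strategy matches the paper's: split $\Xi_t\cap\P$ into its $\M$- and $\N$-components, verify that each normalises $^-\K_t$, then check the character. You also correctly identify the crux as the diagonal error term
\[
\begin{pmatrix}1+uh & 0 \\ 0 & (1+hu)^{-1}\end{pmatrix}\in\H^{m+1}\cap\M
\]
arising when $1+u\in\Om\cap\N$ conjugates $1+h\in\H^m\cap\N^-$.

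Your proposed resolution of this crux, however, diverges from the paper's and contains gaps. First, you write the $\N^-$-component as $v\in\bb_m^{21}$, but the relevant group is $\H^m\cap\N^-=1+\hh_m^{21}$, and $\hh_m$ is in general strictly larger than $\bb_m$. Second, your plan to evaluate $\t\psi_c$ directly via ``$uv\in\jj_r$ and the defining property of $\t$ on $\H^r$'' is not justified: a simple character $\t\in\Cc(\La,m-1,\b)$ is not simply $\psi_\b$ on $\H^r$ in general, so the vanishing $\psi_\F\circ\tr_{\A/\F}((\b+c)uh)=1$ does not by itself yield $^-\xi_t$ trivial on the diagonal error term.

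The paper sidesteps this computation by an indirect argument. Since $c\in\aa_{-m}(\La)$, the character $\psi_c$ is trivial on $\H^{m+1}$, so $^-\xi_t$ and $\t$ coincide on $\H^{m+1}\cap\M$. Now $\t$ itself is normalised by $\Om\cap\N$ (a standard property of simple characters), and applying the \emph{same} commutator computation to $\t$ rather than to $^-\xi_t$ shows that $\t$ is trivial on the diagonal error elements above; hence so is $^-\xi_t$. No explicit evaluation of $\t$ is needed. Similarly, for the $\Xi_t\cap\M$ part the paper simply invokes \cite[Th\'eor\`eme~2.23]{SeSt1} to handle the restriction of $\t\psi_c$ to $\H^{m+1}\cap\M$, rather than the ad hoc estimates you sketch.
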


\begin{proof}
D'abord le groupe $(\U_1(\La)\cap\mult\B)\Om^{q-m}\cap\M$ normalise $^-\K_t$ 
car il normalise $\H^k$ pour tout $k\>r$.  
Ainsi il normalise $^-\xi_t$ si et seulement s'il normalise 
la restriction à $\H^{m+1}\cap\M$ de ce caractère, ce qui découle de 
\cite[Théorème 2.23]{SeSt1}.

Passons à $\Xi_t\cap\N=\Om\cap\N$.
Comme ce groupe normalise $^-\K_t$, on a~:
\begin{equation*}
\begin{pmatrix} x&0\\0&y \end{pmatrix}^{-1}
\begin{pmatrix} 1&u\\0&1 \end{pmatrix}
\begin{pmatrix} x&0\\0&y \end{pmatrix}
\begin{pmatrix} 1&-u\\0&1 \end{pmatrix}
=
\begin{pmatrix} 1&x^{-1}uy-u\\0&1 \end{pmatrix}
\in\H^t\cap\N\subseteq\Ker(^-\xi_t)
\end{equation*}
pour tout $u\in\bb_1+\mm^{12}_{q-m+1}$, donc il normalise aussi la restriction de 
$^-\xi_t$ à $^-\K_t\cap\P$.
Écrivons~:
\begin{equation*}
\begin{pmatrix} 1&u\\0&1 \end{pmatrix}
\begin{pmatrix} 1&0\\h&1 \end{pmatrix}
\begin{pmatrix} 1&-u\\0&1 \end{pmatrix}
=
\begin{pmatrix} 1&0\\h(1+uh)^{-1}&1 \end{pmatrix}
\begin{pmatrix} 1+uh&0\\0&(1+hu)^{-1}\end{pmatrix}
\begin{pmatrix} 1&-(1+uh)^{-1}uhu\\0&1 \end{pmatrix}
\end{equation*}
pour tout $u\in\bb^{12}_1+\mm^{12}_{q-m+1}$ et tout $h\in\hh^{21}_m$. 
Comme $\Om$ normalise $\H^m$, chacun des trois facteurs du 
membre de droite est dans $\H^m$.
En outre le facteur du milieu est dans 
$\H^{m+1}\cap\M$, ce qui suit du fait qu'il est à la fois dans $\H^m\cap\M$ 
et dans $1+(\bb_1+\mm_{q-m+1})\hh_m\subseteq\U_{m+1}(\La)$.
Ainsi $\Om\cap\N$ nor\-malise $^-\K_t$, et il normalise $^-\xi_t$ si 
et seulement si $^-\xi_t$ est trivial sur les éléments de la forme~:
\begin{equation}
\label{salediag}
\begin{pmatrix} 1+uh&0\\0&(1+hu)^{-1}\end{pmatrix}
\in \H^{m+1}\cap\M
\end{equation}
avec $u\in\bb^{12}_1+\mm^{12}_{q-m+1}$ et $h\in\hh^{21}_m$.  
Si l'on applique le même raisonnement à $\t$ 
plutôt qu'à $^-\xi_t$, 
on en déduit que $\t$ est trivial sur les éléments de la forme 
\eqref{salediag} puisqu'il est normalisé par $\Om\cap\N$.
Comme $\t$ et $^-\xi_t$ coïncident sur $\H^{m+1}\cap\M$, on en déduit que 
$^-\xi_t$ est normalisé par $\Om\cap\N$.
\end{proof}

Supposons maintenant que $\I_\G(\xi_t)\subseteq\Xi_t(\M\cap\mult\B)\Xi_t$ 
pour un $r<t\<m$, et soit $g\in\I_\G(\xi_{t-1})$.
On a \textit{a fortiori} $g\in\I_\G(\xi_t)$, de sorte que~:
\begin{equation*}
g = (1+x)b(1+y)^{-1}
\end{equation*}
avec $b\in\M\cap\mult\B$ et $1+x,1+y\in\Xi_t$.
Comme $\Xi_t\cap\P=\Xi_{t-1}\cap\P$ normalise $^-\xi_{t-1}$ 
d'après le lemme \ref{AL2}, on peut supposer que $1+x,1+y\in\Xi_t\cap\N^-$.

\begin{lemm}
\label{AL3}
Pour tout $1+z\in\Xi_t\cap\N^-$, on a $^-\xi_{t-1}^{1+z}={}^-\xi_{t-1}^{}\psi_{a_\b(z)}$.
\end{lemm}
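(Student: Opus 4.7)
The plan is to verify the equality of characters on each of the three factors of the Iwahori-type decomposition ${}^-\K_{t-1} = (\H^m\cap\N^-)\cdot(\H^{m+1}\cap\M)\cdot(\H^{t-1}\cap\N)$. Since both sides are characters of ${}^-\K_{t-1}$ (the right-hand side being a character because $\psi_{a_\b(z)}$ is trivial on commutators of elements of ${}^-\K_{t-1}$, as these lie in deep enough filtration ideals), it is enough to check the identity on each of the three factors separately. The key structural fact used throughout is that $z\in\aa_{m-t+1}^{21}+\mm_{q-t+1}^{21}\subseteq\A^{21}$ satisfies $z^2=0$, so that $(1+z)^{-1}=1-z$ exactly.

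For the factor $\H^m\cap\N^-$ the computation is essentially immediate. Taking $1+u$ with $u\in\hh_m^{21}\subseteq\A^{21}$, one has $zu=uz=0$ (because $\A^{21}\cdot\A^{21}=0$), so the conjugate $(1-z)(1+u)(1+z)$ equals $1+u$ itself. Both ${}^-\xi_{t-1}$ and $\psi_{a_\b(z)}$ vanish on $1+u$: the former by definition, the latter because $(\b z - z\b)u\in\A^{21}$ (as both $\b z$ and $z\b$ lie in $\A^{21}$), and the trace of an element of $\A^{21}$ is zero. The identity thus holds tautologically.

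The factor $\H^{m+1}\cap\M$ is the most delicate but still essentially formal. Writing $y=y_1+y_2$ with $y_i\in\hh_{m+1}^i$, one computes $(1-z)(1+y)(1+z)=1+y+(y_2z-zy_1)$, where $y_2z-zy_1\in\A^{21}$. The Iwahori re-decomposition then gives $(1-z)(1+y)(1+z)=(1+v)\cdot(1+y)$ for some $v\in\A^{21}$ of low enough filtration, so that ${}^-\xi_{t-1}$ vanishes on $1+v$ and the evaluation reduces to ${}^-\xi_{t-1}(1+y)$. On the other side, $\psi_{a_\b(z)}(1+y)=\psi_\F(\tr((\b z-z\b)y))=1$ once more, because $(\b z-z\b)y\in\A^{21}$ has zero trace. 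Both sides again agree.

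The main content is the factor $\H^{t-1}\cap\N$, where the character identity becomes non-trivial. For $1+u$ with $u\in\hh_{t-1}^{12}$, one obtains $(1-z)(1+u)(1+z)=1+u+uz-zu-zuz$, whose components relative to $\A^{12}\oplus\M\oplus\A^{21}$ are $u$, $uz-zu\in\A^1\oplus\A^2$, and $-zuz\in\A^{21}$ respectively. After performing the Iwahori re-decomposition, only the $\M$-component contributes to ${}^-\xi_{t-1}$, giving $\t\psi_c(1+uz-zu)$. The desired equality now reduces to the identity $\t\psi_c(1+uz-zu)=\psi_{a_\b(z)}(1+u)$, which, by cyclicity of the trace, is exactly $\psi_\F\circ\tr_{\A/\F}((\b+c)(uz-zu))=\psi_\F\circ\tr_{\A/\F}((\b z-z\b)u)$ together with the analogous contribution from $c$. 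The main obstacle is verifying that in the filtration range where the elements $uz-zu$ live, the simple character $\t$ is given by the linear formula $\t(1+x)=\psi_\b(x)$; this is exactly the type of compatibility built into the definition of the groups $\H^m$, $\Om$ and the transfer properties of simple characters recalled at the beginning of the appendix, and it is handled by a careful choice of filtration indices $m$, $t$, $r=\lfloor q/2\rfloor+1$ ensuring that $uz-zu$ lies in an ideal of level where the simple-character formula becomes linear. Once this is established, the identity follows from direct manipulation of the trace form.
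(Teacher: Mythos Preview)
Your direct factor-by-factor verification is a different route from the paper's, and it has a genuine gap at exactly the point you flag as the ``main obstacle''.

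The paper does not compute the conjugated character directly. It first invokes \cite[Lemme~4.7]{SeSt1} to obtain the identity $\xi_{t-1}^{1+z}=\xi_{t-1}\psi_{a_\b(z)}$ on the \emph{larger} group $\K_{t-1}$; this is where the delicate behaviour of the simple character $\t$ under conjugation is handled, using the full inductive machinery of simple characters. Given that, it suffices to check that $\Xi_t\cap\N^-$ normalises the subgroup $^-\K_{t-1}$, because restricting both sides of the known identity to $^-\K_{t-1}$ then gives the lemma. The bulk of the paper's proof is that normalisation check (the matrix computation showing the three factors of the conjugate of $1+h\in\H^{t-1}\cap\N$ land in $\H^m\cap\N^-$, $\H^{m+1}\cap\M$, and $\H^{t-1}\cap\N$ respectively).

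Your plan instead tries to reprove the character identity from scratch, and it breaks down on the $\H^{t-1}\cap\N$ factor. The claim that on the $\M$-component the simple character $\t$ is given by the linear formula $\t(1+x)=\psi_\F\circ\tr_{\A/\F}(\b x)$ is not a general property of simple characters: $\t\in\Cc(\La,m-1,\b)$ is defined inductively through a chain of approximations of $\b$, and it agrees with $\psi_\b$ only on very deep filtration pieces which you have not shown are reached here. Your appeal to ``compatibility built into the definition'' and ``transfer properties'' is not an argument --- establishing exactly this kind of conjugation formula is the \emph{content} of results such as \cite[Lemme~4.7]{SeSt1}, which require real work with the approximation sequence. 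In addition, even granting $\t=\psi_\b$ on the relevant element, you would still need $\psi_\F\circ\tr_{\A/\F}(c(zu-uz))=1$ for the $\psi_c$-part to drop out, and this filtration estimate is not addressed either. (There is also a minor conjugation-direction slip: ${}^-\xi_{t-1}^{1+z}(1+u)$ is ${}^-\xi_{t-1}((1+z)(1+u)(1-z))$, not ${}^-\xi_{t-1}((1-z)(1+u)(1+z))$.) Without the external lemma, the central step of your plan is incomplete.
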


\begin{proof}
D'après la preuve du lemme \cite[Lemme 4.7]{SeSt1}, 
on a $\xi_{t-1}^{1+z}=\xi_{t-1}^{}\psi_{a_\b(z)}$ 
pour tout $1+z\in\Xi_t\cap\N^-$. 
Il suffit donc de prouver que $\Xi_t\cap\N^-$ normalise $^-\K_{t-1}$.
Remarquons que~:
\begin{equation*}
^-\K_{t-1}=\H^{m+1}(\H^m\cap\N^-)(\H^{t-1}\cap\N)
\end{equation*} 
et que $\Xi_t\cap\N^-$ normalise $\H^{m+1}$ et $\H^m\cap\N^-$
(voir \cite[Proposition 2.30]{SeSt1}).
Il reste donc à étudier l'action de $\Xi_t\cap\N^-$ sur $\H^{t-1}\cap\N$. 
On a~:
\begin{equation*}
\begin{pmatrix} 1&0\\u&1 \end{pmatrix}
\begin{pmatrix} 1&h\\0&1 \end{pmatrix}
\begin{pmatrix} 1&0\\-u&1 \end{pmatrix}
=
\begin{pmatrix} 1&0\\-uhu(1-hu)^{-1}&1 \end{pmatrix}
\begin{pmatrix} 1-hu&0\\0&(1-uh)^{-1}\end{pmatrix}
\begin{pmatrix} 1&(1-hu)^{-1}h\\0&1 \end{pmatrix}
\end{equation*}
pour tout $u\in\bb^{21}_{m-t+1}+\mm^{21}_{q-t+1}$ et tout 
$h\in\hh^{12}_{t-1}$. 

Comme $\Xi_t\cap\N^-$ normalise $\H^{t-1}$, le facteur de droite du membre de 
droite est dans $\H^{t-1}\cap\N$. 
Ensuite le facteur du milieu est dans 
$\H^{t-1}\cap\M\cap\U_{m+1}(\La)\subseteq\H^{m+1}\cap\M$. 

D'après \cite[Lemme 2.30]{SeSt1}, le groupe $\Xi_t\cap\N^-$ normalise 
$\H^{t-1}$, donc le facteur de gauche appar\-tient à $\H^{t-1}\cap\N^-$.
Comme $1-hu\in\H^{m+1}\subseteq\H^{t-1}$, on a $uhu\in\hh^{21}_{t-1}$. 
Mais on a aussi $u\in\aa_{m-t+1}$ et $h\in\aa_{t-1}$, de sorte que 
$uhu\in\hh^{21}_{t-1}\cap\aa_{m}\subseteq\hh^{21}_{m}$. 
Ainsi le facteur de gauche est dans $\H^{m}$.
\end{proof}

Écrivons maintenant que $g\in\I_\G(^-\xi_{t-1})$. 
Ainsi $b$ entrelace $^-\xi_{t-1}^{1+x}$ avec $^-\xi_{t-1}^{1+y}$, donc~:
\begin{equation*}
b^{-1}a_\b(x) b \equiv a_\b(y) \mod{b^{-1}(^-\kk_{t-1}^*)b+(^-\kk_{t-1}^*)}
\end{equation*}
où $^-\kk_{t-1}$ est le $\Oo_\F$-réseau défini par $^-\K_{t-1}=1+{}^-\kk_{t-1}$.

\begin{lemm}
\label{AL4}
On a $(b^{-1}(^-\kk_{t-1}^*)b+(^-\kk_{t-1}^*))\cap\A^{21}=
(b^{-1}\hh_{t-1}^*b+\hh_{t-1}^*)\cap\A^{21}$.
\end{lemm}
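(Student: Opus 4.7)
L'id�e est que le r�seau $^-\kk_{t-1}$ et le r�seau $\hh_{t-1}$, bien que distincts dans $\A$, co�ncident sur la composante $(1,2)$ relative � la d�composition en blocs induite par $\V=\V^1\oplus\V^2$. Par dualit� pour la forme bilin�aire $(x,y)\mapsto\psi_\F\circ\tr_{\A/\F}(xy)$, leurs r�seaux duaux $^-\kk_{t-1}^*$ et $\hh_{t-1}^*$ co�ncident alors sur la composante $(2,1)$. Comme $b\in\M\cap\mult\B$ est diagonal par blocs et pr�serve la d�composition, on pourra alors en d�duire l'�galit� voulue apr�s intersection avec $\A^{21}$.

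Plus pr�cis�ment, je proc�derais comme suit. D'abord, je calculerais la d�composition par blocs de $^-\kk_{t-1}$. Par d�finition~:
\begin{equation*}
^-\kk_{t-1}=\hh_{m+1}+(\hh_m\cap\A^{21})+(\hh_{t-1}\cap\A^{12}).
\end{equation*}
Comme $t-1\< m+1$ on a $\hh_{m+1}^{12}\subseteq\hh_{t-1}^{12}$, donc $(^-\kk_{t-1})^{12}=\hh_{t-1}^{12}$. Comme $m<m+1$ on a $\hh_{m+1}^{21}\subseteq\hh_m^{21}$, donc $(^-\kk_{t-1})^{21}=\hh_m^{21}$. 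En particulier les composantes $(1,2)$ de $^-\kk_{t-1}$ et $\hh_{t-1}$ sont �gales. Ensuite, la forme bilin�aire associ�e � $\psi_\F\circ\tr_{\A/\F}$ met $\A^{ij}$ en dualit� avec $\A^{ji}$. Pour tout r�seau $\mathfrak{x}\subseteq\A$ compatible avec la d�composition par blocs (c'est le cas ici puisque $\b$ stabilise $\V^1\oplus\V^2$), la composante $(2,1)$ de $\mathfrak{x}^*$ est exactement le $\Oo_\F$-dual de $\mathfrak{x}^{12}$ pour la restriction du tra\c{c}age � $\A^{21}\times\A^{12}$. On en d�duit~:
\begin{equation*}
{}^-\kk_{t-1}^*\cap\A^{21} = ((^-\kk_{t-1})^{12})^* = (\hh_{t-1}^{12})^* = \hh_{t-1}^*\cap\A^{21}.
\end{equation*}

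Enfin, puisque $b\in\M$ est diagonal par blocs, la conjugaison par $b$ commute � la d�composition en blocs, donc pr�serve $\A^{21}$ et chaque composante par blocs des r�seaux consid�r�s. Cela entra�ne~:
\begin{equation*}
(b^{-1}\,{}^-\kk_{t-1}^*\,b)\cap\A^{21}=b^{-1}({}^-\kk_{t-1}^*\cap\A^{21})b=b^{-1}(\hh_{t-1}^*\cap\A^{21})b=(b^{-1}\hh_{t-1}^*b)\cap\A^{21}.
\end{equation*}
Puisque la somme de deux r�seaux compatibles avec la d�composition en blocs est encore compatible, l'intersection avec $\A^{21}$ commute � la somme, et on obtient l'�galit� annonc�e. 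L'�tape la plus d�licate est la v�rification soign�e que toutes les composantes par blocs sont bien stables par les op�rations effectu�es (dualit�, conjugaison, somme), mais aucun calcul substantiel n'est n�cessaire.
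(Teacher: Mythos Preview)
Your proposal is correct and follows essentially the same approach as the paper's proof: both hinge on the observation that $(^-\kk_{t-1})^{12}=\hh_{t-1}^{12}$, so that by the duality between $\A^{12}$ and $\A^{21}$ the $(2,1)$-components of the duals agree, and then the block-diagonal nature of $b\in\M$ lets one pass to the conjugated sum. The paper phrases the duality step slightly more abstractly (for a general block-decomposed $\xx$), but the argument is the same.
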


\begin{proof}
Soit $\xx$ un $\Oo_\F$-réseau de $\A$ se décomposant en blocs $\xx^{ij}$, 
$i,j\in\{1,2\}$.
Alors~:
\begin{eqnarray*}
\xx^*\cap\A^{21}&=&\{a\in\A^{21}\ |\
\psi_{\A}(ax)=1\text{ pour tout $x\in\xx$}\}\\
&=&\{a\in\A^{21}\ |\ \psi_{\A^2}(ax)=1\text{ pour tout $x\in\xx^{12}$}\}.
\end{eqnarray*}
Ensuite, comme $b\in\M$, on a~:
\begin{equation*}
(b^{-1}\xx^*b+\xx^*)\cap\A^{21}=b_2^{-1}(\xx^*\cap\A^{21})b_1^{}+(\xx^*\cap\A^{21})
\end{equation*}
avec $b=b_1+b_2$ et $b_1\in(\A^1)^\times$ et $b_2\in(\A^2)^\times$.
Pour $\xx={}^-\kk_{t-1}$, on a $\xx^{12}=\hh_{t-1}^{12}$
donc on obtient le résultat attendu.
\end{proof}

D'après \cite[Proposition~2.27]{SeSt1}, il y a $x',y'\in\mm_{q-t+2}^{21}$ tels que 
$a_\b(b^{-1}xb-y) = a_\b(b^{-1}x'b-y')$.
Ainsi~:
\begin{equation*}
(b^{-1}xb-y)-(b^{-1}x'b-y')\in(b^{-1}\mm_{q-t+1}b+\mm_{q-t+1})\cap\B\cap\A^{21}.
\end{equation*}
et le membre de droite est égal à
$(b^{-1}\bb_{q-t+1}b+\bb_{q-t+1})\cap\A^{21}$
d'après \cite[Proposition~2.27]{SeSt1}.
Il y a donc $x'',y''\in\bb^{21}_{m-t+1}+\mm_{q-t+2}^{21}$ tels qu'on ait 
$b^{-1}xb-y=b^{-1}x''b-y''$, 
ce qui donne l'éga\-li\-té $g=(1+x'')b(1+y'')^{-1}$. 

Comme $1+x'',1+y''\in(\U_{m-t+1}(\La)\cap\mult\B)\Om^{q-t+2}\cap\N^-$, 
on peut écrire 
$g=(1+u)\g(1+v)^{-1}$ avec $1+u,1+v\in\Om^{q-t+2}\cap\N^-$
et
$\g\in(\U_{m-t+1}(\La)\cap\mult\B)b(\U_{m-t+1}(\La)\cap\mult\B)$.

L'élément $\g$ entrelace $\xi_{t-1}^{1+u}$ et $\xi_{t-1}^{1+v}$, donc aussi 
leurs restrictions au groupe~:
\begin{equation*}
\K_{t-1}\cap\mult\B=1+
\begin{pmatrix} \bb_m & \bb_{t-1} \\ \bb_m & \bb_m \end{pmatrix}.
\end{equation*}
Comme $\xi_{t-1}^{1+z}=\xi_{t-1}\psi_{a_\b(z)}$ pour tout 
$1+z\in\Xi_t\cap\N^-$, comme $\psi_{a_\b(u)}$ et 
$\psi_{a_\b(v)}$ sont triviaux sur $\K_{t-1}\cap\mult\B$, et comme 
$\g$ entrelace la restriction de $\t$ à $\K_{t-1}\cap\mult\B$, 
il entrelace aussi la restriction de $\psi_c$ à $\K_{t-1}\cap\mult\B$.
En appliquant la proposition \ref{prop:mincase}, on trouve que 
$g\in\Xi_{t-1}(\M\cap\mult\B)\Xi_{t-1}$.

\subsection{}
\label{ETAPE2}

On suppose toujours que $m\>r$, mais on pose cette fois-ci~:
\begin{equation*}
\Q_t = (\H^m\cap\P^-)\cdot(\Om^t\cap\N)
\end{equation*}
pour tout $q-m+1\<t\<s$, et on note $\zeta_t$ la restriction de $\xi$ à $\Q_t$.  

Posons $\Ga_t=(\Ga_t\cap\N^-)\cdot(\Xi_{m}\cap\P)$ avec~:
\begin{eqnarray*}
\Ga_t\cap\N^- &=&(\U_{m-t+1} (\La)\cap\mult\B)\H^{q-t+1}\cap\N^-.
\end{eqnarray*}
En appliquant le même calcul que dans le paragraphe précédent, mais en 
échangeant les rôles de $\hh$ et de $\mm$, on prouve que 
$\I_\G(\zeta_t)\subseteq\Ga_t(\M\cap\mult\B)\Ga_t$.
Pour cela, on a besoin de la variante suivante de \cite[Proposition~2.27]{SeSt1}, 
que l'on prouve de la même façon à partir de \cite[Lemma 6.3.2]{BK2}.

\begin{prop}
\label{ExactSequences}
Soit $s$ une corestriction modérée sur $\A$ relative à $\E/\F$.
Pour tout entier $0\<m\<q-1$, la suite~:
\begin{equation*}
\label{setilE}
0\to\bb_{m+1}\to\hh^{m+1}
\fr{a_\b}(\mm_{q-m})^*\fr{s}\bb_{m+1-q}\to0
\end{equation*}
est exacte.
Si on d\'esigne cette suite par 
$0\to\mathfrak{l}_{1}\to\mathfrak{l}_{2}
\to\mathfrak{l}_{3}\to\mathfrak{l}_{4}\to0$, 
alors la suite~:
\begin{equation*}
\label{setilEij}
0
\to h^{-1}\mathfrak{l}^{ij}_{1}h+\mathfrak{l}^{ij}_{1}
\to h^{-1}\mathfrak{l}^{ij}_{2}h+\mathfrak{l}^{ij}_{2}
\to h^{-1}\mathfrak{l}^{ij}_{3}h+\mathfrak{l}^{ij}_{3}
\to h^{-1}\mathfrak{l}^{ij}_{4}h+\mathfrak{l}^{ij}_{4}
\to 0
\end{equation*}
est exacte pour tout $h\in\mult\B\cap\M$ et tous $i,j\in\{1,2\}$.
\end{prop}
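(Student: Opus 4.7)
The plan is to deduce the first (unblocked) exact sequence from \cite[Proposition 2.27]{SeSt1} by showing that the module $\mm_{q-m}$ introduced in \S\ref{P:rss} fits into the same duality framework as the modules appearing in \textit{loc.\ cit.}, and then to upgrade the result to the blocked version by exploiting the fact that $\E$ stabilizes the decomposition $\V=\V^1\oplus\V^2$, so that $\b$ (and any $h\in\mult\B\cap\M$) acts diagonally.

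First I would verify the unblocked sequence term by term. Injectivity of $\bb_{m+1}\hookrightarrow\hh^{m+1}$ is immediate. Exactness at $\hh^{m+1}$ amounts to $\ker(a_\b)\cap\hh^{m+1}=\bb_{m+1}$, which is the standard fact that the centralizer of $\b$ in $\hh^{m+1}$ is $\bb_{m+1}$. For exactness at $(\mm_{q-m})^*$ and surjectivity onto $\bb_{m+1-q}$, I would combine two ingredients: the tame corestriction yields $\ker(s)=\mathrm{im}(a_\b)$ on the appropriate level of the filtration (this is the content of \cite[Lemma 6.3.2]{BK2} suitably translated to our setting), and a dimension count using the fact that $\mm_{q-m}^{*}$ is, by definition of $\mm_{q-m}=\aa_{q-m}\cap\nn_{-m}+\jj_s$, the sum of the duals, which allows one to match the exactness of the standard four-term sequence of \cite[Proposition 2.27]{SeSt1}.

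Next I would establish the blocked statement. The key observation is that, because $\E\subseteq\A^1\times\A^2=\mathscr{M}$, the element $\b$ stabilizes the decomposition $\V=\V^1\oplus\V^2$ and therefore both $a_\b$ and $s$ preserve each block component $\A^{ij}$. Consequently the four-term sequence splits as a direct sum of four-term sequences on each of the four blocks, and each block sequence is exact. Given $h\in\mult\B\cap\M$, the operation $\mathfrak{l}\mapsto h^{-1}\mathfrak{l}^{ij}h+\mathfrak{l}^{ij}$ on a block $\mathfrak{l}^{ij}$ is $h^{}_i$-action on the right and $h_j^{-1}$-action on the left (with $h=h_1+h_2$, $h_i\in(\A^i)^\times\cap\mult\B$), followed by adding the original; the compatibility of this with $a_\b$ and $s$ follows from the fact that $h$ commutes with $\b$ (since $h\in\mult\B$).

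The main obstacle will be to pass from exactness of the sequence $\mathfrak{l}^{ij}_1\to\mathfrak{l}^{ij}_2\to\mathfrak{l}^{ij}_3\to\mathfrak{l}^{ij}_4\to 0$ to exactness after applying the operator $x\mapsto h^{-1}xh+x$. Exactness at the last two terms (surjectivity, and image vs.\ kernel of the last map) follows formally from the commutation of $h$ with $a_\b$ and $s$, together with the exactness of $h^{-1}(\cdot)h+(\cdot)$ being a sum. The delicate point is exactness at $\mathfrak{l}^{ij}_2$ after the operator: one must check that if $z=h^{-1}x h+x$ with $x\in\mathfrak{l}^{ij}_2$ satisfies $a_\b(z)\in h^{-1}\mathfrak{l}^{ij}_1h+\mathfrak{l}^{ij}_1$, then in fact $z$ already lies in $h^{-1}\mathfrak{l}^{ij}_1h+\mathfrak{l}^{ij}_1$, and similarly at $\mathfrak{l}^{ij}_3$. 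This is exactly the kind of descent argument handled by \cite[Lemma 6.3.2]{BK2} once one verifies that the hypotheses of that lemma (a filtration compatible with both the strata and the block decomposition) are met in our setting; I would therefore reduce the blocked exactness to a direct application of that lemma, after noting that the module $\mm_{q-m}$ only differs from the analogous module of \cite{SeSt1} by the summand $\jj_s$, which is itself block-diagonal and so causes no additional complication.
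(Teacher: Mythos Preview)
Your plan is essentially the paper's own approach: the proposition is introduced as a variant of \cite[Proposition~2.27]{SeSt1} (obtained by interchanging the roles of $\hh$ and $\mm$), and the paper simply says one proves it \og de la m\^eme fa\c{c}on \`a partir de \cite[Lemma~6.3.2]{BK2}\fg, which is exactly the reduction you outline. Your phrasing of exactness at $\mathfrak{l}^{ij}_2$ is slightly off (the hypothesis should be $a_\b(z)=0$ in $h^{-1}\mathfrak{l}^{ij}_3h+\mathfrak{l}^{ij}_3$, with conclusion $z\in h^{-1}\mathfrak{l}^{ij}_1h+\mathfrak{l}^{ij}_1$, since $a_\b$ maps $\mathfrak{l}_2$ to $\mathfrak{l}_3$, not $\mathfrak{l}_1$), but this does not affect the strategy.
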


Il ne nous reste qu'à prouver le résultat suivant. 
Posons $\Xi=\K((\U_1(\La)\cap\mult\B)\Om^{q-m}\cap\M)$.

\begin{prop}
\label{positron}
On a $\I_\G(\xi)\subseteq\Xi(\M\cap\mult\B)\Xi$.
\end{prop}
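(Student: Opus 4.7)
The plan is to deduce Proposition \ref{positron} directly from the bound established at the end of subsection \ref{ETAPE2}. The key initial observation is that $\Om \cap \N = \Om^{q-m+1} \cap \N$, since the extra factor $\U_1(\La) \cap \mult\B$ appearing in $\Om$ lies entirely in $\M$ and contributes nothing to the $\N$-component. Consequently $\K = \Q_{q-m+1}$ and $\xi = \zeta_{q-m+1}$, and the main result of subsection \ref{ETAPE2} specialised to $t = q-m+1$ gives $\I_\G(\xi) \subseteq \Ga_{q-m+1}(\M \cap \mult\B)\Ga_{q-m+1}$. The task is then to replace each occurrence of $\Ga_{q-m+1}$ by the (generally smaller) group $\Xi$.

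A direct comparison of the defining Iwahori-type filtrations shows that $\Ga_{q-m+1} \cap \P = \Xi_m \cap \P = \Xi \cap \P$, so the two groups coincide on their $\P$-parts and differ only in the $\N^-$-direction: on the one hand $\Xi \cap \N^- = \H^m \cap \N^-$, on the other hand $\Ga_{q-m+1} \cap \N^-$ also contains the extra factor $(\U_{2m-q}(\La) \cap \mult\B) \cap \N^-$. Given $g \in \I_\G(\xi)$ written in the form $\g_1 b \g_2^{-1}$ with $\g_i \in \Ga_{q-m+1}$ and $b \in \M \cap \mult\B$, I would decompose $\g_i = v_i h_i p_i$ with $v_i \in (\U_{2m-q}(\La) \cap \mult\B) \cap \N^-$, $h_i \in \H^m \cap \N^-$ and $p_i \in \Xi \cap \P$, then use the fact that $v_1, v_2, b$ all belong to $\mult\B$ to rewrite $v_1 b v_2^{-1}$ via the Iwahori decomposition of $\mult\B$ relative to $\M \cap \mult\B$ as a product of an element of $\M \cap \mult\B$ (the new middle factor) and an element of $\mult\B \cap \N^-$ which must then be absorbed into the $h_1$-factor.

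The hard part will be verifying that this residual $\mult\B \cap \N^-$-element actually lands inside $\H^m \cap \N^-$ after the appropriate filtration estimate, since this rests on inequalities relating the levels $2m-q$, $m$ and $q-m+1$ in the simple-stratum filtration, together with commutator bounds analogous to those of Lemmas \ref{AL3}--\ref{AL4}. When those inequalities degenerate, I would supplement the argument by intersecting with the complementary bound obtained from subsection \ref{ETAPE1} applied to the restriction $\xi_r$ of $\xi$ to $\K_r \subseteq \K$, which constrains the $\N^-$-component from a different filtration side; the combined constraint should force the $\N^-$-part into the intersection of the two bounds, which is exactly $\H^m \cap \N^-$, yielding $g \in \Xi(\M \cap \mult\B)\Xi$ as required.
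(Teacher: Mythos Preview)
Your opening observation is correct: $\K=\Q_{q-m+1}$ and $\xi=\zeta_{q-m+1}$, so the bound $\I_\G(\xi)\subseteq\Ga_{q-m+1}(\M\cap\mult\B)\Ga_{q-m+1}$ from \S\ref{ETAPE2} is available. The difficulty is entirely in the improvement from $\Ga_{q-m+1}$ to $\Xi$, and here your plan has a genuine gap.

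The step where you propose to rewrite $v_1bv_2^{-1}$ ``via the Iwahori decomposition of $\mult\B$ relative to $\M\cap\mult\B$'' is not available: $\mult\B$ is a full reductive $p$-adic group, not a parahoric or pro-$p$ subgroup, and there is no decomposition $\mult\B=(\N^-\cap\mult\B)(\M\cap\mult\B)(\N\cap\mult\B)$. An arbitrary element of $\mult\B$ need not lie in this big cell, and even when it does the $\N^-$-component will not in general land in a prescribed filtration piece like $\H^m\cap\N^-$. Your fallback of intersecting with the \S\ref{ETAPE1} bound does not repair this: the two constraints are inclusions of $g$ in two different unions of double cosets, and nothing forces the $\N^-$-contributions coming from two separate factorisations of the same $g$ to coincide, let alone to lie in the intersection of the filtration pieces.

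The paper does something quite different at this last stage. Having written $g=(1+u)\,\g\,(1+v)^{-1}$ with $1+u,1+v\in\H^m\cap\N^-$ and the $(\U_{2m-q}(\La)\cap\mult\B)$-excess absorbed into the \emph{middle} factor $\g$, one does not try to decompose $\g$ algebraically. Instead one uses that $\g$ intertwines $\xi^{1+u}$ with $\xi^{1+v}$, hence also their restrictions to $\K\cap\mult\B$. On this subgroup the conjugates coincide with $\t\psi_c$ (the $\psi_{a_\b(z)}$-twists are trivial on $\mult\B$), and since $\g$ already intertwines $\t$ there, it intertwines the restriction of $\psi_c$ to $\K\cap\mult\B$. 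One is now exactly in the situation of Proposition~\ref{prop:mincase} inside $\mult\B$, and that proposition forces $\g$ into $(\J\text{-piece})(\M\cap\mult\B)(\J\text{-piece})$, giving $g\in\Xi(\M\cap\mult\B)\Xi$. The point you are missing is this final appeal to Proposition~\ref{prop:mincase}; it is the only mechanism in the paper that converts the $\mult\B$-excess into a genuine $\M\cap\mult\B$-factor, and no amount of filtration bookkeeping will substitute for it.
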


\begin{proof}
Soit $g\in\I_\G(\xi)$.
En restreignant à~:
\begin{equation*}
^-\Q_{q-m+1}=(\H^m\cap\N^-)\cdot(\H^{m+1}\cap\M)\cdot(\Om^t\cap\N)
\end{equation*}
on peut écrire $g=(1+u)\g(1+v)^{-1}$ avec 
$\g\in(\U_{2m-q}(\La)\cap\mult\B)(\M\cap\mult\B)(\U_{2m-q}(\La)\cap\mult\B)$
et $1+u,1+v\in\H^{m}\cap\N^-$.
L'élément $\g$ entrelace $\xi^{1+u}$ et $\xi^{1+v}$, donc aussi 
leurs restrictions à~:
\begin{equation*}
\K\cap\mult\B=1+
\begin{pmatrix} \bb_m & \bb_{1} \\ \bb_m & \bb_m \end{pmatrix}.
\end{equation*}
On en déduit qu'il 
entrelace aussi la restriction de $\psi_c$ à $\K\cap\mult\B$.
En appliquant la proposition \ref{prop:mincase}, on trouve que 
$g\in\Xi(\M\cap\mult\B)\Xi$.
\end{proof}

\subsection{}

Dans ce paragraphe, on suppose que $m\in\{1,\dots,q-1\}$.

\begin{coro}
\label{cor:xiKint}
On a $\I_\G(\xi)\subseteq\K\M\K$.
\end{coro}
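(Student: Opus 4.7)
The plan is to split into the two cases $m\>r$ and $m<r$, deriving the bound directly from Proposition \ref{positron} in the first case and by a dual argument in the second.

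First, for $m\>r$, the corollary follows essentially formally from Proposition \ref{positron}. That proposition yields $\I_\G(\xi)\subseteq\Xi(\M\cap\mult\B)\Xi$, where $\Xi=\K\cdot Y$ with $Y=(\U_1(\La)\cap\mult\B)\Om^{q-m}\cap\M$ (a subgroup of $\G$ by the normalisation properties of $\Om^k$ by $\U_1(\La)\cap\mult\B$). Since $Y\subseteq\M$ and the middle factor $\M\cap\mult\B$ also lies in $\M$, the product $Y\cdot(\M\cap\mult\B)\cdot Y$ lies in $\M$, so $\Xi(\M\cap\mult\B)\Xi\subseteq\K\M\K$, as required.

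For $1\<m<r$, I would proceed by a symmetric/dual argument. Transposing the roles of $\P$ and $\P^-$ (and of $\N$ and $\N^-$), the same construction applied with the alternative level parameter $m'=q-m$ produces a subgroup $\K^\sharp$ and a character $\xi^\sharp$ for which $m'\>r$ now holds (at least for $q$ odd, and with a boundary adjustment when $q$ is even and $m=q/2$). The key observation is that any $g\in\I_\G(\xi)$ also entrelaces the restriction of $\xi$ to any subgroup; by choosing the right subgroup $\K'\subseteq\K$ one identifies this restriction with (a translate of) $\xi^\sharp$ via the transfer of the underlying simple character $\t\in\Cc(\La,m-1,\b)$ to $\Cc(\La,m'-1,\b)$ and a compatible restriction of $\psi_c$. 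Applying the first case to $\xi^\sharp$ gives $\I_\G(\xi^\sharp)\subseteq\K^\sharp\M\K^\sharp$; since $\K^\sharp$ and $\K'$ sit inside $\K$ (up to an absorbable $\M$-factor), this passes to the desired bound $\I_\G(\xi)\subseteq\K\M\K$.

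The main obstacle will be the second step: identifying the restriction $\xi|_{\K'}$ with a character of the form $\t'\psi_{c'}$ already covered by Proposition \ref{positron}. This requires checking compatibility of two independent pieces of data, namely the transfer of $\t$ between the two levels (which is controlled by the properties of $\Cc(\La,\cdot,\b)$) and the restriction of $\psi_c$ (which involves the interaction of $c\in\aa_{-m}(\La)$ with the swapped $\P$/$\P^-$ decomposition). The delicate case $m=q/2$ for $q$ even, which sits outside both ranges, would be handled by a short direct computation using the same ingredients: writing an entrelacing element $g=(1+u)b(1+v)^{-1}$ with $b\in\M\cap\mult\B$ as in the proof of Proposition \ref{positron}, and verifying that the cocycle identities force $u,v$ into $\K$ without needing the intermediate subgroup $\Xi$.
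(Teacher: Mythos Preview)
Your treatment of the case $m\>r$ matches the paper exactly: Proposition~\ref{positron} gives $\I_\G(\xi)\subseteq\Xi(\M\cap\mult\B)\Xi$, and since $\Xi=\K\cdot Y$ with $Y\subseteq\M$, this is contained in $\K\M\K$.

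For $m<r$, however, your dual approach is substantially more complicated than what the paper does, and the identification step you flag as ``the main obstacle'' is indeed unclear: swapping $\P$ and $\P^-$ and passing to the level $m'=q-m$ does not obviously land you back in the framework of \S\ref{P:rss}--\S\ref{ETAPE2}, since the construction of $\K$ and $\xi$ is asymmetric in $\P,\P^-$ and the element $c$ only satisfies $c\in\aa_{-m}(\La)$. You also create an artificial boundary case $m=q/2$ (for $q$ even) that you then have to handle separately.

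The paper bypasses all of this with a direct restriction argument. The key observation, which you are missing, is that for $m<r$ one has $q-m+1>s$, so $\Om^{q-m}=\Om^{q-m+1}=\J^s$ and hence $\Om=\J^1$; in particular $\K=\H^m(\J^1\cap\N)$. Now simply restrict $\xi$ from $\K$ to the subgroup $\K'=\H^r(\J^1\cap\N)$. This $\K'$ is precisely the group $\K$ attached to the level $r$ (with the restricted simple character $\t|_{\H^r}\in\Cc(\La,r-1,\b)$ and the same $c$, which still lies in $\aa_{-r}(\La)$ since $r>m$), and the restricted character is the corresponding $\xi$ at level $r$. Since $r\>r$, the first case applies and gives $g\in\K'\M\K'$ for any $g\in\I_\G(\xi)$. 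As $\K'\subseteq\K$, this yields $\I_\G(\xi)\subseteq\K\M\K$ with no case distinction on the parity of $q$ and no dual construction needed.
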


\begin{proof}
Si $m\>r$, c'est une conséquence de la proposition \ref{positron}.

Si $m<r$, remarquons que $\Om^{q-m}=\Om^{q-m+1}=\J^s$ 
et $\Om=\J^1$ et $\K=\H^m(\J^1\cap\N)$. 
Soit $g\in\I_\G(\xi)$.
En restreignant $\xi$ de $\K$ à $\K'=\H^r(\J^1\cap\N)$ et en appliquant le 
corollaire avec $m=r$, on obtient $g\in\K'\M\K'$. 
Comme $\K'\subseteq\K$, on en déduit le résultat voulu. 
\end{proof}

\subsection{}

Soit maintenant $(\BJ,\bl)$ un type semi-simple de $\G$, comme 
au paragraphe \ref{NonEndoEq} dont nous repre\-nons les notations, 
et soit $\bn$ la représentation de $\BJ^1$ définie au paragraphe
\ref{defeta}.

Le résultat suivant est nouveau, même dans le cas complexe et même 
dans le cas de $\GL_n(\F)$.

\begin{lemm} 
\label{NicolasRostov}
On a $\I_\G(\bn)\subseteq \BJ^1\L\BJ^1$.
\end{lemm}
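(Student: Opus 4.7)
La strat\'egie consiste \`a proc\'eder par r\'ecurrence sur le nombre $l$ de classes d'endo-\'equivalence apparaissant dans le type semi-simple $(\BJ,\bl)$, en r\'eduisant \`a la majoration d'entrelacement de l'appendice \`a chaque \'etape. Le cas $l=1$ est trivial puisqu'alors $\L=\G$ et la conclusion $\I_\G(\bn)\subseteq\BJ^1\G\BJ^1=\G$ est vide. On peut donc supposer $l\>2$.

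Pour l'\'etape de r\'ecurrence, je d\'ecomposerais grossi\`erement $\L$ en deux facteurs $\L_A\times\L_B$ selon une partition des classes d'endo-\'equivalence en deux paquets (par exemple $\Theta^1$ d'un c\^ot\'e et $\Theta^2,\dots,\Theta^l$ de l'autre). Cette partition induit une d\'ecomposition $\V=\V_A\oplus\V_B$ et un sous-groupe de Levi $\L_A\times\L_B$ de $\G$ contenant $\L$. L'objectif de l'\'etape principale est de prouver l'inclusion interm\'ediaire $\I_\G(\bn)\subseteq\BJ^1(\L_A\times\L_B)\BJ^1$~; on conclut ensuite en appliquant l'hypoth\`ese de r\'ecurrence \`a la restriction du type semi-simple au facteur $\L_B$ (qui fait intervenir $l-1$ classes), le facteur $\L_A$ rel\`eve du cas homog\`ene trait\'e par la proposition \ref{blaeta}.

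Pour obtenir cette inclusion interm\'ediaire, j'utiliserais l'existence d'une approximation commune $([\La,n,0,\g],\vartheta,t)$ de la famille des caract\`eres simples $(\t_1,\dots,\t_r)$ (proposition \ref{PCTSM}). Posons $c=\b-\g$~; par construction, $c$ stabilise la d\'ecomposition $\V=\V_A\oplus\V_B$ et appartient \`a $\aa_{-m}(\La)$ pour un $m$ convenable. La restriction de $\bn$ \`a $\H^{t+1}(\g,\La)$ est un multiple de $\vartheta$ et sa restriction \`a $\H^t(\g,\La)\cap(\L_A\times\L_B)$ s'exprime en termes de $\vartheta\psi_c$. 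On se trouve ainsi exactement dans le cadre des paragraphes \ref{ETAPE1} et \ref{ETAPE2} de l'appendice~: le sous-groupe $\K$ qui y est d\'efini est contenu dans $\BJ^1$ et la restriction de $\bn$ \`a ce sous-groupe contient (comme facteur irr\'eductible sur le pro-$p$-groupe $\K$) le caract\`ere $\xi$ de l'appendice.

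Le point-clef technique consiste \`a ramener l'entrelacement de $\bn$ \`a celui de $\xi$. Pour cela, j'observerais que $\bn$ est une repr\'esentation irr\'eductible d'un pro-$p$-groupe construite par des extensions de type Heisenberg \`a partir du caract\`ere $\xi$, et par l'argument classique utilisant le fait que les op\'erateurs d'entrelacement sur les repr\'esentations de Heisenberg sont de dimension un, on obtient l'\'egalit\'e $\I_\G(\bn)\subseteq\BJ^1\cdot\I_\G(\xi)\cdot\BJ^1$. Le corollaire \ref{cor:xiKint} donne alors $\I_\G(\xi)\subseteq\K(\L_A\times\L_B)\K$, d'o\`u $\I_\G(\bn)\subseteq\BJ^1(\L_A\times\L_B)\BJ^1$. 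Appliquant l'hypoth\`ese de r\'ecurrence au facteur $\L_B$ (apr\`es avoir convenablement projet\'e sur chacun des facteurs de Levi gr\^ace \`a la d\'ecomposition d'Iwahori de $\BJ^1$), on obtient la conclusion.

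L'obstacle principal sera la comparaison entre $\I_\G(\bn)$ et $\I_\G(\xi)$~: il faut contr\^oler pr\'ecis\'ement comment l'entrelacement se propage le long des extensions de Heisenberg successives qui reconstruisent $\bn$ \`a partir de $\xi$, en tenant compte du fait que $\bn$ n'est plus une pure repr\'esentation de Heisenberg simple mais un objet mixte incorporant des donn\'ees de caract\`ere sur les blocs hors-diagonaux. Le maintien de la d\'ecomposition d'Iwahori de $\BJ^1$ par rapport au sous-groupe parabolique standard de Levi $\L_A\times\L_B$ permet de mener cette comparaison bloc par bloc.
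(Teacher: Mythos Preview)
Votre plan suit la bonne architecture (r\'ecurrence sur $l$, r\'eduction \`a l'appendice via une approximation commune), mais la partition que vous proposez est trop na\"ive et c'est l\`a que se trouve le v\'eritable obstacle, pas dans la comparaison $\I_\G(\bn)$ vs $\I_\G(\xi)$. Pour que le corollaire~\ref{cor:xiKint} s'applique, il faut \^etre dans le cadre du paragraphe~\ref{P:rss}, lequel suppose implicitement (c'est l'hypoth\`ese de \cite[\S4]{SeSt1} et ce qui rend possible le lemme~\ref{lem:maps} puis la proposition~\ref{prop:mincase}) que les strates $[\La^i,t,t-1,c_i]$ dans $\B_\g$ ont des polyn\^omes caract\'eristiques premiers entre eux. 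Si vous s\'eparez ${\bf\Theta}^1$ du reste alors que ${\bf\Theta}^1$ et, disons, ${\bf\Theta}^2$ ne se distinguent qu'\`a un niveau strictement inf\'erieur \`a $t-1$, les composantes $c_A$ et $c_B$ donneront des strates fondamentales \'equivalentes et tout l'appendice s'effondre. La bonne partition est dict\'ee par le niveau de l'approximation commune~: on prend $t$ minimal tel que tous les ${\bf\Theta}_i^{(t)}$ soient endo-\'equivalents, puis on choisit pour $\I$ une classe de la relation $i\sim_t j\Leftrightarrow{\bf\Theta}_i^{(t-1)}\approx{\bf\Theta}_j^{(t-1)}$. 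Par minimalit\'e de $t$, cette classe est propre (d'o\`u la r\'ecurrence marche des deux c\^ot\'es), et par d\'efinition de $\sim_t$ les deux blocs se s\'eparent d\'ej\`a au niveau $t-1$, ce qui garantit la condition de coprimalit\'e.

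Quant \`a ce que vous appelez l'\flqq~obstacle principal~\frqq, il n'y en a pas~: puisque $\K\subseteq\BJ^1$ et que la restriction de $\bl$ (donc de $\bn$) \`a $\K$ est un multiple du caract\`ere $\xi$ (c'est la propri\'et\'e 2 de la proposition~\ref{PCTSM}), on a directement $\I_\G(\bn)\subseteq\I_\G(\xi)$ sans aucun argument de type Heisenberg. Le corollaire~\ref{cor:xiKint} donne alors $\I_\G(\xi)\subseteq\K\M^*\K\subseteq\BJ^1\M^*\BJ^1$, et on conclut par r\'ecurrence en restreignant $\bn$ \`a $\BJ^1\cap\M^*$.
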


\begin{proof}
La preuve se fait par récurrence sur $l$, le cas $l=1$ correspondant 
à un type semi-simple homogène et étant déjà traité au paragraphe \ref{EndoEq}. 
Nous supposons donc que $l\>2$. 

Pour~$t\in\{0,\dots,n\}$, 
notons~${\bf\Theta}_i^{(t)}$ le ps-caractère déterminé par 
la restriction de $\t_i$ à $\H^{t+1}(\b_i,\La^i)$. 
Pour~$t\in\{1,\dots,n\}$, on définit une relation d'équivalence~$\sim_t$ 
sur~$\{1,\ldots,r\}$ par~: 
\begin{equation*}
i\sim_t j \ \Leftrightarrow \ {\bf\Theta}_i^{(t-1)} \approx {\bf\Theta}_j^{(t-1)}
\end{equation*}
où $\approx$ désigne la relation d'endo-équivalence entre ps-caractères. 

Notons $t$ le plus petit entier $\>1$ tel que 
${\bf\Theta}_1^{(t)},\dots,{\bf\Theta}_r^{(t)}$ 
sont tous endo-équivalents et soit $\I$ une clas\-se d'équivalence pour la 
relation~$\sim_t$.
Posons~:
\begin{equation*}
\Y^1=\bigoplus_{i\in\I}\V^i,
\quad
\Y^2=\bigoplus_{i\not\in\I}\V^i.
\end{equation*}
Remarquons que $\Y^1\ne\V$, par minimalité de~$t$.  
Soit $\M^*$ le stabilisateur dans $\G$ 
de la décompo\-sition $\V=\Y_1\oplus\Y_2$
et soit $\P^*$ le sous-groupe parabolique stabilisateur de $\Y_1$, de 
radical unipo\-tent $\N^*$. 
Nous allons prouver que~:
\begin{equation}
\label{eqn:intc}
\I_\G(\bn) \subseteq \BJ^1\M^*\BJ^1.
\end{equation}
Le résultat se déduira alors de l'hypothèse de récurrence, puisque 
$\bl|_{\BJ\cap\M^*}=\bl^{(1)}\otimes\bl^{(2)}$, où $\bl^{(1)}$ et 
$\bl^{(2)}$ sont des types semi-simples avec strictement moins 
de $l$ endo-classes. 

Prouvons maintenant la relation \eqref{eqn:intc}. 
Soit~$([\La,n,0,\g],\vartheta,t)$ une approximation commune de 
$(\t_1,\dots,\t_r)$. 
Posons~:
\begin{equation*}
\Om=(\U^1(\La)\cap\B_\g)(1+\mm_{q-t+1}(\g,\La)),
\quad
\K=\H^t(\g,\La)(\Om\cap\N^*),
\end{equation*}
où $\B_\g$ est le centralisateur de $\g$ dans $\A$. 
Posons $c=\g-\b$ et $c_i={\rm e}_{i}c{\rm e}_{i}$, où~${\rm e}_{i}$ est la 
projection sur~$\Y^i$ de noyau $\Y^{3-i}$ pour $i\in\{1,2\}$. 
Soit $\xi$ le caractère de $\K$ coïncidant avec 
$\t\psi_c$ sur~$\H^t(\g,\La)$ et qui est trivial sur $\K\cap\N^*$. 
Nous sommes donc dans la situation du paragraphe~\ref{P:rss}.

Par construction, on a 
$\BJ = (\H^t(\g,\La)\cap\N^{*-}) \cdot (\BJ\cap\M^*)\cdot (\Om\cap\N^*)$
et la restriction de $\bl$ à $\K$ est un multiple de $\xi$. 
En particulier, c'est aussi vrai pour $\bn$ (car $\K\subseteq\BJ^1$) et~: 
\begin{equation*}
\I_\G(\bn) \subseteq \I_\G(\xi) \subseteq \K\M^*\K
\end{equation*}
d'après le corollaire~\ref{cor:xiKint}.  
Comme $\K\subseteq\BJ^1$, on a prouvé~\eqref{eqn:intc} et le résultat 
s'ensuit. 
\end{proof} 

\providecommand{\bysame}{\leavevmode ---\ }
\providecommand{\og}{``}
\providecommand{\fg}{''}
\providecommand{\smfandname}{\&}
\providecommand{\smfedsname}{\'eds.}
\providecommand{\smfedname}{\'ed.}
\providecommand{\smfmastersthesisname}{M\'emoire}
\providecommand{\smfphdthesisname}{Th\`ese}

\end{document}